\documentclass{article}

\usepackage{lineno,hyperref}
\usepackage{amsmath}
\usepackage{amssymb}
\usepackage{amsthm}
\modulolinenumbers[5]
\usepackage{graphicx}

\newtheorem{lemma}{Lemma}
\newtheorem*{lemma*}{Lemma}

\newtheorem{theorem}{Theorem}
\newtheorem*{theorem*}{Theorem}

\theoremstyle{definition}
\newtheorem{example}{Example}

\makeatletter
\newcommand{\Ast}{\mathop{\scalebox{1.5}{\raisebox{-0.18ex}{\(\ast\)}}}}
\newcommand{\bigast}{\mathop{\mathpalette\big@st\relax}\displaylimits}
\newcommand{\big@st}[2]{\vcenter{\m@th\hbox{\scalebox{\ifx#1\displaystyle2.1\else1.5\fi}{\(#1\Ast\)}}}}
\makeatother

\newcommand\dotminus{\mathbin{\dot{-}}}

\newcommand\dotoplus{\mathbin{\dot{\oplus}}}

\newcommand\sMat[4]{\bigl(
	\begin{smallmatrix}
		#1 \mathstrut & #2 \mathstrut \\
		#3 \mathstrut & #4 \mathstrut
	\end{smallmatrix}
\bigr)}

\DeclareMathOperator\mat{M}

\DeclareMathOperator\elem{E}

\DeclareMathOperator\glin{GL}

\DeclareMathOperator\stlin{St}
\DeclareMathOperator\symp{Sp}

\DeclareMathOperator\orth{O}
\DeclareMathOperator\sorth{SO}

\DeclareMathOperator\storth{StO}

\DeclareMathOperator\Ker{Ker}
\DeclareMathOperator\sign{sign}

\newcommand\eps{\varepsilon}

\newcommand\inv[1]{
	\!\;\overline{
		\!\!\:#1\vphantom !\!\!\:
	}\;\!
}
\newcommand\op{{\mathrm{op}}}

\DeclareMathOperator\Aut{Aut}

\DeclareMathOperator\height{ht}

\newcommand{\up}[2]{{^{#1}\!{#2}}}

\newcommand{\Set}{\mathbf{Set}}

\newcommand{\interval}[2]{\,{]}{#1}, {#2}{[}\,}

\title{
	Root graded groups revisited
}

\author{
	Egor Voronetsky \\
	Chebyshev Laboratory, \\
	St. Petersburg State University, \\
	14th Line V.O., 29B, \\
	Saint Petersburg 199178 Russia \\
}

\begin{document}
\maketitle

\begin{flushright}
	\textit{Dedicated to the memory of Nikolai Vavilov}
\end{flushright}

\begin{abstract}
	A group \(G\) is called root graded if it has a family of subgroups
	\(
		G_\alpha
	\) indexed by roots from a root system \(\Phi\) satisfying natural conditions similar to Chevalley groups over commutative unital rings. For any such group there is a corresponding algebraic structure (commutative unital ring, associative unital ring, etc.) encoding the commutator relations between
	\(
		G_\alpha
	\). We give a complete description of varieties of such structures for irreducible root systems of rank \(\geq 3\) excluding
	\(
		\mathsf H_3
	\) and
	\(
		\mathsf H_4
	\). Moreover, we provide a construction of root graded groups for all algebraic structures from these varieties.
\end{abstract}

\section{Introduction}

Let \(K\) be a commutative unital ring and \(\Phi\) be a root system. Structure of the Chevalley group
\(
	G(\Phi, K)
\) (with respect to some weight lattice) is well-known \cite{vavilov-plotkin}. Excepting some small rank cases,
\(
	G(\Phi, K)
\) has the largest perfect subgroup
\(
	\elem(\Phi, K)
\) generated by all root elements (the elementary subgroup), this subgroup is normal, the factor-group is solvable, and there is a classification of all subgroups of
\(
	G(\Phi, K)
\) normalized by
\(
	\elem(\Phi, K)
\) in terms of ideals of \(K\) and relative \(\mathrm K_1\)-functors. Some of these results require that \(K\) is finite-dimensional. If some structure constants are not invertible in \(K\), then the classification of normal subgroups involves not only ideals, but also so-called admissible pairs \cite{abe}. Hence it is natural to consider a generalization of Chevalley groups where root subgroups with various root lengths are parameterized by different objects.

A root graded group \(G\) is an abstract group with root subgroups
\(
	G_\alpha
\) such that, informally, they satisfy Chevalley commutator formula and they a permuted by some Weyl elements
\(
	n_\alpha \in G_\alpha\, G_{-\alpha}\, G_\alpha
\). All Chevalley groups
\(
	G(\Phi, K)
\) and their elementary subgroups are naturally root graded. Moreover, isotropic reductive groups over semi-local rings with connected spectra are also root graded, see \cite{borel-tits}, \cite[Exp. XXVI]{sga3}, and \cite{tits-indices}.

Each root graded group \(G\) determines a certain multi-sorted algebraic structure consisting of its root subgroups. Operations are given by the multiplications of
\(
	G_\alpha
\) and the commutator maps between various root subgroups. Moreover, Weyl elements determine some distinguished elements in the root subgroups and identifications between root subgroups corresponding to roots of the same length (i.e. from the same orbit under the action of the Weyl group). The goal of this paper is to describe the varieties of such algebraic structures and to prove the existence theorem, namely, that every such structure comes from a root graded group.

We consider only non-crystallographic root systems \(\Phi\) of rank
\(
	\geq 3
\) except
\(
	\mathsf H_3
\) and
\(
	\mathsf H_4
\). Description of varieties (the ``coordinatisation theorem'') is mostly done by T. Wiedemann in \cite{wiedemann} (see also \cite{shi} for the simply laced case) except the case of non-crystallographic
\(
	\mathsf F_4
\) and the case of
\(
	\mathsf B_\ell
\) with
\(
	\lambda \neq -1
\) (in our terminology from \S 7) not reduced to orthogonal groups over commutative rings. The existence theorem for root systems
\(
	\mathsf A_\ell
\),
\(
	\mathsf D_\ell
\),
\(
	\mathsf E_\ell
\) is well-known since the algebraic structures are associative and commutative unital rings. For
\(
	\mathsf B_\ell
\) with
\(
	\ell \geq 4
\) the existence follows e.g. from the general theory of odd unitary groups \cite{bc-comm-rel}. Finally, for
\(
	\mathsf B_3
\) and
\(
	\mathsf F_4
\) only partial (though rather non-trivial) results were known, see \cite{wiedemann} for details. There are also similar results for classical groups with non-degenerated commutator relations in the sense of \cite[chapter I]{loos-neher}, see \cite{a-comm-rel, bc-comm-rel}. Further references including history of these objects may be found in \cite{wiedemann}.

The paper is organized as follows. In \S 2 and \S 3 we discuss some general definitions and show how to explicitly construct a variety by the class of \(\Phi\)-graded groups. In \S 4 we consider the important class of groups with
\(
	\mathsf B_\ell
\)-commutator relations and
\(
	\mathsf A_{\ell - 1}
\)-Weyl elements, i.e. almost
\(
	\mathsf B_\ell
\)-graded groups but with some Weyl elements missing. This class has a relatively simple associated variety with \(6\) sorts and without complicated sign conventions in the axioms and commutator formulae. In \S 5 we apply these objects to prove the coordinatisation theorems for simply laces root systems. In the remaining sections 6--9 we focus on
\(
	\mathsf B_3
\)- and
\(
	\mathsf F_4
\)-graded groups. Since the associated varieties contain alternative rings, we recall necessary preliminaries in \S 6. The sections 7 and 8 contain the coordinatisation theorems for
\(
	\mathsf B_\ell
\)- and
\(
	\mathsf F_4
\)-graded groups respectively. Finally, in \S 9 we prove the existence theorem in full generality.

Our paper is mostly self-contained, we need only some basic results about alternative rings and root systems. Also, we use the existence of Chevalley groups of types
\(
	\mathsf E_\ell
\). Actually, it is possible to use only
\(
	G(\mathsf E_\ell, \mathbb C)
\) (in \S 5 and \S 8) since the method used in our existence theorem is applicable to
\(
	\mathsf E_\ell
\) as well, though we do not write this explicitly.

In general we say that
\(
	(\Phi, \Psi)
\)-rings are the objects coordinatising groups with \(\Phi\)-commutator relations and \(\Psi\)-Weyl elements for a root subsystem
\(
	\Psi \subseteq \Phi
\). Thus our main theorems concern
\(
	(\Phi, \Phi)
\)-rings. For
\(
	\Phi = \mathsf A_\ell
\) they are just associative unital rings, for
\(
	\Phi = \mathsf D_\ell
\) and
\(
	\Phi = \mathsf E_\ell
\) such objects are commutative unital rings, for
\(
	\Phi = \mathsf B_\ell
\) they consist of an alternative unital ring (associative for
\(
	\ell \geq 4
\)) with a pseudo-involution and an ``odd form parameter''. Finally,
\(
	(\mathsf F_4, \mathsf F_4)
\)-rings consist of two alternative unital rings with involutions and some additional maps. There is a symmetry between these two rings is due to the outer automorphism of the non-crystallographic root system
\(
	\mathsf F_4
\).

For convenience, let us compare Wiedemann's alternative rings with Jordan modules
\(
	(R, J)
\) \cite[definitions 8.1.5, 8.3.12, and 8.6.1]{wiedemann} and our
\(
	(\mathsf B_3, \mathsf B_3)
\)-rings
\(
	(R, \Delta)
\) from \S 7, i.e. alternative rings with odd form parameters, in the particular case
\(
	\lambda = -1
\). Wiedemann's nuclear involution
\(
	x^\sigma
\) is our pseudo-involution
\(
	x^*
\) (an actual involution since \(\lambda\) is central). The non-abelian groups \(J\) and \(\Delta\) correspond to each other, Wiedemann's ``square-module'' operation \(\phi(u, x)\) is our
\(
	u \cdot x
\),
\(
	\pi_1(u)
\) is our
\(
	\rho(u)
\),
\(
	T_1(x)
\) is our
\(
	\phi(x)
\), and
\(
	\psi(u, v)
\) is our
\(
	-\langle u, v \rangle
\). In the theory of odd form rings \cite{thesis} actually
\(
	\langle u, v \rangle = \inv{\pi(u)}\, \pi(v)
\) for some map
\(
	\pi \colon \Delta \to R_{01}
\) for a right \(R\)-module
\(
	R_{01}
\), it is explicitly constructed in \cite{bc-comm-rel} (but only if \(R\) is associative, e.g. for
\(
	\ell \geq 4
\)). There exist
\(
	(\mathsf B_\ell, \mathsf B_\ell)
\)-rings with
\(
	\lambda \neq -1
\) (and not consisting of a commutative ring with quadratic form) by example \ref{bb-ring-init} in \S 7, moreover, the corresponding
\(
	\mathsf B_\ell
\)-graded groups contain non-weakly-balanced Weyl triples in the sense of \cite[definition 2.2.12]{wiedemann}.

\section{Root graded groups}

Recall that a (non-crystallographic) \textit{root system}
\(
	\Phi \subseteq \mathbb R^\ell \setminus \{0\}
\) is a finite spanning subset of a Euclidean vector space such that
\begin{itemize}

	\item
	if
	\(
		\alpha \in \Phi
	\) and
	\(
		\beta \in \Phi \cap \mathbb R \alpha
	\), then
	\(
		\beta = \pm \alpha
	\);

	\item
	if
	\(
		\alpha, \beta \in \Phi
	\), then
	\(
		s_\alpha(\beta)
		= \beta
		- 2 \frac{
			\alpha \cdot \beta
		}{
			\alpha \cdot \alpha
		} \alpha
		\in \Phi
	\), where
	\(
		\alpha \cdot \beta
	\) is the dot product.

\end{itemize}
We need the notation
\(
	\interval \alpha \beta
	= \Phi \cap (
		\mathbb R_{> 0} \alpha
		+ \mathbb R_{> 0} \beta
	)
\) for linearly independent roots \(\alpha\), \(\beta\). The \textit{Weyl group} of \(\Phi\) is
\(
	\mathrm W(\Phi)
	= \langle
		s_\alpha
		\mid
		\alpha \in \Phi
	\rangle
	\leq \orth(\ell)
\). A root system \(\Phi\) is \textit{irreducible} if it is non-empty (i.e.
\(
	\ell > 0
\)) and it is indecomposable into a disjoint union of non-empty orthogonal subsets. It is well-known that up to the choice of root lengths irreducible root systems are completely determined by their \textit{type}:
\(
	\mathsf A_\ell
\) for
\(
	\ell \geq 1
\),
\(
	\mathsf B_\ell = \mathsf C_\ell
\) for
\(
	\ell \geq 2
\),
\(
	\mathsf D_\ell
\) for
\(
	\ell \geq 4
\),
\(
	\mathsf E_\ell
\) for
\(
	\ell \in \{6, 7, 8\}
\),
\(
	\mathsf F_4
\),
\(
	\mathsf G_2
\),
\(
	\mathsf H_\ell
\) for
\(
	\ell \in \{2, 3, 4\}
\), or
\(
	\mathrm I_2^m
\) for \(m \geq 7\). The lower index of a type denotes the space dimension.

A root system is called \textit{crystallographic} if
\(
	2 \frac{
		\alpha \cdot \beta
	}{
		\alpha \cdot \alpha
	} \in \mathbb Z
\), but without the condition
\(
	\Phi \cap \mathbb R \alpha = \{\pm \alpha\}
\). Such root systems are also classified by the type, but the non-crystallographic type
\(
	\mathsf B_\ell
\) splits into the crystallographic types
\(
	\mathsf B_\ell
\) and
\(
	\mathsf C_\ell
\) (it is distinct from
\(
	\mathsf B_\ell
\) for
\(
	\ell \geq 3
\)). There is an additional crystallographic type
\(
	\mathsf{BC}_\ell
\) for
\(
	\ell \geq 1
\), after identifying co-directional roots it corresponds to the non-crystallographic
\(
	\mathsf B_\ell
\) for
\(
	\ell \geq 2
\) or
\(
	\mathsf A_1
\). The non-crystallographic types
\(
	\mathsf H_\ell
\) and
\(
	\mathsf I_2^m
\) have no crystallographic realizations. For future references we list some root systems of all crystallographic types following Bourbaki \cite[\S XI.4]{bourbaki}.
\begin{align*}
	\mathsf A_\ell
	&= \{
		\mathrm e_i - \mathrm e_j
		\mid
		1 \leq i \neq j \leq \ell + 1
	\},
\\
	\mathsf B_\ell
	&= \{
		\pm \mathrm e_i \pm \mathrm e_j
		\mid
		1 \leq i < j \leq \ell
	\}
	\sqcup \{
		\pm \mathrm e_i
		\mid
		1 \leq i \leq \ell
	\},
\\
	\mathsf C_\ell
	&= \{
		\pm \mathrm e_i \pm \mathrm e_j
		\mid
		1 \leq i < j \leq \ell
	\}
	\sqcup \{
		\pm 2 \mathrm e_i
		\mid
		1 \leq i \leq \ell
	\},
\\
	\mathsf{BC}_\ell
	&= \{
		\pm \mathrm e_i \pm \mathrm e_j
		\mid
		1 \leq i < j \leq \ell
	\}
	\sqcup \{
		\pm \mathrm e_i
		\mid
		1 \leq i \leq \ell
	\}
	\sqcup \{
		\pm 2 \mathrm e_i
		\mid
		1 \leq i \leq \ell
	\},
\\
	\mathsf D_\ell
	&= \{
		\pm \mathrm e_i \pm \mathrm e_j
		\mid
		1 \leq i < j \leq \ell
	\},
\\
	\mathsf E_6
	&= \{
		\pm \mathrm e_i \pm \mathrm e_j
		\mid
		1 \leq i < j \leq 5
	\}
	\sqcup \bigl\{
		\textstyle \pm \frac 1 2 (
			\sum_{i = 1}^5 \sigma_i \mathrm e_i
			+ \mathrm e_6
			+ \mathrm e_7
			- \mathrm e_8
		)
		\mid
		\sigma_i \in \{-1, 1\},\,
		\sum_i \sigma_i \equiv 3 \!\!\!\pmod 4
	\bigr\},
\\
	\mathsf E_7
	&= \{
		\pm \mathrm e_i \pm \mathrm e_j
		\mid
		1 \leq i < j \leq 6
	\}
	\sqcup \{
		\pm (\mathrm e_7 - \mathrm e_8)
	\} \\
	&\sqcup \bigl\{
		\textstyle \pm \frac 1 2 (
			\sum_{i = 1}^6 \sigma_i \mathrm e_i
			+ \mathrm e_7
			- \mathrm e_8
		)
		\mid
		\sigma_i \in \{-1, 1\},\,
		\sum_i \sigma_i \equiv 0 \!\!\!\pmod 4
	\bigr\},
\\
	\mathsf E_8
	&= \{
		\pm \mathrm e_i \pm \mathrm e_j
		\mid
		1 \leq i < j \leq 8
	\}
	\sqcup \bigl\{
		\textstyle \frac 1 2
		\sum_{i = 1}^8 \sigma_i \mathrm e_i
		\mid
		\sigma_i \in \{-1, 1\},\,
		\sum_i \sigma_i \equiv 0 \!\!\!\pmod 4
	\bigr\},
\\
	\mathsf F_4
	&= \{
		\pm \mathrm e_i
		\mid
		1 \leq i \leq 4
	\}
	\sqcup \{
		\pm \mathrm e_i \pm \mathrm e_j
		\mid
		1 \leq i < j \leq 4
	\}
	\sqcup \bigl\{
		\textstyle \frac 1 2 \sum_{i = 1}^4
			\sigma_i \mathrm e_i
		\mid
		\sigma_i \in \{-1, 1\}
	\bigr\},
\\
	\mathsf G_2
	&= \{
		\pm (\mathrm e_i - \mathrm e_j)
		\mid
		1 \leq i < j \leq 3
	\}
	\sqcup \bigl\{
		\pm (
			2 \mathrm e_1
			- \mathrm e_2
			- \mathrm e_3
		),
		\pm (
			2 \mathrm e_2
			- \mathrm e_1
			- \mathrm e_3
		),
		\pm (
			2 \mathrm e_3
			- \mathrm e_1
			- \mathrm e_2
		)
	\bigr\}.
\end{align*}

Until the end of this section \(\Phi\) is an arbitrary non-crystallographic root system. We say that a subset
\(
	\Sigma \subseteq \Phi
\) of a root system is \textit{special closed} if it is an intersection of \(\Phi\) with a convex cone not containing opposite non-zero vectors. An \textit{extreme root}
\(
	\alpha \in \Sigma
\) of a special closed subset
\(
	\Sigma \subseteq \Phi
\) is a root from an extreme ray of the conical convex hull
\(
	\sum_{\alpha \in \Sigma}
		\mathbb R_{\geq 0} \alpha
\) of \(\Sigma\). Every non-empty special closed subset
\(
	\Sigma \subseteq \Phi
\) contains an extreme root \(\alpha\) and for any such \(\alpha\) the set
\(
	\Sigma \setminus \{\alpha\}
\) is also special closed. We say that a linear order on a special closed subset
\(
	\Sigma \subseteq \Phi
\) is \textit{right extreme} if either
\(
	\Sigma = \varnothing
\) or the largest root
\(
	\alpha_{\max} \in \Sigma
\) is extreme and the induced order on
\(
	\Sigma \setminus \{\alpha_{\max}\}
\) is also right extreme. Right extreme orders are opposite to \textit{extreme} orders from \cite[definition 2.3.16]{wiedemann}. Unlike Wiedemann we define group commutators as
\(
	[f, g] = f g f^{-1} g^{-1}
\), so right extreme orders are more convenient. Clearly, every special closed subset admits a right extreme order.

A group \(G\) has \textit{\(\Phi\)-commutator relations} it it has a family of subgroups
\(
	(G_\alpha \leq G)_{\alpha \in \Phi}
\) for
\(
	\alpha \in \Phi
\) such that
\begin{itemize}

	\item
	\(
		[G_\alpha, G_\beta]
		\leq \bigl\langle
			G_\gamma
			\mid
			\gamma \in \interval \alpha \beta
		\bigr\rangle
	\) for all linearly independent
	\(
		\alpha, \beta \in \Phi
	\);

	\item
	\(
		G_\alpha
		\cap \bigl\langle
			G_\beta
			\mid
			\beta \in \Sigma \setminus \{\alpha\}
		\bigr\rangle
		= 1
	\) for all special closed subsets
	\(
		\Sigma \subseteq \Phi
	\) and extreme
	\(
		\alpha \in \Sigma
	\).

\end{itemize}
The second condition is a bit weaker than \cite[definition 2.5.2, (iv)]{wiedemann} and is usually not included in the definition of groups with commutator relations.

Suppose that \(G\) has \(\Phi\)-commutator relations. An \textit{\(\alpha\)-Weyl element} for
\(
	\alpha \in \Phi
\) is an element
\(
	n \in G_\alpha\, G_{-\alpha}\, G_\alpha
\) such that
\(
	\up n{G_\beta} = G_{s_\alpha(\beta)}
\) for all
\(
	\beta \in \Phi
\) (note that such \(n\) is called
\(
	(-\alpha)
\)-Weyl in \cite[definition 2.2.2]{wiedemann}). Clearly, if
\(
	n = f g h
\) is an \(\alpha\)-Weyl element for
\(
	f \in G_\alpha
\),
\(
	g \in G_{-\alpha}
\),
\(
	h \in G_\alpha
\), then
\(
	n^{-1} = h^{-1} g^{-1} f^{-1}
\) is also \(\alpha\)-Weyl element and
\(
	n = g h f^n = \up nh f g
\) is simultaneously
\(
	(-\alpha)
\)-Weyl. A group \(G\) is called \textit{\(\Phi\)-graded} if for every root \(\alpha\) there exists an \(\alpha\)-Weyl element \cite[definition 2.5.2]{wiedemann}.

For example, let \(\Phi\) be a crystallographic root system distinct from
\(
	\mathsf{BC}_\ell
\) and \(K\) be a commutative unital ring. The \textit{Chevalley group}
\(
	G^{\mathrm{ad}}(\Phi, K)
\) is the \(K\)-point group of the split reductive group scheme of adjoint type with the root system \(\Phi\). This group has a \(\Phi\)-grading with natural isomorphisms
\(
	t_\alpha \colon K \to G_\alpha
\) satisfying \textit{Chevalley commutator formula}
\[
	[t_\alpha(x), t_\alpha(y)]
	= \prod_{
		\substack{
			i \alpha + j \beta \in \Phi
		\\
			i, j \in \mathbb N_{> 0}
		}
	} t_{i \alpha + j \beta}(
		N_{\alpha \beta i j} x^i y^j
	)
\]
for non-opposite
\(
	\alpha, \beta \in \Phi
\) and the formula
\[
	\up{
		t_\alpha(a)\,
		t_{-\alpha}(-a^{-1})\,
		t_\alpha(a)
	}{t_\beta(x)}
	= t_{s_\alpha(\beta)}\bigl(
		d_{\alpha \beta}
		a^{
			-2 \frac{
				\alpha \cdot \beta
			}{
				\alpha \cdot \alpha
			}
		}
		x
	\bigr)
\]
for all
\(
	\alpha, \beta \in \Phi
\), where
\(
	a \in K^*
\) and
\(
	x, y \in K
\). Here
\(
	N_{\alpha \beta i j} \in \mathbb Z
\) and
\(
	d_{\alpha \beta} \in \mathbb Z^*
\) are the \textit{structure constants}. Their absolute values depend only on the orbit of
\(
	(\alpha, \beta) \in \Phi \times \Phi
\) under the action of the Weyl group and the values of \(i\), \(j\) in the case of
\(
	N_{\alpha \beta i j}
\). On the other hand, the signs depend not only on the roots and the indices, but also on the choice of
\(
	t_\alpha
\). All Weyl elements are of the type
\(
	t_\alpha(a)\, t_{-\alpha}(-a^{-1})\, t_\alpha(a)
\) with
\(
	a \in K^*
\). The \textit{standard torus} of
\(
	G^{\mathrm{ad}}(\Phi, K)
\) is denoted by
\(
	T^{\mathrm{ad}}(\Phi, K)
\), it stabilizes the root subgroups. We make the identification
\[
	T^{\mathrm{ad}}(\Phi, K)
	\cong \{
		g = (a_\alpha \in K^*)_{\alpha \in \Phi}
		\mid
		a_{i \alpha + j \beta} = a_\alpha^i a_\beta^j
		\text{ for } i \alpha + j \beta \in \Phi
		\text{ and } i, j \in \mathbb N_0
	\},\,
	\up g{t_\alpha(x)} = t_\alpha(a_\alpha x).
\]

As another example, let \(R\) be an associative unital ring and
\(
	m \geq 2
\). The general linear group
\(
	\glin(m, R)
\) has so-called \textit{elementary transvections}
\(
	t_{ij}(x) = 1 + x e_{ij}
\) for
\(
	i \neq j
\), where \(1\) is the identity matrix and
\(
	e_{ij}
\) is a matrix unit. The maps
\(
	t_{ij} \colon R \to \glin(m, R)
\) are injective group homomorphisms, their images gives an
\(
	\mathsf A_{m - 1}
\)-grading on
\(
	\glin(m, R)
\). Namely,
\begin{itemize}

	\item
	\(
		[t_{ij}(x), t_{jk}(y)] = t_{ik}(xy)
	\) for
	\(
		i \neq k
	\);

	\item
	\(
		[t_{ij}(x), t_{kl}(y)] = 1
	\) for
	\(
		j \neq k
	\) and
	\(
		i \neq l
	\);

	\item
	\(
		t_{ij}(a)\, t_{ji}(b)\, t_{ij}(c)
	\) is a Weyl element if and only if
	\(
		a = c \in R^*
	\) and
	\(
		b = -a^{-1}
	\).

\end{itemize}

The next lemma is a strengthening of the second condition from the definition of commutator relations, it is proved in \cite[lemma 2.4.17]{wiedemann} under a bit stronger assumption.
\begin{lemma} \label{comm-rel}
	Let \(G\) be a group with \(\Phi\)-commutator relations and
	\(
		\Sigma \subseteq \Phi
	\) be a special closed subset. Then for any right extreme order on \(\Sigma\) the product map
	\[
		\prod_{\alpha \in \Sigma}^{\Set}
			G_\alpha
		\to G,\,
		(g_\alpha)_{\alpha \in \Sigma}
		\mapsto \prod_{\alpha \in \Sigma}
			g_\alpha
	\]
	with respect to this order is injective and its image
	\(
		G_\Sigma
	\) is a subgroup of \(G\), i.e.
	\(
		G_\Sigma
		= \langle
			G_\alpha
			\mid
			\alpha \in \Sigma
		\rangle
	\).
\end{lemma}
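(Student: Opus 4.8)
The plan is to induct on the cardinality of $\Sigma$, peeling off the largest root $\alpha_{\max}$ of the right extreme order. The base case $\Sigma = \varnothing$ is trivial. For the inductive step, write $\Sigma' = \Sigma \setminus \{\alpha_{\max}\}$ with the induced order, which is right extreme by definition, so by induction the product map on $\Sigma'$ is injective with image a subgroup $G_{\Sigma'} = \langle G_\beta \mid \beta \in \Sigma' \rangle$. Since $\alpha_{\max}$ is extreme in $\Sigma$, the set $\Sigma'$ is special closed. The first thing I would check is that $G_{\Sigma'}$ is normalized by $G_{\alpha_{\max}}$: for $\beta \in \Sigma'$ the commutator $[G_{\alpha_{\max}}, G_\beta]$ lies in $\langle G_\gamma \mid \gamma \in \interval{\alpha_{\max}}{\beta} \rangle$, and every $\gamma \in \interval{\alpha_{\max}}{\beta}$ is a positive combination of $\alpha_{\max}$ and $\beta$, hence lies in the convex cone defining $\Sigma$, so $\gamma \in \Sigma$; moreover $\gamma \neq \alpha_{\max}$ since $\alpha_{\max}$ spans an extreme ray and $\gamma$ is in the relative interior, so $\gamma \in \Sigma'$. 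Thus $[G_{\alpha_{\max}}, G_{\Sigma'}] \leq G_{\Sigma'}$, whence $G_{\Sigma'} G_{\alpha_{\max}} = G_{\alpha_{\max}} G_{\Sigma'}$ is a subgroup, equal to $G_\Sigma$, and the product map $G_{\Sigma'} \times G_{\alpha_{\max}} \to G_\Sigma$ is a surjection of sets.

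It remains to prove injectivity of the product map $\prod^{\Set}_{\alpha \in \Sigma} G_\alpha \to G$, which by the inductive hypothesis reduces to showing that $G_{\Sigma'} \cap G_{\alpha_{\max}} = 1$, equivalently that the factorization $g = g' \cdot g_{\alpha_{\max}}$ with $g' \in G_{\Sigma'}$, $g_{\alpha_{\max}} \in G_{\alpha_{\max}}$ is unique. Suppose $g'_1 g_1 = g'_2 g_2$; then $(g'_2)^{-1} g'_1 = g_2 g_1^{-1} \in G_{\Sigma'} \cap G_{\alpha_{\max}}$. Now apply the second axiom of $\Phi$-commutator relations directly: $\Sigma$ is special closed, $\alpha_{\max}$ is extreme in $\Sigma$, so
\[
	G_{\alpha_{\max}} \cap \bigl\langle G_\beta \mid \beta \in \Sigma \setminus \{\alpha_{\max}\} \bigr\rangle
	= G_{\alpha_{\max}} \cap G_{\Sigma'} = 1.
\]
Hence $g_2 g_1^{-1} = 1$ and $g'_2{}^{-1} g'_1 = 1$, giving $g_1 = g_2$ and $g'_1 = g'_2$; combined with the injectivity of the product map on $\Sigma'$ this yields injectivity on $\Sigma$.

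The only real subtlety — and the step I would expect to demand the most care — is the geometric claim that every $\gamma \in \interval{\alpha_{\max}}{\beta}$ with $\beta \in \Sigma'$ again lies in $\Sigma'$ and, in particular, is different from $\alpha_{\max}$. This needs the precise definition of an extreme root: $\alpha_{\max}$ lies on an extreme ray of the cone $\sum_{\delta \in \Sigma} \mathbb{R}_{\geq 0}\delta$, so it cannot be written as a positive combination $i\,\alpha_{\max} + j\,\beta$ with $i, j > 0$ and $\beta$ a different root of $\Sigma$ unless that forces $\beta \in \mathbb{R}\alpha_{\max}$, which is excluded since $\alpha_{\max}$ and $\beta$ are taken linearly independent (if $\beta = -\alpha_{\max}$ the interval $\interval{\alpha_{\max}}{\beta}$ is empty by the convexity of the cone defining $\Sigma$). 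One must also note that the argument is symmetric enough that the same reasoning lets one, if desired, peel off from either end; but peeling from the top with commutators written as $[f,g] = fgf^{-1}g^{-1}$ is exactly why the right extreme order was set up this way, so no sign bookkeeping intervenes. Everything else is a routine normal-subgroup manipulation.
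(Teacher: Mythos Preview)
Your proof is correct and follows essentially the same approach as the paper: induction on \(|\Sigma|\), peeling off the extreme root \(\alpha_{\max}\), using the second axiom of \(\Phi\)-commutator relations for the trivial-intersection step, and using the first axiom together with the observation \(\interval{\alpha_{\max}}{\beta} \subseteq \Sigma'\) for the subgroup claim. The only cosmetic difference is that the paper phrases the subgroup argument as ``\(G_\Sigma\, G_\alpha \subseteq G_\Sigma\) for all \(\alpha \in \Sigma\)'' rather than ``\(G_{\alpha_{\max}}\) normalizes \(G_{\Sigma'}\)'', and treats injectivity first; the content is the same.
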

\begin{proof}
	Let us prove that the product map is injective by induction on \(\Sigma\). Indeed, if
	\(
		\prod_{\alpha \in \Sigma} g_\alpha
		= \prod_{\alpha \in \Sigma} f_\alpha
	\) for
	\(
		g_\alpha, f_\alpha \in G_\alpha
	\), then by the definition of \(\Phi\)-commutator relations
	\(
		g_{\alpha_{\max}} = f_{\alpha_{\max}}
	\), so by the induction hypothesis
	\(
		g_\alpha = f_\alpha
	\) for remaining \(\alpha\).

	Now we check that
	\(
		G_\Sigma\, G_\alpha \subseteq G_\Sigma
	\) for all
	\(
		\alpha \in \Sigma
	\) by induction on \(\Sigma\). This is obvious if
	\(
		\alpha = \alpha_{\max}
	\). Otherwise we have
	\[
			G_\Sigma\, G_\alpha
		=
			G_{\Sigma \setminus \{\alpha_{\max}\}}\,
			G_{\alpha_{\max}}\,
			G_\alpha
		\subseteq
			G_{\Sigma \setminus \{\alpha_{\max}\}}\,
			[G_{\alpha_{\max}}, G_\alpha]\,
			G_\alpha\,
			G_{\alpha_{\max}}
		=
			G_{\Sigma \setminus \{\alpha_{\max}\}}\, G_{\alpha_{\max}}
		=
			G_\Sigma
	\]
	by the induction hypothesis.
\end{proof}

\section{Abstract \((\Phi, \varnothing)\)-rings}

We start an algebraic description of \(\Phi\)-graded groups with the following definition. A \textit{\((\Phi, \varnothing)\)-ring} consists of
\begin{itemize}

	\item
	a family
	\(
		(P_\alpha)_{\alpha \in \Phi}
	\) of groups with the group operations \(\dotplus\),

	\item
	for any linearly independent
	\(
		\alpha, \beta \in \Phi
	\), any right extreme order \(\preceq\) on
	\(
		\interval \alpha \beta
	\), and any
	\(
		\gamma \in \interval \alpha \beta
	\) a map
	\(
		C_{\alpha \beta}^{\gamma, {\preceq}}
		\colon P_\alpha \times P_\beta
		\to P_\gamma
	\)

\end{itemize}
such that for any special closed subset
\(
	\Sigma \subseteq \Phi
\) there exists a group \(G_\Sigma\) together with group homomorphisms
\(
	t_\alpha \colon P_\alpha \to G_\Sigma
\) for all
\(
	\alpha \in \Sigma
\) satisfying
\begin{itemize}

	\item
	for any right extreme order \(\leq\) on \(\Sigma\) the product map
	\(
		\prod_{\alpha \in \Sigma}^\Set P_\alpha
		\to G_\Sigma,\,
		(\zeta_\alpha)_\alpha
		\mapsto \prod_\alpha t_\alpha(\zeta_\alpha)
	\) from the set-theoretic product with respect to \(\leq\) is bijective;

	\item
	for any linearly independent
	\(
		\alpha, \beta \in \Sigma
	\) and any right extreme order \(\preceq\) on
	\(
		\interval \alpha \beta
	\) the commutator formula
	\[
		[t_\alpha(\zeta), t_\beta(\eta)]
		= \prod_{
			\gamma \in \interval \alpha \beta
		} t_\gamma\bigl(
			C_{\alpha \beta}^{\gamma, {\preceq}}(
				\zeta,
				\eta
			)
		\bigr)
	\]
	holds for
	\(
		\zeta \in P_\alpha
	\),
	\(
		\eta \in P_\beta
	\).

\end{itemize}
The symbol \(\varnothing\) in the terminology means that we do not assume existence of any Weyl elements. For simple-laced and doubly-laced root systems (i.e. all root systems of rank
\(
	\geq 3
\) excluding
\(
	\mathsf H_3
\) and
\(
	\mathsf H_4
\)) the order on
\(
	\interval \alpha \beta
\) in the last condition is irrelevant and
\(
	C_{\alpha \beta}^{\gamma, {\preceq}}
\) is independent of \(\preceq\).

If \(G\) is a group with \(\Phi\)-commutator relations, then
\(
	(G_\alpha)_{\alpha \in \Phi}
\) is a
\(
	(\Phi, \varnothing)
\)-ring by lemma \ref{comm-rel}, where
\(
	t_\alpha
\) are the canonical inclusions. For Chevalley groups
\(
	G^{\mathrm{ad}}(\Phi, K)
\) we may also take
\(
	P_\alpha = K
\) with standard
\(
	t_\alpha
\). Similarly, if
\(
	\Phi = \mathsf A_{m - 1}
\) and
\(
	G = \glin(m, R)
\) is a general linear group, then we may take
\(
	P_\alpha = R
\) and
\(
	t_{\mathrm e_i - \mathrm e_j}(x) = t_{ij}(x)
\).

In general it is easy to see that the class of
\(
	(\Phi, \varnothing)
\)-rings is a many-sorted variety of algebras. The operations of the corresponding algebraic theory are
\begin{itemize}

	\item
	the group operations (multiplication, inversion, and the identity) on
	\(
		P_\alpha
	\) and

	\item
	the maps
	\(
		C_{\alpha \beta}^\gamma
	\) for every pair
	\(
		(\alpha, \beta) \in \Phi^2
	\) of linearly independent roots and every
	\(
		\gamma \in \interval \alpha \beta
	\), where the right extreme order
	\(
		\interval \alpha \beta
	\) is fixed.

\end{itemize}
The remaining operations
\(
	C_{\alpha \beta}^{\gamma, {\preceq}}
\) (with different \(\preceq\)) may be easily expressed via the given ones. We assume that the fixed orders on
\(
	\interval \alpha \beta
\) and
\(
	\interval \beta \alpha
\) are opposite.

Consider the following string rewriting system. Its objects are formal strings
\(
	t_{\alpha_1}(\zeta_1) \cdots t_{\alpha_m}(\zeta_m)
\) for
\(
	m \geq 0
\),
\(
	\alpha_i \in \Phi
\),
\(
	\zeta_i \in P_{\alpha_i}
\). The rewrite rules are the following operations on substrings.
\begin{align*}
	t_\alpha(\dot 0)
	&\to \varepsilon
	\text{ (the empty string)};
\quad
	t_\alpha(\zeta)\, t_\alpha(\eta)
	\to t_\alpha(\zeta \dotplus \eta);
\\
	t_\alpha(\zeta)\, t_\beta(\eta)
	&\to \bigl(
		\prod_{\gamma \in \interval \alpha \beta}
			t_\gamma\bigl(
				C_{\alpha \beta}^\gamma(\zeta, \eta)
			\bigr)
	\bigr)\,
	t_\beta(\eta)\,
	t_\alpha(\zeta)
	\text{ for } \alpha \nparallel \beta.
\end{align*}

Let
\(
	\Sigma \subseteq \Phi
\) be a special closed subset and \(\leq\) be a right extreme order on \(\Sigma\). We restrict the rewrite rules to the class of formal strings with roots from \(\Sigma\), in the last rule we also impose the condition
\(
	\alpha > \beta
\). Such restricted rewrite system is terminating, i.e. there are no infinite chains
\(
	g_1 \to g_2 \to \ldots
\) of formal strings. A formal string is called \textit{irreducible} if no rewrite rule may be applied to it, i.e. the roots in this string are strictly increasing and the arguments of
\(
	t_\alpha
\) are non-zero.

\begin{theorem} \label{phi-0-ring}
	Let \(\Phi\) be a root system. For every pair
	\(
		(\alpha, \beta) \in \Phi^2
	\) of linearly independent roots choose a right extreme order
	\(
		\preceq_{\alpha \beta}
	\) on
	\(
		\interval \alpha \beta
	\) such that
	\(
		\preceq_{\alpha \beta}
	\) and
	\(
		\preceq_{\beta \alpha}
	\) are opposite. The following is the complete list of axioms on the operations of
	\(
		(\Phi, \varnothing)
	\)-rings.
	\begin{itemize}

		\item
		All
		\(
			P_\alpha
		\) are groups.

		\item
		The identities
		\(
			C_{\alpha \beta}^\gamma(\zeta, \eta)
			= \dotminus C_{\beta \alpha}^\gamma(
				\eta,
				\zeta
			)
		\) hold.

		\item
		For any ordered pair
		\(
			(\alpha, \beta) \in \Phi^2
		\) of linearly independent roots choose a right extreme order \(\leq\) on
		\(
			\Sigma
			= \Phi \cap (
				\mathbb R_{\geq 0} \alpha
				+ \mathbb R_{\geq 0} \beta
			)
		\) such that
		\(
			\alpha > \beta
		\). Apply any rewrite rules to
		\(
			t_\alpha(\zeta)\,
			t_\alpha(\zeta')\,
			t_\beta(\eta)
		\) starting from
		\(
			t_\alpha(\zeta)\, t_\alpha(\zeta')
			\to t_\alpha(\zeta \dotplus \zeta')
		\) and
		\(
			t_\alpha(\zeta')\, t_\beta(\eta)
			\to \ldots
		\) until we obtain two irreducible formal strings. Then the arguments of the corresponding
		\(
			t_\gamma
		\) in these strings are equal (if
		\(
			t_\gamma
		\) appears only in one string, then its argument is zero). Also,
		\(
			C_{\alpha \beta}^\gamma(\dot 0, \eta)
			= \dot 0
		\).

		\item
		For any triple
		\(
			\{\alpha, \beta, \gamma\} \subseteq \Phi
		\) contained in an open half-space choose a right extreme order \(\leq\) on
		\(
			\Sigma
			= \Phi \cap (
				\mathbb R_{\geq 0} \alpha
				+ \mathbb R_{\geq 0} \beta
				+ \mathbb R_{\geq 0} \gamma
			)
		\) (say, such that
		\(
			\alpha > \beta > \gamma
		\)). Apply any rewrite rules to
		\(
			t_\alpha(\zeta)\,
			t_\beta(\eta)\,
			t_\gamma(\theta)
		\) starting from
		\(
			t_\alpha(\zeta)\, t_\beta(\eta)
			\to \ldots
		\) and
		\(
			t_\beta(\eta)\, t_\gamma(\theta)
			\to \ldots
		\) until we obtain two irreducible formal strings. Then the arguments of the corresponding
		\(
			t_\delta
		\) in these strings are equal (if
		\(
			t_\delta
		\) appears only in one string, then its argument is zero).

	\end{itemize}
\end{theorem}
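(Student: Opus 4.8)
The plan is to show that the listed axioms are exactly equivalent to the defining property of a $(\Phi,\varnothing)$-ring, namely the existence of the groups $G_\Sigma$ with the stated bijectivity and commutator conditions. One direction is immediate: if $(P_\alpha, C_{\alpha\beta}^{\gamma,\preceq})$ is a $(\Phi,\varnothing)$-ring, then inside $G_\Sigma$ all the displayed rewrite rules hold as genuine identities between elements (the first two because $t_\alpha$ is a homomorphism, the third because of the commutator formula), so applying two different chains of rewrites to the same formal string and then reading off coordinates via the bijectivity of the product map over an irreducible string forces the arguments of each $t_\gamma$ to agree. Since every axiom is of this shape (confluence of the rewrite system on configurations with two or three starting letters, plus the trivial normalizations $C_{\alpha\beta}^\gamma(\dot0,\eta)=\dot0$ and the antisymmetry $C_{\alpha\beta}^\gamma = \dotminus C_{\beta\alpha}^\gamma$), necessity is clear.

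For sufficiency I would, for each special closed $\Sigma$, fix a right extreme order $\leq$ on $\Sigma$ and \emph{define} $G_\Sigma$ directly as the quotient of the free product $\Ast_{\alpha\in\Sigma} P_\alpha$ by the normal subgroup generated by all instances of the commutator relations $[t_\alpha(\zeta),t_\beta(\eta)]\bigl(\prod_\gamma t_\gamma(C_{\alpha\beta}^{\gamma,\preceq}(\zeta,\eta))\bigr)^{-1}$ for linearly independent $\alpha,\beta\in\Sigma$. The commutator formula then holds in $G_\Sigma$ by construction, and the maps $t_\alpha$ are homomorphisms; what must be proved is that the product map from $\prod^{\Set}_{\alpha\in\Sigma} P_\alpha$ to $G_\Sigma$ (taken in the order $\leq$) is \emph{bijective}. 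Surjectivity follows from the terminating restricted rewrite system: every formal string over $\Sigma$ rewrites to an irreducible one, i.e. to a strictly increasing product, and the rewrite rules are valid relations in $G_\Sigma$. Injectivity is the real content: I would show that the set $N = \prod^{\Set}_{\alpha\in\Sigma} P_\alpha$ of irreducible strings carries a $G_\Sigma$-action (equivalently, that the free product acts on $N$ through the defining relations), with $t_\alpha(\zeta)$ acting by "multiply on the left by $t_\alpha(\zeta)$ and renormalize", and then use that the identity string is sent by $g\cdot(-)$ to the normal form of $g$.

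The main obstacle is verifying that this left-multiplication action of each generator is well-defined and that the defining relations act trivially on $N$ — precisely here the axioms get used. Concretely: to push a letter $t_\alpha(\zeta)$ past the increasing product and re-sort, one repeatedly applies the two-letter rule; checking that the resulting normal form is independent of the order in which one does the re-sorting, and that it respects $t_\alpha(\zeta)t_\alpha(\zeta') = t_\alpha(\zeta\dotplus\zeta')$, reduces exactly to the confluence statements in the third and fourth bullets (two-letter-then-group and the three-letter "braid-like" overlaps), together with $C_{\alpha\beta}^\gamma(\dot0,\eta)=\dot0$ for the empty-argument edge cases and the antisymmetry axiom for orientation bookkeeping. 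This is a standard Diamond-Lemma / Newman-style argument: local confluence at the minimal overlaps (which is what the axioms assert) plus termination gives global confluence, hence a well-defined action on normal forms, hence injectivity. I would remark that for simply-laced and doubly-laced $\Phi$ the intervals $\interval\alpha\beta$ have at most one element (at most two in the doubly-laced case) and the order $\preceq$ is irrelevant, which is why the minimal overlaps to check are only the two- and three-letter ones listed, and no four-letter or longer coherence is needed — this finiteness of the critical pairs is what makes the axiom list complete.
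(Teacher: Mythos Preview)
Your overall plan is the same as the paper's---Newman's lemma applied to the terminating rewrite system, with \(G_\Sigma\) then realized via normal forms---and the necessity direction is handled correctly. But there is one genuine gap in the sufficiency direction that you skate over with the phrase ``the antisymmetry axiom for orientation bookkeeping''.

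The axioms, as stated, fix \emph{one} right extreme order on \(\Phi\cap(\mathbb R_{\ge0}\alpha+\mathbb R_{\ge0}\beta+\mathbb R_{\ge0}\gamma)\) for each unordered triple \(\{\alpha,\beta,\gamma\}\) and assert confluence only for that order. When you run the Diamond Lemma on a general special closed \(\Sigma\) with its own right extreme order, the three-letter critical pair \(t_\alpha(\zeta)\,t_\beta(\eta)\,t_\gamma(\theta)\) that arises will in general be ordered differently from the one in the axiom. The two-variable antisymmetry \(C^\gamma_{\alpha\beta}=\dotminus C^\gamma_{\beta\alpha}\) does not by itself bridge this gap: what is needed is a Hall--Witt--type symmetry of the full three-letter identity. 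The paper supplies exactly this missing step. It runs an induction on \(|\Sigma|\), so that when checking the critical pair one may already compute inside the group \(G_{\Sigma'\setminus\Sigma'_{\mathrm{ex}}}\rtimes\bigast_{\delta\in\Sigma'_{\mathrm{ex}}} t_\delta(P_\delta)\) for \(\Sigma'=\Phi\cap(\mathbb R_{\ge0}\alpha+\mathbb R_{\ge0}\beta+\mathbb R_{\ge0}\gamma)\), and then rewrites the confluence condition as the single group identity
\[
[\![t_\alpha,t_\beta]\!]\,\up{t_\beta}{[\![t_\alpha,t_\gamma]\!]}\,[\![t_\beta,t_\gamma]\!]\,\up{t_\gamma}{[\![t_\beta,t_\alpha]\!]}\,[\![t_\gamma,t_\alpha]\!]\,\up{t_\alpha}{[\![t_\gamma,t_\beta]\!]}=1,
\]
which is manifestly (conjugate-)invariant under cyclic permutation of \(\alpha,\beta,\gamma\) and goes to a conjugate of its inverse under a transposition. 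Hence the axiom for one ordering yields the identity for all orderings. The two-letter overlap \(t_\alpha(\zeta)\,t_\beta(\eta)\,t_\beta(\eta')\) is likewise reduced to the stated axiom for \(t_\alpha(\zeta)\,t_\alpha(\zeta')\,t_\beta(\eta)\) via this same machinery. Your proposal needs both the inductive setup (so you have a group to compute in) and the explicit permutation-symmetry observation; without them the claim that ``local confluence at the minimal overlaps is what the axioms assert'' is not yet justified.

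A minor side remark: your closing sentence about intervals having at most one or two elements in the simply-/doubly-laced case is beside the point here. The finiteness that makes the axiom list complete is simply that rewrite rules touch at most two adjacent letters, so critical pairs have length at most three---this holds for any \(\Phi\), independently of how large \(\interval\alpha\beta\) is.
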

\begin{proof}
	We prove by induction on \(\Sigma\) that the rewrite system restricted to \(\Sigma\) is \textit{confluent} (for any right extreme order on \(\Sigma\)) and there exists a group
	\(
		G_\Sigma
	\) with the properties from the definition of
	\(
		(\Phi, \varnothing)
	\)-rings. The confluence means that if
	\(
		g_1 \leftarrow g \to g_2
	\), then \(g_1\) and \(g_2\) have a common reduction (i.e. they may be made equal using the rewrite rules). This is clear if the rules used in
	\(
		g \to g_1
	\) and
	\(
		g \to g_2
	\) are applied to disjoint substrings. Otherwise we may assume that \(g\) is the union of these substrings. In the calculations below we use the notation
	\[
			[\![t_\alpha(\zeta), t_\beta(\eta)]\!]
		=
			\prod_{
				\gamma \in \interval \alpha \beta
			} t_\gamma\bigl(
				C_{\alpha \beta}^\gamma(\zeta, \eta)
			\bigr)
		\in
			G_{\Sigma'}
	\]
	for distinct roots
	\(
		\alpha, \beta \in \Sigma
	\), where
	\(
		\Sigma' \subset \Sigma
	\) is a proper special closed subset containing
	\(
		\interval \alpha \beta
	\) (so
	\(
		G_{\Sigma'}
	\) exists by the induction hypothesis). Clearly,
	\[
		[\![t_\alpha(\zeta), t_\beta(\eta)]\!]^{-1}
		= [\![t_\beta(\eta), t_\alpha(\zeta)]\!]
	\]
	by the skew-symmetry of
	\(
		C_{\alpha \beta}^\gamma
	\). There are the following possibilities.
	\begin{itemize}

		\item
		If
		\(
			g = t_\alpha(\dot 0)\, t_\alpha(\zeta)
		\) or
		\(
			g = t_\alpha(\zeta)\, t_\alpha(\dot 0)
		\), then
		\(
			g_1 = g_2
		\) since
		\(
			\dot 0
		\) is the neutral element in
		\(
			P_\alpha
		\).

		\item
		If
		\(
			g = t_\alpha(\dot 0)\, t_\beta(\eta)
		\) or
		\(
			g = t_\alpha(\zeta)\, t_\beta(\dot 0)
		\) for
		\(
			\alpha > \beta
		\), then \(g_1\) and \(g_2\) have a common reduction by the identities
		\(
				C_{\alpha \beta}^\gamma(\dot 0, \eta)
			=
				C_{\alpha \beta}^\gamma(\zeta, \dot 0)
			= \dot 0
		\). One of these identities is an axiom and another one follows from the skew-symmetry. From now on we may assume that no factor of \(g\) has zero argument, i.e. the first type of rewrite rules is not applicable to \(g\).

		\item
		If
		\(
			g
			= t_\alpha(\zeta)\,
			t_\alpha(\zeta')\,
			t_\alpha(\zeta'')
		\), then
		\(
			g_1
		\) and
		\(
			g_2
		\) reduce to
		\(
			t_\alpha(
				\zeta \dotplus \zeta' \dotplus \zeta''
			)
		\) since \(\dotplus\) is associative.

		\item
		If
		\(
			g
			= t_\alpha(\zeta)\,
			t_\alpha(\zeta')\,
			t_\beta(\eta)
		\) for
		\(
			\alpha > \beta
		\), then \(g_1\) and \(g_2\) have a common reduction by an axiom.

		\item
		If
		\(
			g
			= t_\alpha(\zeta)\,
			t_\beta(\eta)\,
			t_\beta(\eta')
		\) for
		\(
			\alpha > \beta
		\), then we have
		\begin{align*}
				[\![
					t_\alpha(\zeta),
					t_\beta(\eta)]
				\!]\,
				\up{t_\beta(\eta)}{
					[\![
						t_\alpha(\zeta),
						t_\beta(\eta')
					]\!]
				}
			&=
				\bigl(
					\up{t_\beta(\eta)}{
						[\![
							t_\beta(\eta'),
							t_\alpha(\zeta)
						]\!]
					}\,
					[\![
						t_\beta(\eta),
						t_\alpha(\zeta)
					]\!]
				\bigr)^{-1} \\
			&=
				[\![
					t_\beta(\eta \dotplus \eta'),
					t_\alpha(\zeta)
				]\!]^{-1}
			=
				[\![
					t_\alpha(\zeta),
					t_\beta(\eta \dotplus \eta')
				]\!]
		\end{align*}
		in
		\(
			G_{\interval \alpha \beta \cup \{\beta\}}
		\), the second equality is an axiom.

		\item
		If
		\(
			g
			= t_\alpha(\zeta)\,
			t_\beta(\eta)\,
			t_\gamma(\theta)
		\) for
		\(
			\alpha > \beta > \gamma
		\), then it suffices to prove that
		\[
				[\![
					t_\alpha(\zeta),
					t_\beta(\eta)
				]\!]\,
				\up{t_\beta(\eta)}{
					[\![
						t_\alpha(\zeta),
						t_\gamma(\theta)
					]\!]
				}\,
				[\![
					t_\beta(\eta),
					t_\gamma(\theta)
				]\!]
			=
				\up{t_\alpha(\zeta)}{
					[\![
						t_\beta(\eta),
						t_\gamma(\theta)
					]\!]
				}\,
				[\![
					t_\alpha(\zeta),
					t_\gamma(\theta)
				]\!]\,
				\up{t_\gamma(\theta)}{
					[\![
						t_\alpha(\zeta),
						t_\beta(\eta)
					]\!]
				}
		\]
		in
		\(
			G_{\Sigma' \setminus \Sigma'_{\mathrm{ex}}}
			\rtimes \bigast_{
				\delta \in \Sigma'_{\mathrm{ex}}
			} t_\delta(P_\delta)
		\), where
		\(
			\Sigma' = \Phi \cap (
				\mathbb R_{\geq 0} \alpha
				+ \mathbb R_{\geq 0} \beta
				+ \mathbb R_{\geq 0} \gamma
			)
		\) and
		\(
			\Sigma'_{\mathrm{ex}}
		\) is the set of its extreme roots (\(
			\{\alpha, \beta\}
		\),
		\(
			\{\alpha, \gamma\}
		\), or
		\(
			\{\alpha, \beta, \gamma\}
		\)). We write this identity as
		\[
			[\![
				t_\alpha(\zeta),
				t_\beta(\eta)
			]\!]\,
			\up{t_\beta(\eta)}{
				[\![
					t_\alpha(\zeta),
					t_\gamma(\theta)
				]\!]
			}\,
			[\![
				t_\beta(\eta),
				t_\gamma(\theta)
			]\!]\,
			\up{t_\gamma(\theta)}{
				[\![
					t_\beta(\eta),
					t_\alpha(\zeta)
				]\!]
			}\,
			[\![
				t_\gamma(\theta),
				t_\alpha(\zeta)
			]\!]\,
			\up{t_\alpha(\zeta)}{
				[\![
					t_\gamma(\theta),
					t_\beta(\eta)
				]\!]
			}
			= 1.
		\]
		It is easy to see that this is symmetric under permutations of \(\alpha\), \(\beta\), \(\gamma\). Namely, the left hand side is replaced by a conjugate under even permutations and by a conjugate of the inverse under odd permutations. By an axiom, the identity holds for one of the permutations.

	\end{itemize}

	Now let us check that the group
	\(
		G_\Sigma
	\) exists. Fix a right extreme order \(\leq\) on \(\Sigma\). Let
	\(
		G_\Sigma
	\) be the set of irreducible formal strings with respect to the restricted rewrite system. This is a group, the multiplication is given by concatenation and rewriting. The commutator formula holds by the rewrite rule for
	\(
		t_\alpha(\zeta)\, t_\beta(\eta)
	\) or
	\(
		t_\beta(\eta)\, t_\alpha(\zeta)
	\) and by the definition of
	\(
		f_{\alpha \beta}^{\gamma, \preceq}
	\) for different orders \(\preceq\) (these maps are defined in order to satisfy the commutator formula, they exist by the properties of
	\(
		G_{\interval \alpha \beta}
	\)). The product map
	\(
		\prod_{\alpha \in \Sigma}^{\Set}
			P_\alpha
		\to G_\Sigma
	\) is bijective for any right extreme order \(\leq'\) on \(\Sigma\) by an argument similar to the proof of lemma \ref{comm-rel}.
\end{proof}

For example, in the case
\(
	\Phi = \mathsf A_{n - 1}
\) let
\(
	R_{ij} = P_{\mathrm e_i - \mathrm e_j}
\) and
\(
	x \times_{ijk} y
	= C_{
		\mathrm e_i - \mathrm e_j,
		\mathrm e_j - \mathrm e_k
	}^{\mathrm e_i - \mathrm e_k}(x, y)
\), so
\(
	C_{
		\mathrm e_j - \mathrm e_k,
		\mathrm e_i - \mathrm e_j
	}^{\mathrm e_i - \mathrm e_k}(x, y)
	= -y \times_{ijk} x
\). We get the axioms
\begin{itemize}

	\item
	\(
		R_{ij}
	\) are groups;

	\item
	\(
		(x \times_{ijk} y) \dotplus z
		= z \dotplus (x \times_{ijk} y)
	\) (this is the last axiom for coplanar \(\alpha\), \(\beta\), \(\gamma\));

	\item
	\(
		(x \dotplus x') \times_{ijk} y
		= x \times_{ijk} y \dotplus x' \times_{ijk} y
	\),
	\(
		x \times_{ijk} (y \dotplus y')
		= x \times_{ijk} y \dotplus x \times_{ijk} y'
	\);

	\item
	\(
		(x \times_{ijk} y) \times_{ikl} z
		= x \times_{ijl} (y \times_{jkl} z)
	\) (this is the last axiom for linearly independent \(\alpha\), \(\beta\), \(\gamma\) forming a base of a root subsystem of type
	\(
		\mathsf A_3
	\)).

\end{itemize}

\begin{lemma} \label{weyl-crit}
	Let \(G\) be a \(\Phi\)-graded group, where the rank of \(\Phi\) is at least \(2\). An element
	\(
		n \in G_\alpha\, G_{-\alpha}\, G_\alpha
	\) is \(\alpha\)-Weyl if and only if
	\(
		\up n{G_\beta}
		\leq G_{s_\alpha(\beta)}
	\) for all
	\(
		\beta \in \Phi \setminus \mathbb R \alpha
	\).
\end{lemma}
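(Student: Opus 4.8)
The ``only if'' direction is immediate from the definition of an \(\alpha\)-Weyl element: if \(n\) is \(\alpha\)-Weyl, then \(\up n{G_\beta} = G_{s_\alpha(\beta)}\) for every \(\beta \in \Phi\), in particular for \(\beta \notin \mathbb R\alpha\). So the content is the converse: given \(n \in G_\alpha\, G_{-\alpha}\, G_\alpha\) with \(\up n{G_\beta} \leq G_{s_\alpha(\beta)}\) for all \(\beta \notin \mathbb R\alpha\), show \(n\) is \(\alpha\)-Weyl. The plan is to fix, using that \(G\) is \(\Phi\)-graded, an honest \(\alpha\)-Weyl element \(n_0 \in G_\alpha\, G_{-\alpha}\, G_\alpha\), and set \(m = n_0^{-1} n \in L := \langle G_\alpha, G_{-\alpha}\rangle\). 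Since \(n_0\) is \(\alpha\)-Weyl and \(s_\alpha\) preserves \(\Phi \setminus \mathbb R\alpha\), one gets at once \(\up m{G_\beta} = \up{n_0^{-1}}{(\up n{G_\beta})} \leq \up{n_0^{-1}}{G_{s_\alpha(\beta)}} = G_\beta\) for all \(\beta \notin \mathbb R\alpha\). It then suffices to prove that \(m\) normalizes every root subgroup, including \(G_{\pm\alpha}\), and acts on each by an automorphism: for then \(n = n_0 m\) yields \(\up n{G_\gamma} = \up{n_0}{G_\gamma} = G_{s_\alpha(\gamma)}\) for all \(\gamma \in \Phi\).

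I expect the main obstacle to be exactly the behaviour on \(G_{\pm\alpha}\): the hypothesis is silent there, and this cannot be repaired by transport along a Weyl element \(n_\delta\), since conjugation by \(n_\delta\) replaces the pair \((\alpha, n)\) by \((s_\delta(\alpha),\, n_\delta^{-1} n n_\delta)\), and the new element again satisfies the hypothesis only for the root \(s_\delta(\alpha)\) and is again uninformative on \(G_{\pm s_\delta(\alpha)}\). So I would argue directly inside a rank-\(2\) subsystem. Pick a root \(\beta\) with \(\beta \neq \pm\alpha\) and \(\alpha \cdot \beta \neq 0\) (available since \(\Phi\) is irreducible of rank \(\geq 2\)), and work in \(\Psi = (\mathbb Z\alpha + \mathbb Z\beta)\cap\Phi\) and \(\langle G_\gamma \mid \gamma \in \Psi\rangle\). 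Writing \(n = t_\alpha(\varphi)\, t_{-\alpha}(\chi)\, t_\alpha(\psi)\) with \(\varphi, \psi \in P_\alpha\), \(\chi \in P_{-\alpha}\) (notation of \S 3), I would expand \(\up n{t_\beta(\eta)}\) by the commutator relations of \(\Psi\); by Lemma \ref{comm-rel} the result is a uniquely determined product of \(t_\gamma\)'s over \(\Psi \cap (\mathbb R_{\geq 0}\alpha + \mathbb R_{\geq 0}\beta)\). Requiring this product to lie in \(G_{s_\alpha(\beta)}\) forces each factor outside \(G_{s_\alpha(\beta)}\) to be trivial for all \(\eta \in P_\beta\); unwinding these identities pins \(\chi\) down as a two-sided quasi-inverse of \(\psi\) (and, feeding in the roots \(-\beta\) and \(\alpha+\beta\), of \(\varphi\) too) and forces the surviving parameter map \(\eta \mapsto \up n{t_\beta(\eta)}\) to be a bijection onto \(G_{s_\alpha(\beta)}\); surjectivity is extracted by running the same expansion for the known Weyl element \(n_0\), where the corresponding maps are already bijective.

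Granting this, \(\up n{G_\beta} = G_{s_\alpha(\beta)}\), i.e.\ \(\up m{G_\beta} = G_\beta\), for every \(\beta \notin \mathbb R\alpha\). Moreover the same rank-\(2\) computation shows that \(G_{\pm\alpha}\) is generated by commutators of root subgroups indexed by roots not proportional to \(\alpha\) (e.g.\ \([G_{\alpha+\beta}, G_{-\beta}]\) generates \(G_\alpha\) whenever \(\Psi \cap (\mathbb R_{>0}(\alpha+\beta) + \mathbb R_{>0}(-\beta)) = \{\alpha\}\)). Hence \(\langle G_\gamma \mid \gamma \notin \mathbb R\alpha \rangle = \langle G_\gamma \mid \gamma \in \Phi\rangle =: G^*\), so \(m \in G^*\) and conjugation by \(m\) is an inner automorphism of \(G^*\) carrying each \(G_\gamma\) into itself.

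Finally, the explicit shape of \(n = t_\alpha(\varphi)\, t_{-\alpha}(\chi)\, t_\alpha(\psi)\) obtained in the second step — with \(\varphi, \psi\) invertible and \(\chi\) their quasi-inverse, exactly the constraints satisfied by a Chevalley Weyl element — lets one check directly, again via the unique-product decomposition of Lemma \ref{comm-rel} applied inside \(L = \langle G_\alpha, G_{-\alpha}\rangle\), that \(\up n{G_\alpha} = G_{-\alpha}\) and \(\up n{G_{-\alpha}} = G_\alpha\) (equivalently, \(m\) normalizes \(G_{\pm\alpha}\) and acts invertibly there). Combining the three displays, \(m\) — and hence \(n = n_0 m\) — permutes the root subgroups exactly as \(s_\alpha\), so \(n\) is \(\alpha\)-Weyl. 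The delicate point throughout is the bookkeeping of structure constants in the non-simply-laced local patterns (\(\mathsf B_2\), \(\mathsf G_2\)), which is where I expect the argument to be most technical.
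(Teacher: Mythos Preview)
Your route is far more elaborate than the paper's, and the final step has a genuine gap. The paper's argument is essentially one observation: since \(G\) is \(\Phi\)-graded, each \(G_{\pm\alpha}\) is generated by the images of the commutator components \(C_{\beta\gamma}^{\pm\alpha}\) with \(\beta,\gamma \notin \mathbb R\alpha\); conjugating the commutator identity \([t_\beta(\zeta), t_\gamma(\eta)] = \prod_\delta t_\delta\bigl(C^\delta_{\beta\gamma}(\zeta,\eta)\bigr)\) by \(n\) and applying the hypothesis to every factor with \(\delta\neq\pm\alpha\) forces the \(\pm\alpha\)-factor into \(G_{\mp\alpha}\). No rank-\(2\) bookkeeping, no auxiliary Weyl element \(n_0\), no quasi-inverses.

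You in fact state this generation fact in your third paragraph, but then draw the wrong consequence: you use it to conclude \(m\in G^{*}\), which is already trivial from \(m\in\langle G_\alpha,G_{-\alpha}\rangle\). The useful consequence is that once \(\up m{G_\delta}=G_\delta\) for \(\delta\notin\mathbb R\alpha\), one gets \(\up m{[G_{\alpha+\beta},G_{-\beta}]}=[\up m{G_{\alpha+\beta}},\up m{G_{-\beta}}]=[G_{\alpha+\beta},G_{-\beta}]\), hence \(\up m{G_{\pm\alpha}}=G_{\pm\alpha}\). That already finishes the proof; your fourth paragraph is superfluous.

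As written, that fourth paragraph is also invalid. Lemma~\ref{comm-rel} applies only to special closed subsets, and \(\{\alpha,-\alpha\}\) is not one (it contains opposite roots), so there is no unique-product decomposition available inside \(L=\langle G_\alpha,G_{-\alpha}\rangle\) to ``check directly'' that \(\up n{G_\alpha}=G_{-\alpha}\). Likewise, calling \(\varphi,\psi\) ``invertible'' and \(\chi\) their ``quasi-inverse'' presupposes an algebraic structure on \(P_\alpha\) that the paper only establishes \emph{after} this lemma (and in fact uses this lemma to establish); at this point the \(P_\alpha\) are bare groups. Your second-paragraph programme of extracting such structure from a single rank-\(2\) expansion is therefore circular in the paper's logical order, and in any case unnecessary.
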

\begin{proof}
	Suppose that
	\(
		\up n{G_\beta} \leq G_{s_\alpha(\beta)}
	\) for all \(\beta\) linearly independent with \(\alpha\). It suffices to prove that
	\(
		\up n{G_{\pm \alpha}}
		\leq G_{\mp \alpha}
	\). Indeed, since \(G\) is \(\Phi\)-graded, the group
	\(
		P_{\pm \alpha}
	\) is generated by the images of all
	\(
		C_{\beta \gamma}^{\pm \alpha}
	\). Clearly,
	\[
		\up n{t_{\pm \alpha}\bigl(
			C_{\beta \gamma}^{\pm \alpha}(
				P_\beta,
				P_\gamma
			)
		\bigr)}
		\leq t_{\mp \alpha}\bigl(
			C_{
				s_\alpha(\beta), s_\alpha(\gamma)
			}^{\mp \alpha}(
				P_{s_\alpha(\beta)},
				P_{s_\alpha(\gamma)}
			)
		\bigr).\qedhere
	\]
\end{proof}

\section{Groups with \(\mathsf B_\ell\)-commutator relations and \(\mathsf A_{\ell - 1}\)-Weyl elements}

From now on we assume that
\(
	\ell \geq 3
\). Let
\(
	\mathrm e_{-i} = -\mathrm e_i
\) for
\(
	1 \leq i \leq \ell
\). It is convenient to use the following variation of
\(
	(\mathsf B_\ell, \varnothing)
\)-rings called \textit{partial graded odd form rings} of rank \(\ell\) \cite{bc-comm-rel}. Instead of
\(
	P_\alpha
\) for long
\(
	\alpha = \mathrm e_j - \mathrm e_i
\) we use two different groups
\(
	R_{ij}
\) and
\(
	R_{-j, -i}
\) with the group operations \(+\) connected by mutually inverse anti-isomorphisms
\(
	\inv{(-)} \colon R_{ij} \to R_{-j, -i}
\) and
\(
	\inv{(-)} \colon R_{-j, -i} \to R_{ij}
\), here
\(
	i, j \in \{-\ell, \ldots, -1, 1, \ldots, \ell\}
\) have distinct absolute values. The homomorphisms
\(
	t_{\mathrm e_j - \mathrm e_i}
	\colon P_{\mathrm e_j - \mathrm e_i}
	\to G_\Sigma
\) are replaced by
\(
	t_{ij} \colon R_{ij} \to G_\Sigma
\) and
\(
	t_{-j, -i} \colon R_{-j, -i} \to G_\Sigma
\) such that
\[
	t_{ij}(a) = t_{-j, -i}(-\inv a).
\]
Also, we write
\(
	\Delta^0_i
\) instead of
\(
	P_{\mathrm e_i}
\) and
\(
	t_i \colon \Delta^0_i \to G_\Sigma
\) instead of
\(
	t_{\mathrm e_i}
\), here
\(
	i \in \{-\ell, \ldots, -1, 1, \ldots, \ell\}
\). The operations
\(
	C_{\alpha \beta}^\gamma
\) are replaced by certain binary operations from the following commutator formulae.
\begin{align*}
	[t_{ij}(x), t_{jk}(y)]
	&= t_{ik}(x y)
	\text{ for distinct } |i|, |j|, |k|;
\\
	[t_{-i, j}(x), t_{ji}(y)]
	&= t_i(x * y)
	\text{ for } |i| \neq |j|;
\\
	[t_i(u), t_j(v)]
	&= t_{-i, j}(-u \circ v)
	\text{ for } |i| \neq |j|;
\\
	[t_i(u), t_{ij}(x)]
	&= t_{-i, j}(u \triangleleft x)\,
	t_j(\dotminus u \cdot (-x))
	\text{ for } |i| \neq |j|.
\end{align*}

We explicitly list all axioms of these objects using theorem \ref{phi-0-ring} assuming that all indices have distinct absolute values.
\begin{itemize}

	\item
	The objects \(
		R_{ij}
	\) and
	\(
		\Delta^0_i
	\) are groups with the group operations \(+\) and \(\dotplus\) respectively;

	\item
	\(
		\inv{(-)} \colon R_{ij} \to R_{-j, -i}
	\) are involutions, i.e. anti-isomorphisms with the property
	\(
		\inv{\inv x} = x
	\);

	\item
	\(
		R_{ij} \times R_{jk} \to R_{ik},\,
		(x, y) \mapsto x y
	\) are biadditive with central images,
	\(
		\inv{(x y)} = \inv y \inv x
	\);

	\item
	\(
		R_{-i, j} \times R_{ji} \to \Delta^0_i,\,
		(x, y) \mapsto x * y
	\) are biadditive with central images,
	\(
		x * y \dotplus \inv y * \inv x = \dot 0
	\);

	\item
	\(
		\Delta^0_i \times \Delta^0_j \to R_{-i, j},\,
		(u, v) \mapsto u \circ v
	\) are biadditive with central images,
	\(
		\inv{u \circ v} = v \circ u
	\);

	\item
	\(
			u \circ (x * y)
			+ u \triangleleft (-\inv x)
			+ y
		=
			y
			+ u \triangleleft (-\inv x)
	\) holds for
	\(
		u \in \Delta^0_i
	\),
	\(
		x \in R_{-j, -i}
	\),
	\(
		y \in R_{-i, j}
	\);

	\item
	\(
			u \cdot x
			\dotplus v
		=
			v
			\dotplus u \cdot x
			\dotminus \inv x * (u \circ v)
	\) holds for
	\(
		u \in \Delta^0_i
	\),
	\(
		x \in R_{ij}
	\),
	\(
		v \in \Delta^0_j
	\);

	\item
	\(
		\Delta^0_i \times R_{ij} \to R_{-i, j},\,
		(u, x) \mapsto u \triangleleft x
	\) satisfy
	\(
			(u \dotplus v) \triangleleft x
		=
			v \triangleleft x
			+ u \circ (v \cdot (-x))
			+ u \triangleleft x
	\) and
	\(
		u \triangleleft (x + y)
		= u \triangleleft x + u \triangleleft y
	\);

	\item
	\(
		\Delta^0_i \times R_{ij} \to \Delta^0_j,\,
		(u, x) \mapsto u \cdot x
	\) satisfy
	\(
		(u \dotplus v) \cdot x
		= u \cdot x \dotplus v \cdot x
	\) and
	\(
			u \cdot (x + y)
		=
			u \cdot x
			\dotplus \inv y * (u \triangleleft x)
			\dotplus u \cdot y
	\);

	\item
	\(
		(x y)\, z = x\, (y z)
	\) holds for
	\(
		x \in R_{ij}
	\),
	\(
		y \in R_{jk}
	\),
	\(
		z \in R_{kl}
	\) (this identity is vacuous for
	\(
		\ell = 3
	\));

	\item
	\(
		x y * z = x * y z
	\) holds for
	\(
		x \in R_{-i, j}
	\),
	\(
		y \in R_{jk}
	\),
	\(
		z \in R_{ki}
	\);

	\item
	\(
		u \circ (x * y) = 0
	\) holds for
	\(
		u \in \Delta^0_i
	\),
	\(
		x \in R_{-j, k}
	\),
	\(
		y \in R_{kj}
	\);

	\item
	\(
		u \circ (v \cdot x) = (u \circ v)\, x
	\) holds for
	\(
		u \in \Delta^0_i
	\),
	\(
		v \in \Delta^0_j
	\),
	\(
		x \in R_{jk}
	\);

	\item
	\(
		\inv x\, (u \triangleleft y)
		+ (u \cdot x) \circ (u \cdot y)
		+ \inv{(u \triangleleft x)}\, y
		= 0
	\) holds for
	\(
		u \in \Delta^0_i
	\),
	\(
		x \in R_{ij}
	\),
	\(
		y \in R_{ik}
	\);

	\item
	\(
		(x * y) \triangleleft z
		= x\, (y z) - \inv y\, (\inv x z)
	\) and
	\(
		(x * y) \cdot z = \inv z x * y z
	\) hold for
	\(
		x \in R_{-i, j}
	\),
	\(
		y \in R_{ji}
	\),
	\(
		z \in R_{ik}
	\);

	\item
	\(
		(u \triangleleft x)\, y = u \triangleleft x y
	\),
	\(
		(\dotminus u \cdot x) \triangleleft y
		= \inv{(u \triangleleft x)}\, (x y)
	\), and
	\(
		(u \cdot x) \cdot y = u \cdot x y
	\) hold for
	\(
		u \in \Delta^0_i
	\),
	\(
		x \in R_{ij}
	\),
	\(
		y \in R_{jk}
	\).

\end{itemize}

The following lemma is essentially a possible replacement of the axiom
\(
	(\dotminus u \cdot x) \triangleleft y
	= \inv{(u \triangleleft x)}\, (x y)
\).

\begin{lemma} \label{tri-dot}
	The identity
	\(
		(u \cdot x) \triangleleft y
		= \inv x\, (u \triangleleft x y)
	\) holds for
	\(
		u \in \Delta^0_i
	\),
	\(
		x \in R_{ij}
	\),
	\(
		y \in R_{jk}
	\), where \(|i|\), \(|j|\), \(|k|\) are distinct.
\end{lemma}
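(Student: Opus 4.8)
The plan is to derive the identity $(u \cdot x) \triangleleft y = \inv x\,(u \triangleleft x y)$ from the listed axioms by a short chain of substitutions, using the axiom $(\dotminus u \cdot x) \triangleleft y = \inv{(u \triangleleft x)}\,(x y)$ together with the ``partial additivity'' axiom $(u \dotplus v) \triangleleft x = v \triangleleft x + u \circ (v \cdot (-x)) + u \triangleleft x$. The key observation is that $\dotminus u \cdot x$ and $u \cdot x$ are related by the group operation $\dotplus$ on $\Delta^0_j$, and that $\triangleleft$ is ``almost additive'' in its first variable in the sense just quoted; so I expect to expand $(\dot 0) \triangleleft y = \bigl((\dotminus u \cdot x) \dotplus (u \cdot x)\bigr) \triangleleft y$ in two ways.

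First I would record that $\dot 0 \triangleleft y = 0$ (this follows from the additivity in the second variable, $u \triangleleft (x + y) = u \triangleleft x + u \triangleleft y$ applied with the appropriate element, or directly from the partial-additivity axiom with $u = v = \dot 0$; in any case the map is a homomorphism in a suitable sense). Next, apply the first-variable expansion to $w \triangleleft y$ where $w = (\dotminus u \cdot x) \dotplus (u \cdot x)$: with $v = \dotminus u \cdot x$ and first argument $u \cdot x$ we get
\[
	0 = (u \cdot x) \triangleleft y + (u\cdot x) \circ \bigl((\dotminus u \cdot x)\cdot(-y)\bigr) + (\dotminus u \cdot x) \triangleleft y.
\]
Now substitute the axiom $(\dotminus u \cdot x) \triangleleft y = \inv{(u \triangleleft x)}\,(x y)$, and simplify the middle term. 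The middle term $(u \cdot x)\circ\bigl((\dotminus u\cdot x)\cdot(-y)\bigr)$ should be handled using $(\dotminus u\cdot x)\cdot(-y) = \dotminus(u\cdot x)\cdot(-y)$ (from $(u\dotplus v)\cdot x$ additivity, or from $u \cdot (x+y)$ with a degenerate argument) together with the biadditivity of $\circ$ and the axiom $u \circ (v \cdot x) = (u \circ v)\,x$; after using $\inv{u \circ v} = v \circ u$ and the involution rule $\inv{(xy)} = \inv y\,\inv x$, the circle term and the $\inv{(u\triangleleft x)}\,(xy)$ term should combine, via the axiom $\inv x\,(u \triangleleft y) + (u\cdot x)\circ(u\cdot y) + \inv{(u\triangleleft x)}\,y = 0$ (taken with $y$ replaced by $xy$ and using $(u\cdot x)\cdot y = u \cdot x y$), to leave exactly $-\inv x\,(u \triangleleft x y)$. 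Rearranging then yields $(u\cdot x)\triangleleft y = \inv x\,(u\triangleleft xy)$.

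The main obstacle I anticipate is bookkeeping of signs and of the centrality hypotheses: the image of $\circ$ lies in the center and $\triangleleft$-values are additive, so the terms commute, but one must be careful that the ``conjugation'' implicit in $(u\dotplus v)\triangleleft x$ degenerates correctly (all the cross terms are central, so the apparent non-commutativity is harmless). The one genuinely substantive step is recognizing that the compound-argument axiom $\inv x\,(u\triangleleft y) + (u\cdot x)\circ(u\cdot y) + \inv{(u\triangleleft x)}\,y = 0$, specialized by replacing $y$ with $xy$ and using $u\cdot xy = (u\cdot x)\cdot y$ and $(u\cdot x)\triangleleft y$ appearing on the left — rather, by applying it with the roles of $x$ and the product arranged so that the $\circ$-term matches the middle term above — is exactly what is needed to cancel the quadratic term. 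If that specialization does not match cleanly, the fallback is to instead prove the identity directly inside a group $G_\Sigma$ for a suitable special closed $\Sigma$ in $\mathsf B_\ell$ by computing $\up{t_i(u)\,t_{ij}(x)}{(\ldots)}$ and comparing with the commutator formulae, which must give the result since all these axioms hold in $G_\Sigma$.
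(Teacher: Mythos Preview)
Your approach is essentially the paper's: both hinge on the axiom $(\dotminus u \cdot x)\triangleleft y = \inv{(u\triangleleft x)}\,(xy)$, the identity $\inv x\,(u\triangleleft y) + (u\cdot x)\circ(u\cdot y) + \inv{(u\triangleleft x)}\,y = 0$ specialized with $y\to xy$, and the partial additivity of $\triangleleft$ in its first variable. The paper starts by substituting $u\mapsto \dotminus u$ in the first axiom and then expands $(\dotminus u)\triangleleft(xy)$ via partial additivity; you instead expand $\dot 0\triangleleft y$ first. These are the same computation in a different order.

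There is, however, one step you are glossing over and should make explicit. After your substitutions you are left with
\[
(u\cdot x)\triangleleft y \;=\; \inv x\,(u\triangleleft xy) \;+\; (u\cdot x)\circ\bigl(u\cdot(-xy)\dotplus u\cdot xy\bigr),
\]
and the second summand does \emph{not} vanish by biadditivity alone: it is not true in general that $u\cdot(-z)=\dotminus(u\cdot z)$. The second-variable axiom $u\cdot(x+y)=u\cdot x\dotplus \inv y*(u\triangleleft x)\dotplus u\cdot y$, applied with $y=-x$, only shows that $u\cdot(-z)\dotplus u\cdot z$ lies in the image of $*$. What kills the term is the axiom $w\circ(a*b)=0$, which you did not invoke. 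The paper's proof ends on precisely this cancellation (also without spelling out the reason), so once you add this observation your argument is complete and matches the paper's.
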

\begin{proof}
	Indeed,
	\begin{align*}
			(u \cdot x) \triangleleft y
		&=
			\inv{(
				(\dotminus u) \triangleleft x
			)}\, (x y)
		=
			- \inv x\, (
				(\dotminus u) \triangleleft x y
			)
			- (u \cdot x) \circ (u \cdot x y) \\
		&=
			- \inv x\, \bigl(
				u \circ (u \cdot (-x y))
				- u \triangleleft x y
			\bigr)
			- (u \cdot x) \circ (u \cdot x y) \\
		&=
			\inv x\, (u \triangleleft x y)
			- (u \cdot x) \circ (
				u \cdot (-x y)
				\dotplus u \cdot x y
			)
		=
			\inv x\, (u \triangleleft x y). \qedhere
	\end{align*}
\end{proof}

From now on we fix a group \(G\) with
\(
	\mathsf B_\ell
\)-commutator relations. Let
\(
	(R_{ij}, \Delta^0_i)_{ij}
\) be the corresponding partial graded odd form ring. Suppose that \(G\) has Weyl elements
\[
	n_{i, i + 1}
	= t_{i, i + 1}(a_i)\,
	t_{i + 1, i}(b_i)\,
	t_{i, i + 1}(c_i)
\]
for all
\(
	1 \leq i \leq \ell - 1
\), so \(G\) has \(\alpha\)-Weyl elements for all roots \(\alpha\) from the root subsystem
\(
	\mathsf A_{\ell - 1}
\). The next lemma implies that
\(
	R_{ij}
\) are abelian groups.

\begin{lemma} \label{ba-ring-1}
	Let
	\(
		n = t_{ij}(a)\, t_{ji}(b)\, t_{ij}(c)
	\) be a Weyl element. Then for any
	\(
		k \notin \{i, -i, j, -j\}
	\) we have
	\begin{itemize}

		\item
		\(
			\up n{t_{ik}(x)} = t_{jk}(b x)
		\) and
		\(
			a\, (b x) = -x
		\) for
		\(
			x \in R_{ik}
		\);

		\item
		\(
			\up n{t_{jk}(x)} = t_{ik}(c x)
		\) and
		\(
			b\, (c x) = -x
		\) for
		\(
			x \in R_{jk}
		\);

		\item
		\(
			\up n{t_{ki}(x)} = t_{kj}(-x c)
		\) and
		\(
			(x c)\, b = -x
		\) for
		\(
			x \in R_{ki}
		\);

		\item
		\(
			\up n{t_{kj}(x)} = t_{ki}(-x b)
		\) and
		\(
			(x b)\, a = -x
		\) for
		\(
			x \in R_{kj}
		\);

		\item
		\(
			\up n{t_i(u)} = t_j(u \cdot (-c))
		\),
		\(
			\up n{t_j(u)} = t_i(u \cdot (-b))
		\);

		\item
		\(
			\up n{t_{\pm i, j}(x)}
			= t_{\pm j, i}(W(x))
		\) for a unique group isomorphisms
		\(
			R_{\pm i, j} \to R_{\pm j, i},\,
			x \mapsto W(x)
		\);

		\item
		\(
			W(x y) = -(b x)\, (y b)
		\) for
		\(
			x \in R_{ik}
		\),
		\(
			y \in R_{kj}
		\) and
		\(
			W(x y) = (\inv{\,c\,} x)\, (y b)
		\) for
		\(
			x \in R_{-i, k}
		\),
		\(
			y \in R_{kj}
		\);

		\item
		\(
			W(x)\, (c y) = b\, (x y)
		\) for
		\(
			x \in R_{ij}
		\),
		\(
			y \in R_{jk}
		\) and
		\(
			W(x)\, (c y) = -\inv{\,c\,}\, (x y)
		\) for
		\(
			x \in R_{-i, j}
		\),
		\(
			y \in R_{jk}
		\);

		\item
		\(
			(x c)\, W(y) = (x y)\, b
		\) for
		\(
			x \in R_{ki}
		\),
		\(
			y \in R_{ij}
		\) and
		\(
			(x \inv{\,b\,})\, W(y) = -(x y)\, b
		\) for
		\(
			x \in R_{k, -i}
		\),
		\(
			y \in R_{-i, j}
		\);

		\item
		\(
			a = c
		\).

	\end{itemize}
\end{lemma}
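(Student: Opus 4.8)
The plan is to exploit the action of the Weyl element $n$ on root subgroups via Lemma~\ref{weyl-crit}, together with the commutator formulae, to pin down the action on each $G_\gamma$ and then extract the scalar relations. First I would observe that since $G$ has $\mathsf B_\ell$-commutator relations and $n$ is an $\alpha$-Weyl element for $\alpha = \mathrm e_j - \mathrm e_i$, conjugation by $n$ sends $G_\gamma$ to $G_{s_\alpha(\gamma)}$, where $s_\alpha$ swaps the indices $i \leftrightarrow j$ (and $-i \leftrightarrow -j$). For a root subgroup like $t_{ik}$ with $k \notin \{i,-i,j,-j\}$, the target is $t_{jk}$, so we must have $\up n{t_{ik}(x)} = t_{jk}(\phi(x))$ for some map $\phi$; to identify $\phi$ I would conjugate the commutator identity $[t_{ij}(a'), t_{jk}(x)] = t_{ik}(a'x)$ (applied with a suitable $a'$, or better, directly compute $\up n{t_{ik}(x)}$ by writing $n = t_{ij}(a)\,t_{ji}(b)\,t_{ij}(c)$ and pushing $t_{ik}(x)$ through each factor using the commutator formulae). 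Pushing through $t_{ij}(c)$ does nothing (as $j \ne k$, $i \ne i$... actually $[t_{ij}(c), t_{ik}(x)]=1$ since they share the first index), pushing through $t_{ji}(b)$ produces $t_{jk}(bx)$ times $t_{ik}(x)$, and pushing through the remaining $t_{ij}(a)$ turns $t_{ik}(x)$ into $t_{ik}(x)$ again while the $t_{jk}(bx)$ stays; collecting, one reads off $\up n{t_{ik}(x)} = t_{jk}(bx)$ provided the leftover $t_{ik}$-term cancels, which forces $a(bx) = -x$. The analogous bookkeeping for $t_{jk}$, $t_{ki}$, $t_{kj}$ gives the next three bullets, and for $t_i$, $t_j$ one uses the mixed commutator $[t_i(u), t_{ij}(x)] = t_{-i,j}(u \triangleleft x)\, t_j(\dotminus u\cdot(-x))$ to get $\up n{t_i(u)} = t_j(u\cdot(-c))$, etc.

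Next I would handle the ``long exceptional'' subgroups $R_{\pm i, j}$ and $R_{\pm j, i}$ (the roots $\mathrm e_j \mp \mathrm e_i$), where $s_\alpha$ maps $\mathrm e_j - \mathrm e_i \mapsto \mathrm e_i - \mathrm e_j$ and $\mathrm e_j + \mathrm e_i \mapsto \mathrm e_i + \mathrm e_j$: here the source and target root subgroups are genuinely distinct groups, so conjugation by $n$ gives group isomorphisms $R_{\pm i, j} \to R_{\pm j, i}$, call them $W$. Uniqueness is automatic since $t_{\pm j,i}$ is injective. To get the formulae $W(xy) = -(bx)(yb)$ and $W(x)(cy) = b(xy)$ and so on, I would write $xy$ (for $x \in R_{ik}$, $y \in R_{kj}$) as a commutator $[t_{ik}(x), t_{kj}(y)] = t_{ij}(xy)$, conjugate by $n$, and substitute the already-established actions on $t_{ik}$ and $t_{kj}$, namely $\up n{t_{ik}(x)} = t_{jk}(bx)$ and $\up n{t_{kj}(y)} = t_{ki}(-yb)$ — wait, the latter needs care with which scalar, so I'd use the $t_{ki}$/$t_{kj}$ bullets. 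Then $[t_{jk}(bx), t_{ki}(\dots)]$ lands in $R_{ji}$ and produces $W(xy)$; matching scalars using $[t_{jk}(p), t_{ki}(q)] = t_{ji}(pq)$ gives the stated identity. The identities relating $W$ to multiplication by $b$ and $c$ come similarly from conjugating $[t_{ij}(x), t_{jk}(y)] = t_{ik}(xy)$ and its variants, now using the known action on $t_{jk}$ and on $t_{ik}$.

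Finally, for the last bullet $a = c$: the relations $a(bx) = -x$ and $b(cx) = -x$ (valid for all $x \in R_{ik}$, $R_{jk}$ respectively, with $|k|$ distinct from $|i|,|j|$; such $k$ exists because $\ell \ge 3$) say that $b\cdot(-)$ is a two-sided inverse to $a\cdot(-)$ on one side and to $c\cdot(-)$ on the other — more precisely, from $a(bx) = -x$ and the third bullet $(x'c)b = -x'$ one gets, after choosing $x = yc$ for $y \in R_{kj}$ and chaining, that $a$ and $c$ act identically. Concretely I would argue: applying the $t_{jk}$ action twice, $\up{n^2}{t_{jk}(x)} = t_{jk}(b(cx))$... but $n^2$ acts on $G_{\mathrm e_k - \mathrm e_j}$ as conjugation by an element of the torus (since $s_\alpha^2 = 1$), hence as an automorphism fixing the root subgroup, which combined with injectivity of $t_{jk}$ forces a clean relation; then comparing with the effect on $t_{ik}$ and using associativity $a(b(cx)) = (ab\cdot ?) $... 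The cleanest route, which I'd actually take, is: from $a(bx) = -x$ for $x \in R_{ik}$ and $(yc)b = -y$ for $y \in R_{kj}$ (third bullet) together with associativity, deduce $b x$ is ``invertible'' from the left by $a$ and from the right by $c$, so for any $x$ one has $a(bx) = -x = $ the unique preimage, and substituting $x = (ca')$ style expressions shows $ab\cdot c = c$ hence, again by the injectivity/nondegeneracy coming from the existence of enough indices $k$, that $a = c$ after left-multiplying by the ``inverse''. The main obstacle I anticipate is precisely this last step: isolating $a = c$ requires juggling the one-sided inverse relations $a(bx)=-x$, $b(cx)=-x$, $(xc)b=-x$, $(xb)a=-x$ without a priori knowing any of $a,b,c$ is invertible in a unital ring — the trick is that the ambient structure $R_{ik}, R_{jk}, R_{kj}$ provides enough ``room'' (for three distinct indices, hence $\ell \ge 3$) to cancel, and one must be careful that all four relations are genuinely used and consistent. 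The bilinear-map and $W$-identities, by contrast, are routine once the conjugation actions are established.
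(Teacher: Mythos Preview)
Your treatment of the first nine bullets is essentially the paper's own argument: push \(t_{ik}(x)\), \(t_{jk}(x)\), \(t_{ki}(x)\), \(t_{kj}(x)\), \(t_i(u)\), \(t_j(u)\) through the three factors of \(n\) using the commutator formulae inside an appropriate \(G_\Sigma\), then read off the conjugation action and the vanishing condition on the leftover component; and for the \(W\)-identities, conjugate \([t_{pq}(x),t_{qr}(y)]=t_{pr}(xy)\) by \(n\) for the relevant triples \((p,q,r)\). That part is fine.

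The gap is in \(a=c\). From the relations you extract from \(n\) alone you can indeed get \(ay=cy\) for all \(y\in R_{jk}\) (combine \(a(bx)=-x\) on \(R_{ik}\) with \(b(cy)=-y\) on \(R_{jk}\), substituting \(x=cy\)), i.e.\ \((a-c)\,R_{jk}=0\). But at this stage there is no unit in \(R_{jk}\) and no a priori nondegeneracy, so this does \emph{not} yield \(a=c\); your sketched cancellations (``\(ab\cdot c=c\)'', torus action of \(n^2\), etc.) all implicitly assume such nondegeneracy. The paper closes this gap by invoking the ambient hypothesis of the section---that \(G\) has \(\mathsf A_{\ell-1}\)-Weyl elements---to pick a \emph{second} Weyl element
\(
n'=t_{jk}(a')\,t_{kj}(b')\,t_{jk}(c')
\)
for an index \(k\) with \(\operatorname{sign}(k)=\operatorname{sign}(j)\). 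Applying the already-proved third bullet to \(n'\) gives \((xc')b'=-x\) for \(x\in R_{ij}\), which furnishes exactly the right-cancellation you are missing: one computes
\[
a=-(ac')b'=\bigl(a(b(cc'))\bigr)b'=-(cc')b'=c,
\]
using in turn bullet~3 for \(n'\), bullet~2 for \(n\), bullet~1 for \(n\), and bullet~3 for \(n'\) again. So the missing ingredient in your argument is precisely the use of a second Weyl element from the \(\mathsf A_{\ell-1}\) family.
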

\begin{proof}
	The first five properties follow by direct calculations inside various groups
	\(
		G_\Sigma
	\), the six one is clear. Using them, the next three properties follow from conjugating the commutator formula
	\(
		[t_{pq}(x), t_{qr}(y)] = t_{pr}(x y)
	\) by \(n\) for appropriate \(p\), \(q\), \(r\). Now take an index \(k\) such that
	\(
		\sign(k) = \sign(j)
	\) and
	\(
		|i|
	\),
	\(
		|j|
	\),
	\(
		|k|
	\) are distinct. Choose a Weyl element
	\(
		n' = t_{jk}(a')\, t_{kj}(b')\, t_{jk}(c')
	\), then
	\[
			a
		=
			-(a c')\, b'
		=
			\bigl(a\, (b\, (c c'))\bigr)\, b'
		=
			-(c c')\, b'
		=
			c. \qedhere
	\]
\end{proof}

Let
\(
	e_{ij} = a_i \cdots a_{j - 1}
\) for
\(
	1 \leq i < j \leq \ell
\),
\(
	e_{ij} = (-1)^{i - j}\, b_{i - 1} \cdots b_j
\) for \(
	1 \leq j < i \leq \ell
\), and
\(
	e_{ij} = \inv{e_{-j, -i}}
\) for distinct
\(
	i, j < 0
\). In particular,
\(
	a_i = c_i = e_{i, i + 1}
\) and
\(
	b_i = -e_{i + 1, i}
\).

\begin{lemma} \label{ba-ring-2}
	The elements \(
		e_{ij}
	\) satisfy the following.
	\begin{itemize}

		\item
		\(
			e_{ij} e_{jk} = e_{ik}
		\) for distinct \(i\), \(j\), \(k\) of the same sign.

		\item
		\(
				n_{ij}
			=
				t_{ij}(e_{ij})\,
				t_{ji}(-e_{ji})\,
				t_{ij}(e_{ij})
			=
				n_{ji}^{-1}
			=
				n_{-i, -j}
		\) are Weyl elements for distinct \(i\) and \(j\) of the same sign.

		\item
		\(
				\up{n_{ij}}{n_{jk}}
			=
				n_{ik}
			=
				n_{ij}^{n_{jk}}
		\) for distinct \(i\), \(j\), \(k\) of the same sign.

		\item
		\(
			[n_{ij}, n_{kl}] = 1
		\) for distinct \(i\), \(j\), \(k\), \(l\) of the same sign.

	\end{itemize}
\end{lemma}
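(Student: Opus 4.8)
My plan is to establish the four items in order, working throughout with indices of positive sign; the negative‑index statements then follow formally, since $\inv{(-)}$ is an anti‑isomorphism with $\inv{(xy)}=\inv y\,\inv x$, $e_{ij}=\inv{e_{-j,-i}}$ and $t_{ij}(a)=t_{-j,-i}(-\inv a)$, so every relation among positive‑index $e$'s and $n$'s transports verbatim to its negative counterpart (in particular $n_{-i,-j}=n_{ji}^{-1}$ will be immediate from these identifications, granting $n_{ij}=n_{ji}^{-1}$). As a preliminary I record, for each $1\le m\le\ell-1$, the four \emph{cancellation identities} obtained by applying Lemma~\ref{ba-ring-1} to the given Weyl element $n_{m,m+1}$ (whose decomposition is $t_{m,m+1}(e_{m,m+1})\,t_{m+1,m}(-e_{m+1,m})\,t_{m,m+1}(e_{m,m+1})$, using $a=c$): namely $e_{m,m+1}(e_{m+1,m}\,x)=x$, $e_{m+1,m}(e_{m,m+1}\,x)=x$, $(x\,e_{m,m+1})e_{m+1,m}=x$ and $(x\,e_{m+1,m})e_{m,m+1}=x$, for $x$ in the appropriate $R_{\bullet,m}$, $R_{\bullet,m+1}$, $R_{m,\bullet}$ or $R_{m+1,\bullet}$ (second index outside $\{m,m+1\}$); these just read off the formulas $a(bx)=-x$, $b(cx)=-x$, $(xc)b=-x$, $(xb)a=-x$.

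For the first item, note that by definition $e_{pq}$ is the ordered product of the elementary factors $e_{r,r\pm1}$ along the straight integer path from $p$ to $q$, so $e_{ij}e_{jk}$ is the product along the concatenated path $i\to j\to k$. If $j$ lies between $i$ and $k$ this path is already straight and $e_{ij}e_{jk}=e_{ik}$ is just the iterated associativity axiom $(xy)z=x(yz)$ along a path all of whose indices have distinct absolute values. Otherwise $j$ is a strict extremum, and I eliminate the backtrack at $j$ by peeling the factors of $e_{jk}$ off one at a time from the left, each step $e_{i,r}\cdot e_{r,r'}=e_{i,r'}$ (with $|r-r'|=1$) being a monotone regrouping followed by one cancellation identity, until either the factors of $e_{jk}$ are exhausted (leaving exactly $e_{ik}$) or the leftover has shrunk to a single factor $e_{i,i\mp1}$; in the latter case one last cancellation $e_{i,i\mp1}(e_{i\mp1,i}\,y)=y$ absorbs the remaining, now straight, tail $y=e_{i\mp1,k}$. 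Bookkeeping the absolute values so that every use of associativity and of a cancellation identity is legitimate is the real content here; for $\ell=3$ the associativity axiom is vacuous, but every $e_{pq}$ is then a product of at most two factors, so only the cancellation identities are used.

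For the second item I induct on $j-i$ with $i<j$. The case $j=i+1$ is the hypothesis (and $a_i=c_i$ by Lemma~\ref{ba-ring-1}), so the stated formula reproduces the given $n_{i,i+1}$; for $j-i\ge2$ I show $t_{ij}(e_{ij})\,t_{ji}(-e_{ji})\,t_{ij}(e_{ij})=\up{n_{i,i+1}}{n_{i+1,j}}$, which exhibits it as a conjugate of a Weyl element (induction) by a Weyl element, hence as a Weyl element for the root $s_{\mathrm e_{i+1}-\mathrm e_i}(\mathrm e_j-\mathrm e_{i+1})=\mathrm e_j-\mathrm e_i$: conjugating the three factors of $n_{i+1,j}$ by $n_{i,i+1}$ via Lemma~\ref{ba-ring-1} and using the monotone instances of the first item gives $\up{n_{i,i+1}}{n_{i+1,j}}=t_{ij}(e_{i,i+1}e_{i+1,j})\,t_{ji}(-e_{j,i+1}e_{i+1,i})\,t_{ij}(e_{i,i+1}e_{i+1,j})=t_{ij}(e_{ij})\,t_{ji}(-e_{ji})\,t_{ij}(e_{ij})$. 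For $i>j$ the increasing Weyl element $n_{ji}$ is already built, and $n_{ij}=n_{ji}^{-1}$: applying the general identity $n=\up nh\,f\,g$ to $n=n_{ji}$ rewrites $n_{ji}=t_{ij}(w)\,t_{ji}(e_{ji})\,t_{ij}(-e_{ij})$ with $t_{ij}(w)=\up{n_{ji}}{t_{ji}(e_{ji})}$, and the bullets of Lemma~\ref{ba-ring-1} describing the isomorphism $W$, together with a cancellation identity and injectivity of $c\mapsto c\,e_{jk}$ on $R_{ij}$ (seen by conjugating $t_{ij}$ by whichever of $n_{jk},n_{kj}$ is increasing), force $w=-e_{ij}$; hence $n_{ji}^{-1}=t_{ij}(e_{ij})\,t_{ji}(-e_{ji})\,t_{ij}(e_{ij})$, a Weyl element. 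The third item is then quick: with $n_{ij}$ a Weyl element, Lemma~\ref{ba-ring-1} gives $\up{n_{ij}}{t_{jk}(e_{jk})}=t_{ik}(e_{ij}e_{jk})$ and $\up{n_{ij}}{t_{kj}(-e_{kj})}=t_{ki}(-e_{kj}e_{ji})$, which by the first item (triples $(i,j,k)$ and $(k,j,i)$) equal $t_{ik}(e_{ik})$ and $t_{ki}(-e_{ki})$, so $\up{n_{ij}}{n_{jk}}=t_{ik}(e_{ik})\,t_{ki}(-e_{ki})\,t_{ik}(e_{ik})=n_{ik}$; and $n_{ij}^{n_{jk}}=\up{n_{kj}}{n_{ij}}=(\up{n_{kj}}{n_{ji}})^{-1}=n_{ki}^{-1}=n_{ik}$, using $n_{jk}^{-1}=n_{kj}$, $n_{ij}=n_{ji}^{-1}$ and the part of the third item just proved for the triple $(k,j,i)$.

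Finally, for the fourth item, when $i,j,k,l$ are distinct the roots $\mathrm e_j-\mathrm e_i$ and $\pm(\mathrm e_l-\mathrm e_k)$ have disjoint two‑element supports, so $\interval{\mathrm e_j-\mathrm e_i}{\pm(\mathrm e_l-\mathrm e_k)}=\varnothing$; by the $\mathsf B_\ell$‑commutator relations both factors $t_{ij}(R_{ij})$ and $t_{ji}(R_{ji})$ of $n_{ij}$ centralise $t_{kl}(R_{kl})$ and $t_{lk}(R_{lk})$, hence $n_{ij}$ centralises $n_{kl}$ and $[n_{ij},n_{kl}]=1$. The main obstacle is the first item: organising the backtrack elimination so that, uniformly in the shape of the path $i\to j\to k$, every step stays inside a genuine multiplication map and every cancellation identity is applied with its free index outside the relevant $\{m,m+1\}$. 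The only other delicate point is the identification $w=-e_{ij}$ in the second item; everything else reduces to direct computations inside the groups $G_\Sigma$.
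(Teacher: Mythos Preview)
Your argument is correct, but it is organized differently from the paper's, and in one place you work harder than necessary.

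\textbf{Organization of items 1 and 2.} You prove all of item 1 first, using only the four cancellation identities coming from the given Weyl elements \(n_{m,m+1}\); this forces the backtrack-elimination argument you describe. The paper takes the opposite route: it records only the monotone case of item 1 (immediate from the definition and, for \(\ell\geq4\), associativity) together with the one-step extensions \(e_{ji}e_{i,i\pm1}=e_{j,i\pm1}\) and \(e_{i\pm1,i}e_{ij}=e_{i\pm1,j}\), then establishes item 2 by the same induction \(n_{ij}=\up{n_{i,i+1}}{n_{i+1,j}}\) you use. Once every \(n_{ij}\) is Weyl, Lemma~\ref{ba-ring-1} applied to \(n_{ij}\) itself gives the \emph{general} cancellation identities \((xe_{ij})e_{ji}=x\) and \(e_{ji}(e_{ij}y)=y\), from which the non-monotone case of item 1 and item 3 drop out in one line. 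Your approach trades this shortcut for a more elementary but longer case analysis; both are valid.

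\textbf{The identity \(w=-e_{ij}\).} Your computation via the \(W\)-map bullets works, but there is a one-line alternative you overlooked: once you have rewritten the Weyl element \(n_{ji}\) as \(t_{ij}(w)\,t_{ji}(e_{ji})\,t_{ij}(-e_{ij})\), the final bullet of Lemma~\ref{ba-ring-1} (``\(a=c\)'') applied to \emph{this} decomposition gives \(w=-e_{ij}\) immediately, with no need for the \(W\)-formulas, extra cancellation, or injectivity of \(c\mapsto c\,e_{jk}\). This is presumably what the paper's ``easy induction'' has in mind.

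\textbf{A small slip.} In your last backtrack cancellation ``\(e_{i,i\mp1}(e_{i\mp1,i}\,y)=y\) absorbs \(y=e_{i\mp1,k}\)'': the element \(y\) must lie in \(R_{i,k}\), so it should be \(y=e_{i,k}\), not \(e_{i\mp1,k}\). (You first write \(e_{i\pm1,k}=e_{i\pm1,i}\,e_{i,k}\) by the monotone case, then cancel.)
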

\begin{proof}
	By definition,
	\(
		e_{ij} e_{jk} = e_{ik}
	\) if
	\(
		i < j < k
	\) or
	\(
		i > j > k
	\). Since
	\(
		n_{i, i + 1}
	\) are Weyl elements,
	\(
		e_{ji} e_{i, i \pm 1} = e_{j, i \pm 1}
	\) and
	\(
		e_{i \pm 1, i} e_{ij} = e_{i \pm 1, j}
	\) for
	\(
		j \notin \{i, i \pm 1\}
	\) by lemma \ref{ba-ring-1}. In particular,
	\(
		n_{ij}
		= \up{n_{i, i + 1}}{n_{i + 1, j}}
	\) for
	\(
		j \notin \{i, i + 1\}
	\). An easy induction shows that
	\(
		n_{ij} = n_{-i, -j}
	\) and
	\(
		n_{ji} = n_{-j, -i} = n_{ij}^{-1}
	\) are Weyl elements for
	\(
		0 < i < j
	\). We have
	\(
		(x e_{ij})\, e_{ji} = x
	\) for
	\(
		x \in R_{ki}
	\) and
	\(
		e_{ji}\, (e_{ij} y) = y
	\) for
	\(
		y \in R_{jk}
	\) by lemma \ref{ba-ring-1}, where
	\(
		|k| \notin \{|i|, |j|\}
	\). In particular,
	\(
		e_{ij} e_{jk} = e_{ik}
	\) and
	\(
			\up{n_{ij}}{n_{jk}}
		=
			n_{ik}
		=
			n_{ij}^{n_{jk}}
	\) for distinct \(i\), \(j\), \(k\) of the same sign. The last property is obvious.
\end{proof}

It turns out that in our case
\(
	(R_{ij}, \Delta^0_i)_{ij}
\) has a matrix-like structure.

\begin{lemma} \label{ba-ring-3}
	If
	\(
		R_{ij}
	\) and
	\(
		R_{kl}
	\) are defined,
	\(
		\sign(i) = \sign(k)
	\), and
	\(
		\sign(j) = \sign(l)
	\), then there exists a unique isomorphism
	\(
		R_{ij} \cong R_{kl}
	\) composed from the left and right multiplications by
	\(
		e_{pq}
	\). If
	\(
		\sign(i) = \sign(j)
	\), then there exists a unique isomorphism
	\(
		\Delta^0_i \cong \Delta^0_j
	\) composed from the maps
	\(
		({-}) \cdot e_{pq}
	\). All operations of partial graded odd form rings are preserved under these isomorphisms.
\end{lemma}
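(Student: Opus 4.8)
The strategy is to reduce the general case to the "adjacent" moves already established in Lemmas \ref{ba-ring-1} and \ref{ba-ring-2}, and to obtain uniqueness from the injectivity of the product maps (Lemma \ref{comm-rel}) applied to the conjugation action of the Weyl elements $n_{pq}$ on the root subgroups. Concretely, for a pair $(i,j)$ with $|i|\neq|j|$ I would write $\up{n_{pq}}{t_{ij}(x)} = t_{i'j'}(x')$ whenever $\{i',j'\} = s_{\mathrm e_p - \mathrm e_q}(\{i,j\})$, where the precise formula for $x'$ in terms of left/right multiplication by the $e_{rs}$ is read off from Lemma \ref{ba-ring-1} (the cases $\up n{t_{ik}(x)} = t_{jk}(bx)$, $\up n{t_{ki}(x)} = t_{kj}(-xc)$, and $\up n{t_{\pm i,j}(x)} = t_{\pm j,i}(W(x))$, together with the explicit $W$ on products). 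Composing such conjugations along a path in the "index graph" connecting $(i,j)$ to $(k,l)$ produces \emph{some} isomorphism $R_{ij}\to R_{kl}$ built from multiplications by the $e_{rs}$; similarly for $\Delta^0_i \cong \Delta^0_j$ using $\up{n_{pq}}{t_i(u)} = t_j(u\cdot(-e_{ji}))$-type formulae. Since $\mathrm W(\mathsf A_{\ell-1})$ (acting as $S_\ell$ on $\{1,\dots,\ell\}$, extended by the sign-flip symmetry $\inv{(\,\cdot\,)}$) acts transitively on ordered pairs of distinct indices of prescribed signs, every target $(k,l)$ is reached, giving existence.

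For uniqueness I would argue that any two such composites agree. The cleanest way: a composite of left/right multiplications by $e_{rs}$'s carrying $R_{ij}\to R_{kl}$ is, by the associativity and "Weyl-compatibility" identities of Lemma \ref{ba-ring-1} (the relations $W(xy) = -(bx)(yb)$, $e_{ij}e_{jk}=e_{ik}$, and $(xe_{ij})e_{ji}=x$), always reducible to the single normalized form $x \mapsto e_{ki}\, x\, e_{jl}$ (with the convention $e_{mm}=1$ and the appropriate bar-conjugate rewriting when signs are negative). Indeed each elementary multiplication either extends or cancels a prefix/suffix $e_{rs}$-chain, exactly as in the proof of Lemma \ref{ba-ring-2}; so the composite depends only on the endpoints. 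The same normalization for $\Delta^0$: every composite of maps $(\,\cdot\,)\cdot e_{pq}$ reduces to $u\mapsto u\cdot e_{ij}$ by the axiom $(u\cdot x)\cdot y = u\cdot xy$ and $e_{ij}e_{jk}=e_{ik}$. Uniqueness then also follows from the fact that these normalized maps are genuine bijections (their inverses are $x\mapsto e_{ik}\,x\,e_{lj}$, resp. $u\mapsto u\cdot e_{ji}$), so no nontrivial relation among them can collapse.

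Finally, that \emph{all} operations of the partial graded odd form ring are preserved is checked operation-by-operation, but without real computation: each defining commutator formula $[t_{ij}(x),t_{jk}(y)] = t_{ik}(xy)$, $[t_{-i,j}(x),t_{ji}(y)] = t_i(x*y)$, etc., lives inside some $G_\Sigma$, and conjugating the whole formula by the appropriate product of $n_{pq}$'s sends it to the same formula with all indices permuted and all arguments replaced by their images under the normalized isomorphisms — because conjugation is an automorphism of $G_\Sigma$ and it carries each $t_\alpha(P_\alpha)$ to $t_{s(\alpha)}(P_{s(\alpha)})$ by construction. Matching the two sides via the injectivity of the product map (Lemma \ref{comm-rel}) yields precisely the assertion that e.g. multiplication, $*$, $\circ$, $\triangleleft$, $\cdot$ intertwine the isomorphisms. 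The one genuinely delicate point — and I expect it to be the main obstacle — is bookkeeping the \emph{signs} and the \emph{bar-involution} when a Weyl reflection flips the sign of an index: one must verify that the normalized form $x\mapsto e_{ki}\,x\,e_{jl}$ is consistent with $t_{ij}(a) = t_{-j,-i}(-\inv a)$ and with the formulae $W(xy) = (\inv{\,c\,}x)(yb)$ in the mixed-sign case. I would handle this by fixing once and for all a reduced word for each Weyl-group element (using only the adjacent $s_i$ and, at most once, the sign flip) so that every sign is accounted for exactly as in Lemma \ref{ba-ring-1}, leaving no ambiguity.
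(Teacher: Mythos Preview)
Your overall strategy matches the paper's: existence is easy, uniqueness comes from reducing composites of elementary multiplications, and preservation of operations comes from conjugating commutator identities by Weyl elements. But two of your steps do not go through as written, and both failures stem from the same source: you are implicitly using associativity that you do not have.

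First, your ``normalized form'' \(x \mapsto e_{ki}\, x\, e_{jl}\) is not well-defined. The expression \(e_{ki}\, x\, e_{jl}\) is ambiguous --- is it \((e_{ki} x)\, e_{jl}\) or \(e_{ki}\, (x e_{jl})\)? These agree only if you prove \((e_{ij} x)\, e_{kl} = e_{ij}\, (x e_{kl})\), which is one of four specific identities the paper isolates and verifies. More seriously, when \(\ell = 3\) there are only three available absolute values, so a composite of left/right multiplications need not alternate through a ``fresh'' index at each step; you can get stuck with a word that your first three reduction rules cannot shorten. The paper handles exactly this case with a fourth, genuinely nontrivial identity
\[
	\bigl(e_{\eps j, \eps i}\, (x e_{jk})\bigr)\, e_{ki}
	= e_{\eps j, \eps k}\, \bigl((e_{\eps k, \eps i} x)\, e_{ji}\bigr),
\]
whose proof requires the \(W\)-formulae from Lemma~\ref{ba-ring-1}. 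Your sketch (``each elementary multiplication either extends or cancels a prefix/suffix \(e_{rs}\)-chain'') is precisely what fails here, and the axiom \((xy)z = x(yz)\) is vacuous for \(\ell = 3\), so you cannot invoke it.

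Second, your claim that preservation of all operations follows ``without real computation'' by conjugating commutator formulae is too optimistic for the multiplication maps \(R_{ij} \times R_{jk} \to R_{ik}\). Conjugation by a single product of \(n_{pq}\)'s realises one particular isomorphism on each factor, but the lemma asserts compatibility with the \emph{canonical} isomorphisms, and you must check that the various realisations agree --- which is exactly the uniqueness you are trying to prove, so the argument is circular unless you have already pinned down the normal form. The paper separates this out: for all operations other than the ring multiplication, the axioms of partial graded odd form rings (together with Lemma~\ref{tri-dot}) give compatibility directly; for the multiplication itself, explicit computations with \(W_{\pm k, i}\) are needed, e.g.\ verifying \((e_{\pm k, i} x)(y e_{k, \pm i}) = -W_{\pm k, i}(xy)\) and its consequences. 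Your ``delicate point'' paragraph correctly locates the difficulty but does not resolve it.
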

\begin{proof}
	The existence of isomorphisms is clear. In order to prove the uniqueness for
	\(
		R_{ij} \cong R_{kl}
	\) it suffices to check the following identities, where all indices have distinct absolute values.
	\begin{align*}
		(x e_{ij})\, e_{ji} &= x
		\text{ for } x \in R_{ki}.
	\\
		(x e_{ij})\, e_{jk} &= x e_{ik}
		\text{ for } x \in R_{li}.
	\\
		(e_{ij} x)\, e_{kl} &= e_{ij}\, (x e_{kl})
		\text{ for } x \in R_{jk}.
	\\
		(
			e_{\eps j, \eps i}\,
			(x e_{jk})
		) e_{ki}
		&= e_{\eps j, \eps k}\, (
			(e_{\eps k, \eps i} x)\,
			e_{ji}
		)
		\text{ for } x \in R_{\eps i, j}
		\text{ and } \eps = \pm 1.
	\end{align*}
	Indeed, if
	\(
		R_{ij} \to R_{ij}
	\) is a composition of such multiplications, then we may use the first two identities (and their duals) to reduce the number of multiplications. Such identities are not applicable if left and right multiplications alternate. But in this case either we may apply the third identity (and then reduce the number of multiplications), or all indices take only \(3\) absolute values and we may apply the last identity. The first three identities easily follow from lemmas \ref{ba-ring-1}, \ref{ba-ring-2}, and
	\(
		\up{n_{ij}}{t_{kl}(x)} = t_{kl}(x)
	\) for indices with distinct absolute values. The last identity follows from lemma \ref{ba-ring-1} and
	\[
			e_{\eps j, \eps k}\, (
				(e_{\eps k, \eps i} x)\,
				e_{ji}
			)
		=
			-W_{ji}(
				e_{\eps i, \eps k}\,
				(e_{\eps k, \eps i} x)
			)
		=
			-W_{ji}(x)
		=
			-W_{ji}((x e_{jk})\, e_{kj})
		=
			(
				e_{\eps j, \eps i}\,
				(x e_{jk})
			)\, e_{ki}.
	\]

	The uniqueness claim for
	\(
		\Delta^0_i \cong \Delta^0_j
	\) means that
	\(
		(u \cdot e_{ij}) \cdot e_{ji} = u
	\) for
	\(
		u \in \Delta^0_i
	\). This follows from lemma \ref{ba-ring-1} and the identity
	\(
		n_{ij}^{-1} = n_{ji}
	\) from lemma \ref{ba-ring-2}.

	It remains to check that the operations are preserved under these isomorphisms. The only non-trivial case is the multiplication maps
	\(
		R_{ij} \times R_{jk} \to R_{ik}
	\) since for other operations we may just apply corresponding axioms of partial graded odd form rings and lemma \ref{tri-dot}. If
	\(
		|l| \notin \{|i|, |j|, |k|\}
	\), then
	\(
		(x y)\, e_{kl} = x\, (y e_{kl})
	\),
	\(
		e_{li}\, (x y) = (e_{li} x)\, y
	\),
	\(
		x y = (x e_{jl})\, (e_{lj} y)
	\) for
	\(
		x \in R_{ij}
	\),
	\(
		y \in R_{jk}
	\). Also,
	\[
			(e_{\pm k, i} x)\,
			(y e_{k, \pm i})
		=
			-W_{\pm k, i}(x y)
		=
			-W_{\pm k, i}\bigl(
				((x y)\, e_{k, \eps j})\,
				e_{\eps j, k}
			\bigr)
		=
			\bigl(
				e_{\pm k, i}\,
				((x y)\, e_{k, \eps j})
			\bigr)\, e_{\eps j, \pm i}
	\]
	for
	\(
		\eps = \pm 1
	\) and
	\[
		e_{\pm j, i}\, (x y)
		= -W_{\pm i, j}(x)\, (e_{\pm i, j} y)
	\]
	for such \(x\), \(y\) by lemma \ref{ba-ring-1}.
\end{proof}

Using lemma \ref{ba-ring-3} we identify the groups
\(
	R_{ij}
\) and
\(
	\Delta^0_i
\) using the canonical isomorphisms. We call the resulting object a
\(
	(\mathsf B_\ell, \mathsf A_{\ell - 1})
\)-\textit{ring}. Some binary operations may be replaced by unary ones, namely, let
\(
	x * y = \phi(x y)
\), where
\(
	\phi(x) = x * e_{ij}
\), and
\(
	u \triangleleft x = \widehat \rho(u)\, x
\), where
\(
	\widehat \rho(u) = u \triangleleft e_{ij}
\).

Formally, a
\(
	(\mathsf B_\ell, \mathsf A_{\ell - 1})
\)-ring consists of
\begin{itemize}

	\item
	abelian groups
	\(
		R_{i j}
	\) for
	\(
		i, j \in \{{-}, {+}\}
	\);

	\item
	involutions
	\(
		R_{i j} \to R_{-j, -i},\, x \mapsto \inv x
	\), i.e. isomorphisms with the property
	\(
		\inv{\inv x} = x
	\);

	\item
	biadditive multiplication maps
	\(
		R_{i j} \times R_{j k} \to R_{i k}
	\) such that
	\(
		\inv{(x y)} = \inv y \inv x
	\);

	\item
	two-sided units
	\(
		1_i \in R_{i i}
	\) for the multiplication maps (necessarily
	\(
		\inv{1_i} = 1_{-i}
	\));

	\item
	groups
	\(
		\Delta^0_-
	\) and
	\(
		\Delta^0_+
	\) with the group operations \(\dotplus\);

	\item
	group homomorphisms
	\(
		\phi \colon R_{-i, i} \to \Delta^0_i
	\) with central images such that
	\(
		\phi(x + \inv x) = \dot 0
	\);

	\item
	biadditive maps
	\(
		\Delta^0_i \times \Delta^0_j \to R_{-i, j},\,
		(u, v) \mapsto u \circ v
	\) such that
	\(
		\inv{u \circ v} = v \circ u
	\);

	\item
	maps
	\(
		\widehat \rho \colon \Delta^0_i \to R_{-i, i}
	\) such that
	\(
		\widehat \rho(u \dotplus v)
		= \widehat \rho(u)
		- u \circ v
		+ \widehat \rho(v)
	\) and
	\(
		\widehat \rho(u)
		+ u \circ u
		+ \inv{\widehat \rho(u)}
		= 0
	\);

	\item
	maps
	\(
		({-}) \cdot ({=})
		\colon \Delta^0_i \times R_{i j}
		\to \Delta^0_j
	\) such that
	\(
		(u \dotplus v) \cdot x
		= u \cdot x \dotplus v \cdot x
	\) and
	\(
			u \cdot (x + y)
		=
			u \cdot x
			\dotplus \phi\bigl(
				\inv y\, (\widehat \rho(u) x)
			\bigr)
			\dotplus u \cdot y
	\)

\end{itemize}
Moreover, the operations satisfy additional axioms
\begin{align*}
	u \dotplus v
	&= v \dotplus u \dotminus \phi(u \circ v),
&
	\phi(x y) \cdot z
	&= \phi((\inv z x)\, (y z)),
\\
	\phi((x y)\, z) &= \phi(x\, (y z)),
&
	\widehat \rho(u \cdot x)\, y
	&= (\inv x\, \widehat \rho(u))\, (x y),
\\
	u \circ \phi(x) &= 0,
&
	u \circ (v \cdot x) &= (u \circ v)\, x,
\\
	\widehat \rho(\phi(x y))\, z
	&= x\, (y z) - \inv y\, (\inv x z),
&
	(u \cdot x) \cdot y &= u \cdot x y,
\\
	(\widehat \rho(u)\, x)\, y
	&= \widehat \rho(u)\, (x y),
&
	u \cdot 1_i &= u \text{ for } u \in \Delta^0_i.
\end{align*}
Finally, the multiplication is associative if
\(
	\ell \geq 4
\). Other than this requirement, the definition is independent of \(\ell\).

\begin{lemma} \label{weyl-a-long}
	Let \(G\) be a group with
	\(
		\mathsf B_\ell
	\)-commutator relations and
	\(
		\mathsf A_{\ell - 1}
	\)-Weyl elements for
	\(
		\ell \geq 3
	\). We parameterize the root subgroups by components of the corresponding
	\(
		(\mathsf B_\ell, \mathsf A_{\ell - 1})
	\)-ring
	\(
		(R_{ij}, \Delta^0_i)_{
			i, j \in \{{-}, {+}\}
		}
	\). An element
	\(
		n = t_{ij}(a)\, t_{ji}(b)\, t_{ij}(c)
	\) is Weyl if and only if
	\begin{itemize}

		\item
		\(
			a = c
		\) is invertible,
		\(
			b = -a^{-1}
		\) (i.e.
		\(
			a b = -1_i
		\) and
		\(
			b a = -1_j
		\));

		\item
		\(
			a^{-1}\, (a x) = x
		\),
		\(
			a\, (a^{-1} x) = x
		\),
		\(
			(x a)\, a^{-1} = x
		\),
		\(
			(x a^{-1})\, a = x
		\);

		\item
		\(
			(\zeta x)\, \eta = \zeta\, (x \eta)
		\),
		\(
			(\zeta x)\, (y \zeta)
			= \zeta\, (x y)\, \zeta
		\),
		\(
			\zeta\, (x\, (\zeta y))
			= (\zeta x \zeta)\, y
		\),
		\(
			((x \zeta)\, y)\, \zeta
			= x\, (\zeta y \zeta)
		\) for
		\(
			\zeta, \eta \in \{
				a, a^{-1}, \inv a, \inv a^{-1}
			\}
		\).

	\end{itemize}
	If
	\(
		|k| \notin \{|i|, |j|\}
	\), then
	\begin{align*}
		\up n{t_{ik}(x)} &= t_{jk}(-a^{-1} x),
	&
		\up n{t_{-i, j}(x)}
		&= t_{-i, j}(\inv a^{-1} \inv x a),
	&
		\up n{t_i(u)} &= t_j(u \cdot (-a)),
	\\
		\up n{t_{jk}(x)} &= t_{ik}(a x),
	&
		\up n{t_{i, -j}(x)}
		&= t_{i, -j}(a \inv x \inv a^{-1}),
	&
		\up n{t_{-i}(u)}
		&= t_{-j}(u \cdot (-\inv a^{-1})),
	\\
		\up n{t_{ki}(x)} &= t_{kj}(-x a),
	&
		\up n{t_{ij}(x)} &= t_{ji}(-a^{-1} x a^{-1}),
	&
		\up n{t_j(u)} &= t_i(u \cdot a^{-1}),
	\\
		\up n{t_{kj}(x)} &= t_{ki}(x a^{-1}),
	&
		\up n{t_{ji}(x)} &= t_{ij}(-a x a),
	&
		\up n{t_{-j}(u)} &= t_{-i}(u \cdot \inv a).
	\end{align*}
\end{lemma}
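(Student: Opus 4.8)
The plan is to use Lemma~\ref{weyl-crit} to reduce, in both directions, to controlling $\up n{t_\beta(\zeta)}$ only for roots $\beta$ linearly independent with the long root $\alpha = \mathrm e_j - \mathrm e_i$ carrying $t_{ij}$ (so $n \in G_\alpha\, G_{-\alpha}\, G_\alpha$ and $s_\alpha$ interchanges $\mathrm e_i$ and $\mathrm e_j$ while fixing $\mathrm e_m$ for $m \ne \pm i, \pm j$), and, for each such $\beta$, to compute
\[
	\up n{t_\beta(\zeta)}
	= \up{t_{ij}(a)}{\bigl( \up{t_{ji}(b)}{\bigl( \up{t_{ij}(c)}{t_\beta(\zeta)} \bigr)} \bigr)}
\]
by expanding each of the three conjugations with the $\mathsf B_\ell$-commutator formulae of \S4 and reading off factors via Lemma~\ref{comm-rel}. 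The same expansion drives both implications: forcing the outcome into $G_{s_\alpha(\beta)}$ produces the asserted conditions on $a,b,c$, and assuming those conditions makes it collapse to the tabulated value.

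\emph{Necessity.} Lemma~\ref{ba-ring-1} already supplies $a = c$, the conjugation formulas on the subgroups $t_{\pm k, \cdot}$ and $t_{\cdot, \pm k}$, the isomorphisms $W$ on $R_{\pm i, j}$, and the relations $a\,(bx) = -x$, $b\,(cx) = -x$, $(xc)\,b = -x$, $(xb)\,a = -x$. After the identifications of Lemma~\ref{ba-ring-3} — which in particular identify each $e_{pq}$ with the unit $1_{\sign(p)}$ — and using that $\ell \ge 3$ furnishes an auxiliary index of either sign, I would specialise the first two relations to argument $1$ to get $a b = -1_i$ and $b a = -1_j$; hence $a$ is invertible with $b = -a^{-1}$, and substituting this into the four relations gives the cancellation identities of the second bullet. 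Rewriting $bx = -a^{-1}x$, $-xc = -xa$, etc., turns the Lemma~\ref{ba-ring-1} formulas into the first column of the table (and, using $c=a$, into the $\Delta^0$-entries of the last column).

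The nucleus identities of the third bullet, and the remaining two columns, are where the real work lies. Since each $e_{pq}$ is now a unit, one may present $t_{ij}(z)$ both as $[t_{ik}(1), t_{kj}(z)]$ and as $[t_{ik}(z), t_{kj}(1)]$, choosing $\sign(k)$ suitably in each case (available for every $\ell \ge 3$). Conjugating both presentations by $n$ and invoking the first-column formulas gives
\[
	\up n{t_{ij}(z)}
	= t_{ji}\bigl(-a^{-1}(z a^{-1})\bigr)
	= t_{ji}\bigl(-(a^{-1} z)\, a^{-1}\bigr),
\]
which both delivers $\up n{t_{ij}(z)} = t_{ji}(-a^{-1} z a^{-1})$ and forces $(a^{-1} z)\,a^{-1} = a^{-1}(z a^{-1})$, i.e.\ the identity $(\zeta x)\,\eta = \zeta\,(x\eta)$ for $\zeta=\eta=a^{-1}$. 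Running the same two-presentation argument on $t_{ji}$, $t_{-i, j}$, $t_{i, -j}$ I expect to obtain $\up n{t_{ji}(z)} = t_{ij}(-aza)$, $\up n{t_{-i, j}(x)} = t_{-i, j}(\inv a^{-1}\inv x\, a)$, $\up n{t_{i, -j}(x)} = t_{i, -j}(a\,\inv x\,\inv a^{-1})$ together with all the identities $(\zeta x)\,\eta = \zeta\,(x\eta)$; the ones with nested $\zeta$'s should then follow by comparing $\up{n^2}{(-)}$ with the iterate of the established formulas (equivalently, from the Weyl triple $n, n_{ij}$ of Lemma~\ref{ba-ring-2}). For $\ell \ge 4$ the ring is associative and the third bullet is vacuous.

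\emph{Sufficiency.} Assuming the three bulleted conditions, I would run the expansion of $\up n{t_\beta(\zeta)}$ for $\beta \notin \mathbb R\alpha$ case by case. For $\beta$ of type $\mathrm e_k \mp \mathrm e_i$ or $\mathrm e_k \mp \mathrm e_j$ the expansion has three nontrivial factors that collapse using only $b = -a^{-1}$ and the cancellation identities, as in the model computation $\up n{t_{ik}(x)} = t_{ik}\bigl(a(bx)+x\bigr)\, t_{jk}(bx) = t_{jk}(-a^{-1}x)$. For $\beta \in \{\mathrm e_i, \mathrm e_j, \pm(\mathrm e_i + \mathrm e_j)\}$ the expansion additionally spawns short-root and $(\mathrm e_i \pm \mathrm e_j)$-root factors, and here the nucleus identities are exactly what cancels the spurious ones, leaving the single factor in $G_{s_\alpha(\beta)}$ predicted by the table. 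All of this is equivariant under the anti-isomorphisms $\inv{(\cdot)}$ and under relabelling by the $n_{pq}$ of Lemma~\ref{ba-ring-2}, so one representative $\beta$ per Weyl orbit suffices; then $\up n{G_\beta} \le G_{s_\alpha(\beta)}$ for all $\beta \notin \mathbb R\alpha$, and $n$ is $\alpha$-Weyl by Lemma~\ref{weyl-crit}. The hard part is this final case-check — above all, extracting the nested nucleus identities when $\ell = 3$ and the coordinate ring is only alternative; what makes the argument go through is that the single auxiliary absolute value still carries both signs, keeping the two commutator presentations of $t_{ij}(z)$ available, while Lemma~\ref{ba-ring-3} guarantees that every operation of the $(\mathsf B_\ell, \mathsf A_{\ell-1})$-ring is respected throughout.
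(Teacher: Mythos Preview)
Your proposal is correct and matches the paper's strategy: both directions go through Lemma~\ref{weyl-crit} and explicit conjugation computations in the various $G_\Sigma$, with necessity drawing on Lemma~\ref{ba-ring-1}. The paper's own proof is essentially a two-line pointer to these same ingredients.

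The one place where the paper is more economical than your write-up is in deriving the third-bullet Moufang-type identities. Rather than your two-presentation trick for $t_{ij}$, $t_{ji}$, $t_{\pm i,\mp j}$ followed by an $n^2$-comparison for the nested identities, the paper simply applies Lemma~\ref{ba-ring-1} to \emph{both} decompositions
\[
	n = t_{ij}(a)\,t_{ji}(b)\,t_{ij}(a) = t_{ji}(b)\,t_{ij}(a)\,t_{ji}(b)
\]
and to $n^{-1}$ (together with the $\inv{(-)}$-rewritings via $t_{ij}(a) = t_{-j,-i}(-\inv a)$). Under each decomposition the $W$-relations of Lemma~\ref{ba-ring-1}, namely $W(x)(cy) = b(xy)$, $(xc)\,W(y) = (xy)\,b$, and $W(xy) = -(bx)(yb)$, specialise directly to the left, right, and middle Moufang identities for the relevant element of $\{a, a^{-1}, \inv a, \inv a^{-1}\}$. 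No separate $n^2$ argument is needed, and as you phrase it that argument does not obviously yield the nested identities --- iterating $\up n{(-)}$ on $t_{ik}$ or $t_{ij}$ only reconfirms the cancellation laws. Your route still works once you feed in the $W$-relations you already cite from Lemma~\ref{ba-ring-1}; you are just re-deriving by hand what a second invocation of that lemma on the dual decomposition already packages.
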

\begin{proof}
	The conditions for \(n\) to be a Weyl element and the following group of formulae follow from lemma \ref{ba-ring-1} applied to
	\(
			n
		=
			t_{ij}(a)\, t_{ji}(b)\, t_{ij}(a)
		=
			t_{ji}(b)\, t_{ij}(a)\, t_{ji}(b)
	\) and
	\(
		n^{-1}
	\). Conversely, if the conditions hold, then it may be checked using the axioms of
	\(
		(\mathsf B_\ell, \mathsf A_{\ell - 1})
	\)-rings, lemma \ref{weyl-crit}, and calculations in various
	\(
		G_\Sigma
	\) that \(n\) is indeed a Weyl element.
\end{proof}

\begin{lemma} \label{weyl-a-short}
	Let \(G\) be a group with
	\(
		\mathsf B_\ell
	\) and
	\(
		\mathsf A_{\ell - 1}
	\)-Weyl elements for
	\(
		\ell \geq 3
	\). We parameterize the root subgroups by components of the corresponding
	\(
		(\mathsf B_\ell, \mathsf A_{\ell - 1})
	\)-ring
	\(
		(R_{ij}, \Delta^0_i)_{
			i, j \in \{{-}, {+}\}
		}
	\). An element
	\(
		n = t_i(u)\, t_{-i}(v)\, t_i(w)
	\) is Weyl if and only if
	\(
		\widehat \rho(u) = \widehat \rho(w)
	\) is invertible (i.e. there is
	\(
		a \in R_{i, -i}
	\) such that
	\(
		a\, \widehat \rho(u) = 1_i
	\) and
	\(
		\widehat \rho(u)\, a = 1_{-i}
	\)),
	\(
		\widehat \rho(v)
		= \inv{\widehat \rho(w)}^{-1}
	\),
	\(
		v
		= w \cdot (-\widehat \rho(w)^{-1})
	\), and
	\(
		u
		= w
		\cdot \widehat \rho(w)^{-1}\,
		\inv{\widehat \rho(w)}
	\). If
	\(
		i \neq \pm j
	\), then,
	\begin{align*}
		\up n{t_{ij}(x)}
		&= t_{-i, j}(\widehat \rho(w)\, x),
	\qquad
		\up n{t_{-i, j}(x)}
		= t_{ij}(\inv{\widehat \rho(w)}^{-1}\, x),
	\\
		\up n{t_j(\zeta)}
		&= t_j(
			\zeta
			\dotminus v \cdot (w \circ \zeta)
		),
	\\
		\up n{t_i(\zeta)} &= t_{-i}\bigl(
			(
				\zeta
				\dotminus v \cdot (w \circ \zeta)
			) \cdot \widehat \rho(w)^{-1}
		\bigr),
	\\
		\up n{t_{-i}(\zeta)} &= t_i\bigl(
			(
				\zeta
				\dotminus v \cdot (w \circ \zeta)
			) \cdot \inv{\widehat \rho(w)}
		\bigr).
	\end{align*}
\end{lemma}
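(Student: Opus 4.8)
The strategy mirrors the proof of Lemma~\ref{weyl-a-long}: establish both implications by explicit computation, now using the four commutator formulae for $t_i$, $t_{-i}$, $t_{ij}$ together with Lemmas~\ref{weyl-crit} and~\ref{tri-dot} and the reduced operations $u\triangleleft x=\widehat\rho(u)\,x$, $u*v=\phi(uv)$. Since no special closed subset contains both $\mathrm e_i$ and $-\mathrm e_i$, the conjugates $\up n{t_{ij}(x)}$, $\up n{t_{-i,j}(x)}$ and $\up n{t_j(\zeta)}$ with $|j|\neq|i|$ are computed inside $G_\Sigma$ for $\Sigma=\Phi\cap H$, where $H$ is an open half-space containing $-\mathrm e_i$ and the roots $\mathrm e_j$, $\mathrm e_j\pm\mathrm e_i$ but not $\mathrm e_i$ (and the conjugates of $t_i(\zeta)$, $t_{-i}(\zeta)$ in the reflected half-space). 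For $|j|,|k|\neq|i|$ every factor of $n$ commutes with $t_{jk}(x)$, so $\up n{t_{jk}(x)}=t_{jk}(x)$, which lies in the correct root subgroup because $s_{\mathrm e_i}$ fixes the root of $t_{jk}$; this root can therefore be disregarded throughout.

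Assume first that $n=t_i(u)\,t_{-i}(v)\,t_i(w)$ is $\mathrm e_i$-Weyl. Conjugate $t_{ij}(x)$, $t_j(\zeta)$, $t_{-i,j}(x)$ and $t_i(\zeta)$ (for $|j|\neq|i|$) successively by $t_i(w)$, $t_{-i}(v)$, $t_i(u)$, at each step using the appropriate commutator formula and then reorganising the product into the normal form of $G_\Sigma$. Requiring each conjugate to land in the root subgroup prescribed by $s_{\mathrm e_i}$ kills all the ``wrong'' components; invoking also $n^{-1}$ (which is $\mathrm e_i$-Weyl too) makes the surviving component a bijection. Simplifying the resulting equations with the ring axioms then yields exactly the four displayed conditions---that $\widehat\rho(w)$ is invertible, $\widehat\rho(v)=\inv{\widehat\rho(w)}^{-1}$, $v=w\cdot(-\widehat\rho(w)^{-1})$ and $u=w\cdot\widehat\rho(w)^{-1}\,\inv{\widehat\rho(w)}$ (with $\widehat\rho(u)=\widehat\rho(w)$ also obtainable, as in the proof of Lemma~\ref{ba-ring-1}, by conjugating a long-root commutator relation by $n$)---and the same computations simultaneously produce all the displayed conjugation formulae.

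Conversely, suppose the four displayed conditions hold, so in particular $\widehat\rho(w)$ is invertible. By Lemma~\ref{weyl-crit} it suffices to check $\up n{G_\beta}\le G_{s_{\mathrm e_i}(\beta)}$ for $\beta\in\Phi\setminus\mathbb R\mathrm e_i$; the case $\beta=\mathrm e_j-\mathrm e_k$ with $|j|,|k|\neq|i|$ is trivial by the first paragraph, so only the roots $\mathrm e_j$ and $\mathrm e_j\pm\mathrm e_i$ with $|j|\neq|i|$ remain. For each of these one carries out the same step-by-step conjugation and verifies, using the $(\mathsf B_\ell,\mathsf A_{\ell-1})$-ring axioms and Lemma~\ref{tri-dot}, that the resulting product collapses exactly to the element written in the statement, which lies in the required root subgroup; invertibility of $\widehat\rho(w)$ is precisely what makes the displayed maps, such as $x\mapsto\widehat\rho(w)\,x$, well defined and the identities consistent. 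The formulae for $\up n{t_i(\zeta)}$ and $\up n{t_{-i}(\zeta)}$, concerning roots in $\mathbb R\mathrm e_i$, are not needed for Lemma~\ref{weyl-crit} but follow from the same calculation.

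The conceptual content is light; the difficulty lies entirely in the bookkeeping of these conjugations. Each commutator formula introduces correction terms in two root subgroups, so pushing $t_i(w)$, $t_{-i}(v)$ and $t_i(u)$ past a single root element produces a long product that must be brought into the stated normal form, and showing that all stray terms cancel or combine correctly is where one invokes the axioms $u\cdot(x+y)=u\cdot x\dotplus\phi(\inv y\,(\widehat\rho(u)x))\dotplus u\cdot y$, $\widehat\rho(u\dotplus v)=\widehat\rho(u)-u\circ v+\widehat\rho(v)$, $u\circ\phi(x)=0$, $u\circ(v\cdot x)=(u\circ v)\,x$ and $\widehat\rho(\phi(xy))\,z=x\,(yz)-\inv y\,(\inv x z)$, together with Lemma~\ref{tri-dot}. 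This verification, rather than any single clever step, is the main obstacle.
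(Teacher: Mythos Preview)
Your overall strategy matches the paper's: compute \(\up n{t_\alpha(\zeta)}\) for \(\alpha\notin\mathbb R\,\mathrm e_i\) inside a suitable \(G_\Sigma\), read off the conditions on \(u,v,w\), and invoke Lemma~\ref{weyl-crit} for the converse. That part is fine.

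The gap is in your handling of \(\up n{t_{\pm i}(\zeta)}\). You list \(t_i(\zeta)\) among the elements to be ``conjugated successively by \(t_i(w),\,t_{-i}(v),\,t_i(u)\)'' and later say these formulae ``follow from the same calculation''. But the step-by-step conjugation you describe relies on the commutator formulae, and there is \emph{no} commutator formula for the pair \((\mathrm e_i,-\mathrm e_i)\): these roots are antiparallel, so no special closed subset contains both, and \(\up{t_{-i}(v)}{t_i(\zeta')}\) simply cannot be evaluated this way. Your remark about ``the reflected half-space'' does not resolve this, since any half-space containing \(\mathrm e_i\) excludes \(-\mathrm e_i\), and \(n\) has factors from both.

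The paper circumvents this by a device you do not mention: once the conjugation formulae for \(\alpha\notin\mathbb R\,\mathrm e_i\) are established, one writes
\[
	t_{\pm i}(\zeta)
	= t_{\mp i,\pm j}\bigl(-\widehat\rho(\zeta)\bigr)\,
	  \bigl[\,t_{\pm j}(\dotminus\zeta),\,t_{\pm j,\pm i}(-1)\,\bigr]
\]
for some \(j\) with \(|j|\neq|i|\), and conjugates this identity by \(n\). Every factor on the right now lives in a root subgroup \(G_\beta\) with \(\beta\notin\mathbb R\,\mathrm e_i\), so the already-known formulae apply and the result can be simplified to a single \(t_{\mp i}\)-element. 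You gesture at this technique when deriving \(\widehat\rho(u)=\widehat\rho(w)\), but you need it again, explicitly, for the \(\pm\mathrm e_i\) conjugation formulae; ``the same calculation'' does not cover it.
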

\begin{proof}
	The expressions
	\(
		\up n{t_\alpha(\zeta)}
	\) for
	\(
		\alpha \neq \pm \mathrm e_i
	\) may be simplified using calculations in corresponding
	\(
		G_\Sigma
	\) and axioms of
	\(
		(\mathsf B_\ell, \mathsf A_{\ell - 1})
	\)-rings. It turns out that these expressions are of type
	\(
		t_{s_{\mathrm e_i}(\alpha)}(\eta)
	\) if and only if \(u\), \(v\), \(w\) satisfy the conditions from the statement. By lemma \ref{weyl-crit} these conditions are necessary and sufficient for \(n\) to be a Weyl element. The same calculations give explicit values of the arguments \(\eta\). Finally,
	\(
		\up n{t_{\pm i}(\zeta)}
	\) may be calculated using conjugation by \(n\) the identity
	\[
		\bigl[
			t_{\pm j}(\dotminus \zeta),
			t_{\pm j, \pm i}(-1_{\pm \sign(i)})
		\bigr]
		= t_{\mp i, \pm j}(\widehat \rho(\zeta))\,
		t_{\pm i}(\zeta),
	\]
	where
	\(
		j \neq \pm i
	\) has the same sign as \(i\).
\end{proof}

\section{\(\mathsf{ADE}\)-graded groups}

Recall that if \(R\) is an associative unital ring and
\(
	m \geq 3
\), then the \textit{Steinberg group}
\(
	\stlin(m, R)
\) is the abstract group with the generators
\(
	t_{ij}(x)
\) for
\(
	1 \leq i \neq j \leq m
\),
\(
	x \in R
\) and the relations
\begin{align*}
	t_{ij}(x)\, t_{ij}(x') &= t_{ij}(x + x');
\\
	[t_{ij}(x), t_{jk}(y)] &= t_{ik}(x y)
	\text{ for } i \neq k;
\\
	[t_{ij}(x), t_{kl}(y)] &= 1
	\text{ for } j \neq k \text{ and } i \neq l.
\end{align*}
This group is
\(
	\mathsf A_{m - 1}
\)-graded. Similarly, if \(K\) is a commutative unital ring and \(\Phi\) is a crystallographic root system of rank \(\geq 2\), then the Steinberg group
\(
	\stlin(\Phi, K)
\) is generated by
\(
	t_\alpha(x)
\) for
\(
	\alpha \in \Phi
\),
\(
	x \in K
\) with the relations
\[
	t_\alpha(x)\, t_\alpha(x') = t_\alpha(x + x');
\quad
	[t_\alpha(x), t_\beta(y)]
	= \prod_{
		\substack{
			i \alpha + j \beta \in \Phi
		\\
			i, j \in \mathbb N_{> 0}
		}
	} t_{i \alpha + j \beta}(
		N_{\alpha \beta i j} x^i y^j
	).
\]
This group is \(\Phi\)-graded.

\begin{theorem} \label{group-a}
	Let \(G\) be an
	\(
		\mathsf A_\ell
	\)-graded group for
	\(
		\ell \geq 3
	\). Then there exists an associative unital ring \(R\) and a homomorphism
	\(
		Q \colon \stlin(\ell + 1, R) \to G
	\) inducing isomorphisms between the root subgroups (so we may identify
	\(
		P_\alpha
	\) with \(R\)).

	Any other such homomorphism
	\(
		Q' \colon \stlin(\ell + 1, R') \to G
	\) is of the following type. Choose a ring isomorphism
	\(
		F \colon R' \to R
	\)
	and elements
	\(
		a_1, \ldots, a_{\ell + 1} \in R^*
	\) and let
	\(
		Q'(t_{ij}(x))
		= Q(
			t_{ij}(a_i\, F(x)\, a_j^{-1})
		)
	\). The tuple
	\(
		(F, a_1, \ldots, a_{\ell + 1})
	\) is uniquely determined by \(Q'\) up to the change
	\(
		F \mapsto c^{-1} F c
	\),
	\(
		a_i \mapsto a_i c
	\) for
	\(
		c \in R^*
	\).
\end{theorem}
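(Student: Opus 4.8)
The strategy is to use the Steinberg group of the ring $R$ associated to the $\mathsf{A}_\ell$-graded structure $(P_\alpha)_{\alpha\in\mathsf{A}_\ell}$ and exploit the presentation of $\stlin(\ell+1,R)$ together with the $\Phi$-commutator relations in $G$.

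First I would extract the ring structure. After fixing a maximal torus coming from the Weyl elements $n_{i,i+1}$ (which exist since $G$ is $\mathsf{A}_\ell$-graded), I identify each $P_{\mathrm e_i-\mathrm e_j}$ with a common abelian group $R$ by transporting via suitable products of these Weyl elements, exactly as in the matrix-identification arguments of \S 4 (the analogue of Lemma \ref{ba-ring-3} but much simpler in the simply-laced case). The multiplication $x\times_{ijk}y=C^{\mathrm e_i-\mathrm e_k}_{\mathrm e_i-\mathrm e_j,\,\mathrm e_j-\mathrm e_k}(x,y)$ from Theorem \ref{phi-0-ring}, after this identification, becomes a single associative biadditive multiplication on $R$ with a two-sided unit (the images of the distinguished elements $e_{ij}$ of the Weyl elements), using the axioms listed after Theorem \ref{phi-0-ring}. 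That $R$ is unital associative with $1$ requires the rank $\geq 3$ hypothesis (the $\mathsf{A}_3$ relation gives associativity, and the Weyl element data give the unit and its invertibility). Then by the defining relations of $\stlin(\ell+1,R)$, the assignment $t_{ij}(x)\mapsto$ (the root element of $G$ in $G_{\mathrm e_i-\mathrm e_j}$ corresponding to $x\in R\cong P_{\mathrm e_i-\mathrm e_j}$) respects all three families of relations — additivity is the group law on $P_\alpha$, the commutator relations are precisely the $\mathsf{A}_\ell$-commutator relations with the computed structure constants, and the "far commuting" relations come from $[G_\alpha,G_\beta]=1$ when $\interval\alpha\beta=\varnothing$. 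This defines $Q\colon\stlin(\ell+1,R)\to G$, and it induces isomorphisms on root subgroups by construction.

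For the uniqueness/classification part, suppose $Q'\colon\stlin(\ell+1,R')\to G$ is another such homomorphism. Since both $Q$ and $Q'$ induce isomorphisms on root subgroups, for each $(i,j)$ we get an induced bijection $R'\cong P_{\mathrm e_i-\mathrm e_j}\cong R$; call it $g_{ij}$. Comparing the additivity relations shows each $g_{ij}$ is an isomorphism of abelian groups. Comparing the commutator relation $[t_{ij}(x),t_{jk}(y)]=t_{ik}(xy)$ pushed through $Q'$ versus $Q$ yields $g_{ik}(x'y')=g_{ij}(x')\,g_{jk}(y')$ in $R$; a standard argument (set $a_i$ to record the discrepancy of $g_{ij}$ from a fixed reference iso $F$, using the unit) shows $g_{ij}(x')=a_i\,F(x')\,a_j^{-1}$ for a ring isomorphism $F\colon R'\to R$ and units $a_i\in R^*$. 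The ambiguity $(F,a_i)\mapsto(c^{-1}Fc,\,a_ic)$ is exactly the stabilizer of this normal form: replacing $F$ by $c^{-1}Fc$ and each $a_i$ by $a_ic$ leaves every $g_{ij}=a_i\,F(\cdot)\,a_j^{-1}$ unchanged, and conversely two tuples giving the same $g_{ij}$ for all $i,j$ differ by such a $c$ (take $c$ to be the discrepancy at one index and propagate).

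**Main obstacle.** The technical heart — and the step I expect to be most delicate — is producing the unital associative ring cleanly from the abstract commutator data: matching up the identifications coming from different chains of Weyl elements (well-definedness of the single multiplication on $R$ independent of choices), checking that the distinguished elements $e_{ij}$ multiply correctly to give a genuine two-sided identity, and verifying invertibility of the relevant elements so that the formula $t_{ij}(x)\mapsto t_{ij}(a_i F(x)a_j^{-1})$ in the second part makes sense. This is where the $\mathsf{A}_3$-subsystem axiom (associativity) and Lemma \ref{weyl-crit} are essential, and where one must be careful that the rank $\geq 3$ hypothesis is genuinely used. The surjectivity of $Q$ onto the relevant subgroup generated by root elements, and the fact that $Q$ (not just its restriction to root subgroups) is well-defined, both follow formally once the presentation of $\stlin(\ell+1,R)$ is matched against the relations holding in $G$, so those are routine.
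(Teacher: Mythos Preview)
Your proposal is correct and follows essentially the same mathematical route as the paper: identify all root subgroups via the Weyl elements, read off the ring multiplication from the commutator maps \(C_{\alpha\beta}^{\gamma}\), obtain associativity and unit from the \(\mathsf A_3\)-axiom and Weyl-element data, and for the uniqueness part compare the induced additive isomorphisms \(g_{ij}\colon R'\to R\) via the identity \(g_{ij}(x)\,g_{jk}(y)=g_{ik}(xy)\) to extract \(F\) and the units \(a_i\).

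The only notable packaging difference is that the paper does not redo the identification lemmas directly for \(\mathsf A_\ell\); instead it views \(G\) as a group with \(\mathsf B_{\ell+1}\)-commutator relations by declaring \(G_\alpha=1\) for the extra (short and ``negative-index'') roots, and then simply invokes the \((\mathsf B_{\ell+1},\mathsf A_\ell)\)-ring machinery of \S4. In that framework one gets \(R_{+-}=R_{-+}=0\), \(\Delta^0_{\pm}=\dot 0\), and \(R_{++}\) is automatically the desired associative unital ring. This reuses Lemmas \ref{ba-ring-1}--\ref{ba-ring-3} wholesale rather than reproving their (easier) simply-laced analogues; your direct approach is more self-contained but amounts to the same computations. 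Your identification of the ``main obstacle'' is accurate and is exactly what those lemmas handle.
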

\begin{proof}
	We may consider \(G\) as a group with
	\(
		\mathsf B_{\ell + 1}
	\) commutator relations by taking
	\(
		G_\alpha = 1
	\) for additional roots \(\alpha\). Let
	\(
		(R_{ij}, \Delta^0_i)_{
			i, j \in \{{-}, {+}\}
		}
	\) be the
	\(
		(\mathsf B_{\ell + 1}, \mathsf A_\ell)
	\)-ring constructed by \(G\). Clearly,
	\(
		R_{+-} = R_{-+} = 0
	\),
	\(
		\Delta^0_{-} = \Delta^0_{+} = \dot 0
	\), and
	\(
		R_{--} \cong R_{++}^\op,\,
		x \mapsto \inv x
	\). By definition of
	\(
		(\mathsf B_{\ell + 1}, \mathsf A_\ell)
	\)-rings,
	\(
		R_{++}
	\) is an associative unital ring and the relations between the root elements
	\(
		t_{ij}(x)
	\) are the same as in the Steinberg group.

	Suppose that
	\(
		Q \colon \stlin(\ell + 1, R) \to G
	\) and
	\(
		Q' \colon \stlin(\ell + 1, R') \to G
	\) are group homomorphisms inducing isomorphisms between root subgroups. There are additive isomorphisms
	\(
		F_{ij} \colon R' \to R
	\) for
	\(
		1 \leq i \neq j \leq \ell + 1
	\) such that
	\(
		Q'(t_{ij}(x))
		= Q\bigl( t_{ij}(F_{ij}(x)) \bigr)
	\). It follows that
	\(
		F_{ij}(a)\, F_{jk}(b) = F_{ik}(a b)
	\) for distinct \(i\), \(j\), \(k\). In particular,
	\[
			F_{ij}(1)\, F_{ji}(1)\, F_{ik}(a)
		=
			F_{ij}(1)\, F_{jk}(a)
		=
			F_{ik}(a)
	\]
	for
	\(
		k \notin \{i, j\}
	\), so
	\(
		F_{ij}(1)\, F_{ji}(1) = 1
	\). Let
	\(
		a_i = F_{i, \ell + 1}(1)
	\) for
	\(
		1 \leq i \leq \ell
	\),
	\(
		a_{\ell + 1} = 1
	\), and
	\(
		F(x) = F_{\ell + 1, 1}(x)\, a_1
	\). Then
	\(
		F_{ij}(1) = a_i a_j^{-1}
	\),
	\(
		F_{ij}(x) = a_i\, F(x)\, a_j^{-1}
	\),
	\(
		F(1) = 1
	\), and
	\(
		F(x y) = F(x)\, F(y)
	\). Conversely, for any such tuple
	\(
		(F, a_1, \ldots, a_{\ell + 1})
	\) the resulting homomorphism \(Q'\) is a well-defined homomorphism from the Steinberg group inducing isomorphisms on the root subgroups.
\end{proof}

Recall that the \textit{even special orthogonal group}
\(
	\sorth(2 \ell, K)
\) for
\(
	\ell \geq 2
\) is the Chevalley group
\(
	G(\mathsf D_\ell, K)
\) for a suitable choice of the weight lattice, it is
\(
	\mathsf D_\ell
\)-graded. The corresponding Steinberg group
\(
	\storth(2 \ell, K)
\) is also
\(
	\mathsf D_\ell
\)-graded, it has generators
\(
	t_{ij}(x)
\) for
\(
	x \in K
\),
\(
	i \neq \pm j
\) and the relations
\begin{align*}
	t_{ij}(x)\, t_{ij}(x') &= t_{ij}(x + x'),
&
	[t_{ij}(x), t_{kl}(y)] &= 1
	\text{ for }
	\{-i, j\} \cap \{k, -l\} = \varnothing,
\\
	t_{ij}(x) &= t_{-j, -i}(-x),
&
	[t_{ij}(x), t_{jk}(y)] &= t_{ik}(x y)
	\text{ for } i \neq \pm k,
\\&&
	[t_{-i, j}(x), t_{ji}(y)] &= 1.
\end{align*}

\begin{theorem} \label{group-d}
	Let \(G\) be a
	\(
		\mathsf D_\ell
	\)-graded group for
	\(
		\ell \geq 4
	\). Then there is a commutative unital ring \(K\) and a homomorphism
	\(
		Q \colon \storth(2 \ell, K) \to G
	\) from the even orthogonal Steinberg group inducing isomorphisms between the root subgroups.

	Any other such homomorphism
	\(
		Q' \colon \storth(2 \ell, K') \to G
	\) is of the following type. Choose a ring isomorphism
	\(
		F \colon K' \to K
	\) and elements
	\(
		a_{-\ell}, \ldots, a_{-1},
		a_1, \ldots, a_\ell
		\in K^*
	\) such that
	\(
		a_i a_{-i} = a_j a_{-j}
	\) and let
	\(
		Q'(t_{ij}(x))
		= Q(
			t_{ij}(F(x)\, a_i / a_j)
		)
	\). The tuple
	\(
		(F, a_i)_i
	\) is uniquely determined by \(Q'\) up to the change
	\(
		a_i \mapsto a_i c^{\eps_i}
	\) for
	\(
		c \in K^*
	\).
\end{theorem}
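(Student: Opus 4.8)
The plan is to run the $(\mathsf B_\ell, \mathsf A_{\ell - 1})$-ring machinery of \S 4 and then feed in the additional Weyl elements coming from the $\mathsf D_\ell$-grading.

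\emph{Reduction to a ring.} Regard $G$ as a group with $\mathsf B_\ell$-commutator relations by declaring $G_{\pm \mathrm e_i} = 1$. Since $\mathsf A_{\ell - 1} = \{\mathrm e_i - \mathrm e_j\}$ is a subsystem of $\mathsf D_\ell$ and $G$ is $\mathsf D_\ell$-graded, $G$ carries $\mathsf A_{\ell - 1}$-Weyl elements $n_{i, i + 1}$ (the condition on $\up n{G_\beta}$ for short $\beta$ is automatic, as $s_{\mathrm e_i - \mathrm e_j}$ permutes the trivial short root subgroups). Hence \S 4 produces a $(\mathsf B_\ell, \mathsf A_{\ell - 1})$-ring $(R_{ij}, \Delta^0_i)$ attached to $G$. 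Because the short root subgroups are trivial we get $\Delta^0_- = \Delta^0_+ = \dot 0$; the operations $\phi$, $\circ$, $\widehat \rho$, and $({-}) \cdot ({=})$ all vanish and the axioms collapse to: $S := R_{++}$ is an associative unital ring (associative since $\ell \geq 4$), $R_{--} \cong S^{\op}$ via the involution $\inv{(-)}$, and $R_{+-}$, $R_{-+}$ together with the multiplication maps form the off-diagonal part of a $2 \times 2$ matrix ring with involution over $S$.

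\emph{Commutativity.} The $\mathsf D_\ell$-grading supplies a Weyl element $n$ for the long root $\mathrm e_{\ell - 1} + \mathrm e_\ell = \mathrm e_{\ell - 1} - \mathrm e_{-\ell}$, so lemma \ref{ba-ring-1} applies to $n = t_{\ell - 1, -\ell}(a)\, t_{-\ell, \ell - 1}(b)\, t_{\ell - 1, -\ell}(a)$. Together with lemma \ref{weyl-crit}, the resulting conjugation formulas show that $R_{+-}$ and $R_{-+}$ are invertible bimodules with distinguished generators, so the matrix ring degenerates. Since $\weyl(\mathsf D_\ell)$ acts transitively on roots, the available Weyl elements identify every $R_{ij}$ with $S$; under these identifications all multiplication maps become the product of $S$ and $\inv{(-)}$ becomes a ring involution $\sigma$ of $S$. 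A further short computation with the conjugation formulas for $n$ — equivalently, the relation $t_{ij}(x) = t_{-j, -i}(-x)$ read after the identifications — forces $\sigma = \id_S$. Hence $S =: K$ is a commutative unital ring and the surviving axioms are precisely the defining relations of $\storth(2\ell, K)$; its universal property yields a homomorphism $Q \colon \storth(2\ell, K) \to G$, and the maps $t_\bullet$ are bijective onto the root subgroups, so $Q$ induces isomorphisms between root subgroups. This commutativity step — showing that the extra $\mathsf D_\ell$-Weyl elements collapse the matrix-ring-with-involution to a commutative ring — is the main obstacle, and is where the sign conventions and the bookkeeping of the canonical identifications of \S 4 must be handled carefully.

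\emph{Classification of $Q'$.} This is parallel to the proof of theorem \ref{group-a}. An arbitrary $Q' \colon \storth(2\ell, K') \to G$ inducing isomorphisms on root subgroups yields additive isomorphisms $F_{ij} \colon K' \to K$ with $Q'(t_{ij}(x)) = Q(t_{ij}(F_{ij}(x)))$. Conjugating $[t_{ij}(x), t_{jk}(y)] = t_{ik}(xy)$ gives $F_{ij}(x)\, F_{jk}(y) = F_{ik}(xy)$, so the $F_{ij}(1)$ form a unit cocycle; the relation $t_{ij}(x) = t_{-j, -i}(-x)$ gives $F_{ij}(x) = F_{-j, -i}(x)$. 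Choosing a base index (and defining the ``negative'' units through an index of the opposite sign), one extracts a ring isomorphism $F \colon K' \to K$ and units $a_i$ with $F_{ij}(x) = F(x)\, a_i / a_j$ and $a_i a_{-i} = a_j a_{-j}$ (the translation of $F_{ij}(1) = F_{-j, -i}(1)$), uniquely determined by $Q'$ up to $a_i \mapsto a_i c^{\eps_i}$; conversely, using $a_i a_{-i}$ constant and the commutativity of $K$ one checks that every such tuple defines a well-defined homomorphism $Q'$. Once commutativity is in hand, this last step is a routine transcription of the $(\mathsf B_\ell, \mathsf A_{\ell - 1})$-ring axioms into orthogonal Steinberg relations together with a cocycle computation as in the $\mathsf A_\ell$ case.
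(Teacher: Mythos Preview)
Your outline matches the paper's approach: view $G$ with $\mathsf B_\ell$-commutator relations and trivial short root subgroups, form the $(\mathsf B_\ell,\mathsf A_{\ell-1})$-ring, use an extra $\mathsf D_\ell$-Weyl element to produce units $1_{-+},1_{+-}$ via lemma~\ref{weyl-a-long}, and then run the cocycle argument of theorem~\ref{group-a} for the classification. The classification paragraph is fine.

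The gap is in the commutativity step. When you write that ``the operations $\phi$, $\circ$, $\widehat\rho$, and $({-})\cdot({=})$ all vanish and the axioms collapse to \ldots\ a $2\times 2$ matrix ring with involution'', you are discarding precisely the axiom that does the work. The $(\mathsf B_\ell,\mathsf A_{\ell-1})$-ring axiom
\[
\widehat\rho\bigl(\phi(xy)\bigr)\,z \;=\; x\,(yz)\;-\;\inv y\,(\inv x\, z)
\]
does not become vacuous when $\Delta^0_{+}=\dot 0$: its left-hand side vanishes, but its right-hand side survives, yielding the nontrivial identity $x\,(yz)=\inv y\,(\inv x\, z)$ on $R_{**}$. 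After inserting the units $1_{\pm\mp}$ this reads $xyz=y^{*}x^{*}\lambda z$ in $K=R_{++}$, and specialising $x,y,z$ to $1$ gives $\lambda=1$, $x^{*}=x$, and $xy=yx$ in one stroke. This is exactly the paper's argument.

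By contrast, your stated mechanism --- ``the relation $t_{ij}(x)=t_{-j,-i}(-x)$ read after the identifications'' --- cannot by itself force $\sigma=\id$. In the partial graded odd form ring one has $t_{ij}(x)=t_{-j,-i}(-\inv x)$ tautologically; after identifying $R_{ij}$ and $R_{-j,-i}$ with $K$ this only records how the two parametrisations of the same root subgroup compare and is compatible with an arbitrary anti-automorphism. The constraint comes from compatibility of the \emph{multiplication} maps across sign classes, which is encoded in the axiom above (equivalently, in the vanishing commutator $[t_{-i,j}(x),t_{ji}(y)]=1$ and its interaction with the $\mathsf A_2$-type commutators via theorem~\ref{phi-0-ring}). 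So your ``short computation'' needs to point here rather than to the symmetry relation.
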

\begin{proof}
	Let
	\(
		(R_{ij}, \Delta^0_i)_{
			i, j \in \{{-}, {+}\}
		}
	\) be the
	\(
		(\mathsf B_\ell, \mathsf A_{\ell - 1})
	\)-ring constructed by \(G\),
	\(
		\Delta^0_{-} = \Delta^0_{+} = \dot 0
	\). Choose a Weyl element
	\[
			n_{-1, 2}
		=
			t_{-1, 2}(1_{-+})\,
			t_{2, -1}(-1_{+-})\,
			t_{-1, 2}(1_{-+})
	\]
	with
	\(
		1_{-+} 1_{+-} = 1_{-}
	\) and
	\(
		1_{+-} 1_{-+} = 1_{+}
	\) by lemma \ref{weyl-a-long}. Then
	\(
		\sMat{R_{--}}{R_{-+}}{R_{+-}}{R_{++}}
		\cong \mat(2, K)
	\) for the associative unital ring
	\(
		K = R_{++}
	\), where
	\(
		1_{-}
	\),
	\(
		1_{-+}
	\),
	\(
		1_{+-}
	\), and
	\(
		1_{+}
	\) are matrix units. Let
	\(
		x^* = 1_{+-} \inv a 1_{-+}
	\) for
	\(
		x \in K
	\) and
	\(
		\lambda = 1_{+-} \inv{1_{-+}} \in K^*
	\). By axioms of
	\(
		(\mathsf B_\ell, \mathsf A_{\ell - 1})
	\)-rings,
	\(
		x \mapsto x^*
	\) is a ring anti-endomorphism,
	\(
		\lambda^* = \lambda^{-1}
	\),
	\(
		x^{**} = \lambda x \lambda^*
	\),
	\(
		\lambda^{**} = \lambda
	\), and
	\(
		x y z = y^* x^* \lambda z
	\) for all
	\(
		x, y, z \in K
	\) (the last identity follows from the axiom for
	\(
		\widehat \rho(\phi(1_{-+} x y))\, z
	\) since
	\(
		\Delta^0_{+} = \dot 0
	\)). It follows that
	\(
		\lambda = 1
	\),
	\(
		x^* = x
	\), and \(K\) is commutative. Thus we have the required homomorphism
	\(
		Q \colon \storth(2 \ell, K) \to G
	\).

	Now let
	\(
		Q' \colon \storth(2 \ell, K') \to G
	\) be another such homomorphism. There are additive isomorphisms
	\(
		F_{ij} \colon K' \to K
	\) such that
	\(
		Q'(t_{ij}(x))
		= Q\bigl( t_{ij}(F_{ij}(x)) \bigr)
	\). It follows that
	\(
		F_{ij}(x)\, F_{jk}(y) = F_{ik}(x y)
	\) and
	\(
		F_{ij}(x) = F_{-j, -i}(x)
	\) for distinct
	\(
		|i|
	\),
	\(
		|j|
	\),
	\(
		|k|
	\). As in the proof of theorem \ref{group-a},
	\(
		F_{ij}(1) \in K^*
	\) and there are
	\(
		a_i \in K^*
	\) and a ring isomorphism
	\(
		F \colon K' \to K
	\) such that
	\(
		F_{ij}(x) = F(x)\, a_i / a_j
	\), e.g.
	\(
		a_i = F_{i \ell}(1)
	\) for
	\(
		|i| < \ell
	\),
	\(
		a_\ell = 1
	\),
	\(
		a_{-\ell} = F_{-\ell, 1}(1)\, a_1
	\), and
	\(
		F(x) = F_{\ell 1}(x)\, a_1
	\). The second identity implies that
	\(
		a_{-i} a_i = a_{-j} a_j
	\). Conversely, for any such tuple
	\(
		(F, a_i)_i
	\) the resulting homomorphism \(Q'\) is well-defined.
\end{proof}

Now let \(G\) be an
\(
	\mathsf E_\ell
\)-graded group for
\(
	\ell \in \{6, 7, 8\}
\). We are going to construct its coordinatisation
\(
	\stlin(\mathsf E_\ell, K) \to G
\) for some commutative unital ring \(K\). Let us fix parametrizations
\(
	t_\alpha
\) of root subgroups of the ``standard'' group
\(
	G_{\mathrm{std}}
	= G^{\mathrm{ad}}(\mathsf E_\ell, \mathbb C)
\) and let
\(
	N_{\alpha \beta i j}
\),
\(
	d_{\alpha \beta}
\) be the corresponding structure constants. We also need the torus
\[
	T^{\mathsf{ad}}(\mathsf E_\ell, \mathbb Z)
	= \bigl\{
		(\sigma_\alpha)_{\alpha \in \mathsf E_\ell}
		\mid
		\sigma_\alpha \in \{-1, 1\},\,
		\sigma_\alpha \sigma_\beta = \sigma_\gamma
		\text{ for } \alpha + \beta = \gamma
	\bigr\}
	\cong \{-1, 1\}^\ell.
\]
This group acts on root subgroups of both \(G\) and
\(
	G_{\mathrm{std}}
\) by
\(
	\up{(\sigma_\beta)_\beta}{t_\alpha(x)}
	= t_\alpha(\sigma_\alpha x)
\) preserving the commutator relations. If
\(
	g = (\sigma_\gamma)_\gamma
	\in T^{\mathrm{ad}}(\mathsf E_\ell, \mathbb Z)
\) is arbitrary and
\(
	n_\alpha \in G
\) is \(\alpha\)-Weyl, then clearly
\[
	\up{n_\alpha g n_\alpha^{-1}}{t_\beta(x)}
	= t_\beta(\sigma_{s_\alpha(\beta)} x)
\]
in \(G\) or
\(
	G_{\mathrm{std}}
\).

\begin{lemma} \label{e-braid}
	For any root
	\(
		\alpha \in \mathsf E_\ell
	\) there is
	\(
		g_\alpha
		\in T^{\mathrm{ad}}(\mathsf E_\ell, \mathbb Z)
	\) such that in every
	\(
		\mathsf E_\ell
	\)-graded group \(G\) every \(\alpha\)-Weyl element
	\(
		n_\alpha
	\) satisfies
	\(
		[n_\alpha^2 g_\alpha^{-1}, G_\beta] = 1
	\). For any
	\(
		\alpha \perp \beta
	\) all \(\alpha\)-Weyl elements commute with all \(\beta\)-Weyl elements. For all roots
	\(
		\alpha, \beta \in \mathsf E_\ell
	\) such that
	\(
		\alpha + \beta \in \mathsf E_\ell
	\) there is
	\(
		g_{\alpha \beta}
		\in T^{\mathrm{ad}}(\mathsf E_\ell, \mathbb Z)
	\) such that in any
	\(
		\mathsf E_\ell
	\)-graded group \(G\) all \(\alpha\)-Weyl and \(\beta\)-Weyl elements
	\(
		n_\alpha
	\),
	\(
		n_\beta
	\) satisfy
	\(
		[
			(n_\alpha n_\beta)^3\,
			g_{\alpha \beta}^{-1},
			G_\gamma
		]
		= 1
	\).
\end{lemma}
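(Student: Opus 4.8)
The plan is to deduce each of the three assertions from a computation inside a suitable classical subgroup of $G$: every configuration of roots that occurs fits into a subsystem of $\mathsf E_\ell$ of type $\mathsf A_3$ or $\mathsf D_5$, and inside such a subsystem Theorems \ref{group-a} and \ref{group-d}, together with the explicit conjugation formulas of Lemma \ref{weyl-a-long}, make the Weyl elements completely transparent.

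First I would isolate two auxiliary facts. \emph{(a) Restriction to subsystems.} If $\Psi\subseteq\mathsf E_\ell$ is a closed subsystem which is the intersection of $\mathsf E_\ell$ with a linear subspace, then $G':=\langle G_\gamma\mid\gamma\in\Psi\rangle$ is a $\Psi$-graded group: it has $\Psi$-commutator relations (for $\alpha,\beta\in\Psi$ the set $\interval\alpha\beta$ and the special closed subsets of $\Psi$ are the same whether formed in $\Psi$ or in $\mathsf E_\ell$, and the injectivity/centrality conditions come from Lemma \ref{comm-rel} applied in $G$), and since $G$ is $\mathsf E_\ell$-graded it contains, for every $\gamma\in\Psi$, a $\gamma$-Weyl element, which automatically lies in $G_\gamma G_{-\gamma}G_\gamma\subseteq G'$ and is a $\gamma$-Weyl element of $G'$; conversely every $\gamma$-Weyl element of $G$ with $\gamma\in\Psi$ arises this way. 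Hence Theorems \ref{group-a} and \ref{group-d} and Lemma \ref{weyl-a-long} may be applied to $G'$. \emph{(b) Subsystem geometry.} Every subsystem of $\mathsf E_\ell$ of rank $\le 3$ is, up to $\mathrm W(\mathsf E_\ell)$, of type $\mathsf A_1^k$ ($k\le 3$), $\mathsf A_2$, $\mathsf A_2\times\mathsf A_1$ or $\mathsf A_3$, each of which embeds as a subsystem into $\mathsf A_3$ or $\mathsf D_5$; inspecting the standard realizations one checks that every subsystem of $\mathsf E_\ell$ of rank $\le 3$ lies in a subsystem of $\mathsf E_\ell$ of type $\mathsf A_3$ or $\mathsf D_5$. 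In particular any two roots of $\mathsf E_\ell$ lie in a common $\mathsf A_3$, and any three lie in a common subsystem of type $\mathsf A_3$ or $\mathsf D_5$.

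For the orthogonality claim no restriction is needed: if $\alpha\perp\beta$ then $\interval\alpha\beta=\varnothing$ — a root $s\alpha+t\beta$ with $s,t>0$ would satisfy $\langle s\alpha+t\beta,\alpha^\vee\rangle=2s\in\{-1,0,1\}$ and likewise $2t\in\{-1,0,1\}$, forcing $s=t=1/2$ and hence $(s\alpha+t\beta)\cdot(s\alpha+t\beta)=\tfrac14(\alpha\cdot\alpha+\beta\cdot\beta)$, not the common root length — so the commutator relations give $[G_{\pm\alpha},G_{\pm\beta}]=1$, and any element of $G_\alpha G_{-\alpha}G_\alpha$ commutes with any element of $G_\beta G_{-\beta}G_\beta$. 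For the square, given $\alpha$ and an arbitrary target $\beta$, choose $\Psi\supseteq\{\alpha,\beta\}$ of type $\mathsf A_3$ and coordinatise $G'$ by $\stlin(4,R)$; writing $\alpha=\mathrm e_i-\mathrm e_j$, Lemma \ref{weyl-a-long} shows every $\alpha$-Weyl element of $G$ has the form $n=t_{ij}(a)\,t_{ji}(-a^{-1})\,t_{ij}(a)$ with $a\in R^*$, and applying the conjugation formulas twice gives that $n^2$ acts on $G_{\mathrm e_p-\mathrm e_q}$ by $x\mapsto(-1)^{\langle\mathrm e_p-\mathrm e_q,\alpha^\vee\rangle}x$, a central sign which is independent of $a$ since the factors $a$ and $a^{-1}$ cancel. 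Setting $g_\alpha:=\bigl((-1)^{\langle\gamma,\alpha^\vee\rangle}\bigr)_{\gamma\in\mathsf E_\ell}$ — which lies in $T^{\mathrm{ad}}(\mathsf E_\ell,\mathbb Z)$ because $\gamma\mapsto\langle\gamma,\alpha^\vee\rangle$ is additive — we find that $n^2$ and $g_\alpha$ act on $G_\beta$ in the same way, so $n^2 g_\alpha^{-1}$ centralises $G_\beta$; as $\beta$ and $n$ were arbitrary and $g_\alpha$ does not depend on $G$, this is the required element.

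For the product, given $\alpha+\beta\in\mathsf E_\ell$ and an arbitrary target $\gamma$, choose $\Psi\supseteq\{\alpha,\beta,\gamma\}$ of type $\mathsf A_3$ or $\mathsf D_5$ and coordinatise $G'$ by $\stlin(4,R)$ or $\storth(10,K)$. In the standard notation $\alpha$ and $\beta$ are then $\mathrm e_i-\mathrm e_j$ and $\mathrm e_j-\mathrm e_k$ (or $\mathrm e_k-\mathrm e_i$) for distinct indices $i,j,k$, and $n_\alpha,n_\beta$ are standard Weyl elements; conjugating a root element $t_{pq}(x)$ in turn by $n_\beta,n_\alpha,n_\beta,n_\alpha,n_\beta,n_\alpha$ and using Lemma \ref{weyl-a-long}, the factors $a^{\pm1},b^{\pm1}$ introduced along the way telescope to $1$, so $(n_\alpha n_\beta)^3$ acts trivially on every root subgroup of $\Psi$, in particular on $G_\gamma$. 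As $\gamma$ was arbitrary, $(n_\alpha n_\beta)^3$ centralises every root subgroup of $G$, and the assertion holds with $g_{\alpha\beta}=1$ (a general $g_{\alpha\beta}$ is allowed in the statement only to accommodate the paper's fixed parametrisation of $G_{\mathrm{std}}$, against which $g_{\alpha\beta}$ is the corresponding torus element, the same for every $G$). The step I expect to be the real obstacle is not any of these square/hexagon computations — they are already implicit in Lemma \ref{weyl-a-long} — but fact (a): one must check carefully that the restriction of an $\mathsf E_\ell$-graded structure to a classical subsystem is an honest graded group of that subsystem's type, so that the coordinatisation theorems may legitimately be invoked.
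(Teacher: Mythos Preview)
Your proposal is correct and follows essentially the same route as the paper: reduce each identity to a classical subsystem containing all the roots involved, then invoke the coordinatisation theorems (\ref{group-a} or \ref{group-d}) together with the explicit conjugation formulae of Lemma~\ref{weyl-a-long}. The paper phrases this more tersely---it cites only Theorem~\ref{group-d} and embeds everything into a single $\mathsf D_m$ via the ``extend a base of the rank-$3$ subsystem to a base of $\mathsf E_\ell$'' argument---whereas you split into $\mathsf A_3$ and $\mathsf D_5$ and are explicit about the hexagon computation, even observing that one may take $g_{\alpha\beta}=1$; but these are elaborations of the same idea rather than a different approach.
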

\begin{proof}
	Such result is well-known for all simply-laced Chevalley groups. For arbitrary \(G\) we may apply theorem \ref{group-d} to a root subsystem of type
	\(
		\mathsf D_m
	\) containing all roots from an identity. Indeed, if
	\(
		\Psi \subseteq \mathsf E_\ell
	\) is a root subsystem of rank \(3\) (not of the type
	\(
		3 \mathsf A_1
	\) since the second claim is obvious), then any its base may be continued to a base of
	\(
		\mathsf E_\ell
	\), so without loss of generality it corresponds to a triple of roots in the Dynkin diagram of
	\(
		\mathsf E_\ell
	\) and at least two of them are neighbors. If these roots do not lie in a common sub-diagram of type
	\(
		\mathsf D_m
	\), then we may apply a suitable Weyl element permuting one of the chosen roots with its neighbor.
\end{proof}

Recall that the set of orthogonal roots
\(
	\mu^\perp \cap \mathsf E_\ell
\) to a root
\(
	\mu \in \mathsf E_\ell
\) is a root system itself of type
\(
	\mathsf A_5
\) for
\(
	\ell = 6
\),
\(
	\mathsf D_6
\) for
\(
	\ell = 7
\), and
\(
	\mathsf E_7
\) for
\(
	\ell = 8
\). If \(\mu\) is the highest root with respect to a base
\(
	\Delta \subseteq \mathsf E_\ell
\), then
\(
	\mu^\perp \cap \Delta
\) is a base of
\(
	\mu^\perp \cap \mathsf E_\ell
\). The stabilizer of \(\mu\) in the Weyl group
\(
	\mathrm W(\mathsf E_\ell)
\) is precisely the Weyl group
\(
	\mathrm W(\mu^\perp \cap \mathsf E_\ell)
\).

\begin{theorem} \label{group-e}
	Let \(G\) be an
	\(
		\mathsf E_\ell
	\)-graded group for
	\(
		\ell \in \{6, 7, 8\}
	\). Then there is a commutative unital ring \(K\) and a homomorphism
	\(
		Q \colon \stlin(\mathsf E_\ell, K) \to G
	\) inducing isomorphisms between the root subgroups.

	Any other such homomorphism
	\(
		Q' \colon \stlin(\mathsf E_\ell, K') \to G
	\) is of the following type. Choose a ring isomorphism
	\(
		F \colon K' \to K
	\) and element
	\(
		(a_\alpha)_{\alpha \in \Phi}
		\in T^{\mathrm{ad}}(\mathsf E_\ell, K)
	\) and let
	\(
		Q'(t_\alpha(x))
		= Q\bigl(
			t_\alpha(a_\alpha\, F(x))
		\bigr)
	\). Both \(F\) and
	\(
		(a_\alpha)_\alpha
	\) are uniquely determined by \(Q'\).
\end{theorem}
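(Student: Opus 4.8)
The plan is to reduce to a large subsystem of type $\mathsf D_{\ell-1}$, where Theorem~\ref{group-d} applies and manufactures the commutative ring, and then to propagate the resulting parameterization to all of $\mathsf E_\ell$ by conjugating with Weyl elements, keeping the sign conventions synchronized with the standard group $G_{\mathrm{std}} = G^{\mathrm{ad}}(\mathsf E_\ell, \mathbb C)$ (which is itself $\mathsf E_\ell$-graded) via Lemma~\ref{e-braid}. The commutativity and the basic ring structure will come essentially for free; the real work is the sign bookkeeping.

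First I would fix a base $\Delta$ of $\mathsf E_\ell$ and let $\Psi \subseteq \mathsf E_\ell$ be the subsystem of type $\mathsf D_{\ell-1}$ spanned by all of $\Delta$ except a suitable endpoint node (deleting a leaf of the length-two arm of the Dynkin diagram). The family $(P_\alpha)_{\alpha \in \Psi}$ is then a $\mathsf D_{\ell-1}$-graded group: the $\mathsf D_{\ell-1}$-commutator relations are inherited, and for $\alpha \in \Psi$ an $\alpha$-Weyl element of $G$ restricts to an $\alpha$-Weyl element of this subgroup because $s_\alpha$ preserves $\Psi$. Since $\ell - 1 \geq 4$, Theorem~\ref{group-d} supplies the commutative unital ring $K$ together with parameterizations $t_\alpha \colon K \to G$ for $\alpha \in \Psi$ satisfying the defining relations of $\storth(2(\ell-1), K)$. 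Running the same construction inside $G_{\mathrm{std}}$ identifies its $\Psi$-root subgroups with $\mathbb C$ compatibly with the standard Chevalley parameterization (up to an inner automorphism by the standard torus, which I absorb once and for all).

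Next I would extend the parameterization to every root. As $\weyl(\mathsf E_\ell)$ is transitive on $\mathsf E_\ell$ and $G$ possesses $\alpha$-Weyl elements for all $\alpha$, for each root $\alpha$ I pick $w \in \weyl(\mathsf E_\ell)$ with $w(\alpha_0) = \alpha$ for a fixed $\alpha_0 \in \Psi$, lift $w$ to a product $\widetilde w$ of Weyl elements, and set $t_\alpha(x) = \up{\widetilde w}{t_{\alpha_0}(x)}$. By Lemma~\ref{e-braid} any two lifts act on the root subgroups identically up to an element of $T^{\mathrm{ad}}(\mathsf E_\ell, \mathbb Z)$, i.e. up to a sign on each $P_\alpha$; choosing the very same lifts in $G$ as in $G_{\mathrm{std}}$ and normalizing against the fixed standard parameterization of $G_{\mathrm{std}}$ makes the $t_\alpha$ well defined. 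For a pair $\alpha, \beta$ with $\alpha + \beta \in \mathsf E_\ell$ the commutator $[t_\alpha(x), t_\beta(y)]$ is biadditive with values in $P_{\alpha+\beta} \cong K$; conjugating by a Weyl element carrying the $\mathsf A_2$-subsystem $\{\alpha,\beta\}$ into $\Psi$ (all $\mathsf A_2$- and all $\mathsf A_1 \times \mathsf A_1$-subsystems of $\mathsf E_\ell$ being $\weyl(\mathsf E_\ell)$-conjugate into $\Psi$) rewrites it as a commutator already known inside $\storth(2(\ell-1), K)$, so it equals $t_{\alpha+\beta}(N' xy)$ with $N' \in \{\pm 1\}$; the parallel computation in $G_{\mathrm{std}}$, with the same choices, identifies $N'$ with the standard structure constant $N_{\alpha \beta 1 1}$. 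Hence the maps $t_\alpha \colon K \to G$ satisfy the defining relations of $\stlin(\mathsf E_\ell, K)$ and induce the desired homomorphism $Q$, an isomorphism on each root subgroup.

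For the uniqueness statement I would follow the template of Theorems~\ref{group-a} and~\ref{group-d}. Writing $Q'(t_\alpha(x)) = Q(t_\alpha(F_\alpha(x)))$ for additive isomorphisms $F_\alpha \colon K' \to K$, comparison of the Chevalley commutator formulae for $Q$ and $Q'$ (using $N_{\alpha \beta 1 1} = \pm 1$) yields $F_\alpha(x)\,F_\beta(y) = F_{\alpha+\beta}(xy)$ whenever $\alpha + \beta \in \mathsf E_\ell$; exactly as there this forces $F_\alpha(1) \in K^*$ and produces a ring isomorphism $F \colon K' \to K$ with $F(1) = 1$ and elements $a_\alpha \in K^*$ satisfying $a_\alpha a_\beta = a_\gamma$ for $\alpha + \beta = \gamma$ — that is, $(a_\alpha)_\alpha \in T^{\mathrm{ad}}(\mathsf E_\ell, K)$ — such that $F_\alpha(x) = a_\alpha\,F(x)$; conversely every such pair defines a homomorphism $Q'$ of the stated form. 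Uniqueness is then immediate: evaluating $F_\alpha(x) = a_\alpha F(x)$ at $x = 1$ gives $a_\alpha = F_\alpha(1)$, which is determined by $Q'$, and then $F(x) = a_\alpha^{-1}F_\alpha(x)$ is determined as well. The main obstacle is the third paragraph: organizing the Weyl-element conjugations across all of $\mathsf E_\ell$ so that the induced signs are globally consistent and agree with those of $G_{\mathrm{std}}$ — this is precisely what Lemma~\ref{e-braid} is designed to deliver, and the rest is a matter of carefully carrying the signs through the computations.
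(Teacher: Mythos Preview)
Your proposal is correct and follows essentially the same strategy as the paper: build the parameterizations by Weyl-conjugation and use Lemma~\ref{e-braid} together with the standard group $G_{\mathrm{std}}$ to control the sign ambiguities, then read off the uniqueness clause from the multiplicativity relation $F_\alpha(x)\,F_\beta(y)=F_{\alpha+\beta}(xy)$. The only organizational difference is the starting point: the paper takes $K=G_\mu$ for the highest root $\mu$, propagates via the \emph{simple} reflections with the correction factors $d_{\alpha_i\beta}$, and introduces the multiplication afterwards (checking it is a commutative unital ring by passing to a $\mathsf D_5$-subsystem), whereas you invoke Theorem~\ref{group-d} on a parabolic $\mathsf D_{\ell-1}$ at the outset and thereby obtain the ring structure immediately; your route saves the separate ``define and verify the multiplication'' step, while the paper's route makes the well-definedness argument slightly cleaner because only words in simple reflections and the stabilizer of $\mu$ need to be compared.
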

\begin{proof}
	Fix a base
	\(
		\{\alpha_1, \ldots, \alpha_\ell\}
		\subseteq \mathsf E_\ell
	\) and choose
	\(
		\alpha_i
	\)-Weyl elements
	\(
		n_i
	\) for
	\(
		1 \leq i \leq \ell
	\). Let
	\(
		K = G_\mu
	\), where \(\mu\) is the highest root, for now it is just an abelian group (commutativity follows from theorem \ref{group-d} applied to a root subsystem of type
	\(
		\mathsf D_5
	\)). We are going to define isomorphisms
	\(
		t_\alpha \colon K \to G_\alpha
	\) for all
	\(
		\alpha \in \mathsf E_\ell
	\) in such a way that
	\(
		t_\mu(x) = x
	\) and
	\[
		t_{s_i(\beta)}(d_{\alpha_i \beta} x)
		= \up{n_i}{t_\beta(x)},
	\]
	where
	\(
		s_i = s_{\alpha_i}
	\).

	This definition allows us to evaluate all
	\(
		t_\alpha
	\) by choosing tuples
	\(
		(k_1, \ldots, k_m)
	\) such that
	\(
		\alpha
		= s_{k_1}(\ldots s_{k_m}(\mu) \ldots)
	\). We only have to check that
	\(
		t_\alpha
	\) is independent on such choice. Indeed, if
	\(
		s_{k_1} \cdots s_{k_m}
		= s_{k'_1} \cdots s_{k'_{m'}}
	\), then the two formulae for
	\(
		t_\alpha(x)
	\) coincide by lemma \ref{e-braid} and explicit description of relations between generators of reflection groups since the required identities between various
	\(
		d_{\beta \gamma}
	\),
	\(
		g_\beta
	\), and
	\(
		g_{\beta \gamma}
	\) hold in
	\(
		G_{\mathrm{std}}
	\). Moreover, if
	\(
		s_{k_1} \cdots s_{k_n}
	\) stabilizes \(\mu\), then without loss of generality
	\(
		\alpha_{k_i} \perp \mu
	\), so
	\(
		t_{\alpha}(x) = x
	\).

	Now let us introduce a multiplication on \(K\) in such a way that
	\(
		[t_\alpha(x), t_\beta(y)]
		= t_{\alpha + \beta}(N_{\alpha \beta 1 1} x y)
	\) for
	\(
		\alpha,
		\beta,
		\alpha + \beta
		\in \mathsf E_\ell
	\). Recall that in this case
	\(
		N_{\alpha \beta 1 1} \in \{-1, 1\}
	\). Such multiplication is independent on the choice of
	\(
		(\alpha, \beta)
	\) since the Weyl group acts transitively on the set of such pairs and the corresponding identities between various
	\(
		d_{\gamma \delta}
	\) and
	\(
		N_{\gamma \delta 1 1}
	\) hold in
	\(
		G_{\mathrm{std}}
	\). Here we use that the Dynkin diagram is not a chain, for
	\(
		\mathsf A_\ell
	\) there are two orbits of such pairs
	\(
		(\alpha, \beta)
	\) and the resulting ring may be non-commutative. The constructed multiplication makes \(K\) a commutative unital ring by theorem \ref{group-d} applied to a root subsystem of type
	\(
		\mathsf D_5
	\).

	If
	\(
		Q' \colon \stlin(\mathsf E_\ell, K') \to G
	\) is another such homomorphism given by isomorphisms
	\(
		t'_\alpha \colon K \to G
	\), then there are additive isomorphisms
	\(
		F_\alpha \colon K' \to K
	\) such that
	\(
		t'_\alpha(x) = t_\alpha(F_\alpha(x))
	\). Clearly,
	\(
		F_\alpha(x)\, F_\beta(y)
		= F_{\alpha + \beta}(x y)
	\) for
	\(
		\alpha,
		\beta,
		\alpha + \beta
		\in \mathsf E_\ell
	\). Let
	\(
		a_\alpha = F_\alpha(1)
	\), they form an element of
	\(
		T^{\mathrm{ad}}(\mathsf E_\ell, K)
	\). It is easy to see that
	\(
		F(x) = F_\alpha(x) / a_\alpha
	\) is independent of \(\alpha\).
\end{proof}

\section{Alternative rings}

In this section we collect basic facts about alternative rings needed below. Let \(R\) be a non-associative ring. The \textit{associator} of
\(
	x, y, z \in R
\) is the element
\(
	[x, y, z] = (x y)\, z - x\, (y z)
\). The \textit{nucleus} of \(R\) is the set
\[
	\mathrm N(R)
	= \{
		\nu \in R
		\mid
		[\nu, R, R] = [R, \nu, R] = [R, R, \nu] = 0
	\}.
\]
If \(R\) is unital, then
\(
	1 \in \mathrm N(R)
\). Every element
\(
	\nu \in \mathrm N(R)
\) satisfies the identities \cite[lemma 7.1.1]{zhevlakov}
\begin{align*}
	\nu\, [x, y, z] &= [\nu x, y, z],
&
	[x \nu, y, z] &= [x, \nu y, z],
\\
	[x, y \nu, z] &= [x, y, \nu z],
&
	[x, y, z \nu] &= [x, y, z]\, \nu.
\end{align*}
Moreover,
\(
	\mathrm N(R) \subseteq R
\) is an associative subring \cite[corollary 7.1.1]{zhevlakov}. The \textit{center} of \(R\) is
\[
	\mathrm C(R)
	= \{
		x \in \mathrm N(R)
		\mid
		x y = y x \text{ for all } y \in R
	\}.
\]

A ring \(R\) is called \textit{alternative} if the identities
\(
	[x, x, y] = [x, y, x] = [y, x, x] = 0
\) hold, i.e.
\(
	x^2 y = x\, (x y)
\),
\(
	(x y)\, x = x\, (y x)
\), and
\(
	(y x)\, x = y x^2
\). For such rings the associator is skew-symmetric,
\(
	[x_{\sigma(1)}, x_{\sigma(2)}, x_{\sigma(3)}]
	= (-1)^{\sign(\sigma)}\, [x_1, x_2, x_3]
\) for every permutation \(\sigma\). Moreover, the following identities hold (see \cite[lemma 2.3.7 and its corollary]{zhevlakov} and \cite{mccrimmon}).
\begin{align*}
	x\, (y\, (x z)) &= (x y x)\, z,
&
	[x, y, z x] &= x\, [y, z, x],
&
	x\, (y\, (z x)\, y) &= (x y)\, z\, (x y),
\\
	((y x)\, z)\, x &= y\, (x z x),
&
	[x y, z, x] &= [x, y, z]\, x,
&
	(x\, (y z)\, x)\, y &= (x y)\, z\, (x y),
\\
	(x y)\, (z x) &= x\, (y z)\, x,
&
	[y, x^2, z] &= x\, [y, x, z] + [y, x, z]\, x,
&
	x\, (y\, (x z x)\, y)\, x &= (x y x)\, z\, (x y x).
\end{align*}

Let \(R\) be a non-associative unital ring. An element
\(
	a \in R
\) is \textit{invertible} if there is
\(
	b \in R
\) such that
\(
	a b = b a = 1
\) (an \textit{inverse} of \(a\)). In an alternative ring \(a\) is invertible if and only if it is left- and right-invertible if and only if
\(
	x \mapsto a x
\) is bijective if and only if
\(
	x \mapsto x a
\) is bijective \cite[proposition 2]{mccrimmon}. If \(a\) and \(b\) are invertible in an alternative ring, then
\(
	a b
\) is invertible with
\(
	(a b)^{-1} = b^{-1} a^{-1}
\) \cite[proposition 2]{mccrimmon}. If
\(
	a \in R
\) is invertible in an alternative ring, then
\(
	[a, a^{-1}, R] = 0
\) \cite[lemma 10.3.7]{zhevlakov}.

Every \(2\)-generated unital subring of an alternative unital ring \(R\) is associative by Artin's theorem \cite[theorem 2.3.2]{zhevlakov}. A \(3\)-generated unital subring
\(
	\langle x, y, z \rangle
	\subseteq R
\) is associative if and only if
\(
	[x, y, z] = 0
\) \cite[theorem I.2]{bruck-kleinfeld}. If
\(
	S \subseteq R
\) is an associative unital subring and
\(
	a \in S
\) is invertible in \(R\), then the subring
\(
	\langle S, a^{-1} \rangle \subseteq R
\) is also associative by \cite[theorem I.3]{bruck-kleinfeld} and \cite[lemma 10.3.8]{zhevlakov}.

The next lemma is used in the existence theorem.

\begin{lemma} \label{inv-alter}
	Let \(R\) be an alternative unital ring and
	\(
		x, y \in R
	\) be such that
	\(
		1 + x y
	\) is invertible. Then
	\(
		1 + y x
	\) is invertible with
	\(
		(1 + y x)^{-1} = 1 - y\, (1 + x y)^{-1}\, x
	\), and
	\[
		x\, (\widehat y z) = \widehat x\, (y z),
	\quad
		y\, (\widehat x z) = \widehat y\, (x z),
	\quad
		(z \widehat x)\, y = (z x)\, \widehat y,
	\quad
		(z \widehat y)\, x = (z y)\, \widehat x
	\]
	for any
	\(
		z \in R
	\), where
	\[
		\widehat y
		= y\, (1 + x y)^{-1}
		= (1 + y x)^{-1}\, y,
	\quad
		\widehat x
		= x\, (1 + y x)^{-1}
		= (1 + x y)^{-1}\, x.
	\]
\end{lemma}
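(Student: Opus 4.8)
The plan is to peel off the assertions not involving $z$ and handle them in an associative subring, then reduce the four three‑variable identities to a single one by symmetry, and finally reduce that one to an associator identity provable by the Moufang relations of \S 6. For the first part: by Artin's theorem $\langle x,y\rangle$ is associative, and since $1+xy$ is invertible in $R$, adjoining $(1+xy)^{-1}$ preserves associativity by the Bruck--Kleinfeld facts recalled in \S 6; call the resulting associative subring $S$, and put $u=(1+xy)^{-1}$ and (once available) $v=(1+yx)^{-1}$. A short computation in $S$ gives $(1+yx)(1-yux)=1=(1-yux)(1+yx)$, so $1+yx$ is invertible with $v=1-yux$; then $(1+yx)(yu)=y(1+xy)u=y$ yields $yu=vy=\widehat y$, and symmetrically $xv=ux=\widehat x$. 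For later use I record the identities $x\widehat y=\widehat x y=1-u$ and $\widehat y x=y\widehat x=1-v$, all valid in the associative ring $S$.

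Next, the four identities involving $z$ are interchanged by two symmetries preserving both hypothesis and data: interchanging $x$ and $y$ (which swaps $\widehat x\leftrightarrow\widehat y$, replaces $1+xy$ by $1+yx$---invertible by the previous paragraph---and carries the first identity to the second and the third to the fourth), and passing to the opposite ring $R^\op$, which is again alternative, in which $\widehat x,\widehat y$ are unchanged as elements, and which carries the first identity to the fourth and the second to the third. Hence it suffices to prove the first identity $x(\widehat y z)=\widehat x(yz)$ for an arbitrary alternative unital ring with $1+xy$ invertible.

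Expanding both sides through $(ab)c=a(bc)+[a,b,c]$ and using $\widehat x y=x\widehat y$ from the first paragraph, one gets $\widehat x(yz)-x(\widehat y z)=[x,\widehat y,z]-[\widehat x,y,z]$, so the identity is equivalent to $[\widehat x,y,z]=[x,\widehat y,z]$, i.e. $[ux,y,z]=[x,yu,z]$. I would first treat the polynomial case $[(xy)^n x,y,z]=[x,y(xy)^n,z]$ for every $n\geq 0$: writing $(xy)^n x=x(yx)^n$ and $y(xy)^n=(yx)^n y$ and applying the left Moufang identity $x(y(xc))=(xyx)c$ repeatedly, both associators collapse to the same iterated product, the two left‑normed terms agreeing because $(xy)^n$ commutes with $xy$ in $S$. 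One then transports the identity from powers of $xy$ to $u=(1+xy)^{-1}$ using the invertibility, with the same Moufang identity supplying the operator relation $x(c+y(xc))=((1+xy)x)c$ needed to move $(1+xy)^{-1}$ past.

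The main obstacle is precisely this last passage. Since the associator $[x,y,z]$ need not vanish, the naive associative cancellation is unavailable; and, crucially, any manipulation that stays inside the associative subring $S$ is circular---it merely re‑expresses $u$ through its own defining relation---so one must genuinely invoke Moufang identities that move the outside variable $z$ relative to $x$ and $y$. Turning the step ``from powers of $xy$ to $(1+xy)^{-1}$'' into a rigorous argument, rather than a formal power‑series manipulation, is the technical heart; the cleanest route is to verify that the left‑multiplication map $c\mapsto c+y(xc)$ is bijective, which follows from the invertibility of $1+yx$ together with the relation $x(c+y(xc))=((1+xy)x)c$ just displayed.
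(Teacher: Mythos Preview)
Your first paragraph is correct and coincides with the paper's treatment: everything not involving \(z\) takes place in the associative subring \(S=\langle x,y,(1+xy)^{-1}\rangle\). The symmetry reduction to a single identity is also fine, and the rewriting \(x(\widehat y z)=\widehat x(yz)\iff [ux,y,z]=[x,yu,z]\) (using \((ux)y=x(yu)\) in \(S\)) is valid.

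The genuine gap is the passage from the polynomial identities \([(xy)^nx,y,z]=[x,y(xy)^n,z]\) to the identity with \(u=(1+xy)^{-1}\). The map \(a\mapsto[ax,y,z]-[x,ya,z]\) is additive in \(a\), so vanishing on \(\mathbb Z[xy]\) tells you nothing about \(u\), which does not lie in that subgroup. Your proposed fix---bijectivity of \(T\colon c\mapsto c+y(xc)\)---is not established by what you wrote: the Moufang relation \(L_xT=L_{(1+xy)x}\) yields bijectivity of \(T\) only if \(L_x\) is injective and \(L_{(1+xy)x}\) is surjective, and neither follows from the hypotheses since \(x\) is not assumed invertible. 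Moreover, even granting bijectivity of \(T\), you do not say how this gives \([ux,y,z]=[x,yu,z]\); the natural candidate inverse \(c\mapsto c-\widehat y(xc)\) fails, since composing with \(T\) leaves the residue \(-y[x,y,c]\) (coming from \([y,xy,c]=-y[x,y,c]\)), which is nonzero in general. So the ``technical heart'' you identify is not merely technical: as written, the argument does not close.

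The paper takes a different route that sidesteps this obstacle entirely. Since the identity is universally quantified over \(z\), one may substitute \(z\mapsto(1+xy)\,z\,(1+xy)\) and then multiply both sides by \(1+xy\) on the left and on the right. Using \([1+xy,(1+xy)^{-1},R]=0\) and the Moufang identities, this eliminates \(u\) altogether and the identity becomes
\[
(1+xyx)\bigl((yz)(1+xy)^2\bigr)=x\bigl(((y+yxy)z)(1+xy)^2\bigr),
\]
a polynomial identity in \(x,y,z\) alone, which is then verified by expanding and matching monomials via the Moufang identities listed in \S 6. The point you are missing is that substitution in the free variable \(z\), together with multiplication by the invertible \(1+xy\), converts the problem into an inverse-free one directly; no approximation of \((1+xy)^{-1}\) by polynomials is needed.
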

\begin{proof}
	The formula for the inverse of
	\(
		1 + y x
	\) is well-known and it may be easily checked in the associative subring generated by \(x\), \(y\), and
	\(
		(1 + x y)^{-1}
	\). We check only the identity
	\(
		x\, (\widehat y z) = \widehat x\, (y z)
	\) since the remaining ones are similar. Multiply both sides from the left and from the right by
	\(
		1 + x y
	\) and replace \(z\) by
	\(
		(1 + x y)\, z\, (1 + x y)
	\), so the identity becomes
	\[
		(1 + x y x)\, \bigl((y z)\, (1 + x y)^2\bigr)
		= x\, \bigl(
			((y + y x y)\, z)\, (1 + x y)^2
		\bigr).
	\]
	Now expand both sides. The resulting identity easily follows from the standard identities for monomials in alternative rings.
\end{proof}

\section{\(\mathsf B_\ell\)-graded groups}

Let \(G\) be a
\(
	\mathsf B_\ell
\)-graded group and
\(
	(R_{ij}, \Delta^0_i)_{
		i, j \in \{{-}, {+}\}
	}
\) be the corresponding
\(
	(\mathsf B_\ell, \mathsf A_{\ell - 1})
\)-ring. By lemma \ref{weyl-a-short} there is a Weyl element
\[
		n_1
	=
		t_1\bigl(
			\iota
			\cdot \widehat \rho(\iota)^{-1}\,
			\inv{\widehat \rho(\iota)}
		\bigr)\,
		t_{-1}\bigl(
			\iota
			\cdot (-\widehat \rho(\iota)^{-1})
		\bigr)\,
		t_1(\iota).
\]
Let
\(
	1_{++} = 1_{+}
\),
\(
	1_{--} = 1_{-}
\),
\(
	1_{-+} = \widehat \rho(\iota) \in R_{-+}
\), and
\(
	1_{+-} = \widehat \rho(\iota)^{-1} \in R_{+-}
\), i.e.
\(
	1_{ij} 1_{jk} = 1_{ik}
\). Also, let
\(
	\lambda = 1_{+-} \inv{1_{-+}} \in R_{++}
\) and
\(
	x^* = 1_{+-}\, (\inv x 1_{-+})
\) for any
\(
	x \in R_{++}
\). We use the notation
\(
	\eps_i = (-1)^{\sign(i)}
\) for non-zero integer \(i\), i.e.
\(
	\eps_i = 1
\) for
\(
	i > 0
\) and
\(
	\eps_i = -1
\) for
\(
	i < 0
\).

\begin{lemma} \label{bb-ring-1}
	The elements
	\(
		1_{+-}
	\),
	\(
		1_{-+}
	\),
	\(
		\lambda
	\),
	\(
		\lambda^*
	\) lie in the nucleus of \(
		R_{**}
		= \sMat{R_{--}}{R_{-+}}{R_{+-}}{R_{++}}
	\), so
	\(
		R_{**} \cong \mat(2, R_{++})
	\). Moreover,
	\(
		(x y)^* = y^* x^*
	\),
	\(
		1_{+}^* = 1_{+}
	\),
	\(
		\lambda^* \lambda = \lambda \lambda^* = 1_{+}
	\),
	\(
		x^{**} = \lambda x \lambda^*
	\), and
	\(
		\lambda^{**} = \lambda
	\). The involution on
	\(
		R_{**}
	\) may be expressed via
	\(
		({-})^*
	\) as
	\[
			\inv{1_{p+} x 1_{+q}}
		=
			1_{-q, {+}}
			\lambda^{(\eps_q - 1) / 2}
			x^*
			\lambda^{(1 - \eps_p) / 2}
			1_{{+}, -p}
	\]
	for
	\(
		x \in R_{++}
	\).
\end{lemma}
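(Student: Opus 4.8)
The plan is to first realise $R_{**}$ as a $2\times2$ matrix ring over $R_{++}$ by exhibiting the four elements $1_{ij}$ as nuclear matrix units, and then to read off every assertion about $({-})^*$ and $\lambda$ by rewriting with those matrix units together with the two defining properties $\inv{\inv x}=x$, $\inv{xy}=\inv y\,\inv x$ of the involution.

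\textbf{Step 1: matrix units.} By lemma~\ref{weyl-a-short} applied to $n_1$ the element $1_{-+}=\widehat\rho(\iota)$ is invertible with inverse $1_{+-}=\widehat\rho(\iota)^{-1}$, so together with the units $1_+,1_-$ we get $1_{ij}1_{jk}=1_{ik}$ for all $i,j,k\in\{{+},{-}\}$. The real point is that $1_{-+},1_{+-}$ (and, trivially, $1_\pm$) lie in $\mathrm N(R_{**})$. The first-slot identity $(1_{-+}x)\,y=1_{-+}(xy)$ is literally the axiom $(\widehat\rho(u)\,x)\,y=\widehat\rho(u)(xy)$ with $u=\iota$. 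A short manipulation of the axioms $u\circ(v\cdot x)=(u\circ v)\,x$ and $(u\cdot x)\cdot y=u\cdot xy$ shows every element of the form $u\circ v$ is first-slot nuclear; since $\inv{1_{-+}}$ is the image of the nuclear element $1_{-+}$ under the anti-automorphism $\inv{({-})}$ — and, explicitly, $\inv{1_{-+}}=-1_{-+}-\iota\circ\iota$ is first-slot nuclear — applying $\inv{({-})}$ to its first-slot identity shows $1_{-+}$ is third-slot nuclear. The remaining middle-slot identity $(x\,1_{-+})\,y=x\,(1_{-+}y)$ then follows from lemma~\ref{tri-dot} combined with the axiom $\widehat\rho(u\cdot x)\,y=(\inv x\,\widehat\rho(u))\,(xy)$ and the two slots just obtained (equivalently: conjugate the commutator relation $[t_{ij}(x),t_{jk}(y)]=t_{ik}(xy)$ by $n_1$ and compare the two sides via lemma~\ref{weyl-a-short}). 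Everything for $1_{+-}$ follows by the involution. Hence $X\mapsto(1_{+i}\,x_{ij}\,1_{j+})_{i,j}$ is a ring isomorphism $R_{**}\xrightarrow{\ \sim\ }\mat(2,R_{++})$ carrying $1_-,1_{-+},1_{+-},1_+$ to the matrix units; this uses only nuclearity, so it is valid whether or not $R_{++}$ is associative.

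\textbf{Step 2: $({-})^*$ and $\lambda$.} From $\inv{1_+}=1_-$ we get $1_+^*=1_{+-}(1_-\,1_{-+})=1_+$, and anti-multiplicativity of $\inv{({-})}$ with $1_{+-}1_{-+}=1_+$ and nuclearity gives $(xy)^*=y^*x^*$. Unwinding definitions with nuclearity, $\inv{1_{-+}}=1_{-+}\lambda$ and $\inv{1_{+-}}=\lambda^*1_{+-}$; then
\[
	\lambda^*\lambda=(\lambda^*1_{+-})\,\inv{1_{-+}}=\inv{1_{+-}}\,\inv{1_{-+}}=\inv{1_{-+}1_{+-}}=\inv{1_-}=1_+,
\]
and symmetrically $\lambda\lambda^*=1_+$, so $\lambda$ is invertible with $\lambda^{-1}=\lambda^*$ (hence $\lambda,\lambda^*\in\mathrm N(R_{**})$, being products and $\inv{({-})}$-images of nuclear elements). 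For $x^{**}$, expand $x^{**}=1_{+-}(\inv{x^*}\,1_{-+})$ with $\inv{x^*}=(\inv{1_{-+}}\,x)\,\inv{1_{+-}}=(1_{-+}\lambda x)(\lambda^*1_{+-})$ and push the matrix units through using nuclearity and the identity $[\lambda,x,\lambda^*]=0$ — valid in $R_{++}$ because it is alternative and $\lambda^*=\lambda^{-1}$, so one may invoke $[a,a^{-1},R]=0$ and skew-symmetry of associators from \S6; this yields $x^{**}=\lambda x\lambda^*$, whence $\lambda^{**}=\lambda\lambda\lambda^*=\lambda(\lambda\lambda^*)=\lambda$.

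\textbf{Step 3: the involution in matrix form; main obstacle.} Since $\inv{({-})}$ carries the $R_{ij}$-component onto the $R_{-j,-i}$-component, in $\mat(2,R_{++})$-coordinates it acts entrywise, and Steps 1--2 compute it to be $\sMat abcd\mapsto\sMat{d^*}{b^*\lambda}{\lambda^{-1}c^*}{\lambda^{-1}a^*\lambda}$ (one checks directly that this squares to the identity using $x^{**}=\lambda x\lambda^*$ and $\lambda^*=\lambda^{-1}$). Writing an arbitrary element of $R_{pq}$ as $1_{p+}x\,1_{+q}$ with $x\in R_{++}$ and reading off the four cases gives precisely the displayed formula: the exponents $(\eps_q-1)/2$ and $(1-\eps_p)/2$ are exactly $0$ when the relevant index is ${+}$ and $\mp1$ when it is ${-}$, and $\lambda^{-1}=\lambda^*$. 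The only step requiring genuine work is the nuclearity of $1_{-+}$ and $1_{+-}$ in Step 1 — in particular the middle-slot identity, and, for $\ell=3$, having the alternativity of $R_{++}$ available; once that is in place the rest is bookkeeping with the matrix units and the two defining identities of the involution.
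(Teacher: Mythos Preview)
Your approach is genuinely different from the paper's and has the right skeleton, but there are two problems, one serious.

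\textbf{Middle-slot nuclearity.} Your primary argument for $[x,1_{-+},y]=0$ is to combine lemma~\ref{tri-dot} with the axiom $\widehat\rho(u\cdot x)\,y=(\inv x\,\widehat\rho(u))(xy)$. Equating these (with $u=\iota$) gives only the restricted identity $[\inv x,1_{-+},xy]=0$. Linearizing in $x$ and setting $x'=1_+$ does yield $[a,1_{-+},b]=0$ for $a\in R_{--}$, but to cover $a\in R_{+-}$ you would need a unit $x'\in R_{+-}$ together with $[\inv{x'},1_{-+},{-}]=0$, and establishing that for $x'=1_{+-}$ is exactly the kind of mixed identity you have not yet proved. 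The paper closes this gap differently: it observes that $\up{n_1}{n_{12}}=t_{-1,2}(1_{-+})\,t_{2,-1}(-1_{+-})\,t_{-1,2}(1_{-+})$ is again a Weyl element and applies lemma~\ref{weyl-a-long} to it, obtaining the package $[\zeta,x,\eta]=0$ together with the Moufang identities $(\zeta x)(y\zeta)=\zeta(xy)\zeta$, $\zeta(x(\zeta y))=(\zeta x\zeta)y$, $((x\zeta)y)\zeta=x(\zeta y\zeta)$ for $\zeta,\eta\in\{1_{+-},1_{-+}\}$. These, combined with left nuclearity of $1_{-+}$ and $\inv{1_{+-}}=\widehat\rho(\iota\cdot 1_{+-})$ (hence right nuclearity of $\inv{1_{-+}}$ and $1_{+-}$), give full nuclearity. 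Your parenthetical ``equivalently: conjugate the commutator relation by $n_1$'' is pointing at precisely this, but it is not an equivalent reformulation of the tri-dot argument --- it is the missing ingredient, and it should be the main line.

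\textbf{Circular appeal to alternativity.} In Step~2 you justify $[\lambda,x,\lambda^*]=0$ by saying $R_{++}$ is alternative and invoking $[a,a^{-1},R]=0$. But alternativity of $R_{++}$ is lemma~\ref{bb-ring-2}, proved \emph{after} the present lemma, so this is circular. Fortunately it is also unnecessary: once $1_{-+},1_{+-}\in\mathrm N(R_{**})$, their $\inv{({-})}$-images and products are nuclear too, so $\lambda=1_{+-}\inv{1_{-+}}$ and $\lambda^*=\inv{1_{+-}}1_{-+}$ are nuclear and $[\lambda,x,\lambda^*]=0$ follows trivially. You actually say this (``being products and $\inv{({-})}$-images of nuclear elements''), so simply drop the appeal to \S6.

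In short: your axiom-based route to left and right nuclearity of $1_{-+}$ via $\widehat\rho(\iota)$ and $\inv{1_{-+}}=-1_{-+}-\iota\circ\iota$ is nice and slightly more elementary than the paper's, but for the middle slot you really do need the Weyl-element input from lemma~\ref{weyl-a-long}, as the paper does.
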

\begin{proof}
	Note that
	\[
			\up{n_1}{n_{12}}
		=
			t_{-1, 2}(1_{-+})\,
			t_{2, -1}(-1_{+-})\,
			t_{-1, 2}(1_{-+})
	\]
	is also a Weyl element by lemma \ref{weyl-a-short}. From lemma \ref{weyl-a-long} we obtain
	\(
			[\zeta, \eta, R_{**}]
		=
			[\zeta, R_{**}, \eta]
		=
			[R_{**}, \zeta, \eta]
		= 0
	\),
	\(
		(\zeta x)\, (y \zeta) = \zeta\, (x y)\, \zeta
	\),
	\(
		\zeta\, (x\, (\zeta y)) = (\zeta x \zeta)\, y
	\),
	\(
		((x \zeta)\, y)\, \zeta = x\, (\zeta y \zeta)
	\) for
	\(
		\zeta, \eta \in \{1_{+-}, 1_{-+}\}
	\).

	On the other hand, by axioms of
	\(
		(\mathsf B_\ell, \mathsf A_{\ell - 1})
	\)-rings the elements
	\(
		1_{-+} = \widehat \rho(\iota)
	\) and
	\(
		\inv{1_{+-}}
		= \widehat \rho(\iota \cdot 1_{+-})
	\) lie in the left nucleus of
	\(
		R_{**}
	\), i.e.
	\(
			[1_{-+}, R_{**}, R_{**}]
		=
			[\inv{1_{+-}}, R_{**}, R_{**}]
		= 0
	\). By duality,
	\(
			[R_{**}, R_{**}, \inv{1_{-+}}]
		=
			[R_{**}, R_{**}, 1_{+-}]
		= 0
	\). It easily follows that
	\(
		1_{+-}
	\) and
	\(
		1_{-+}
	\) lie in the nucleus of
	\(
		R_{**}
	\), as well as
	\(
		\inv{1_{+-}}
	\),
	\(
		\inv{1_{-+}}
	\),
	\(
		\lambda = 1_{+-} \inv{1_{-+}}
	\), and
	\(
		\lambda^* = \inv{1_{+-}} 1_{-+}
	\). The formula for the involution may be checked by cases. The remaining identities easily follow from
	\(
		\inv{(x y)} = \inv y \inv x
	\) and
	\(
		\inv{\inv x} = x
	\) for
	\(
		x, y \in R_{++}
	\).
\end{proof}

Now let
\(
	\phi(x) = \phi(1_{-+} x)
\) for
\(
	x \in R_{++}
\),
\(
	\langle u, v \rangle
	= 1_{+-}\, (u \circ v)
\) for
\(
	u, v \in \Delta^0_+
\), and
\(
	\rho(u) = 1_{+-}\, \widehat \rho(u)
\) for
\(
	u \in \Delta^0_+
\). Then
\begin{align*}
		\phi(1_{-p, +} x 1_{+p})
	&=
		\phi\bigl(
			\lambda^{(1 - \eps_p) / 2} x
		\bigr) \cdot 1_{+p},
\\
		(u \cdot 1_{+p}) \circ (v \cdot 1_{+q})
	&=
		1_{-p, +}
		\lambda^{(\eps_p - 1) / 2}\,
		\langle u, v \rangle\,
		1_{+q},
\\
		\widehat \rho(u \cdot 1_{+p})
	&=
		1_{-p, +}
		\lambda^{(\eps_p - 1) / 2}\,
		\rho(u)\,
		1_{+p}
\end{align*}
for
\(
	x \in R_{++}
\) and
\(
	u, v \in \Delta^0_+
\).

\begin{lemma} \label{bb-ring-2}
The unital ring \(R_{++}\) is alternative. The associator map satisfies the identities
\[
	[x^*, x, y] = 0,
\quad
		\zeta\, [x, y, z]
	=
		[\zeta x, y, z]
	=
		[x \zeta, y, z]
	=
		[x, y, z]\, \zeta
	=
		-[x, y, z],
\quad
	[x^*, y, z] = [x, y, z]^* = -[x, y, z]
\]
for
\(
	\zeta \in \{\lambda, \lambda^*\}
\).
\end{lemma}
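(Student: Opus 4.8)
If $\ell \geq 4$ then the multiplication of $R_{**}$, hence of its subring $R_{++}$, is associative, so every associator vanishes and there is nothing to prove; assume from now on that $\ell = 3$. The plan is to reduce everything to identities inside $R_{++}$ by exploiting that $1_{-+}$, $1_{+-}$, $\lambda$, $\lambda^*$ are nuclear and $R_{**} \cong \mat(2, R_{++})$ (Lemma \ref{bb-ring-1}), and then to squeeze the required relations out of the axioms of $(\mathsf B_3, \mathsf A_2)$-rings. As a first step I would record a few auxiliary formulas. Rewriting the axiom for $\widehat\rho(\phi(xy))\,z$ with the off-diagonal arguments shifted into $R_{++}$ by the invertible nuclear element $1_{-+}$ and evaluating the free argument at a unit, I get an explicit expression for $\rho(\phi(w))$ in terms of $w$, $w^*$, $\lambda$ valid for all $w \in R_{++}$ (recall $\widehat\rho(u) = 1_{-+}\rho(u)$), and, using the axiom for $\widehat\rho(u \cdot x)\,y$, a similar expression for $\rho(u \cdot x)$. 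From $\phi((xy)z) = \phi(x(yz))$ one gets $\phi([x,y,z]) = \dot 0$ for all $x,y,z \in R_{++}$, and from $\phi(x + \inv x) = \dot 0$ together with the involution formula of Lemma \ref{bb-ring-1} one gets $\phi(w + w^*\lambda) = \dot 0$.

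The heart of the matter is alternativity. The axiom $(\widehat\rho(u)\,x)\,y = \widehat\rho(u)(xy)$ says that $\widehat\rho(u)$, hence $\rho(u)$, is left-nuclear; fed the formula for $\rho(\phi(w))$ it becomes an identity of the shape $[w - w^*\lambda, x, y] = 0$, i.e. $[w,x,y] = [w^*, \lambda x, y]$ for all $w,x,y \in R_{++}$. Running the companion axioms $\widehat\rho(u \cdot x)\,y = (\inv x\,\widehat\rho(u))(xy)$ and $\phi(xy)\cdot z = \phi((\inv z\,x)(yz))$ through $\rho$ and the formula for $\rho(u \cdot x)$ produces the mirror identity $[p,q,w] = [p,q,w^*]\,\lambda$. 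Combining these two identities with $x^{**} = \lambda x \lambda^*$, $(xy)^* = y^* x^*$, $\lambda^*\lambda = 1_+$ (and, where needed, $u \circ \phi(x) = 0$ and $u \circ (v \cdot x) = (u \circ v)\,x$) forces $[x,x,y] = [x,y,y] = 0$, so $R_{++}$ is alternative; from then on the associator is alternating and the nuclear-element rules of \S 6 become available.

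With $R_{++}$ alternative the remaining bookkeeping collapses: using skew-symmetry of the associator, the nuclearity of $\lambda$ and $\lambda^*$, and $[a,b,\lambda c] = \lambda[a,b,c]$ etc., the identities $[w,x,y] = [w^*, \lambda x, y]$ and $[p,q,w] = [p,q,w^*]\,\lambda$ turn into relations among $[x^*,y,z]$, $\lambda[x,y,z]$, $[x,y,z]\lambda$ and $[x,y,z]$; imposing $\phi([x,y,z]) = \dot 0$, which via the $\rho\phi$-formula and $[x,y,z]^* = -[z^*,y^*,x^*]$ gives $[x,y,z] = [x,y,z]^*\lambda$, together with $\phi(w + w^*\lambda) = \dot 0$, pins down the signs and yields $\lambda[x,y,z] = [x,y,z]\lambda = -[x,y,z]$ and $[x^*,y,z] = [x,y,z]^* = -[x,y,z]$. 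The identities $[x\zeta,y,z] = [x,\zeta y,z] = \zeta[x,y,z] = [x,y,z]\zeta = -[x,y,z]$ for $\zeta \in \{\lambda, \lambda^*\}$ then follow from the nuclear-associator rules of \S 6, and finally $[x^*,x,y] = -[x,x,y] = 0$ by alternativity.

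The step I expect to be the real obstacle is establishing alternativity. The axioms carrying associator information are stated only for triples of indices with three distinct absolute values, which for $\ell = 3$ exhausts $\{1,2,3\}$ and so pins the sign pattern; consequently every use of them forces one to insert the nuclear matrix units $1_{\pm\mp}$ in order to move elements between the diagonal and the off-diagonal components of $R_{**}$, and to keep careful track of the $\lambda$-twists this introduces (from $\overline{1_{-+}} = 1_{-+}\lambda$ and the involution formula of Lemma \ref{bb-ring-1}). One must also take care that the manipulations deliver $[x,x,y] = 0$ outright rather than merely $2[x,x,y] = 0$, which may require bringing in one further axiom (such as the non-additivity formula for $u \cdot (x+y)$) to break the ambiguity.
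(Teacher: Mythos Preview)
Your overall strategy---pull identities out of the \((\mathsf B_\ell,\mathsf A_{\ell-1})\)-axioms via the nuclear elements \(1_{\pm\mp}\), \(\lambda\), \(\lambda^*\) of Lemma~\ref{bb-ring-1}---is the right one, but the step you yourself flag as the obstacle really is a gap. From the left-nuclearity of \(\rho(\phi(w))=w-w^*\lambda\) you get the relation \([w,a,b]=[w^*\lambda,a,b]\); your ``mirror identity'' \([p,q,w]=[p,q,w^*]\lambda\) is in fact a formal consequence of this one under the anti-automorphism \((-)^*\), so you have only a single linear relation on associators. That relation is stable under iteration (since \(w\mapsto w^*\lambda\) is an involution) and cannot by itself force the quadratic condition \([x,x,y]=0\). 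Your worry about landing on \(2[x,x,y]=0\) is well placed, but even that does not follow from what you have written.

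The paper closes the gap by a second, independent use of the distinguished element \(\iota\): specialising the axiom \(\widehat\rho(u\cdot x)\,y=(\inv x\,\widehat\rho(u))(xy)\) to \(u=\iota\), and using that \(\widehat\rho(\iota)=1_{-+}\) is \emph{nuclear}, gives \((x^*x)\,y=x^*(xy)\), i.e.\ \([x^*,x,y]=0\) outright. Combined with your relation (which yields \([\lambda x,x,y]=[x^*,x,y]\)) this gives \([x,x,y]=\lambda^*[\lambda x,x,y]=\lambda^*[x^*,x,y]=0\), and then \([y,x,x]=0\) follows by applying \((-)^*\). The same trick---specialising \(u=v=\iota\) in \(\langle u, v\cdot x\rangle=\langle u,v\rangle\,x\) and \((u\cdot x)\cdot y=u\cdot xy\), together with \(1+\lambda=-\langle\iota,\iota\rangle\)---shows \((1+\lambda)x\) is left-nuclear and hence \([\lambda x,y,z]=-[x,y,z]\); your route via \(\phi([x,y,z])=\dot 0\) only gives \([x,y,z]=[x,y,z]^*\lambda\), which is not enough to pin down the sign without already knowing \([x^*,y,z]=-[x,y,z]\). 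In short: the missing idea is to exploit \(\iota\) itself, not just the axioms with generic \(u\).
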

\begin{proof}
	Note that
	\(
		1_{+-}\, \widehat \rho(\phi(1_{-+} x))
		= \rho(\phi(x))
		= x - x^* \lambda
	\) lie in the left nucleus of
	\(
		R_{++}
	\), i.e.
	\(
		[x, y, z] = [x^* \lambda, y, z]
	\). On the other hand,
	\(
			(x^* x)\, y
		=
			1_{+-}\,
			(\inv x\, \widehat \rho(\iota)\, x)\,
			y
		=
			1_{+-}\,
			(\inv x\, \widehat \rho(\iota))\,
			(x y)
		=
			x^*\, (x y)
	\). It follows that
	\(
		[x^*, x, y] = 0
	\) and
	\[
		[x, x, y]
		= \lambda^*\, [\lambda x, x, y]
		= \lambda^*\, [x^*, x, y]
		= 0.
	\]
	Since
	\(
		[x, y, z]^* = -[z^*, y^*, x^*]
	\) in any ring with an anti-automorphism, the ring
	\(
		R_{++}
	\) is alternative.

	It remains to check that
	\(
		[\lambda x, y, z] = -[x, y, z]
	\). But this follows from
	\(
		1 + \lambda = -\langle \iota, \iota \rangle
	\) and
	\[
			(
				(\langle \iota, \iota \rangle\, x)\,
				y
			)\,
			z
		=
			\bigl\langle
				\iota,
				((\iota \cdot x) \cdot y) \cdot z
			\bigr\rangle
		=
			\bigl\langle
				\iota,
				(\iota \cdot x) \cdot y z
			\bigr\rangle
		=
			(\langle \iota, \iota \rangle\, x)\,
			(y z).\qedhere
	\]
\end{proof}

By lemma \ref{bb-ring-1} we may identify all
\(
	R_{ij}
\) using multiplications by
\(
	1_{pq}
\) from both sides, as well as identify
\(
	\Delta^0_{-}
\) with
\(
	\Delta^0_{+}
\) using the maps
\(
	u \mapsto u \cdot 1_{+-}
\) and
\(
	u \mapsto u \cdot 1_{-+}
\). We call the resulting objects
\(
	(\mathsf B_\ell, \mathsf B_\ell)
\)-\textit{rings}. Namely, a
\(
	(\mathsf B_\ell, \mathsf B_\ell)
\)-ring consists of
\begin{itemize}

	\item
	an alternative unital ring
	\(
		R_{++}
	\) with an anti-automorphism
	\(
		({-})^* \colon R_{++} \to R_{++}
	\) and an element
	\(
		\lambda \in R_{++}^*
	\) satisfying
	\(
		\lambda^{-1} = \lambda^*
	\),
	\(
		\lambda, \lambda^* \in \mathrm N(R_{++})
	\),
	\(
		x^{**} = \lambda x \lambda^*
	\) (in particular,
	\(
		\lambda^{**} = \lambda
	\));

	\item
	a group
	\(
		\Delta^0_+
	\) with the group operation \(\dotplus\) and a distinguished element
	\(
		\iota
	\);

	\item
	a group homomorphism
	\(
		\phi \colon R_{++} \to \Delta^0_+
	\) with central image and such that
	\(
		\phi(x + x^* \lambda) = \dot 0
	\);

	\item
	a biadditive map
	\(
		\langle {-}, {=} \rangle
		\colon \Delta^0_+ \times \Delta^0_+
		\to \mathrm N( R_{++} )
	\) such that
	\(
		\langle v, u \rangle
		= \langle u, v \rangle^*\, \lambda
	\);

	\item
	a map
	\(
		\rho \colon \Delta^0_+ \to \mathrm N(R_{++})
	\) such that
	\(
		\rho(u \dotplus v)
		= \rho(u) - \langle u, v \rangle + \rho(v)
	\),
	\(
		\rho(u)
		+ \langle u, u \rangle
		+ \rho(u)^*\, \lambda
		= 0
	\),
	\(
		\rho(\iota) = 1
	\) (in particular,
	\(
		\rho(-u) = \rho(u)^*\, \lambda
	\) and
	\(
		\langle \iota, \iota \rangle = -1 - \lambda
	\));

	\item
	A map
	\(
		({-}) \cdot ({=})
		\colon \Delta^0_+ \times R_{++}
		\to \Delta^0_+
	\) such that
	\(
		(u \dotplus v) \cdot x
		= u \cdot x \dotplus v \cdot x
	\) and
	\(
			u \cdot (x + y)
		=
			u \cdot x
			\dotplus \phi(y^*\, \rho(u)\, x)
			\dotplus u \cdot y
	\);

\end{itemize}
Moreover, the operations satisfy the identities from lemma \ref{bb-ring-2},
\begin{align*}
		u
		\dotplus v
	&=
		v
		\dotplus u
		\dotminus \phi(\langle u, v \rangle),
&
	u \cdot 1 &= u,
\\
	\phi((x y)\, z) &= \phi(x\, (y z)),
&
	(u \cdot x) \cdot y &= u \cdot x y,
\\
		\langle u, \phi(x) \rangle
	&=
		\langle \phi(x), u \rangle
	= 0,
&
	u \cdot (x y)\, z &= u \cdot x\, (y z),
\\
	\rho(\phi(x)) &= x - x^* \lambda,
&
	\langle u, v \cdot x \rangle
	&= \langle u, v \rangle\, x,
\\
	x\, (y^*\, \rho(u)\, y) &= (x y^*)\, \rho(u)\, y,
&
	\langle u \cdot x, v \rangle
	&= x^*\, \langle u, v \rangle,
\\
	(x^*\, \rho(u)\, x)\, y &= x^*\, \rho(u)\, (x y),
&
	\phi(x) \cdot y &= \phi(y^* x y),
\\&&
	\rho(u \cdot x) &= x^*\, \rho(u)\, x.
\end{align*}
In the case
\(
	\ell \geq 4
\) we again require that
\(
	R_{++}
\) is associative.

If
\(
	(R, \Delta) = (R_{++}, \Delta^0_{+})
\) is a
\(
	(\mathsf B_\ell, \mathsf B_\ell)
\)-ring, then its \textit{Steinberg group}
\(
	\stlin(\mathsf B_\ell, R, \Delta)
\) is the abstract group with generators
\(
	t_{ij}(x)
\) for
\(
	i \neq \pm j
\),
\(
	x \in R
\) and
\(
	t_i(u)
\) for
\(
	u \in \Delta
\) satisfying the relations
\begin{align*}
	t_{ij}(x)\, t_{ij}(y) &= t_{ij}(x + y),
&
	t_{ij}(x)
	&= t_{-j, -i}\bigl(
		-\lambda^{(\eps_j - 1) / 2}
		x^*
		\lambda^{(1 - \eps_i) / 2}
	\bigr),
\\
	t_i(u)\, t_i(v) &= t_i(u \dotplus v),
&
	[t_{ij}(x), t_{jk}(y)] &= t_{ik}(x y)
	\text{ for } i \neq k,
\\
	[t_i(u), t_{kl}(x)] &= 1
	\text{ for } i \notin \{k, -l\},
&
	[t_{-i, j}(x), t_{ji}(y)]
	&= t_i\bigl(
		\phi\bigl(
			\lambda^{(1 - \eps_i) / 2} x y
		\bigr)
	\bigr),
\\
	[t_{ij}(x), t_{kl}(y)] &= 1
	\text{ for }
	\{-i, j\} \cap \{k, -l\} = \varnothing,
&
	[t_i(u), t_j(v)]
	&= t_{-i, j}\bigl(
		-\lambda^{(\eps_i - 1) / 2}\,
		\langle u, v \rangle
	\bigr)
	\text{ for } i \neq \pm j,
\\&&
	[t_i(u), t_{ij}(a)]
	&= t_{-i, j}\bigl(
		\lambda^{(\eps_i - 1) / 2}\,
		\rho(u)\,
		x
	\bigr)\,
	t_j(\dotminus u \cdot (-x)).
\end{align*}
This group is
\(
	\mathsf B_\ell
\)-graded if
\(
	(R, \Delta)
\) is constructed by a
\(
	\mathsf B_\ell
\)-graded group. Later we prove that it is always
\(
	\mathsf B_\ell
\)-graded.

\begin{theorem} \label{group-b}
	Let \(G\) be a
	\(
		\mathsf B_\ell
	\)-graded group for
	\(
		\ell \geq 3
	\). Then there is a
	\(
		(\mathsf B_\ell, \mathsf B_\ell)
	\)-ring
	\(
		(R, \Delta)
	\) and a homomorphism
	\(
		Q
		\colon \stlin(\mathsf B_\ell, R, \Delta)
		\to G
	\) from the Steinberg group inducing isomorphisms between the root subgroups.

	In the case
	\(
		\ell \geq 4
	\) any other such homomorphism
	\(
		Q'
		\colon \stlin(\mathsf B_\ell, R', \Delta')
		\to G
	\) is of the following type. Choose a ring isomorphism
	\(
		F \colon R' \to R
	\), a group isomorphism
	\(
		H \colon \Delta' \to \Delta
	\), and elements
	\(
		a_{-\ell}, \ldots, a_{-1},
		a_1, \ldots, a_\ell,
		b
		\in R^*
	\) such that
	\begin{align*}
		a_i^* a_{-i} &= b \text{ for } i > 0,
	&
		H(\phi(x)) &= \phi(b\, F(x)),
	&
		H(u \cdot x) &= H(u) \cdot F(x),
	\\
		F(\lambda) &= b^{-1} b^* \lambda,
	&
		F(\langle u, v \rangle)
		&= b^{-1}\,
		\bigl\langle H(u), H(v) \bigr\rangle,
	&
	\\
		F(x^*) &= b^{-1}\, F(x)^*\, b,
	&
		F(\rho(u)) &= b^{-1}\, \rho(H(u)),
	&
	\end{align*}
	and let
	\[
		Q'(t_{ij}(x))
		= Q\bigl(
			t_{ij}\bigl(a_i\, F(x)\, a_j^{-1}\bigr)
		\bigr),
	\quad
		Q'(t_i(u))
		= Q\bigl(
			t_i\bigl(H(u) \cdot a_i^{-1}\bigr)
		\bigr).
	\]
	The tuple
	\(
		(F, H, a_i, b)_i
	\) is uniquely determined by \(Q'\) up to the change
	\(
		F \mapsto c^{-1} F c
	\),
	\(
		H \mapsto H \cdot c
	\),
	\(
		a_i \mapsto a_i c
	\),
	\(
		b \mapsto c^* b c
	\) for
	\(
		c \in R^*
	\).

	In the case
	\(
		\ell = 3
	\)
	for any other such homomorphism
	\(
		Q' \colon (R', \Delta') \to G
	\) there are unique group isomorphism
	\(
		F \colon R' \to R
	\), group isomorphism
	\(
		H \colon \Delta' \to \Delta
	\), and elements
	\(
		a, b \in R^*
	\) such that
	\begin{align*}
		F(1) &= (a b)^{-1},
	&
		Q'(t_{\sigma 1, \tau 2}(x))
		&= Q\bigl(
			t_{\sigma 1, \tau 2}\bigl(
				\nu_1^{(1 - \sigma) / 2}\,
				F(x)\, a
				\nu_2^{(\tau - 1) / 2}
			\bigr)
		\bigr),
	\\
		F(x y) &= (F(x)\, a)\, (b\, F(y)),
	&
		Q'(t_{\sigma 1, \tau 3}(x))
		&= Q\bigl(
			t_{\sigma 1, \tau 3}\bigl(
				\nu_1^{(1 - \sigma) / 2}\,
				F(x)\,
				\nu_3^{(\tau - 1) / 2}
			\bigr)
		\bigr),
	\\
		F(\lambda)
		&= (\nu_3 a b)^{-1}\, \nu_3^* \lambda,
	&
		Q'(t_{\sigma 2, \tau 1}(x))
		&= Q\bigl(
			t_{\sigma 2, \tau 1}\bigl(
				\nu_2^{(1 - \sigma) / 2}
				b\, (F(x)\, a)\, b
				\nu_1^{(\tau - 1) / 2}
			\bigr)
		\bigr),
	\\
		F(\lambda^*)
		&= (\nu_3^* \lambda a b)^{-1}\, \nu_3,
	&
		Q'(t_{\sigma 2, \tau 3}(x))
		&= Q\bigl(
			t_{\sigma 2, \tau 3}\bigl(
				\nu_2^{(1 - \sigma) / 2}
				b\, F(x)\,
				\nu_3^{(\tau - 1) / 2}
			\bigr)
		\bigr),
	\\
		F(x^*)
		&= (\nu_3 a b)^{-1}\,
		F(x)^*\,
		\nu_1\,
		(a b)^{-1},
	&
		Q'(t_{\sigma 3, \tau 1}(x))
		&= Q\bigl(
			t_{\sigma 3, \tau 1}\bigl(
				\nu_3^{(1 - \sigma) / 2}\,
				(a b)\, F(x)\, (a b)\,
				\nu_1^{(\tau - 1) / 2}
			\bigr)
		\bigr),
	\\
		H(\phi(x)) &= \phi(\nu_3 a b\, F(x)),
	&
		Q'(t_{\sigma 3, \tau 2}(x))
		&= Q\bigl(
			t_{\sigma 3, \tau 2}\bigl(
				\nu_3^{(1 - \sigma) / 2}
				a\, (b\, F(x))\, a
				\nu_2^{(\tau - 1) / 2}
			\bigr)
		\bigr),
	\\
		F(\rho(u)) &= (\nu_3 a b)^{-1}\, \rho(H(u)),
	&
		Q'(t_{\sigma 1}(u))
		&= Q\bigl(
			t_{\sigma 1}\bigl(
				H(u) \cdot a b \nu_1^{(\sigma - 1) / 2}
			\bigr)
		\bigr),
	\\
		F(\langle u, v \rangle)
		&= (\nu_3 a b)^{-1}\,
		\bigl\langle H(u), H(v) \bigr\rangle,
	&
		Q'(t_{\sigma 2}(u))
		&= Q\bigl(
			t_{\sigma 2}\bigl(
				H(u) \cdot a \nu_2^{(\sigma - 1) / 2}
			\bigr)
		\bigr),
	\\
		H(u \cdot x) &= H(u) \cdot a b\, F(x),
	&
		Q'(t_{\sigma 3}(u))
		&= Q\bigl(
			t_{\sigma 3}\bigl(
				H(u) \cdot \nu_3^{(\sigma - 1) / 2}
			\bigr)
		\bigr)
	\end{align*}
	for
	\(
		\sigma, \tau \in \{-1, 1\}
	\), where
	\[
		\nu_1 = \rho(H(\iota) \cdot a b),
		\nu_2 = \rho(H(\iota) \cdot a),
		\nu_3 = \rho(H(\iota))
		\in \mathrm N(R).
	\]
	Conversely, for any tuple
	\(
		(F, H, a, b)
	\) satisfying the identities from the first column the corresponding pair
	\(
		(R', \Delta')
	\) is a
	\(
		(\mathsf B_\ell, \mathsf B_\ell)
	\)-ring and the group homomorphism \(Q'\) defined by the second column is well-defined.
\end{theorem}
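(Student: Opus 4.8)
The plan is to follow the pattern of Theorems \ref{group-a}, \ref{group-d}, and \ref{group-e}, using the machinery of \S4 together with Lemmas \ref{bb-ring-1}, \ref{bb-ring-2}, \ref{weyl-a-short}, and, in the rank-three case, the alternative-ring calculus of \S6. Since a $\mathsf B_\ell$-graded $G$ has in particular $\mathsf B_\ell$-commutator relations and $\mathsf A_{\ell - 1}$-Weyl elements, \S4 attaches to it a $(\mathsf B_\ell, \mathsf A_{\ell - 1})$-ring $(R_{ij}, \Delta^0_i)$. As $G$ is $\mathsf B_\ell$-graded it also carries a short-root Weyl element $n_1$ as in Lemma \ref{weyl-a-short}; feeding it into Lemmas \ref{bb-ring-1} and \ref{bb-ring-2} gives $R_{**} \cong \mat(2, R_{++})$, makes $R_{++}$ an alternative unital ring (associative when $\ell \geq 4$) with a pseudo-involution $({-})^*$ and an element $\lambda$, and identifies all the $R_{ij}$ together with $\Delta^0_{-}$ and $\Delta^0_{+}$, so one obtains a $(\mathsf B_\ell, \mathsf B_\ell)$-ring $(R, \Delta) = (R_{++}, \Delta^0_{+})$. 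Under these identifications the root elements $t_{ij}(x)$ and $t_i(u)$ of $G$ satisfy precisely the defining relations of $\stlin(\mathsf B_\ell, R, \Delta)$ --- this is a direct rewriting of the commutator formulae of partial graded odd form rings and of the identities in Lemmas \ref{bb-ring-1} and \ref{bb-ring-2}, together with the values of $\up{n_1}{({-})}$ from Lemma \ref{weyl-a-short} --- so sending generators to root elements defines a homomorphism $Q$, which restricts to an isomorphism on each root subgroup by construction.

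\textbf{The classification for $\ell \geq 4$.} Given another such $Q'$, parametrizing the root subgroups by $(R', \Delta')$ via maps $t'_\alpha$, I would first extract additive isomorphisms $F_{ij} \colon R' \to R$ and $H_i \colon \Delta'_i \to \Delta_i$ with $Q'(t_{ij}(x)) = Q(t_{ij}(F_{ij}(x)))$ and $Q'(t_i(u)) = Q(t_i(H_i(u)))$. Comparing the long-root commutators $[t_{ij}(x), t_{jk}(y)] = t_{ik}(xy)$ gives the cocycle identity $F_{ij}(x)\,F_{jk}(y) = F_{ik}(xy)$, so exactly as in Theorem \ref{group-a} (now using associativity) $F_{ij}(1) \in R^*$, and fixing reference indices produces units $a_i$, a ring isomorphism $F$ with $F_{ij}(x) = a_i\, F(x)\, a_j^{-1}$, the induced group isomorphism $H \colon \Delta' \to \Delta$, and $H_i(u) = H(u) \cdot a_i^{-1}$. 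Pulling back the involution relation $t_{ij}(x) = t_{-j,-i}(-\lambda^{(\eps_j - 1)/2} x^* \lambda^{(1 - \eps_i)/2})$ forces $a_i^* a_{-i}$ to be independent of $i$ --- call it $b$ --- and yields $F(\lambda) = b^{-1} b^* \lambda$ and $F(x^*) = b^{-1} F(x)^* b$, while the relations involving $t_i(u)$ give $H(\phi(x)) = \phi(b\, F(x))$, $H(u \cdot x) = H(u) \cdot F(x)$, $F(\langle u, v\rangle) = b^{-1}\langle H(u), H(v)\rangle$, and $F(\rho(u)) = b^{-1}\rho(H(u))$. The only remaining freedom is that of the reference index, which sends $a_i \mapsto a_i c$ for $c \in R^*$, hence $F \mapsto c^{-1} F c$, $H \mapsto H \cdot c$, $b \mapsto c^* b c$; and the converse, that any tuple $(F, H, a_i, b)$ obeying these identities makes $(R', \Delta')$ a $(\mathsf B_\ell, \mathsf B_\ell)$-ring and $Q'$ well defined, is a routine check of the Steinberg relations.

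\textbf{The classification for $\ell = 3$, and the main obstacle.} The strategy is unchanged, but now $R$ is only alternative, so the identifications of the $R_{ij}$ through the units $1_{pq}$ from Lemma \ref{ba-ring-3} no longer behave associatively: moving a $1_{pq}$ past a product costs an associator, and with only three absolute values there is no fourth index to absorb it. Accordingly $F$ will not be a ring map but a twisted one, $F(xy) = (F(x)\, a)\,(b\, F(y))$ with twist units $a, b \in R^*$ and $F(1) = (ab)^{-1}$, and the compatibilities with $\lambda$, $({-})^*$, $\rho$, $\langle {-}, {=}\rangle$ and $({-})\cdot({=})$ will pick up the nuclear factors $\nu_1 = \rho(H(\iota)\cdot ab)$, $\nu_2 = \rho(H(\iota)\cdot a)$, $\nu_3 = \rho(H(\iota))$ appearing in the stated formulas. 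The work is bookkeeping: organize the root elements $t_{\sigma i, \tau j}$ and $t_{\sigma i}$ by $|i|, |j| \in \{1,2,3\}$ and signs $\sigma, \tau$; use the involution relation to pass from each $t_{-j,-i}$ to $t_{ij}$, which pins down all $\lambda$-powers; propagate the twist through the long-root commutators; and use $[t_i(u), t_{ij}(x)] = \ldots$ with Lemma \ref{weyl-a-short} to fix $H$ and the $\nu_k$. I expect the genuine difficulty to lie here and in the converse direction: verifying that all of the twisting can be captured by just $a$, $b$ and the $\nu_k$, and that the resulting elaborate formulas satisfy every $(\mathsf B_3, \mathsf B_3)$-ring axiom and Steinberg relation, will require careful use of the alternative-ring identities of \S6 (Artin's theorem, the monomial identities, $[a, a^{-1}, R] = 0$) and of Lemma \ref{inv-alter}.
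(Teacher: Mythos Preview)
Your treatment of the existence of $Q$ and of the classification for $\ell \geq 4$ matches the paper's proof essentially line for line: the same extraction of the $F_{ij}$ and $H_i$, the same reduction to Theorem~\ref{group-a} via $F_{ij}(x) = a_i\,F(x)\,a_j^{-1}$, and the same derivation of $b$ and the compatibility identities from the involution and short-root relations.

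For $\ell = 3$ the forward direction is also close, but you should be aware that the paper organizes it through the notion of \emph{isotopy} of alternative rings: the triple $(F_{12}, F_{23}, F_{13})$ with $F_{13}(xy) = F_{12}(x)\,F_{23}(y)$ is by definition an isotopy $R' \to R$, and McCrimmon's theorem (cited as \cite[theorem 2]{mccrimmon}) gives directly the unique $a,b \in R^*$ with $F_{12}(x) = F(x)\,a$, $F_{23}(x) = b\,F(x)$, $F(1) = (ab)^{-1}$. This is what makes the twist canonical rather than something you have to extract by hand; the rest of the formulae are then derived from it together with the $\rho$-relation, which produces the $\nu_k$.

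The real divergence is in the converse for $\ell = 3$. You propose to verify directly that the elaborate twisted formulae satisfy all $(\mathsf B_3, \mathsf B_3)$-ring axioms and all Steinberg relations using the alternative-ring calculus. The paper sidesteps this entirely: given $a, b \in R^*$ and $H(\iota) \in \Delta$ with $\rho(H(\iota))$ invertible, it observes that one can build a homomorphism $Q'$ from scratch by \emph{choosing different Weyl elements} in $G$, namely
\[
	t_1\bigl(H(\iota)\cdot\lambda\bigr)\,t_{-1}\bigl(H(\iota)\cdot(-\nu_3^{-1})\bigr)\,t_1\bigl(H(\iota)\bigr),\quad
	t_{12}(a)\,t_{21}(-a^{-1})\,t_{12}(a),\quad
	t_{23}(b)\,t_{32}(-b^{-1})\,t_{23}(b),
\]
and running the construction of \S4 and \S7 again with these in place of the standard ones (Lemmas~\ref{weyl-a-long} and~\ref{weyl-a-short} guarantee they are Weyl elements). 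This automatically yields a $(\mathsf B_3,\mathsf B_3)$-ring $(R',\Delta')$ and a well-defined $Q'$, and reduces the remaining verification to the trivial case $a = b = 1$, $H(\iota) = \iota$. Your brute-force route may be feasible, but it is substantially harder and is not what the paper does; Lemma~\ref{inv-alter}, which you invoke, plays no role here (it is used only in \S9).
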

\begin{proof}
	The claim about existence of \(Q\) follows from our constructions and the definition of
	\(
		(\mathsf B_\ell, \mathsf B_\ell)
	\)-rings, see theorem \ref{phi-0-ring} and lemmas \ref{ba-ring-3}, \ref{bb-ring-1}, \ref{bb-ring-2}. Now suppose that
	\(
		Q
		\colon \stlin(\mathsf B_\ell, R, \Delta)
		\to G
	\),
	\(
		Q'
		\colon \stlin(\mathsf B_\ell, R', \Delta')
		\to G
	\) are group homomorphisms inducing isomorphisms between the root subgroups. There are additive isomorphisms
	\(
		F_{ij} \colon R' \to R
	\) for
	\(
		|i| \neq |j|
	\) and
	\(
		H_i \colon \Delta' \to \Delta
	\) such that
	\(
		Q'(t_{ij}(x))
		= Q\bigl( t_{ij}(F_{ij}(x)) \bigr)
	\) and
	\(
		Q'(t_i(u))
		= Q'\bigl( t_i(H_i(u)) \bigr)
	\). It follows that
	\begin{align*}
		F_{ij}(x)\, F_{jk}(y) &= F_{ik}(x y),
	&
			\lambda^{(\eps_i - 1) / 2}\,
			\bigl\langle H_i(u), H_j(v) \bigr\rangle
		&=
			F_{-i, j}\bigl(
				\lambda^{(\eps_i - 1) / 2}\,
				\langle u, v \rangle
			\bigr),
	\\
			\lambda^{(\eps_j - 1) / 2}\,
			F_{ij}(x)^*\,
			\lambda^{(1 - \eps_i) / 2}
		&=
			F_{-j, -i}\bigl(
				\lambda^{(\eps_j - 1) / 2}
				x^*
				\lambda^{(1 - \eps_i) / 2}
			\bigr),\!\!\!\!\!\!\!\!\!\!\!\!\!\!
	&
		H_i(u) \cdot F_{ij}(x) &= H_j(u \cdot x),
	\\
			\phi\bigl(
				\lambda^{(1 - \eps_i) / 2}\,
				F_{-i, j}(x)\,
				F_{ji}(y)
			\bigr)
		&=
			H_i\bigl(
				\phi\bigl(
					\lambda^{(1 - \eps_i) / 2}
					x
					y
				\bigr)
			\bigr),
	&
			\lambda^{(\eps_i - 1) / 2}\,
			\rho(H_i(u))\,
			F_{ij}(x)
		&=
			F_{-i, j}\bigl(
				\lambda^{(\eps_i - 1) / 2}\,
				\rho(u)\,
				x
			\bigr)
	\end{align*}
	if \(i\), \(j\), \(k\) have distinct absolute values.

	In the case
	\(
		\ell \geq 4
	\) the rings \(R\) and \(R'\) are associative. It is easy to see using the first identity that
	\(
		F_{ij}(1) \in R^*
	\) and there are
	\(
		a_i \in R^*
	\) and a ring isomorphism
	\(
		F \colon R' \to R
	\) such that
	\(
		F_{ij}(x) = a_i\, F(x)\, a_j^{-1}
	\) (e.g.
	\(
		a_i = F_{i \ell}(1)
	\) for
	\(
		|i| < \ell
	\),
	\(
		a_\ell = 1
	\),
	\(
		a_{-\ell} = F_{-\ell, 1}(1)\, a_1
	\), and
	\(
		F(x) = F_{\ell 1}(x)\, a_1
	\)), see the proof of theorem \ref{group-a}. The identity for involution means that
		\[
			F(x)^*\,
			a_i^*
			\lambda^{(1 - \eps_i) / 2}
			a_{-i}\,
			F\bigl( \lambda^{(\eps_i - 1) / 2} \bigr)
		=
			a_j^*
			\lambda^{(1 - \eps_j) / 2}
			a_{-j}\,
			F\bigl( \lambda^{(\eps_j - 1) / 2} \bigr)\,
			F(x^*).
	\]
	In other words, there exists
	\(
		b \in R^*
	\) such that
	\(
		a_i^* a_{-i} = b
	\) for
	\(
		i > 0
	\),
	\(
		F(\lambda) = b^{-1} b^* \lambda
	\), and
	\(
		F(x^*) = b^{-1}\, f(x)^*\, b
	\). In particular,
	\(
			a_i^*
			\lambda^{(1 - \eps_i) / 2}
			a_{-i}\,
			F\bigl( \lambda^{(\eps_i - 1) / 2} \bigr)
		=
			b
	\) for all \(i\). Now let
	\(
		H(u) = H_i(u) \cdot a_i
	\) for some \(i\), this is a group isomorphism. It is easy to see using the remaining identities between
	\(
		F_{ij}
	\) and
	\(
		H_i
	\) that \(H\) is independent of \(i\) and satisfies the required identities. Conversely, for any such tuple
	\(
		(F, H, a_i, b)_i
	\) the resulting object
	\(
		(R', \Delta')
	\) is a
	\(
		(\mathsf B_\ell, \mathsf B_\ell)
	\)-ring and \(Q'\) is a well-defined homomorphism.

	Finally, suppose that
	\(
		\ell = 3
	\). The triple
	\(
		(F_{12}, F_{23}, F = F_{13})
	\) satisfying
	\(
		F_{13}(xy) = F_{12}(x)\, F_{23}(y)
	\) is called an \textit{isotopy} between \(R'\) and \(R\) \cite{mccrimmon}. It is known \cite[theorem 2]{mccrimmon} that in this case there are unique
	\(
		a, b \in R^*
	\) such that
	\(
		F(1) = (a b)^{-1}
	\),
	\(
		F_{12}(x) = F(x)\, a
	\), and
	\(
		F_{23}(x) = b\, F(x)
	\) (namely,
	\(
		a = F_{23}(1)^{-1}
	\) and
	\(
		b = F_{12}(1)^{-1}
	\)). Using the isotopy identities between
	\(
		F_{ij}
	\) we may express
	\(
		F_{ij}
	\) in terms of \(F\), \(a\), \(b\) for
	\(
		i, j > 0
	\). Let
	\(
		H = H_3
	\), then
	\(
		H_2(u) = H(u) \cdot a
	\) and
	\(
		H_1(u) = H(u) \cdot a b
	\). From the identity
	\(
		\rho(H_i(u))\, F_{ij}(x)
		= F_{-i, j}(\rho(u)\, x)
	\) for
	\(
		i > 0
	\) we get
	\(
		F_{-i, j}(x) = \nu_i\, F_{ij}(x)
	\) for
	\(
		i > 0
	\), where
	\[
		\nu_1
		= \rho(H_1(\iota))
		= b^* \nu_2 b
		= (a b)^*\, \nu_3\, (a b),
	\quad
		\nu_2 = \rho(H_2(\iota)) = a^* \nu_3 a,
	\quad
		\nu_3 = \rho(H_3(\iota)).
	\]
	Note that
	\(
		a^* = \nu_2 a^{-1} \nu_3^{-1}
	\) and
	\(
		b^* = \nu_1 b^{-1} \nu_2^{-1}
	\), where all factors except
	\(
		a^{-1}
	\) and
	\(
		b^{-1}
	\) lie in the nucleus, so the \(*\)-subring of \(R\) generated by
	\(
		a^{\pm 1}
	\),
	\(
		b^{\pm 1}
	\), \(\lambda\), and all
	\(
		\nu_i
	\) is associative. Also, there is the dual identity
	\[
			F_{i, -j}(x)\,
			\lambda^{(\eps_j - 1) / 2}\,
			\rho(H_j(u))
		=
			F_{ij}\bigl(
				x
				\lambda^{(\eps_j - 1) / 2}\,
				\rho(u)
			\bigr)
	\]
	obtained from the identities for
	\(
		\rho(H_i(u))\, F_{ij}(x)
	\) and
	\(
		F_{ij}(x)^*
	\). This dual identity easily implies
	\(
		F_{i, -j}(x) = F_{ij}(x)\, \nu_j^{-1}
	\) for
	\(
		j > 0
	\). The identities
	\(
		H_{-i}(u) = H_i(u) \cdot \nu_i^{-1}
	\) for
	\(
		i > 0
	\) follow from
	\(
		H_i(u) \cdot F_{ij}(1) = H_j(u)
	\) since
	\(
		F_{ij}(1)\, F_{j, -i}(1) = \nu_i^{-1}
	\) for
	\(
		i > 0
	\).

	Now we have
	\[
			F_{ij}(x)^*\,
			(\nu_i^* \lambda)^{(1 - \sigma) / 2}\,
			\nu_i^{(1 + \sigma) / 2}
		=
			(\lambda \nu_j^*)^{(1 - \tau) / 2}\,
			\nu_j^{(1 + \tau) / 2}\,
			F_{ji}\bigl(
				\lambda^{(\tau - 1) / 2}
				x^*
				\lambda^{(1 - \sigma) / 2}
			\bigr)
	\]
	for
	\(
		i, j > 0
	\) and
	\(
		\sigma, \tau \in \{-1, 1\}
	\). Take, for example,
	\(
		i = -2
	\) and
	\(
		j = 3
	\). This gives us the formulae for
	\(
		F(\lambda)
	\) (if
	\(
		x = 1
	\),
	\(
		\sigma = -1
	\), and
	\(
		\tau = 1
	\)),
	\(
		F(\lambda^*)
	\) (if
	\(
		x = 1
	\),
	\(
		\sigma = 1
	\), and
	\(
		\tau = -1
	\)), and
	\(
		F(x^*)
	\) (if
	\(
		\sigma = \tau = 1
	\)). The remaining identities are straightforward.

	To prove the converse statement note that for all elements
	\(
		a, b \in R^*
	\) and
	\(
		H(\iota) \in \Delta
	\) with invertible
	\(
		\rho(H(\iota)) = \nu_3
	\) there exists a corresponding homomorphism
	\(
		Q'
		\colon \stlin(\mathsf B_\ell, R', \Delta')
		\to G
	\). Indeed, the
	\(
		(\mathsf B_3, \mathsf B_3)
	\)-ring
	\(
		(R', \Delta')
	\) is constructed from \(G\) in the usual way using the Weyl elements
	\[
		t_1\bigl(H(\iota) \cdot \lambda\bigr)\,
		t_{-1}\bigl(
			H(\iota) \cdot (-\nu_3^{-1})
		\bigr)\,
		t_1\bigl(H(\iota)\bigr),
	\quad
		t_{12}(a)\, t_{21}(-a^{-1})\, t_{12}(a),
	\quad
		t_{23}(b)\, t_{32}(-b^{-1})\, t_{23}(b)
	\]
	instead of the standard ones, see lemmas \ref{weyl-a-long} and \ref{weyl-a-short}. So without loss of generality
	\(
		a = b = 1
	\) and
	\(
		H(\iota) = \iota
	\). This means that \(F\) and \(H\) preserve all operations (in particular,
	\(
		(R', \Delta')
	\) is an
	\(
		(\mathsf B_3, \mathsf B_3)
	\)-ring) and \(Q'\) is clearly well-defined.
\end{proof}

\begin{example} \label{bb-ring-b-c}
	There exist non-trivial
	\(
		(\mathsf B_\ell, \mathsf B_\ell)
	\)-rings with
	\(
		\lambda = \pm 1
	\) for all \(\ell\). For example, let \(R\) be a commutative unital ring with trivial involution and \(\Delta = R\). Then
	\[
		\lambda = 1,
	\quad
		\phi(x) = \dot 0,
	\quad
		\iota = 1,
	\quad
		\langle u, v \rangle = -2 u v,
	\quad
		\rho(u) = u^2,
	\quad
		u \cdot x = u x
	\]
	and
	\[
		\lambda = -1,
	\quad
		\phi(x) = 2 x,
	\quad
		\iota = 1,
	\quad
		\langle u, v \rangle = 0,
	\quad
		\rho(u) = u,
	\quad
		u \cdot x = u x^2
	\]
	are both
	\(
		(\mathsf B_\ell, \mathsf B_\ell)
	\)-rings corresponding to the groups
	\(
		\orth(2 \ell + 1, R)
	\) and
	\(
		\symp(2 \ell, R)
	\).
\end{example}

\begin{example} \label{bb-ring-init}
	We may construct the free
	\(
		(\mathsf B_\ell, \mathsf B_\ell)
	\)-ring with empty set of generators (it is independent of \(\ell\)). Namely, let
	\(
		R = \mathbb Z[\lambda^{\pm 1}]
	\) and
	\(
			\Delta
		=
			\phi(\mathbb Z[\lambda^{-1}])
			\dotoplus \bigoplus_{m \in \mathbb Z}^\cdot
				\iota \cdot \mathbb Z \lambda^m
	\). As a set \(\Delta\) is isomorphic to
	\(
		\mathbb Z[\lambda^{-1}]
		\times \mathbb Z^{(\mathbb Z)}
	\), where the second factor consists of all integer sequences with finite support. The operations on
	\(
		(R, \Delta)
	\) are uniquely determined by the axioms and it is straightforward to check that this is indeed a well-defined
	\(
		(\mathsf B_\ell, \mathsf B_\ell)
	\)-ring satisfying the universal property. This also follows from the description of the odd form ring \cite[lemma 6 and proposition 1]{thesis} generated by
	\(
		1_{ij}
	\) for
	\(
		i, j \in \{-, +\}
	\) in the ring part and
	\(
		\iota
	\) in the form part with the relations
	\(
		1_{ij} 1_{jk} = 1_{ik}
	\),
	\(
		1_{ij} 1_{kl} = 0
	\) for
	\(
		k \neq l
	\),
	\(
		\inv{1_{++}} = 1_{--}
	\),
	\(
		\iota = \iota \cdot 1_{++}
	\), and
	\(
		\widehat \rho(\iota) = 1_{-+}
	\). The Steinberg group
	\(
		\stlin(\mathsf B_\ell, R, \Delta)
	\) is
	\(
		\mathsf B_\ell
	\)-graded for any
	\(
		\ell \geq 3
	\) by general facts about odd unitary groups \cite[lemma 9]{thesis} or by our existence theorem \ref{exist}. The element
	\(
		t_1(\iota \cdot \lambda)\,
		t_{-1}(\iota \cdot (-1))\,
		t_1(\iota)
	\) is Weyl by lemma \ref{weyl-a-short}, but clearly
	\(
		\iota \cdot \lambda \neq \iota
	\) unlike Weyl elements in Chevalley groups.
\end{example}

\begin{example} \label{bb-ring-e7}
	Consider the surjective map
	\[
		\xi
		\colon \mathsf E_7
		\to \mathsf C_3 \cup \{0\},
	\quad
		\mathrm e_i \mapsto 0
		\text{ for } 1 \leq i \leq 4,
	\quad
		\mathrm e_5 \mapsto \mathrm e_1 + \mathrm e_2,
	\quad
		\mathrm e_6 \mapsto \mathrm e_1 - \mathrm e_2,
	\quad
		\mathrm e_7 \mapsto \mathrm e_3,
	\quad
		\mathrm e_8 \mapsto -\mathrm e_3
	\]
	induced by a linear operator. Its kernel (i.e. the preimage of \(0\)) is a root subsystem of type
	\(
		\mathsf D_4
	\). Preimages of roots are as follows.
	\begin{align*}
		\xi^{-1}(\pm (\mathrm e_1 + \mathrm e_2))
		&= \{
			\sigma \mathrm e_i \pm \mathrm e_5
			\mid
			\sigma \in \{-1, 1\},\,
			1 \leq i \leq 4
		\},
	\\
		\xi^{-1}(\pm (\mathrm e_1 - \mathrm e_2))
		&= \{
			\sigma \mathrm e_i \pm \mathrm e_6
			\mid
			\sigma \in \{-1, 1\},\,
			1 \leq i \leq 4
		\},
	\\
		\xi^{-1}(
			\sigma \mathrm e_1
			+ \tau \mathrm e_3
		)
		&\textstyle= \bigl\{
			\frac 1 2 \sum_{p = 1}^8
				\upsilon_p \mathrm e_p
			\mid
			\upsilon_p \in \{-1, 1\},\,
			\upsilon_5 = \upsilon_6 = \sigma,\,
			\upsilon_7 = -\upsilon_8 = \tau,\,
			\sum_{p = 1}^6 \upsilon_p
			\equiv 0 \!\!\!\pmod 4
		\bigr\},
	\\
		\xi^{-1}(
			\sigma \mathrm e_2
			+ \tau \mathrm e_3
		)
		&\textstyle= \bigl\{
			\frac 1 2 \sum_{p = 1}^8
				\upsilon_p \mathrm e_p
			\mid
			\upsilon_p \in \{-1, 1\},\,
			\upsilon_5 = -\upsilon_6 = \sigma,\,
			\upsilon_7 = -\upsilon_8 = \tau,\,
			\sum_{p = 1}^6 \upsilon_p
			\equiv 0 \!\!\!\pmod 4
		\bigr\},
	\\
		\xi^{-1}(\pm 2 \mathrm e_1)
		&= \{\pm (\mathrm e_5 + \mathrm e_6)\},
	\quad
		\xi^{-1}(\pm 2 \mathrm e_2)
		= \{\pm (\mathrm e_5 - \mathrm e_6)\},
	\quad
		\xi^{-1}(\pm 2 \mathrm e_3)
		= \{\pm (\mathrm e_7 - \mathrm e_8)\}.
	\end{align*}
	It follows that
	\(
		\xi^{-1}(\mathsf A_1 \cup \{0\})
	\) is a root subsystem of type
	\(
		\mathsf D_5
	\) in the short root case and
	\(
		\mathsf A_1 + \mathsf D_4
	\) in the long root case. Any
	\(
		\mathsf E_7
	\)-graded group \(G\) is
	\(
		\mathsf B_3
	\)-graded via \(\xi\) since a new \(\alpha\)-Weyl element for
	\(
		\alpha \in \mathsf C_3
	\) may be constructed as product of \(\beta\)-Weyl elements for a maximal family of pairwise orthogonal
	\(
		\beta \in \xi^{-1}(\alpha)
	\) (such a family contains either \(2\) or \(1\) roots depending on the length of \(\alpha\)). In the case
	\(
		G = G^{\mathrm{ad}}(\mathsf E_7, K)
	\) the corresponding
	\(
		(\mathsf B_3, \mathsf B_3)
	\)-ring
	\(
		(R, \Delta)
	\) has the following structure. The group \(\Delta\) is a free \(K\)-module generated by \(\iota\), \(R\) is an \(8\)-dimensional unital \(K\)-algebra,
	\(
		\rho \colon \Delta \to R,\, \iota \mapsto 1
	\) is a \(K\)-linear injection,
	\(
		\langle u, v \rangle = 0
	\),
	\(
		\phi \colon R \to \Delta
	\) is linear and surjective,
	\(
		(u, x) \mapsto u \cdot x
	\) is linear on \(u\) and quadratic on \(x\), and the quadratic form
	\(
		x \mapsto \iota \cdot x
	\) splits. It follows that we may identify \(\Delta\) with
	\(
		K \subseteq R
	\) using \(\rho\),
	\(
		u \cdot x = u x^* x
	\), \(R\) is a composition algebra,
	\(
		\lambda = -1
	\),
	\(
		\phi(x) = x + x^*
	\). Actually, \(R\) splits (i.e. isomorphic to the Cayley--Dickson algebra from \cite[\S 2.4]{zhevlakov}) since there is a tower of Cayley--Dickson extensions
	\(
		K \times K \subseteq R_0 \subseteq R
	\) corresponding to the root subsystems
	\(
		\{
			\mathrm e_1 - \mathrm e_2,
			\mathrm e_2 - \mathrm e_3
		\}^\perp
		\subseteq (\mathrm e_1 - \mathrm e_2)^{\perp}
		\subseteq \mathsf E_7
	\).
\end{example}

Examples \ref{bb-ring-b-c}--\ref{bb-ring-e7} show that the free
\(
	(\mathsf B_3, \mathsf B_3)
\)-ring
\(
	(R, \Delta)
\) with \(3\) generators
\(
	x, y, z \in R
\) is non-associative and has zero divisors,
\(
	(1 + \lambda)\, [x, y, z] = 0
\).

\section{\(\mathsf F_4\)-graded groups}

In this section \(G\) is an
\(
	\mathsf F_4
\)-graded group. We fix root subsystems
\(
	\mathsf B_3 \subseteq \mathsf F_4
\) and
\(
	\mathsf C_3 \subseteq \mathsf F_4
\) corresponding to the cover of the Dynkin diagram of
\(
	\mathsf F_4
\) by two Dynkin diagrams of
\(
	\mathsf B_3
\). There are
\(
	(\mathsf B_3, \mathsf B_3)
\)-rings
\(
	(R, \Delta)
\) and
\(
	(S, \Theta)
\) parameterizing corresponding subgroups of \(G\) by maps \(t_{ij}\), \(t_i\) and \(t'_{ij}\), \(t'_i\) respectively. Notice that these two root subsystems intersect by a root subsystem of type
\(
	\mathsf B_2
\). By a suitable choice of indices, there are group isomorphisms
\[
	H_1,
	H_{-2},
	H_{-1},
	H_2
	\colon \Delta
	\to S,
\quad
	F_{-1, -2},
	F_{1, -2},
	F_{12},
	F_{-1, 2}
	\colon R
	\to \Theta
\]
such that
\begin{align*}
	t_{\pm 1}(u)
	&= t'_{\pm 1, \pm 2}(H_{\pm 1}(u)),
&
	t_{\pm 1, \pm 2}(x)
	&= t'_{\pm 1}(F_{\pm 1, \pm 2}(x)),
\\
	t_{\pm 2}(u)
	&= t'_{\mp 1, \pm 2}(H_{\pm 2}(u)),
&
	t_{\mp 1, \pm 2}(x)
	&= t'_{\pm 2}(F_{\mp 1, \pm 2}(x)).
\end{align*}
Moreover, since Weyl elements from the constructions of
\(
	(R, \Delta)
\) and
\(
	(S, \Theta)
\) may be chosen arbitrarily, we may further assume that
\(
	H_1(\iota_{\Delta}) = 1_S
\) and
\(
	F_{1 2}(1_R) = \iota_\Theta
\). Comparing commutator formulae, we get
\begin{align*}
	H_1(u \cdot \lambda^* x^* \lambda)
	&= \lambda^*\, \rho(F_{-1, -2}(x))\, H_2(u),
&
	F_{-1, 2}(x)
	\cdot \lambda^*\,
	H_{-1}(u)^*\,
	\lambda
	&= F_{1 2}(\lambda^*\, \rho(u)\, x),
\\
	H_1(u \cdot x^* \lambda)
	&= H_{-2}(u)\, \rho(F_{-1, 2}(x))^*\, \lambda,
&
	F_{1, -2}(x) \cdot H_2(u)^*\, \lambda
	&= F_{1 2}(x\, \rho(u)^*\, \lambda),
\\
	H_2(u \cdot x)
	&= H_{-1}(u)\, \rho(F_{-1, 2}(x)),
&
	F_{-1, -2}(x) \cdot H_2(u)
	&= F_{-1, 2}(x\, \rho(u)),
\\
	H_2(u \cdot x)
	&= \rho(F_{1 2}(x))^*\, \lambda\, H_1(u),
&
	F_{1 2}(x) \cdot H_1(u)
	&= F_{-1, 2}(\rho(u)^*\, \lambda x),
\\
	H_{-1}(u \cdot x^*)
	&= \rho(F_{1 2}(x))\, H_{-2}(u),
&
	F_{1, -2}(x) \cdot H_1(u)^*
	&= F_{-1, -2}(\rho(u)\, x),
\\
	H_{-1}(u \cdot \lambda^* x^*)
	&= H_2(u)\,
	\lambda^*\,
	\rho(F_{1, -2}(x))^*\,
	\lambda,
&
	F_{-1, 2}(x) \cdot \lambda^*\, H_{-2}(u)^*
	&= F_{-1, -2}(x \lambda^*\, \rho(u)^*\, \lambda),
\\
	H_{-2}(u \cdot x)
	&= H_1(u)\, \lambda^* \rho(F_{1, -2}(x)),
&
	F_{1 2}(x) \cdot H_{-2}(u)
	&= F_{1, -2}(x \lambda^*\, \rho(u)),
\\
	H_{-2}(u \cdot x)
	&= \lambda^*\,
	\rho(F_{-1, -2}(x))^*\,
	\lambda\,
	H_{-1}(u),
&
	F_{-1, -2}(x) \cdot H_{-1}(u)
	&= F_{1, -2}(\lambda^*\, \rho(u)^*\, \lambda x),
\\
	\lambda^*\,
	\bigl\langle
		F_{-1, -2}(x),
		F_{-1, 2}(y)
	\bigr\rangle
	&= H_1(\phi(x y^* \lambda)),
&
	F_{1 2}(\lambda^*\, \langle u, v \rangle)
	&= \phi(H_{-1}(u)\, H_2(v)^*\, \lambda),
\\
	\bigl\langle F_{1 2}(x), F_{-1, 2}(y) \bigr\rangle
	&= H_2(\phi(x^* y)),
&
	F_{-1, 2}(\langle u, v \rangle)
	&= \phi(H_1(u)^*\, H_2(v)),
\\
	\bigl\langle F_{1 2}(x), F_{1, -2}(y) \bigr\rangle
	&= H_{-1}(\phi(\lambda x \lambda^* y^*)),
&
	F_{-1, -2}(\langle u, v \rangle)
	&= \phi(
		\lambda\,
		H_1(u)\,
		\lambda^*\,
		H_{-2}(v)^*
	),
\\
	\lambda^*\,
	\bigl\langle
		F_{-1, -2}(x),
		F_{1, -2}(y)
	\bigr\rangle
	&= H_{-2}(\phi(x^* \lambda y)),
&
	F_{1, -2}(\lambda^*\, \langle u, v \rangle)
	&= \phi(H_{-1}(u)^*\, \lambda\, H_{-2}(v)).
\end{align*}
for all
\(
	x, y \in R
\) and
\(
	u, v \in \Delta
\).

\begin{lemma} \label{f4-ring-1}
	The maps
	\begin{align*}
		H(u)
		&= H_1(u)
		= \lambda^*\, H_2(u)
		= H_{-1}(u)\, \lambda
		= H_{-2}(u)\, \lambda,
	\\
		F(x)
		&= F_{1 2}(x)
		= F_{-1, 2}(\lambda x)
		= F_{1, -2}(x \lambda^*)
		= F_{-1, -2}(x \lambda^*)
	\end{align*}
	satisfy the identities
	\begin{align*}
		\lambda_S^2 &= 1_S,
	&
		\lambda_R^2 &= 1_R,
	\\
		\lambda_S\, H(u) &= H(u)\, \lambda_S,
	&
		\lambda_R x &= x \lambda_R,
	\\
		\rho(F(x))^* &= \rho(F(x)),
	&
		\rho(u)^* &= \rho(u),
	\\
		H(u)\, \rho(F(x)) &= \rho(F(x))\, H(u),
	&
		x\, \rho(u) &= \rho(u)\, x,
	\\
		H(u \cdot x) &= \rho(F(x))\, H(u),
	&
		F(x) \cdot H(u) &= F(\rho(u)\, x),
	\\
		u \cdot x^* &= u \cdot x,
	&
		F(x) \cdot H(u)^* &= F(x) \cdot H(u),
	\\
		u \cdot \lambda &= u,
	&
		F(x) \cdot \lambda &= F(x),
	\\
		\phi(x^*) &= \phi(x) = -\phi(x \lambda_R),
	&
		\phi(H(u)^*)
		&= \phi(H(u))
		= -\phi(H(u)\, \lambda_S),
	\\
		\phi(x y) &= \phi(y x),
	&
		\phi(H(u)\, H(v)) &= \phi(H(v)\, H(u)),
	\\
		\bigl\langle F(x), F(y) \bigr\rangle
		&= H(\phi(-x^* y)),
	&
		F(\langle u, v \rangle)
		&= \phi(-H(u)^*\, H(v)).
	\end{align*}
	These identities are equivalent to the list above modulo axioms of
	\(
		(\mathsf B_3, \mathsf B_3)
	\)-rings.
\end{lemma}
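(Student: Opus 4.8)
The plan is to distill from the list of $24$ commutator comparisons a small collection of \emph{normalization} relations, use them to merge the eight maps $H_1,H_{-2},H_{-1},H_2$ and $F_{-1,-2},F_{1,-2},F_{12},F_{-1,2}$ into the single pair $(H,F)$, and then rewrite every listed identity in terms of $H$ and $F$. First I would substitute $u=\iota_{\Delta}$ into each identity containing $H_i(u\cdot x)$ or $F_{ij}(x)\cdot H_j(u)$, and $x=1_R$ into each identity containing $F_{ij}(1_R)$ or $H_i(u\cdot 1_R)$. Using the normalizations $H_1(\iota_{\Delta})=1_S$, $F_{12}(1_R)=\iota_\Theta$, $\rho(\iota)=1$ in both rings, together with the $(\mathsf B_3,\mathsf B_3)$-ring axioms ($1^*=1$, $w\cdot 1=w$, $\lambda^*=\lambda^{-1}$, $\lambda\in\mathrm N$, $\rho(w\cdot a)=a^*\rho(w)\,a$), this produces relations such as $F_{-1,2}(x)=F_{12}(\lambda_R^* x)$, $F_{1,-2}=F_{-1,-2}$, $F_{-1,-2}(x)\cdot\lambda_S=F_{-1,2}(x)$, $H_2(\iota)=\lambda_S$, $H_{-1}(\iota)=H_{-2}(\iota)$ — that is, all four $F$'s agree up to one-sided multiplication by $\lambda_R$ and all four $H$'s up to one-sided multiplication by $\lambda_S$. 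Since the $H_i$ and $F_{ij}$ are bijective, any statement of the shape ``$w\cdot a=w$ for all $w$'' produced by these substitutions can be pushed through $\rho(w\cdot a)=a^*\rho(w)\,a$ (at $w=\iota_\Theta$) to give $a^*a=1$ and commutation of $a$ with every $\rho(w)$.

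The hard part will be promoting these into the genuinely new relations $\lambda_S^{2}=1_S$, $\lambda_R^{2}=1_R$, centrality of $\lambda_S,\lambda_R$, symmetry $\rho(w)^*=\rho(w)$, and centrality of each $\rho(w)$. None of these hold in an arbitrary $(\mathsf B_3,\mathsf B_3)$-ring; they express the rigidity of having two compatible coordinatizations of one $\mathsf F_4$-graded group, so they must come from combining identities that mix the two coordinate systems. Concretely I expect to play the formula $\langle F_{12}(x),F_{-1,2}(y)\rangle=H_2(\phi(x^*y))$ against its flip via $\langle v,u\rangle=\langle u,v\rangle^*\lambda$ (after replacing $F_{-1,2}$ by $F_{12}(\lambda_R^*\cdot)$) to pin down $\lambda_R^2$, and the two expressions for $H_2(u\cdot x)$ — one with $\rho$ on the left, one on the right — to force the centrality and symmetry of $\rho$ and the centrality of $\lambda_S$. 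Once these are available, the multi-part definitions of $H$ and $F$ are consistent (each asserted equality is one of the normalization relations, now with $\lambda^*=\lambda$), so $H,F$ are well defined.

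It then remains to substitute $H_i(u)=\lambda_S^{\pm1}H(u)$ or $H(u)\lambda_S^{\pm1}$ and $F_{ij}(x)=\lambda_R^{\pm1}F(x)$ or $F(x)\lambda_R^{\pm1}$ back into the $24$ identities and simplify using centrality and $\lambda^2=1$. Many become literally the same: the four $\langle\cdot,\cdot\rangle$-identities collapse to $\langle F(x),F(y)\rangle=H(\phi(-x^*y))$, the four $F(\langle u,v\rangle)$-identities to $F(\langle u,v\rangle)=\phi(-H(u)^*H(v))$, the eight $H_i(u\cdot x)$-identities to $H(u\cdot x)=\rho(F(x))\,H(u)$ together with $u\cdot x^*=u\cdot x$ and $u\cdot\lambda=u$, and the eight $F_{ij}(x)\cdot H_j(u)$-identities to $F(x)\cdot H(u)=F(\rho(u)\,x)$ together with $F(x)\cdot H(u)^*=F(x)\cdot H(u)$ and $F(x)\cdot\lambda=F(x)$; the $\phi$-symmetry relations drop out along the way. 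For the converse, that the displayed conclusions imply the original list modulo $(\mathsf B_3,\mathsf B_3)$-ring axioms, one simply reads the definitions backwards — $F_{-1,2}(x):=F(\lambda x)$, $F_{1,-2}(x)=F_{-1,-2}(x):=F(x)\lambda$, $H_2(u):=\lambda H(u)$, $H_{-1}(u)=H_{-2}(u):=H(u)\lambda$, with all these factors now idempotent and central — and checks that each target identity reduces, via $\lambda^2=1$, $\rho(w)^*=\rho(w)$, $u\cdot x^*=u\cdot x$, and the two bottom-row identities, to one of the stated conclusions. This last direction is pure bookkeeping once idempotence and centrality of $\lambda_R,\lambda_S$ are in hand.
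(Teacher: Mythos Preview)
Your plan is essentially the paper's own argument: substitute the special values $u=\iota$ and $x=1$ (and then $x=\lambda$) into the sixteen ``action'' identities to read off the normalizations linking $H_1,H_2,H_{-1},H_{-2}$ and the four $F_{ij}$, define $H,F$ from those, and then rewrite the full list in terms of $H,F$ to obtain the fourteen structural identities plus the six $\phi/\langle\cdot,\cdot\rangle$ ones. The paper does exactly this, in two passes of substitution, and records the intermediate relation $H(u\cdot\lambda^{\pm1})=\lambda^{\mp1}H(u)\lambda^{\mp1}$ (and its $F$-analogue) as the bridge to $\lambda^2=1$ and centrality; your ``hard part'' is the same step.
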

\begin{proof}
	Let us simplify the top \(16\) identities from the list above. Take
	\(
		x = 1
	\) in
	\(
		F_{1 2}(x)
	\) and
	\(
		u = \iota
	\) in
	\(
		H_1(u)
	\). We get the identities
	\[
		H_2(u) = \lambda H_1(u),
	\quad
		F_{1 2}(x) = F_{-1, 2}(\lambda x),
	\quad
		H_{-1}(u) = H_{-2}(u),
	\quad
		F_{1, -2}(x) = F_{-1, -2}(x).
	\]
	Now let
	\(
		x = \lambda
	\) in
	\(
		F_{-1, 2}(x)
	\) and
	\(
		u = \iota
	\) in
	\(
		H_2(u)
	\), we obtain
	\[
		H_1(u) = H_{-2}(u)\, \lambda,
	\quad
		F_{1, -2}(x) = F_{1 2}(x \lambda),
	\quad
		H_2(u \cdot \lambda) = H_{-1}(u),
	\quad
		F_{-1, -2}(x) \cdot \lambda = F_{-1, 2}(x).
	\]
	This implies that \(F\) and \(H\) are well-defined and
	\[
		H(u \cdot \lambda^{\pm 1})
		= \lambda^{\mp 1}\, H(u)\, \lambda^{\mp 1},
	\quad
		F(x) \cdot \lambda^{\pm 1}
		= F(\lambda^{\mp 1} x \lambda^{\mp 1}).
	\]
	Now express all identities using \(F\) and \(H\). After simplification we get the first \(14\) identities from the statement. The remaining \(6\) identities are equivalent to the bottom \(8\) identities in the list above.
\end{proof}

Now we may identify \(\Delta\) with \(S\) and \(\Theta\) with \(R\) using the isomorphisms \(F\) and \(H\). By lemma \ref{f4-ring-1} the operations
\(
	({-}) \cdot ({=})
\) and
\(
	\langle {-}, {=} \rangle
\) may be expressed in terms of the ring multiplications, \(\rho\), and \(\phi\). Also,
\(
	\lambda = \rho(-1)
\) both in \(R\) and \(S\). We call the resulting object an \textit{\((\mathsf F_4, \mathsf F_4)\)-ring}. Formally, an abstract
\(
	(\mathsf F_4, \mathsf F_4)
\)-ring \((R, S)\) consists of two alternative unital ring \(R\) and \(S\) with involutions
\(
	x \mapsto x^*
\), additive maps
\(
	\phi \colon R \to \mathrm C(S)
\) and
\(
	\phi \colon S \to \mathrm C(R)
\), and with maps
\(
	\rho \colon R \to \mathrm C(S)
\) and
\(
	\rho \colon S \to \mathrm C(R)
\) such that
\begin{align*}
	[x^*, y, z] &= -[x, y, z],
&
	u\, \phi(x) &= \phi(\rho(u)\, x),
\\
	[x, y, z]^* &= -[x, y, z],
&
	\rho(1) &= 1,
\\
	\phi(x y) &= \phi(y x),
&
	\rho(x)\, \rho(y) &= \rho(x y),
\\
	\phi((x y)\, z) &= \phi(x\, (y z)),
&
	\rho(x + y) &= \rho(x) + \phi(x^* y) + \rho(y),
\\
	\phi(x^*) &= \phi(x)^* = \phi(x),
&
	\rho(x^*) &= \rho(x)^* = \rho(x),
\\
	\phi(\phi(x)) &= 0,
&
	\rho(\rho(x)) &= x^* x,
\\&&
	\rho(\phi(x)) + \phi(\rho(x)) &= x + x^*,
\end{align*}
for
\(
	x, y, z \in R
\) and
\(
	u \in S
\) or vice versa. Note that an element \(a\) of an
\(
	(\mathsf F_4, \mathsf F_4)
\)-ring is invertible if and only if
\(
	\rho(a)
\) is invertible (and the inverse of
\(
	\rho(a)
\) is necessarily central).

\begin{example} \label{f4-ring-chev}
	For any commutative unital ring \(K\) consider the
	\(
		(\mathsf F_4, \mathsf F_4)
	\)-ring
	\(
		(R, S)
	\) constructed by
	\(
		G^{\mathrm{ad}}(\mathsf F_4, K)
	\), where \(R\) parameterize long roots and \(S\) parameterize short roots. It is easy to see using example \ref{bb-ring-b-c} that
	\(
		R \cong S \cong K
	\) and under these isomorphisms
	\(
		\zeta^* = \zeta
	\) for
	\(
		\zeta \in R \cup S
	\),
	\(
		\phi(x) = 0
	\) for
	\(
		x \in R
	\),
	\(
		\phi(u) = 2 u
	\) for
	\(
		u \in S
	\),
	\(
		\rho(x) = x^2
	\) for
	\(
		x \in R
	\),
	\(
		\rho(u) = u
	\) for
	\(
		u \in S
	\).
\end{example}

\begin{example} \label{f4-ring-init}
	Let us construct the free
	\(
		(\mathsf F_4, \mathsf F_4)
	\)-ring
	\(
		(R, S)
	\) with empty set of generators. Since \(R\) and \(S\) as involution rings are generated by the sets \(\phi(S) \cup \rho(S)\) and \(\phi(R) \cup \rho(R)\) respectively, they are commutative with trivial involutions. It is easy to check that
	\(
		R = \mathbb Z \oplus \mathbb Z \lambda
	\) and
	\(
		S = \mathbb Z \oplus \mathbb Z \lambda
	\) with
	\begin{align*}
		\lambda
		&= \rho(-1) = \phi(1) - 1,
	&
		\phi(a + b \lambda)
		&= (a - b) + (a - b) \lambda,
	\\
		(a + b \lambda) (c + d \lambda)
		&= (a c + b d) + (a d + b c) \lambda,
	&
		\rho(a + b \lambda)
		&= \textstyle \frac{(a - b)^2 + a + b}2
		+ \frac{(a - b)^2 - a - b}2\, \lambda.
	\end{align*}
	Clearly,
	\(
		R^* = \{-1, 1, -\lambda, \lambda\}
	\) and
	\(
		S^* = \{-1, 1, -\lambda, \lambda\}
	\). A corresponding
	\(
		\mathsf F_4
	\)-graded group exists by our existence theorem \ref{exist}.
\end{example}

\begin{example} \label{f4-ring-e}
	Consider the surjective map
	\[
		\xi
		\colon \mathsf E_8
		\to \mathsf F_4 \cup \{0\},
	\quad
		\mathrm e_i \mapsto \mathrm e_i
		\text{ for } 1 \leq i \leq 4,
	\quad
		\mathrm e_i \mapsto 0
		\text{ for } 5 \leq i \leq 8
	\]
	induced by a linear operator. It also restricts to surjections
	\(
		\mathsf E_7 \to \mathsf F_4 \cup \{0\}
	\) and
	\(
		\mathsf E_6 \to \mathsf F_4
	\). It is easy to see that
	\(
		\Ker(\xi) \cong \mathsf D_4
	\),
	\(
		\Ker(\xi) \cap \mathsf E_7 \cong 3 \mathsf A_1
	\), and
	\(
		\Ker(\xi) \cap \mathsf E_6 = \varnothing
	\). Also,
	\begin{align*}
		\xi^{-1}(\pm \mathrm e_i)
		&= \{
			\pm \mathrm e_i + \sigma \mathrm e_j
			\mid
			\sigma \in \{-1, 1\},\,
			5 \leq j \leq 8
		\},
	\\
		\xi^{-1}(\sigma \mathrm e_i + \tau \mathrm e_j)
		&= \{\sigma \mathrm e_i + \tau \mathrm e_j\},
	\\
		\xi^{-1}\bigl(
			\textstyle \frac 1 2
			\sum_{i = 1}^4 \sigma_i \mathrm e_i
		\bigr)
		&= \bigl\{
			\textstyle \frac 1 2 \sum_{i = 1}^8
				\sigma_i \mathrm e_i
			\mid
			\sigma_5, \sigma_6, \sigma_7, \sigma_8
			\in \{-1, 1\};\,
			\sum_{i = 1}^8 \sigma_p
			\equiv 0 \!\!\!\pmod 4
		\bigr\}.
	\end{align*}
	It follows that
	\[
		\xi^{-1}\bigl(\mathsf A_1 \cup \{0\}\bigr)
		\cong \mathsf D_5,
	\quad
		\xi^{-1}\bigl(\mathsf A_1 \cup \{0\}\bigr)
		\cap \mathsf E_7
		\cong \mathsf A_1 + \mathsf A_3,
	\quad
		\xi^{-1}\bigl(\mathsf A_1 \cup \{0\}\bigr)
		\cap \mathsf E_6
		\cong \mathsf A_1 + \mathsf A_1
	\]
	in the short root case and
	\[
		\xi^{-1}\bigl(\mathsf A_1 \cup \{0\}\bigr)
		\cong \mathsf A_1 + \mathsf D_4,
	\quad
		\xi^{-1}\bigl(\mathsf A_1 \cup \{0\}\bigr)
		\cap \mathsf E_7
		\cong 4 \mathsf A_1,
	\quad
		\xi^{-1}\bigl(\mathsf A_1 \cup \{0\}\bigr)
		\cap \mathsf E_6
		\cong \mathsf A_1
	\]
	in the long root case. Any
	\(
		\mathsf E_\ell
	\)-graded group \(G\) is
	\(
		\mathsf F_4
	\)-graded via \(\xi\) since a new \(\alpha\)-Weyl element for
	\(
		\alpha \in \mathsf F_4
	\) may be constructed as product of \(\beta\)-Weyl elements for a maximal family of pairwise orthogonal
	\(
		\beta \in \xi^{-1}(\alpha) \cap \mathsf E_\ell
	\) (such a family contains either \(2\) or \(1\) roots depending on the length of \(\alpha\)). In the case of
	\(
		G = G^{\mathrm{ad}}(\mathsf E_\ell, K)
	\) the corresponding
	\(
		(\mathsf F_4, \mathsf F_4)
	\)-ring
	\(
		(R, S)
	\) has the following structure. The ring \(R\) is canonically isomorphic to \(K\) with the trivial involution, \(S\) is a
	\(
		2^{\ell - 5}
	\)-dimensional unital \(K\)-algebra with involution,
	\(
		\phi \colon R \to S
	\) is zero,
	\(
		\phi \colon S \to R
	\) and
	\(
		\rho \colon R \to S
	\) are \(K\)-linear,
	\(
		\rho \colon S \to R
	\) is a quadratic map. Moreover, the quadratic form
	\(
		\rho \colon S \to R
	\) splits. It follows that \(R\) may be identified with
	\(
		K \subseteq S
	\) via
	\(
		\rho \colon R \to S
	\),
	\(
		\phi(u) = u^* + u
	\) and
	\(
		\rho(u) = u^* u
	\) for
	\(
		u \in S
	\), \(S\) is a composition algebra,
	\(
		\lambda_S = -1
	\),
	\(
		\lambda_R = 1
	\). Since the rings \(S\) for
	\(
		\mathsf E_6
		\subseteq \mathsf E_7
		\subseteq \mathsf E_8
	\) form a tower of Cayley--Dickson extensions and the smallest one of the splits, the remaining two also split. Groups from example \ref{bb-ring-e7} correspond to
	\(
		\mathsf C_3 \subseteq \mathsf F_4
	\) in \(G\) since
	\(
		\xi^{-1}(\mathsf C_3) \cong \mathsf E_7
	\). Indeed,
	\(
		\mathsf C_3
	\) is the orthogonal complement to a long root in
	\(
		\mathsf F_4
	\), so its preimage is the orthogonal complement to a root in
	\(
		\mathsf E_8
	\).
\end{example}

For any
\(
	\mathsf F_4
\)-graded group \(G\) we have a corresponding
\(
	(\mathsf F_4, \mathsf F_4)
\)-ring
\(
	(R, S)
\). Fix a basis
\(
	\{\alpha_1, \alpha_2, \alpha_3, \alpha_4\}
	\subseteq \mathsf F_4
\) such that the root subsystems
\(
	\mathsf B_3
\) and
\(
	\mathsf C_3
\) from the construction of
\(
	(R, S)
\) are spanned by
\(
	\{\alpha_1, \alpha_2, \alpha_3\}
\) and
\(
	\{\alpha_2, \alpha_3, \alpha_4\}
\), namely,
\[
	G_{\alpha_1} = t_{23}(R),
\quad
	G_{\alpha_2} = t_{12}(R) = t'_1(R),
\quad
	G_{\alpha_3} = t_1(S) = t'_{12}(S),
\quad
	G_{\alpha_4} = t'_{23}(S),
\]
see also \cite[\S VI.4.9]{bourbaki}. For root subgroups
\(
	G_{\pm \alpha_2}
\) and
\(
	G_{\pm \alpha_3}
\) we use the parametrizations
\[
	t_{\alpha_2} = t_{12} \colon R \to G_{\alpha_2},
\quad
	t_{-\alpha_2} = t_{21} \colon R \to G_{-\alpha_2},
\quad
	t_{\alpha_3} = t'_{12} \colon S \to G_{\alpha_3},
\quad
	t_{-\alpha_3} = t'_{21} \colon S \to G_{-\alpha_3},
\]
also let us fix the standard Weyl elements
\begin{align*}
	n_1 &= t_{23}(1)\, t_{32}(-1)\, t_{23}(1),
	&
	n_3 &= t'_{12}(1)\, t'_{21}(-1)\, t'_{12}(1),
	\\
	n_2 &= t_{12}(1)\, t_{21}(-1)\, t_{12}(1),
	&
	n_4 &= t'_{23}(1)\, t'_{32}(-1)\, t'_{23}(1).
\end{align*}

We are going to construct parametrizations of remaining root subgroups, in particular, to choose parametrizations for
\(
	(\mathsf B_3 \cup \mathsf C_3)
	\setminus \{\pm \alpha_2, \pm \alpha_3\}
\) (recall that
\(
	t_{ij}
\) and
\(
	t_{-j, -i}
\) parameterize the same subgroups). Consider the following sets of terms in the language of
\(
	(\mathsf F_4, \mathsf F_4)
\)-rings, where we identify terms obtained by permutation of \(R\) and \(S\).
\begin{align*}
	\Upsilon
	&= \{-1, 1, -\lambda, \lambda\},
\\
	O_{\mathrm{sym}}
	&= \{
		\upsilon x,
		\upsilon x^*
		\mid
		\upsilon \in \Upsilon
	\},
\\
	O_{\pi / 2}
	&= \{
		\sigma\, \phi(x y),
		\sigma\, \phi(x^* y)
		\mid
		\sigma \in \{-1, 1\}
	\},
\\
	O_{2 \pi / 3}
	&= \{
		\upsilon x y,
		\upsilon x y^*,
		\upsilon x^* y,
		\upsilon x^* y^*,
		\upsilon y x,
		\upsilon y x^*,
		\upsilon y^* x,
		\upsilon y^* x^*
		\mid
		\upsilon \in \Upsilon
	\},
\\
	O_{3 \pi / 4}
	&= \{
		\upsilon x\, \rho(u),
		\upsilon x^*\, \rho(u)
		\mid
		\upsilon \in \Upsilon
	\}.
\end{align*}
These terms are indeed distinct. Namely, let
\[
	G_{\mathrm{std}}
	= G^{\mathrm{ad}}(\mathsf E_7, \mathbb C)
	\times G^{\mathrm{ad}}(\mathsf E_7, \mathbb C).
\]
where the
\(
	\mathsf F_4
\) grading on the first factor is given as in example \ref{f4-ring-e}, and on the second factor it is given as in the same example followed by an outer automorphism of
\(
	\mathsf F_4
\) turning over the Dynkin diagram. The corresponding
\(
	(\mathsf F_4, \mathsf F_4)
\)-ring is
\(
	(
		\mathbb C \times \mathbb O_{\mathbb C},
		\mathbb O_{\mathbb C} \times \mathbb C
	)
\), where
\(
	\mathbb O_{\mathbb C}
\) is the (split) octonion algebra over
\(
	\mathbb C
\). For this
\(
	(\mathsf F_4, \mathsf F_4)
\)-ring the terms in \(\Upsilon\),
\(
	O_{\mathrm{sym}}
\),
\(
	O_{\pi / 2}
\),
\(
	O_{2 \pi / 3}
\),
\(
	O_{3 \pi / 4}
\) represent distinct elements and functions. Clearly, \(\Upsilon\) is an elementary abelian group under multiplication. The set
\(
	O_{\mathrm{sym}}
\) is an elementary abelian group under composition.

Unlike the case of
\(
	\mathsf E_\ell
\)-groups, we cannot right now prove the full analogue of lemma \ref{e-braid} since the
\(
	\mathsf F_4
\)-analogue of
\(
	T^{\mathrm{ad}}(\mathsf E_\ell, \mathbb Z)
\) acts on the root subgroups by elements of \(\Upsilon\) and multiplication by \(\lambda\) cannot be introduced before coordinatisation.
\begin{lemma} \label{f4-braid}
	Let
	\(
		\alpha,
		\beta,
		\alpha + \beta,
		\gamma
		\in \mathsf F_4
	\) be roots of the same length. Then there exists a sign
	\(
		\sigma \in \{-1, 1\}
	\) such that for every
	\(
		\mathsf F_4
	\)-graded group \(G\) and every \(\alpha\)- and \(\beta\)-Weyl elements
	\(
		n_\alpha
	\) and
	\(
		n_\beta
	\) the identity
	\(
		\up{(n_\alpha n_\beta)^3}g = g^\sigma
	\) holds for every
	\(
		g \in G_\gamma
	\).
\end{lemma}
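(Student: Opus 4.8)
The statement is the same-length analogue of Lemma~\ref{e-braid}: for roots $\alpha,\beta,\alpha+\beta,\gamma$ all long (resp.\ all short) in $\mathsf F_4$, the braid relation $(n_\alpha n_\beta)^3$ acts on $G_\gamma$ by a sign depending only on $(\alpha,\beta,\gamma)$, not on the choice of Weyl elements nor on $G$. The plan is to reduce to the already-coordinatised case of $\mathsf B_3$- or $\mathsf C_3$-graded groups, where $(n_\alpha n_\beta)^3$ can be computed explicitly from Lemmas~\ref{weyl-a-long} and~\ref{weyl-a-short}, and then use the universality of the standard example to pin down the sign.

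First I would reduce the relevant root configuration to one sitting inside the fixed subsystems $\mathsf B_3$ or $\mathsf C_3$. The roots $\alpha,\beta$ with $\alpha+\beta\in\mathsf F_4$ of the same length span a rank-$2$ subsystem of type $\mathsf A_2$ (this is the only way two equal-length roots can sum to a root of the same length in $\mathsf F_4$); together with $\gamma$ we get a subsystem $\Psi$ of rank $\le 3$. As in the proof of Lemma~\ref{e-braid}, any base of $\Psi$ extends to a base of $\mathsf F_4$, hence corresponds to a sub-diagram of the $\mathsf F_4$ Dynkin diagram; since all of $\alpha,\beta,\gamma$ have the same length, that sub-diagram is simply laced, so it lies in one of the two length-homogeneous sub-diagrams of type $\mathsf A_3$ inside $\mathsf F_4$, and after applying a suitable Weyl element (permuting one chosen root with a neighbour, exactly as in Lemma~\ref{e-braid}) we may assume $\Psi$ is contained in the fixed $\mathsf B_3$ (long-root case) or $\mathsf C_3$ (short-root case). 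This move is legitimate because conjugating all the $n$'s by a fixed Weyl element does not change whether $(n_\alpha n_\beta)^3$ acts by a sign, and only relabels which $\sigma$ we are computing.

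Next, inside this $\mathsf B_3$- (or $\mathsf C_3$-) graded situation I would invoke Theorem~\ref{group-b}: the subgroup generated by these root subgroups is a quotient of a Steinberg group $\stlin(\mathsf B_3,R,\Delta)$, and there $n_\alpha,n_\beta$ are of the explicit shape described in Lemmas~\ref{weyl-a-long} and~\ref{weyl-a-short} (with the Weyl datum $a$, resp.\ $\iota$, encoding the freedom in the choice of $n$). Conjugation of $t_\gamma(\zeta)$ by $n_\alpha$ and by $n_\beta$ is given by the explicit formulae in those two lemmas; composing six such conjugations, the twisting parameters appear in combinations that collapse — precisely because in $\mathsf F_4$ both long and short root systems have $\langle\cdot,\cdot\rangle$ and $\phi$ behaving as in Example~\ref{bb-ring-b-c}, so one checks $(n_\alpha n_\beta)^3$ carries $t_\gamma(\zeta)\mapsto t_\gamma(\pm\zeta)$ with the sign independent of $a$ (resp.\ $\iota$). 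Rather than grinding all cases, I would observe that the sign is, by construction, an element of the group $\Upsilon\cap\{-1,1\}$ obtained by a word in the conjugation formulae, and that such words are determined by the braid relation $(s_\alpha s_\beta)^3=1$ in $\mathrm W(\mathsf F_4)$ together with the structure constants — hence by specialising to the standard group $G_{\mathrm{std}}=G^{\mathrm{ad}}(\mathsf E_7,\mathbb C)\times G^{\mathrm{ad}}(\mathsf E_7,\mathbb C)$, whose associated $(\mathsf F_4,\mathsf F_4)$-ring $(\mathbb C\times\mathbb O_{\mathbb C},\,\mathbb O_{\mathbb C}\times\mathbb C)$ is faithful for all the terms in play, the sign $\sigma$ is computed once and for all and automatically agrees in every $G$.

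The main obstacle I anticipate is the bookkeeping in the explicit conjugation computation: Lemma~\ref{weyl-a-short} gives a somewhat involved formula for $\up{n}{t_\alpha(\zeta)}$ (with the $v\cdot(w\circ\zeta)$ correction term and the $\widehat\rho(w)^{\pm1}$ twists), and one has to verify that after three passes of $n_\alpha n_\beta$ all the $\circ$-terms and all the twists cancel, leaving only a global sign. The cleanest way to sidestep a case-by-case verification is to argue structurally: $(n_\alpha n_\beta)^3$ stabilises each $G_\gamma$ (since $(s_\alpha s_\beta)^3$ fixes $\gamma$ in the Weyl group), induces an \emph{additive} automorphism of the abelian group $G_\gamma$, and this automorphism, restricted to the canonical generators, is forced to be multiplication by a unit of the form $\pm 1$ — because in both the long and short root subsystems the relevant coordinate ring is commutative with trivial involution and $\langle\iota,\iota\rangle=-1-\lambda=-2$, $\rho(\iota)=1$, so no genuine ``isotopy'' twist survives. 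Once that structural point is in place, faithfulness of $G_{\mathrm{std}}$ identifies the value of $\sigma$, completing the proof.
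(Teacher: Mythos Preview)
Your overall strategy---reduce to a rank-$3$ subsystem of non-crystallographic type $\mathsf B_3$ and then use the explicit conjugation formulae for Weyl elements---is exactly the paper's approach. However, your execution diverges from the paper in two places, one of which is an actual error and the other an unnecessary complication.

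The reduction via Dynkin sub-diagrams does not work as you state it: the $\mathsf F_4$ Dynkin diagram has only two nodes of each length, so its simply-laced sub-diagrams are of type $\mathsf A_2$ at best, not $\mathsf A_3$. The paper's reduction is different and more direct: it observes that the root subsystem of $\mathsf F_4$ \emph{generated} by $\alpha,\beta,\gamma$ (i.e.\ the intersection of $\mathsf F_4$ with their $\mathbb R$-span) is necessarily of type $\mathsf A_2$ or $\mathsf B_3$. No Weyl-group conjugation or base-extension argument is required; one simply sits inside this ambient $\mathsf B_3$.

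The second point is that you bring in Lemma~\ref{weyl-a-short}, Theorem~\ref{group-b}, the structural identities of $(\mathsf F_4,\mathsf F_4)$-rings, and the faithfulness of $G_{\mathrm{std}}$, none of which are needed. The key observation you are missing is that since $\alpha,\beta,\alpha+\beta,\gamma$ all have the same length, they are automatically \emph{long} roots of the ambient non-crystallographic $\mathsf B_3$ (they lie in the $\mathsf A_3 = \mathsf D_3$ part). Hence only Lemma~\ref{weyl-a-long} is relevant: conjugation by $n_\alpha$ and $n_\beta$ acts on long root subgroups purely by left/right multiplication by the Weyl parameters $a^{\pm1}$, $b^{\pm1}$ (with signs), and the braid cancellation to $\pm 1$ is a direct computation in that setting, independent of the choice of $a,b$. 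The short-root formulae from Lemma~\ref{weyl-a-short}, the $\circ$-terms, and your appeal to Example~\ref{bb-ring-b-c} never enter---which also means your worry that the $(\mathsf B_3,\mathsf B_3)$-ring might not be commutative is beside the point.
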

\begin{proof}
	This follows from lemma \ref{weyl-a-long} applied to a root subsystem of non-crystallographic type
	\(
		\mathsf B_3
	\) containing all these roots. Note that the root subsystem spanned by \(\alpha\), \(\beta\), and \(\gamma\) is necessarily of type
	\(
		\mathsf A_2
	\) or
	\(
		\mathsf B_3
	\).
\end{proof}

\begin{theorem} \label{f4-consts}
	There exist terms
	\begin{align*}
		d_{i, \alpha}(x)
		&\in O_{\mathrm{sym}}
		\text{ for }
			1 \leq i \leq 4
		\text{ and }
			\alpha \in \mathsf F_4;
	\\
		C_{\alpha \beta}^{\alpha + \beta}(x, y)
		&\in O_{\pi / 2}
		\text{ for }
			\alpha \perp \beta
			\text{ and } |\alpha| = |\beta| = 1;
	\\
		C_{\alpha \beta}^{(\alpha + \beta) / 2}(x, y)
		&\in O_{\pi / 2}
		\text{ for }
			\alpha \perp \beta
			\text{ and } |\alpha| = |\beta| = \sqrt 2;
	\\
		C_{\alpha \beta}^{\alpha + \beta}(x, y)
		&\in O_{2 \pi / 3}
		\text{ for }
		\widehat{\alpha \beta} = 2 \pi / 3;
	\\
		C_{\alpha \beta}^{2 \alpha + \beta}(u, x),
		C_{\alpha \beta}^{\alpha + \beta}(x, u)
		&\in O_{3 \pi / 4}
		\text{ for }
			\widehat{\alpha \beta} = 3 \pi / 4,\,
			|\alpha| = 1,
			\text{ and } |\beta| = \sqrt 2;
	\\
		C_{\alpha \beta}^{\alpha + \beta}(u, x),
		C_{\alpha \beta}^{\alpha + 2 \beta}(x, u)
		&\in O_{3 \pi / 3}
		\text{ for }
			\widehat{\alpha \beta} = 3 \pi / 4,\,
			|\alpha| = \sqrt 2,
			\text{ and } |\beta| = 1
	\end{align*}
	with the following property. For every
	\(
		\mathsf F_4
	\)-graded group \(G\) there are (necessarily unique) parametrizations
	\(
		t_\alpha \colon P_\alpha \to G_\alpha
	\) for
	\(
		\alpha \neq \pm \alpha_2, \pm \alpha_3
	\) and
	\(
		P_\alpha \in \{R, S\}
	\) such that
	\[
		\up{n_i}{t_\alpha(x)}
		= t_{s_i(\alpha)}(d_{i, \alpha}(x))
	\]
	for
	\(
		1 \leq i \leq 4
	\),
	\(
		\alpha \in \mathsf F_4
	\) (where
	\(
		s_i = s_{\alpha_i}
	\)), the commutator formula
	\[
		[t_\alpha(\zeta), t_\beta(\eta)]
		= \prod_{\gamma \in \interval \alpha \beta}
			t_\gamma\bigl(
				C_{\alpha \beta}^\gamma(\zeta, \eta)
			\bigr)
	\]
	holds for
	\(
		\alpha \neq -\beta
	\), and
	\(
		t_\alpha(a)\,
		t_{-\alpha}(b)\,
		t_\alpha(c)
	\) is a Weyl element if and only if
	\(
		a = c
	\) is invertible and
	\(
		b = -a^{-1}
	\). Such a family of terms is unique up to the change
	\(
		d_{i, \alpha}(\zeta)
		\mapsto \upsilon_{s_i(\alpha)}^{-1}\bigl(
			d_{i, \alpha}\bigl(
				\upsilon_\alpha(\zeta)
			\bigr)
		\bigr)
	\) and
	\(
		C_{\alpha \beta}^\gamma(\zeta, \eta)
		\mapsto \upsilon_{\gamma}^{-1}\bigl(
			C_{\alpha \beta}^\gamma\bigl(
				\upsilon_\alpha(\zeta),
				\upsilon_\beta(\eta)
			\bigr)
		\bigr)
	\), where
	\(
		\upsilon_\alpha
		= \upsilon_{-\alpha}
		\in \Upsilon
	\) and
	\(
		\upsilon_{\alpha_2}
		= \upsilon_{\alpha_3}
		= x
	\).
\end{theorem}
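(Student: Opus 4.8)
The plan is to fix the parametrizations $t_{\pm\alpha_2}$, $t_{\pm\alpha_3}$ once and for all, to propagate them to every other root subgroup along the standard Weyl elements $n_1,\dots,n_4$, and to read the terms $d_{i,\alpha}$, $C_{\alpha\beta}^\gamma$ off the reference group $G_{\mathrm{std}}$. The first point is that the relevant group operations are genuinely given by terms of the prescribed shapes. Each conjugation by an $n_i$ takes place in a $\mathsf B_3$- or $\mathsf C_3$-graded subgroup of $G$ (built on $\langle\alpha_1,\alpha_2,\alpha_3\rangle$, $\langle\alpha_2,\alpha_3,\alpha_4\rangle$, or a Weyl conjugate), so by lemmas~\ref{weyl-a-long} and~\ref{weyl-a-short} conjugation by $n_i$ carries $G_\beta$ to $G_{s_i(\beta)}$ via a map of the form $x\mapsto\upsilon x$ or $x\mapsto\upsilon x^*$; evaluating in $G_{\mathrm{std}}$, where the terms of $O_{\mathrm{sym}}$ represent distinct functions, pins $d_{i,\beta}$ down and shows $d_{i,\beta}\in O_{\mathrm{sym}}$. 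Likewise every commutator $[t_\alpha(\zeta),t_\beta(\eta)]$ is computed either directly from the commutator formulae of the two $(\mathsf B_3,\mathsf B_3)$-rings attached to the fixed $\mathsf B_3$ and $\mathsf C_3$, or by conjugating such a commutator by the $n_i$; running through the few angle/length configurations of $\mathsf F_4$ (only $2\pi/3$ and $3\pi/4$ are non-right) shows the resulting $C_{\alpha\beta}^\gamma$ fall into $O_{\pi/2}$, $O_{2\pi/3}$ or $O_{3\pi/4}$. The $R\leftrightarrow S$ symmetry of $G_{\mathrm{std}}$ (one $\mathsf E_7$-factor carrying the outer-twisted $\mathsf F_4$-grading) halves this bookkeeping and is the source of the identification of terms up to a permutation of $R$ and $S$.

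With the terms available, $t_\alpha$ for $\alpha\neq\pm\alpha_2,\pm\alpha_3$ is defined by writing $\alpha=s_{i_1}\dotsm s_{i_m}(\alpha_\bullet)$ with $\alpha_\bullet\in\{\alpha_2,\alpha_3\}$ and transporting $t_{\alpha_\bullet}$ along the $n_{i_k}$ using these $d$'s. Well-definedness is the key step, and I would argue it exactly as in the proof of theorem~\ref{group-e}: since $O_{\mathrm{sym}}$ is a group under composition and $G_{\mathrm{std}}$ separates it, the assertion that two words for the same element of $\mathrm W(\mathsf F_4)$ yield the same transported parametrization is an identity between terms in $O_{\mathrm{sym}}$, hence holds once it holds in $G_{\mathrm{std}}$, where it holds because the standard parametrizations of $G_{\mathrm{std}}$ already satisfy the conjugation formula. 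This uses that any two words for an element of $\mathrm W(\mathsf F_4)$ are connected by the braid relations --- for which the needed identities are lemma~\ref{f4-braid} (equal-length roots) and lemmas~\ref{weyl-a-long}, \ref{weyl-a-short} inside $\langle\alpha_1,\alpha_2,\alpha_3\rangle$ for the mixed-length relation $(s_2s_3)^4=1$ --- together with the fact that $\mathrm{Stab}_{\mathrm W(\mathsf F_4)}(\alpha_\bullet)$ is generated by reflections in orthogonal roots, whose Weyl elements act trivially on $G_{\alpha_\bullet}$.

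The three asserted properties then follow. The conjugation formula holds by construction for the moves used in the transport and, for all remaining $n_i$ and $\alpha$, by the same reduction to $G_{\mathrm{std}}$. The commutator formula is checked for one pair in each $\mathrm W(\mathsf F_4)$-orbit of non-opposite pairs inside a $\mathsf B_3$- or $\mathsf C_3$-graded subgroup (lemmas~\ref{weyl-a-long}, \ref{weyl-a-short} and the axioms of $(\mathsf B_3,\mathsf B_3)$-rings), and extended to all pairs by conjugating with the $n_i$ and applying the conjugation formula. The Weyl criterion follows from lemma~\ref{weyl-crit}, lemmas~\ref{weyl-a-long}, \ref{weyl-a-short} in a rank-$3$ subsystem through $\alpha$, and the characterization of invertible elements of an $(\mathsf F_4,\mathsf F_4)$-ring by invertibility of $\rho$. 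Uniqueness of the $t_\alpha$ is immediate: once $t_{\pm\alpha_2},t_{\pm\alpha_3}$ and the $d_{i,\alpha}$ are fixed, the conjugation formula forces every $t_\alpha$. For uniqueness of the family of terms, apply the existence part to $G_{\mathrm{std}}$ with two valid families $(d,C)$, $(d',C')$: the two parametrizations agree on $\pm\alpha_2,\pm\alpha_3$, so $t'_\alpha=t_\alpha\circ\upsilon_\alpha$ with $\upsilon_{\pm\alpha_2}=\upsilon_{\pm\alpha_3}=\mathrm{id}$; comparing the two sets of formulae shows each $\upsilon_\alpha$ commutes with the $(\mathsf F_4,\mathsf F_4)$-ring structure of $G_{\mathrm{std}}$ and, since $d_{i,\alpha},d'_{i,\alpha}\in O_{\mathrm{sym}}$ and the $C$'s lie in the finite sets separated by $G_{\mathrm{std}}$, that $\upsilon_\alpha\in\Upsilon$, with $\upsilon_\alpha=\upsilon_{-\alpha}$ forced by the Weyl criterion and the centrality of $\Upsilon$. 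Substituting $t'_\alpha=t_\alpha\circ\upsilon_\alpha$ into the defining formulae gives precisely the stated change of $d$ and $C$.

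I expect the main obstacle to be the well-definedness step, and within it the mixed-length braid relation $(s_2s_3)^4=1$: one must track conjugation by the long Weyl element $n_2$ and the short Weyl element $n_3$ on the short, long and intermediate root subgroups of the ambient $\mathsf B_3$, keep the signs and the occurrences of $\lambda=\rho(-1)$ consistent, and confirm that after $t_{\pm\alpha_2},t_{\pm\alpha_3}$ are pinned down the residual freedom shrinks exactly to $\Upsilon$ rather than to a larger group of units. The case analysis placing each $C_{\alpha\beta}^\gamma$ in the right set among $O_{\pi/2}$, $O_{2\pi/3}$, $O_{3\pi/4}$ is finite but delicate; organizing the computations in $G_{\mathrm{std}}$ around the $R\leftrightarrow S$ symmetry keeps it under control.
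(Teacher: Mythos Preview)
Your approach follows theorem~\ref{group-e} closely, but the paper explicitly flags this as the wrong template: the proof opens with ``We cannot proceed as in lemma~\ref{e-braid} and theorem~\ref{group-e} since multiplication by $\lambda \in \Upsilon$ is not expressible in the group-theoretic language.'' The difficulty is exactly your well-definedness step. You argue that two words for the same Weyl element give the same transported parametrization because ``the assertion \ldots\ is an identity between terms in $O_{\mathrm{sym}}$, hence holds once it holds in $G_{\mathrm{std}}$.'' But for a general $G$ the monodromy of a braid loop on $G_{\alpha_\bullet}$ is a priori only a bijection of $P_{\alpha_\bullet}$; you do not yet know it is given by a \emph{term} at all, so comparing terms in $G_{\mathrm{std}}$ is circular. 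In the $\mathsf E_\ell$ case this works because lemma~\ref{e-braid} forces the monodromy into $\{\pm 1\}$, which is group-theoretic (inversion); lemma~\ref{f4-braid} does the same, but only for same-length braid relations. Your appeal to ``$\mathrm{Stab}_{\mathrm W(\mathsf F_4)}(\alpha_\bullet)$ is generated by reflections in orthogonal roots, whose Weyl elements act trivially on $G_{\alpha_\bullet}$'' is also incorrect: for two orthogonal \emph{short} roots lemma~\ref{weyl-a-short} gives $\up{n}{t_j(\zeta)} = t_j(\zeta \dotminus v \cdot (w \circ \zeta))$, which is not the identity. So the stabilizer argument and the $(n_2 n_3)^4$ step both need real work, not a reduction to $G_{\mathrm{std}}$.

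The paper avoids all of this by an induction on height. One defines $t_\alpha$ for positive roots by picking \emph{some} $i$ with $\height(s_i(\alpha))<\height(\alpha)$ and transporting from $s_i(\alpha)$; the key combinatorial observation (from the $\mathsf F_4$ root diagram) is that whenever two distinct $i,j$ both lower the height one has $|i-j|\ge 2$, so $n_i,n_j$ literally commute and no braid relation is needed for well-definedness. The conjugation formula $\up{n_i}{t_\alpha(x)}=t_{s_i(\alpha)}(d_{i,\alpha}(x))$ is then verified case by case: trivial if $|\alpha|\neq|\alpha_i|$, by lemmas~\ref{weyl-a-long}--\ref{weyl-a-short} if $\alpha\in\mathsf B_3\cup\mathsf C_3$, by descent along a commuting $n_j$ otherwise; only the two extremal cases $d_{2,\mu_{\mathrm{long}}}$ and $d_{3,\mu_{\mathrm{short}}}$ remain, and those fall to lemma~\ref{f4-braid}. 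The $C_{\alpha\beta}^\gamma$ are then obtained by conjugating into the fixed $\mathsf B_3$ or $\mathsf C_3$, and finally the $t_{-\alpha}$ are adjusted by elements of $O_{\mathrm{sym}}$ to make the Weyl criterion symmetric. Your description of the commutator and Weyl-criterion parts, and of the uniqueness clause, is fine and matches the paper; it is specifically the transport/well-definedness mechanism that needs to be replaced.
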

\begin{proof}
	We cannot proceed as in lemma \ref{e-braid} and theorem \ref{group-e} since multiplication by
	\(
		\lambda \in \Upsilon
	\) is not expressible in the group-theoretic language. At first we find the required terms and construct
	\(
		t_\alpha
	\) not necessarily satisfying the characterization of Weyl elements, and at the end we change
	\(
		t_\alpha
	\) for negative \(\alpha\) by composing them with suitable elements of
	\(
		O_{\mathrm{sym}}
	\). For
	\(
		\alpha \in \mathsf B_3 \cup \mathsf C_3
	\) we already have some homomorphisms
	\(
		t_\alpha
	\) uniquely defined up to the action of
	\(
		O_{\mathrm{sym}}
	\), see lemma \ref{f4-ring-1} for
	\(
		\mathsf B_3 \cap \mathsf C_3
	\). Now let
	\(
		\height
		\colon \mathsf F_4 \cup \{0\}
		\to \mathbb Z
	\) be the \textit{height function}, i.e.
	\(
		\height(\alpha + \beta)
		= \height(\alpha) + \height(\beta)
	\),
	\(
		\height(0) = 0
	\), and
	\(
		\height(\alpha_i) = 1
	\). Let us construct
	\(
		t_\alpha
	\) for
	\(
		\height(\alpha) > 0
	\) and
	\(
		\alpha \notin \mathsf B_3 \cup \mathsf C_3
	\) by induction on
	\(
		\height(\alpha)
	\) as follows. There exists
	\(
		1 \leq i \leq 4
	\) such that
	\(
		\height(s_i(\alpha)) < \height(\alpha)
	\) (but necessarily still
	\(
		\height(s_i(\alpha)) > 0
	\)). Let
	\(
		t_\alpha(x) = \up{n_i}{t_{s_i(\alpha)}(o(x))}
	\) for some
	\(
		o \in O_{\mathrm{sym}}
	\). The same definition also works for
	\(
		\height(\alpha) > 1
	\) and
	\(
		\alpha \in \mathsf B_3 \cup \mathsf C_3
	\) for some explicit \(o\) by lemmas \ref{weyl-a-long} and \ref{weyl-a-short}. Note that our definition is ``independent'' on the choice of \(i\) in the following sense. If
	\(
		\height(s_i(\alpha)),
		\height(s_j(\alpha))
		< \height(\alpha)
	\) for
	\(
		i \neq j
	\), then
	\(
		|i - j| \geq 2
	\) (by inspecting the root diagram of
	\(
		\mathsf F_4
	\) \cite[figure 25]{atlas}),
	\(
		n_i n_j = n_j n_i
	\), and
	\[
		t_\alpha(x)
		= \up{n_i}{t_{s_i(\alpha)}(o(x))}
		= \up{n_i n_j}
			{t_{s_j(s_i(\alpha))}\bigl(o'(o(x))\bigr)}
		= \up{n_j n_i}
			{t_{s_i(s_j(\alpha))}\bigl(o'(o(x))\bigr)}
		= \up{n_j}
			{t_{s_i(\alpha)}\bigl(o''(o'(o(x)))\bigr)}
	\]
	for some
	\(
		o, o', o'' \in O_{\mathrm{sym}}
	\). It follows that
	\(
		\up{n_i}{t_\alpha(x)}
		= t_{s_i(\alpha)}(d_{i, \alpha}(x))
	\) holds for all \(i\) and \(\alpha\) with positive height for some universal terms
	\(
		d_{i, \alpha}
	\) unless
	\(
		s_i(\alpha) = \alpha
	\).

	In order to find
	\(
		d_{i, \alpha}
	\) satisfying this identity consider the following cases.
	\begin{itemize}

		\item
		If
		\(
			|\alpha| \neq |\alpha_i|
		\), then we clearly have to take
		\(
			d_{i, \alpha}(x) = x
		\).

		\item
		If
		\(
			|\alpha| = |\alpha_i|
		\) and
		\(
			\alpha \in \mathsf B_3 \cup \mathsf C_3
		\), then we may apply lemmas \ref{weyl-a-long} and \ref{weyl-a-short} since both \(\alpha\) and
		\(
			\alpha_i
		\) lie in
		\(
			\mathsf B_3
		\) or
		\(
			\mathsf C_3
		\).

		\item
		Suppose that
		\(
			|\alpha| = |\alpha_i|
		\) and there is \(j\) such that
		\(
			|i - j| \geq 2
		\) and
		\(
			\height(s_j(\alpha)) < \height(\alpha)
		\). If there exists an appropriate
		\(
			d_{i, s_j(\alpha)}
		\), then
		\(
			d_{i, \alpha} = d_{i, s_j(\alpha)}
		\) also exists since
		\[
			\up{n_i}{t_{\alpha}(x)}
			= \up{n_i n_j}{
				t_{s_j(\alpha)}(d_{j, \alpha}^{-1}(x))
			}
			= \up{n_j}{
				t_{s_j(\alpha)}\bigl(
					d_{i, s_j(\alpha)}(
						d_{j, \alpha}^{-1}(x)
					)
				\bigr)
			}
			= t_\alpha(d_{i, s_j(\alpha)}(x)).
		\]

		\item
		By considering the root diagram \cite[figure 25]{atlas}, the terms not covered by other cases are
		\(
			d_{2, \mu_{\mathrm{long}}}
		\) and
		\(
			d_{3, \mu_{\mathrm{short}}}
		\), where
		\(
			\mu_{\mathrm{long}}
		\) and
		\(
			\mu_{\mathrm{short}}
		\) are the highest roots of the corresponding lengths. We may find them using lemma \ref{f4-braid}.
	\end{itemize}

	By symmetry, the same holds for roots with negative height. Since
	\(
		d_{i, \alpha_{\pm i}}
	\) clearly exist by lemma \ref{weyl-a-long}, we have all
	\(
		d_{i, \alpha}
	\). In order to find
	\(
		C_{\alpha \beta}^\gamma
	\) we may apply a product of
	\(
		n_i
	\) to
	\(
		(\alpha, \beta)
	\) in order to move the roots into
	\(
		\mathsf B_3
	\) or
	\(
		\mathsf C_3
	\). Inside these root subgroups the required term clearly exists. Similarly, by lemmas \ref{weyl-a-long} and \ref{weyl-a-short} for every root \(\alpha\) with positive height there exists
	\(
		\upsilon \in O_{\mathrm{sym}}
	\) such that
	\(
		t_\alpha(a)\, t_{-\alpha}(b)\, t_\alpha(c)
	\) is Weyl if and only if
	\(
		a = c
	\) is invertible and
	\(
		b = -\upsilon(a)^{-1}
	\). We replace
	\(
		t_{-\alpha}
	\) by
	\(
		t_{-\alpha} \circ \upsilon
	\). The characterization of
	\(
		(-\alpha)
	\)-Weyl elements became valid since
	\(
		t_\alpha(a)\, t_{-\alpha}(b)\, t_\alpha(a)
		= t_{-\alpha}(b)\, t_\alpha(a)\, t_{-\alpha}(b)
	\) for such elements.
\end{proof}

For any abstract
\(
	(\mathsf F_4, \mathsf F_4)
\)-ring
\(
	(R, S)
\) we now may construct the Steinberg group
\(
	\stlin(\mathsf F_4, R, S)
\) with generators
\(
	t_\alpha(x)
\) for long \(\alpha\) and
\(
	x \in R
\),
\(
	t_\beta(\zeta)
\) for short \(\beta\) and
\(
	\zeta \in S
\). The relations are, as usual,
\(
	t_\alpha(x)\, t_\alpha(y) = t_\alpha(x + y)
\) for all \(\alpha\) and
\(
	[t_\alpha(x), t_\beta(y)]
	= \prod_{\gamma \in \interval \alpha \beta}
		t_\gamma\bigl(
			C_{\alpha \beta}^\gamma(x, y)
		\bigr)
\) for
\(
	\alpha \neq -\beta
\).

\begin{theorem} \label{group-f}
	Let \(G\) be a
	\(
		\mathsf F_4
	\)-graded group. Then there is an
	\(
		(\mathsf F_4, \mathsf F_4)
	\)-ring
	\(
		(R, S)
	\) and a homomorphism
	\(
		Q \colon \stlin(\mathsf F_4, R, S) \to G
	\) inducing isomorphisms between root subgroups.

	Any other such homomorphism
	\(
		Q' \colon \stlin(\mathsf F_4, R', S') \to G
	\) is of the following type. Choose additive isomorphisms \(F \colon R' \to R\), \(H \colon S' \to S\) and elements \(a, b \in R^*\), \(\zeta, \eta \in S^*\) such that
	\begin{align*}
		F(1) &= (a b)^{-1},
	&
		H(1) &= (\zeta \eta)^{-1},
	\\
		F(x y) &= (F(x)\, a)\, (b\, F(y)),
	&
		H(u v) &= (H(u)\, \zeta)\, (\eta\, H(v)),
	\\
		F(\lambda x) &= \lambda\, F(x),
	&
		H(\lambda u) &= \lambda\, H(u),
	\\
		F(x^*)\, (a b)
		&= \bigl(F(x)\, (a b)\bigr)^*,
	&
		H(u^*)\, (\zeta \eta)
		&= \bigl(H(u)\, (\zeta \eta)\bigr)^*,
	\\
		F(\phi(u))\, (a b)
		&= \phi\bigl(H(u)\, (\zeta \eta)\bigr),
	&
		H(\phi(x))\, (\zeta \eta)
		&= \phi\bigl(F(x)\, (a b)\bigr),
	\\
		F(\rho(u))\, (a b)
		&= \rho\bigl(H(u)\, (\zeta \eta)\bigr),
	&
		H(\rho(x))\, (\zeta \eta)
		&= \rho\bigl(F(x)\, (a b)\bigr).
	\end{align*}
	The root homomorphisms
	\(
		t_\alpha \colon P_\alpha \to G
	\) for
	\(
		P_\alpha \in \{R, S\}
	\) and
	\(
		t'_\alpha \colon P'_\alpha \to G
	\) for
	\(
		P'_\alpha \in \{R', S'\}
	\) are related via explicit formulae from theorem \ref{group-b} for
	\(
		\alpha \in \mathsf B_3 \cup \mathsf C_3
	\). The tuple
	\(
		(F, H, a, b, \zeta, \eta)
	\) is uniquely determined by \(Q'\).
\end{theorem}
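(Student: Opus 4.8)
The plan is to imitate the pattern of theorems \ref{group-a}, \ref{group-d}, \ref{group-e}, \ref{group-b}, using the two embedded subsystems $\mathsf B_3, \mathsf C_3 \subseteq \mathsf F_4$ as the bridge to the already-proved $\mathsf B_3$-case. The existence of $Q$ is essentially free: theorem \ref{f4-consts} provides parametrizations $t_\alpha \colon P_\alpha \to G_\alpha$ (with $P_\alpha \in \{R, S\}$) and universal terms $d_{i, \alpha}$, $C_{\alpha \beta}^\gamma$, and the defining relations of $\stlin(\mathsf F_4, R, S)$ — additivity of each $t_\alpha$ and the commutator formula for $\alpha \neq -\beta$ — hold in $G$ by theorem \ref{f4-consts} and the $\mathsf F_4$-commutator relations. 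So there is a homomorphism $Q$ with $Q(t_\alpha(x)) = t_\alpha(x)$, and since $Q \circ t_\alpha^{\mathrm{St}} = t_\alpha$ is injective with image $G_\alpha$, it induces isomorphisms between the root subgroups.

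For the classification, let $Q' \colon \stlin(\mathsf F_4, R', S') \to G$ be another such homomorphism and write $t'_\alpha = Q' \circ t'^{\mathrm{St}}_\alpha$ for its root homomorphisms; each is injective with image $G_\alpha$. Any $\alpha$-Weyl element of $G$ with $\alpha \in \mathsf B_3$ normalizes every $G_\beta$ by $s_\alpha$, and $s_\alpha$ preserves $\mathsf B_3$, so $\langle G_\beta \mid \beta \in \mathsf B_3 \rangle$ is $\mathsf B_3$-graded and the families $\{t_\alpha\}_{\alpha \in \mathsf B_3}$, $\{t'_\alpha\}_{\alpha \in \mathsf B_3}$ define homomorphisms of the corresponding $\mathsf B_3$-Steinberg groups into $G$ (the $\mathsf B_3$-relations being among the restricted $\mathsf F_4$-relations, with $(R', S')$ carrying its $(\mathsf B_3, \mathsf B_3)$-ring structure via lemma \ref{f4-ring-1}). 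The $\ell = 3$ case of theorem \ref{group-b} then yields a unique additive isomorphism $F \colon R' \to R$, a unique group isomorphism $H \colon S' \to S$, and unique $a, b \in R^*$ satisfying the explicit $\mathsf B_3$-formulae. Applying the same to $\mathsf C_3$ — with the roles of $R$ and $S$ interchanged, which is legitimate by the $R \leftrightarrow S$ symmetry induced by the outer automorphism of $\mathsf F_4$ — produces data $F_{\mathsf C} \colon S' \to S$, $H_{\mathsf C} \colon R' \to R$, $\zeta, \eta \in S^*$.

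The gluing step is where the argument has substance. On the overlap $\mathsf B_3 \cap \mathsf C_3 \cong \mathsf B_2$ the two descriptions must coincide; together with the identifications of lemma \ref{f4-ring-1} (which express $\cdot$, $\langle {-}, {=} \rangle$ and the $\lambda$'s through the ring multiplications, $\phi$ and $\rho$) and the fact that $n_2 \in G_{\alpha_2}\,G_{-\alpha_2}\,G_{\alpha_2}$ and $n_3 \in G_{\alpha_3}\,G_{-\alpha_3}\,G_{\alpha_3}$ link the two subsystems, this forces $F_{\mathsf C}$ to agree with $H$ and $H_{\mathsf C}$ with $F$ after using the central-rescaling ambiguity of theorem \ref{group-b} in the $\mathsf C_3$-application to absorb the discrepancy; thus only a single pair $(F, H)$ and elements $a, b \in R^*$, $\zeta, \eta \in S^*$ survive. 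Substituting into the $\mathsf B_3$- and $\mathsf C_3$-formulae the simplifications valid in an $(\mathsf F_4, \mathsf F_4)$-ring — $\iota$ corresponds to $1$, $\lambda = \rho(-1)$ is a central involution, and the auxiliary elements $\nu_i = \rho(\,\cdot\,)$ are central — collapses them to exactly the symmetric list of identities in the statement (e.g. $F(1) = (ab)^{-1}$, $F(xy) = (F(x)\,a)\,(b\,F(y))$, $F(\lambda x) = \lambda\,F(x)$, and the $*$-, $\phi$-, $\rho$-compatibilities, together with their $R \leftrightarrow S$ counterparts), as well as the asserted relation between $t_\alpha$ and $t'_\alpha$ on $\mathsf B_3 \cup \mathsf C_3$. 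Uniqueness of $(F, H, a, b, \zeta, \eta)$ is inherited from the uniqueness clauses of theorem \ref{group-b}.

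For the converse, given any tuple $(F, H, a, b, \zeta, \eta)$ satisfying the listed identities, one constructs the $(\mathsf F_4, \mathsf F_4)$-ring associated to $G$ using, in place of the standard Weyl elements $n_2, n_3$ and the standard $\mathsf B_3$- and $\mathsf C_3$-Weyl elements, the ones twisted by $a, b$ and $\zeta, \eta$ (available by lemmas \ref{weyl-a-long} and \ref{weyl-a-short}), exactly as in the last paragraph of the proof of theorem \ref{group-b}; this reduces to $a = b = 1$, $\zeta = \eta = 1$, where $F$ and $H$ preserve all operations, $(R', S') \cong (R, S)$ is visibly an $(\mathsf F_4, \mathsf F_4)$-ring, and $Q'$ is a well-defined homomorphism. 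The main obstacle is thus the gluing: verifying that the rescaling freedoms of the two invocations of theorem \ref{group-b} are compatible (so that one genuinely gets a single isotopy $F$ and a single $H$ rather than two unrelated ones) and that, after eliminating the $\nu_i$ and $\lambda$, the $\mathsf B_3$- and $\mathsf C_3$-formulae assemble into the symmetric identities with the correct positions of $a, b, \zeta, \eta$ and of the involutions.
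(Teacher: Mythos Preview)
Your plan is essentially the paper's: existence from theorem \ref{f4-consts}; classification by applying the $\ell=3$ case of theorem \ref{group-b} to both $\mathsf B_3$ and $\mathsf C_3$ and matching on the overlap $\mathsf B_3\cap\mathsf C_3$; converse by re-running the construction with twisted Weyl elements as in the last paragraph of theorem \ref{group-b}. Two small points. First, in the $\ell=3$ case of theorem \ref{group-b} the tuple $(F,H,a,b)$ is \emph{unique}, so there is no ``central-rescaling ambiguity'' available; the equalities $F_{\mathsf C}=H$ and $H_{\mathsf C}=F$ are forced directly by comparing the second-column formulae of theorem \ref{group-b} on the common $\mathsf B_2$, not obtained by absorbing a discrepancy. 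Second, the paper does carry out one concrete check that you sweep under ``collapses to exactly the symmetric list'': after rewriting in $(\mathsf F_4,\mathsf F_4)$-language, the relation for $F(\langle u,v\rangle)$, i.e.\ $F(\phi(u^*v))=(\nu_3 ab)^{-1}\phi\bigl(H(u)^*H(v)\bigr)$, is shown to be a consequence of the listed identities for $F(\phi(-))$, $H(-\cdot-)$, and $H((-)^*)$, which is why it does not appear separately in the final symmetric list.
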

\begin{proof}
	The first claim follows from theorem \ref{f4-consts}. To prove the second claim, we just rewrite theorem \ref{group-b} for
	\(
		\mathsf B_3
	\) and
	\(
		\mathsf C_3
	\) in terms of
	\(
		(\mathsf F_4, \mathsf F_4)
	\)-rings and impose conditions that both resulting identifications
	\(
		P'_\alpha \cong P_\alpha
	\) coincide for
	\(
		\alpha \in \mathsf B_3 \cap \mathsf C_3
	\). The relation
	\(
		F(\phi(u^* v))
		= (\nu_3 a b)^{-1}\,
		\phi\bigl(H(u)^*\, H(v)\bigr)
	\) turns out to be redundant since after applying the identities for
	\(
		F(\phi(x))
	\),
	\(
		H(x y)
	\), and
	\(
		H(x^*)
	\) in the left hand side we get
	\begin{align*}
			\phi\Bigl(
				\Bigl(
					\bigl(
						\bigl(
							(\zeta \eta)^*\,
							&H(u)^*\,
							(\zeta \eta)^{-1}
						\bigr)\, \zeta
					\bigr)\, (\eta\, H(v))
				\Bigr)\, (\zeta \eta)
			\Bigr)\, (a b)^{-1} \\
		&=
			\phi\Bigl(
				\bigl(
					(\zeta \eta)^*\,
					H(u)^*\,
					(\zeta \eta)^{-1}
				\bigr)\,
				\bigl(
					(\zeta \eta)\,
					H(v)\,
					(\zeta\, \eta)
				\bigr)
			\Bigr)\, (a b)^{-1} \\
		&=
			(a b)^{-1}\, \phi\bigl(
				\rho\bigl(\rho(\zeta \eta)\bigr)\,
				H(u)^*\,
				H(v)
			\bigr)
		=
			(\nu_3 a b)^{-1}\,
			\phi\bigl(H(u)^*\, H(v)\bigr).
	\end{align*}
	It remains to prove that for every such tuple
	\(
		(F, H, a, b, \zeta, \eta)
	\) the pair
	\(
		(R', S')
	\) is an
	\(
		(\mathsf F_4, \mathsf F_4)
	\)-ring and there exists a corresponding homomorphism \(Q'\). This may be done in the same way as in the proof of theorem \ref{group-b}.
\end{proof}

\section{Existence theorem}

We begin with a complementary result to theorem \ref{f4-consts}.
\begin{lemma} \label{f4-empty}
	For any abstract
	\(
		(\mathsf F_4, \mathsf F_4)
	\)-ring
	\(
		(R, S)
	\) there is an
	\(
		(\mathsf F_4, \varnothing)
	\)-ring
	\(
		(P_\alpha)_\alpha
	\) such that
	\(
		P_\alpha = R
	\) for long \(\alpha\),
	\(
		P_\alpha = S
	\) for short \(\alpha\), and
	\(
		C_{\alpha \beta}^\gamma
	\) are given by the terms from theorem \ref{f4-consts}.
\end{lemma}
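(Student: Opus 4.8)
The goal is to verify that the terms $C_{\alpha\beta}^\gamma$ produced by theorem \ref{f4-consts}, together with $P_\alpha = R$ for long $\alpha$ and $P_\alpha = S$ for short $\alpha$, satisfy the axioms of an $(\mathsf F_4, \varnothing)$-ring as listed in theorem \ref{phi-0-ring}. Equivalently, by theorem \ref{phi-0-ring} it suffices to check the confluence-type identities: the additivity/skew-symmetry relations, the two-root mixed associativity identities coming from coplanar and linearly independent triples lying in a common plane, and the three-root Jacobi-type identities for triples $\{\alpha,\beta,\gamma\}$ contained in an open half-space. Once these hold, the groups $G_\Sigma$ exist and $(P_\alpha)_\alpha$ is a genuine $(\mathsf F_4,\varnothing)$-ring. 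The strategy is \emph{not} to recompute everything from scratch, but to reduce each axiom to an instance already known to hold, using the Weyl-equivariance built into theorem \ref{f4-consts}.

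The first and main reduction: \emph{every rank-$\le 2$ configuration of roots in $\mathsf F_4$ can be moved by an element of the Weyl group $\mathrm W(\mathsf F_4)$ into $\mathsf B_3$ or into $\mathsf C_3$.} This is because any root subsystem of rank $\le 2$ in $\mathsf F_4$ is of type $\mathsf A_1$, $\mathsf A_1+\mathsf A_1$, $\mathsf A_2$, $\widetilde{\mathsf A_2}$, $\mathsf B_2$, or $\mathsf G_2$, and each of these embeds (up to $\mathrm W(\mathsf F_4)$) in $\mathsf B_3$ or $\mathsf C_3$ (indeed $\mathsf B_3$ already contains $\mathsf B_2$, $\mathsf A_2$, $\widetilde{\mathsf A_2}$, and all the smaller ones; $\mathsf C_3$ likewise; and $\mathsf G_2$ does not occur as a subsystem of $\mathsf F_4$ generated by two roots). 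The two-root axioms (additivity of $C_{\alpha\beta}^\gamma$ in each argument, the identity $C_{\alpha\beta}^\gamma(\zeta,\eta) = \dotminus C_{\beta\alpha}^\gamma(\eta,\zeta)$, the coplanar-triple axiom, and the linearly-independent-$\mathsf A_2/\mathsf B_2$-triple axiom) involve only roots in $\interval\alpha\beta$ together with $\alpha,\beta$, hence only roots of a single rank-$\le 2$ subsystem; after conjugating by a suitable product of $n_1,\dots,n_4$ and using the equivariance $\up{n_i}{t_\alpha(x)} = t_{s_i(\alpha)}(d_{i,\alpha}(x))$ from theorem \ref{f4-consts}, each such axiom becomes literally an axiom (or a consequence of axioms) of the $(\mathsf B_3,\mathsf B_3)$-ring $(R,\Delta)$ or $(S,\Theta)$, which is genuine by theorem \ref{group-b} applied to the standard group $G_{\mathrm{std}}$ (or by lemma \ref{f4-ring-1}). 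Since the terms $d_{i,\alpha}$ lie in $O_{\mathrm{sym}}$ and are invertible, conjugating an identity between strings by $n_i$ transforms it into another valid identity, so no information is lost.

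For the three-root Jacobi identities the configuration need not lie in a rank-$2$ subsystem, so the above reduction does not directly apply. Here I use the universality argument of theorem \ref{phi-0-ring} combined with the concrete model: the terms $C_{\alpha\beta}^\gamma$ and $d_{i,\alpha}$ were \emph{defined} so that the commutator formula and Weyl-equivariance hold in every $\mathsf F_4$-graded group, in particular in $G_{\mathrm{std}} = G^{\mathrm{ad}}(\mathsf E_7,\mathbb C)\times G^{\mathrm{ad}}(\mathsf E_7,\mathbb C)$ whose associated $(\mathsf F_4,\mathsf F_4)$-ring is $(\mathbb C\times\mathbb O_{\mathbb C},\,\mathbb O_{\mathbb C}\times\mathbb C)$. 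For a Jacobi identity attached to a triple $\{\alpha,\beta,\gamma\}$ in an open half-space, both sides are words in the $t_\delta(C_{\cdots}^\delta(\cdots))$ for $\delta\in\Sigma' = \mathsf F_4\cap(\mathbb R_{\ge0}\alpha+\mathbb R_{\ge0}\beta+\mathbb R_{\ge0}\gamma)$; the identity we must prove is a system of equalities between the $O_{\pi/2}\cup O_{2\pi/3}\cup O_{3\pi/4}$-terms appearing as the $t_\delta$-arguments on the two sides. Because the corresponding identity holds in $G_{\mathrm{std}}$ and, by the remark preceding lemma \ref{f4-braid}, the chosen system of representative terms is such that \emph{distinct terms represent distinct functions on $(\mathbb C\times\mathbb O_{\mathbb C},\mathbb O_{\mathbb C}\times\mathbb C)$}, equality of the two sides as functions on this particular ring forces equality of the formal terms; but then the same formal identity holds when evaluated in \emph{any} $(\mathsf F_4,\mathsf F_4)$-ring, and in particular the Jacobi identity of theorem \ref{phi-0-ring} is satisfied. (Alternatively, one may again reduce many of these triples into $\mathsf B_3\cup\mathsf C_3$ and invoke that $\stlin(\mathsf B_3,R,\Delta)$ is a genuine $(\mathsf B_3,\varnothing)$-ring; the only triples not so reducible are handled by lemma \ref{f4-braid} exactly as in the proof of theorem \ref{f4-consts}.)

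The main obstacle is the bookkeeping in the last step: making precise the claim ``equality of functions on $(\mathbb C\times\mathbb O_{\mathbb C},\mathbb O_{\mathbb C}\times\mathbb C)$ implies equality of formal terms,'' i.e.\ checking that the finite term-sets $\Upsilon, O_{\mathrm{sym}}, O_{\pi/2}, O_{2\pi/3}, O_{3\pi/4}$ genuinely separate points/functions on the split octonion model — this is the content of the assertion ``these terms are indeed distinct'' made just after their definition, and it is what lets us transport identities from the one chosen ring to all rings. Granting that (it is a direct, if slightly tedious, calculation with octonions over $\mathbb C$, using that $\mathbb O_{\mathbb C}$ is not commutative and not associative, so $x,x^*,xy,yx,x\rho(u),\dots$ are pairwise distinct for generic arguments), the rest is the routine verification sketched above, organized by the orbit structure of $\mathrm W(\mathsf F_4)$ on pairs and triples of roots.
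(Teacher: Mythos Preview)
Your rank-$\le 2$ reduction is correct and matches the paper. The gap is in your main argument for the rank-3 Jacobi identities. The term-distinctness statement (that elements of $O_{\pi/2}$, $O_{2\pi/3}$, $O_{3\pi/4}$ are separated by their values on $(\mathbb C\times\mathbb O_{\mathbb C},\mathbb O_{\mathbb C}\times\mathbb C)$) only concerns \emph{single} commutator terms, whereas the $t_\delta$-arguments on the two sides of a Jacobi identity are \emph{compound} expressions built by iterating several $C_{\alpha\beta}^\gamma$ and group operations. Separating the finitely many single terms does not separate arbitrary compounds: for your inference to go through you would need $(\mathbb C\times\mathbb O_{\mathbb C},\mathbb O_{\mathbb C}\times\mathbb C)$ to satisfy no polynomial identities beyond the $(\mathsf F_4,\mathsf F_4)$-ring axioms, which is neither what the paper asserts nor something you can get from a finite check with octonions. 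So this is not ``bookkeeping''; it is a genuinely missing step.

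Your parenthetical alternative \emph{is} the paper's proof, and it covers \emph{all} triples, not just ``many''. The point is that every identity from theorem~\ref{phi-0-ring} involves only roots lying in a rank-$\le 3$ subsystem of $\mathsf F_4$; each such subsystem is of (non-crystallographic) type $\mathsf B_3$ or smaller, and $\mathrm W(\mathsf F_4)$ moves it into the distinguished $\mathsf B_3$ or $\mathsf C_3$. There the identity holds because the $(\mathsf F_4,\mathsf F_4)$-ring axioms recover the $(\mathsf B_3,\mathsf B_3)$-ring axioms. The only place $G_{\mathrm{std}}$ is used is to check the Weyl-equivariance relation
\[
d_{i,\gamma}\bigl(C_{\alpha\beta}^\gamma(\zeta,\eta)\bigr)
= C_{s_i(\alpha),s_i(\beta)}^{s_i(\gamma)}\bigl(d_{i,\alpha}(\zeta),d_{i,\beta}(\eta)\bigr),
\]
and this \emph{is} an equality between single terms in the finite sets $O_*$ (pre- and post-composition by $O_{\mathrm{sym}}$ stays inside $O_*$), so the distinctness argument legitimately applies there --- but only there.
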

\begin{proof}
	We have to check that the commutator terms
	\(
		C_{\alpha \beta}^\gamma
	\) satisfy all identities from theorem \ref{phi-0-ring}. Every such identity involves only roots from a root subsystem \(\Psi\) of the non-crystallographic type
	\(
		\mathsf B_3
	\) since the last axiom from theorem \ref{phi-0-ring} is vacuous otherwise. Now we may transform \(\Psi\) to the distinguished root subsystem
	\(
		\mathsf B_3
	\) or
	\(
		\mathsf C_3
	\). Notice that
	\[
		d_{i, \gamma}\bigl(
			C_{\alpha \beta}^\gamma(\zeta, \eta)
		\bigr)
		= C_{s_i(\alpha), s_i(\beta)}
			^{s_i(\gamma)}
			\bigl(
				d_{i, \alpha}(\zeta),
				d_{i, \beta}(\eta)
			\bigr)
	\]
	holds since this is true in
	\(
		G_{\mathrm{std}}
		= G^{\mathrm{ad}}(\mathsf E_7, \mathbb C)
		\times G^{\mathrm{ad}}(\mathsf E_7, \mathbb C)
	\).
\end{proof}

We prove the existence theorem of a \(\Phi\)-graded group with given
\(
	(\Phi, \Phi)
\)-ring in the following way. There exists a certain nilpotent group
\(
	G_0
\) analogous to the Lie algebra
\(
	\mathfrak g^{\mathrm{ad}}(\Phi, K)
	= \mathfrak t^{\mathrm{ad}}
	\oplus \bigoplus_{\alpha \in \Phi}
		\mathfrak g_\alpha
\) of a Chevalley group
\(
	G^{\mathrm{ad}}(\Phi, K)
\). Instead of the torus we take the group of all formal diagonal elements, i.e. the automorphism group of the
\(
	(\Phi, \varnothing)
\)-ring. Also, first order infinitesimals are insufficient for our purposes, so we use root elements parameterized by specifically truncated power series. The required \(\Phi\)-graded group is constructed as a subgroup of
\(
	\Aut(G_0)
\) generated by certain ``root automorphisms''.

We say that an element \(g\) of a group \(G\) with \(\Phi\)-commutator relations is \textit{diagonal} if
\(
	\up g{G_\alpha} = G_\alpha
\) for all \(\alpha\). For example, diagonal matrices are diagonal in
\(
	\glin(m, R)
\), as well as elements of
\(
	T^{\mathrm{ad}}(\Phi, K)
	\leq G^{\mathrm{ad}}(\Phi, K)
\). The next two lemmas complement lemmas \ref{weyl-a-long} and \ref{weyl-a-short} and we formulate them for
\(
	(\mathsf B_\ell, \mathsf A_{\ell - 1})
\)-rings for simplicity.

\begin{lemma} \label{a-swap-long}
	Let \(G\) be a group with
	\(
		\mathsf B_\ell
	\)-commutator relations and
	\(
		\mathsf A_{\ell - 1}
	\)-Weyl elements for
	\(
		\ell \geq 4
	\) or a
	\(
		\mathsf B_3
	\)-graded group,
	\(
		(R_{ij}, \Delta^0_i)_{ij}
	\) be the corresponding
	\(
		(\mathsf B_\ell, \mathsf A_{\ell - 1})
	\)-ring,
	\(
		i \neq \pm j
	\) be indices,
	\(
		x \in R_{ij}
	\),
	\(
		y \in R_{ji}
	\). Then the element
	\(
		t_{ij}(x)\, t_{ji}(y)
	\) may be written as
	\(
		t_{ji}(y')\, t_{ij}(x')\, h
	\) for some \(x'\), \(y'\) and for diagonal \(h\) if and only if
	\(
		1_i + x y
	\) is invertible in
	\(
		R_{ii}
	\) (so
	\(
		1_j + y x
	\) is invertible in
	\(
		R_{jj}
	\) by lemma \ref{inv-alter}). In this case necessarily
	\begin{align*}
		x' &= x + x y x,
	&
		\up h{t_i(u)}
		&= t_i(u \cdot (1 + x y)^{-1}),
	\\
		y' &= (1 + y x)^{-1}\, y = y\, (1 + x y)^{-1},
	&
		\up h{t_{-i}(u)}
		&= t_{-i}(u \cdot \inv{(1 + x y)}),
	\\
		\up h{t_{kl}(z)} &= t_{kl}(z),
	&
		\up h{t_j(u)}
		&= t_j(u \cdot (1 + y x)),
	\\
		\up h{t_k(u)} &= t_k(u),
	&
		\up h{t_{-j}(u)}
		&= t_{-j}(u \cdot \inv{(1 + y x)}^{-1}),
	\\
		\up h{t_{ik}(z)} &= t_{ik}(z + x\, (y z)),
	&
		\up h{t_{-i, j}(z)}
		&= t_{-i, j}\bigl(
			\inv{(1 + x y)}^{-1}\,
			(z + (z y)\, x)
		\bigr),
	\\
		\up h{t_{jk}(z)} &= t_{jk}(z - y'\, (x z)),
	&
		\up h{t_{i, -j}(z)}
		&= t_{i, -j}\bigl(
			(z + x\, (y z))\,
			\inv{(1 + y x)}^{-1}
		\bigr),
	\\
		\up h{t_{ki}(z)} &= t_{ki}(z - (z x)\, y'),
	&
		\up h{t_{ij}(z)}
		&= t_{ij}\bigl(
			(1 + x y)\,
			(z + (z y)\, x)
		\bigr),
	\\
		\up h{t_{kj}(z)} &= t_{kj}(z + (z y)\, x),
	&
		\up h{t_{ji}(z)}
		&= t_{ji}\bigl(
			(1 + y x)^{-1}\,
			(z - (z x)\, y')
		\bigr).
	\end{align*}
	for
	\(
		|k|, |l| \notin \{|i|, |j|\}
	\).
\end{lemma}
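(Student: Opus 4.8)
The plan is to prove the two implications of the equivalence separately and then read off the explicit formulas, working throughout inside the groups $G_\Sigma$ attached to the rank-$\leq 3$ special closed subsets $\Sigma$ that arise, and using the axioms of $(\mathsf B_\ell, \mathsf A_{\ell-1})$-rings together with lemmas \ref{comm-rel}, \ref{inv-alter} and \ref{ba-ring-3}.

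For the sufficiency direction I would assume $1+xy$ (equivalently $1+yx$, by lemma \ref{inv-alter}) invertible, set $x' = x + xyx$ and $y' = (1+yx)^{-1}y = y(1+xy)^{-1}$ (equal by lemma \ref{inv-alter}), and \emph{define} $h := t_{ij}(-x')\,t_{ji}(-y')\,t_{ij}(x)\,t_{ji}(y)$, so that $t_{ij}(x)\,t_{ji}(y) = t_{ji}(y')\,t_{ij}(x')\,h$ holds by construction; the task is then to show $h$ is diagonal and to identify its action. The combinatorial model is the $\glin(2)$-identity over the associative subring $\langle x, y\rangle$ (associative by Artin's theorem, also in the $\mathsf B_3$ case)
\[
	\sMat{1}{x}{0}{1}\,\sMat{1}{0}{y}{1}
	= \sMat{1}{0}{y(1+xy)^{-1}}{1}\,\sMat{1}{x'}{0}{1}\,\sMat{1+xy}{0}{0}{(1+yx)^{-1}},
\]
checked directly using $1 - y(1+xy)^{-1}x = (1+yx)^{-1}$ from lemma \ref{inv-alter}. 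Lifting this pattern, I would show $h$ is diagonal one root subgroup at a time: for each $\alpha$, conjugate $t_\alpha(z)$ successively through the four factors of $h$, each step a commutator inside a suitable $G_\Sigma$ with $\rk\Sigma \leq 3$, and watch the resulting word collapse --- after using the $(\mathsf B_\ell,\mathsf A_{\ell-1})$-ring identities ($(u\cdot x)\cdot y = u\cdot xy$, $u\cdot 1_i = u$, the mixed associativity axioms, and so on) and the substitution identities of lemma \ref{inv-alter} for the four factors $(1+xy)^{\pm1}$, $(1+yx)^{\pm1}$ --- to $t_{s_{\mathrm e_i-\mathrm e_j}(\alpha)}(\dots)$ of exactly the form recorded in the statement. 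On the long root subgroups the diagonal coefficients are the left/right multiplications by $1+xy$ and $1+yx$ (and by $\inv{(1+xy)}$, $\inv{(1+yx)}$ on $R_{-i,j}$ and $R_{i,-j}$); on the four short root subgroups $G_{\pm\mathrm e_i}$, $G_{\pm\mathrm e_j}$ the coefficient is the corresponding $\cdot$-twist, which I would pin down from the identity $\bigl[t_{\pm j}(\dotminus\zeta), t_{\pm j,\pm i}(-1_{\pm\sign(i)})\bigr] = t_{\mp i,\pm j}(\widehat\rho(\zeta))\,t_{\pm i}(\zeta)$ exactly as in the proof of lemma \ref{weyl-a-short}.

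For the necessity direction I would start from $t_{ij}(x)\,t_{ji}(y) = t_{ji}(y')\,t_{ij}(x')\,h$ with $h$ diagonal and fix $k$ with $|k|\notin\{|i|,|j|\}$ (possible as $\ell\geq 3$). Conjugating $t_{ik}(z)$ through the left-hand side, using $[t_{ji}(y),t_{ik}(z)] = t_{jk}(yz)$, $[t_{ij}(x),t_{jk}(w)] = t_{ik}(xw)$ and the commuting relations inside $G_\Sigma$ for $\Sigma = \Phi\cap(\mathbb R_{\geq0}(\mathrm e_k-\mathrm e_i)+\mathbb R_{\geq0}(\mathrm e_k-\mathrm e_j))$ (special closed), gives $\up{t_{ij}(x)\,t_{ji}(y)}{t_{ik}(z)} = t_{ik}(z+x(yz))\,t_{jk}(yz)$; doing the same through the right-hand side and writing $\up h{t_{ik}(z)} = t_{ik}(\tau(z))$ for the additive bijection $\tau$ by which the diagonal element $h$ acts on $R_{ik}$, gives $t_{ik}(\tau(z))\,t_{jk}(y'\tau(z))$. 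By injectivity of the product map on $G_\Sigma$ (lemma \ref{comm-rel}), $\tau(z) = z+x(yz)$, so $z\mapsto z+x(yz)$ is a bijection of $R_{ik}$; transporting this through the identification $R_{ik}\cong R_{ii}$ of lemma \ref{ba-ring-3} and invoking lemma \ref{inv-alter} together with the criterion that an element $a$ of an alternative ring is invertible iff $z\mapsto az$ is bijective, I would conclude that $1+xy$ is invertible, hence so is $1+yx$. The explicit formulas for $x'$, $y'$ and the action of $h$ then follow from the sufficiency part and uniqueness of the decomposition (again lemma \ref{comm-rel}).

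The hard part will be the sufficiency direction: although each of the (roughly sixteen) conjugations is mechanical, they must be carried out uniformly across the two regimes --- the associative case $\ell\geq4$ and the genuinely alternative case $\ell=3$ --- which forces one to keep every product inside a two- or three-generated (hence, by Artin, associative) subring and to keep track of the involution $\inv{(-)}$ on $R_{-i,j}$ and $R_{i,-j}$, where the coefficient of $h$ is $\inv{(1+xy)}$ rather than $1+xy$. In the necessity direction the only delicate point is the final step, passing from ``$z\mapsto z+x(yz)$ is bijective on $R_{ik}$'' to ``$1+xy$ is invertible'', which in the alternative case relies on the substitution identities of lemma \ref{inv-alter} rather than on a bare associativity argument.
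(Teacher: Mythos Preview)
Your sufficiency direction matches the paper's: define $x',y'$ by the explicit formulas, and verify root subgroup by root subgroup that $h$ is diagonal with the stated action, using lemma \ref{inv-alter} for the alternative calculus and then recovering the remaining root subgroups by writing $t_\alpha(\zeta)$ as a commutator of already-handled ones. The paper organizes this slightly differently (it first records the four relations $xz = x'z + x(y(x'z))$, etc., and then checks them), but the substance is the same.

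The necessity direction has a real gap in the $\ell=3$ case. From conjugating $t_{ik}(z)$ you correctly obtain that $z\mapsto z + x(yz)$ is a bijection, but in an alternative ring this is \emph{not} the map $z\mapsto (1+xy)z$: the two differ by the associator $[x,y,z]$. So the criterion ``$a$ is invertible iff $z\mapsto az$ is bijective'' does not apply to $a=1+xy$, and your appeal to lemma \ref{inv-alter} is circular --- that lemma \emph{assumes} $1+xy$ is invertible. (For $\ell\geq 4$ the ring is associative and your argument is fine; the problem is exactly the case you flagged as ``delicate''.)

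The paper's fix is short but not obvious. From bijectivity of $z\mapsto z+x(yz)$ pick $a$ with $a + x(ya) = 1_i$. Using the alternative identities $[xw,y,x]=[x,w,y]\,x$ and $[ya,x,y]=[y,a,x]\,y$ one finds
\[
	[x,y,a] \;=\; -[x,y,x(ya)] \;=\; -([x,y,a]\,y)\,x,
\]
so $[x,y,a] + ([x,y,a]\,y)\,x = 0$. Now one needs the bijectivity of the \emph{right-hand} map $w\mapsto w+(wy)x$, which is obtained by conjugating $t_{kj}(z)$ rather than $t_{ik}(z)$; this forces $[x,y,a]=0$, whence $(1+xy)a = 1_i$. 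A symmetric argument gives a left inverse for $1+yx$, and the two are tied together by the identity $(1+yx)(1-yax)=1_j$. Thus you must conjugate at least one ``left'' and one ``right'' root subgroup and invoke this associator trick; conjugating $t_{ik}$ alone does not suffice.
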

\begin{proof}
	We use that the ring
	\(
		\sMat{R_{--}}{R_{-+}}{R_{+-}}{R_{++}}
	\) is associative or a matrix algebra over an alternative ring. Conjugating
	\(
		t_{ik}(z)
	\),
	\(
		t_{jk}(z)
	\),
	\(
		t_{ki}(z)
	\),
	\(
		t_{kj}(z)
	\) by both sides of
	\(
		t_{ij}(x)\, t_{ji}(y)
		= t_{ji}(y')\, h\, t_{ij}(x')
	\), we get the corresponding formulae for
	\(
		\up h{t_{pq}(z)}
	\) and
	\begin{align*}
		x z &= x' z + x\, (y\, (x' z))
		\text{ for } z \in R_{jk},
	&
		y z &= y' z + y'\, (x\, (y z))
		\text{ for } z \in R_{ik},
	\\
		z x &= z x' + ((z x')\, y)\, x
		\text{ for } z \in R_{ki},
	&
		z y &= z y' + ((z y)\, x)\, y'
		\text{ for } z \in R_{kj}.
	\end{align*}
	Since the map
	\(
		R_{ik} \to R_{ik},\, z \mapsto z + x\, (y z)
	\) is invertible, there exists
	\(
		a \in R_{ii}
	\) such that
	\(
		a + x\, (y a) = 1_i
	\). We have
	\[
		[x, y, a]
		= [x, y, -x\, (y a)]
		= -([x, y, a]\, y)\, x,
	\]
	so again using the bijectivity
	\(
		[x, y, a] = 0
	\) and
	\(
		(1 + x y)\, a = 1_i
	\). Similarly, there exists
	\(
		b \in R_{jj}
	\) such that
	\(
		[x, y, b] = 0
	\) and
	\(
		b\, (1 + y x) = 1_j
	\). On the other hand,
	\(
		(1 + y x)\, (1 - y a x) = 1_j
	\), so
	\(
		1 + y x
	\) is invertible and
	\(
		b = 1 - y a x
	\). Similarly,
	\(
		1 + x y
	\) is invertible and
	\(
		a = 1 - x b y
	\). From this we easily obtain the formulae for \(x'\) and \(y'\).

	Now suppose that
	\(
		1 + x y
	\) is invertible and define \(x'\) and \(y'\) using the explicit formulae. The four identities above may be checked using symmetry and lemma \ref{inv-alter} (or the associativity law) as follows.
	\begin{align*}
			x z
		&=
			(x + x y x)\, z
			- (x y' x + x y x y' x)\, z
		=
			x' z
			- x\, (y'\, (x z))
			- x\, \Bigl(
				y\,
				\bigl(
					\bigl(x\, (1 + y x)^{-1}\bigr)\,
					(y\, (x z))
				\bigr)
			\Bigr) \\
		&=
			x' z
			- x\, (y'\, (x z))
			- x\, \bigl(
				y\,
				\bigl(x\, (y'\, (x z))\bigr)
			\bigr)
		=
			x' z
			- x'\, (y'\, (x z));
	\\
			y z
		&=
			y' z
			+ (y' x y)\, z
		=
			y' z
			+ y\, \bigl(
				\bigl((1 + x y)^{-1}\, x\bigr)\,
				(y z)
			\bigr)
		=
			y' z
			+ y'\, (x\, (y z)).
	\end{align*}
	The remaining formulae for
	\(
		\up h{t_\alpha(\zeta)}
	\) easily follow by decomposing
	\(
		t_\alpha(\zeta)
	\) into products of commutators of root elements with known conjugates.
\end{proof}

\begin{lemma} \label{a-swap-short}
	Let \(G\) be a group with
	\(
		\mathsf B_\ell
	\)-commutator relations and
	\(
		\mathsf A_{\ell - 1}
	\)-Weyl elements for
	\(
		\ell \geq 3
	\),
	\(
		(R_{ij}, \Delta^0_i)_{ij}
	\) be the corresponding
	\(
		(\mathsf B_\ell, \mathsf A_{\ell - 1})
	\)-ring, \(i\) be an index,
	\(
		u \in \Delta^0_i
	\),
	\(
		v \in \Delta^0_{-i}
	\). Then the element
	\(
		t_i(u)\, t_{-i}(v)
	\) may be written as
	\(
		t_{-i}(v')\, t_i(u')\, h
	\) for some \(u'\), \(v'\) and for diagonal \(h\) if and only if
	\(
		z
		= \widehat \rho(u)\, \widehat \rho(v)
		- u \circ v
		+ 1_{-i}
	\) is invertible in
	\(
		R_{-i, -i}
	\). In this case necessarily
	\begin{align*}
		u'
		&= u \cdot \inv z
		\dotplus v'
			\cdot (-\widehat \rho(u)\, \inv z),
	&
		\widehat \rho(u')
		&= \widehat \rho(u)\, \inv z,
	\\
		v'
		&= v \cdot z^{-1}
		\dotminus u
			\cdot (-\widehat \rho(v)\, z^{-1}),
	&
		\widehat \rho(v')
		&= \widehat \rho(v)\, z^{-1},
	\\
		\up h{t_j(w)}
		&= t_j\bigl(
			\dotminus v' \cdot (-u \circ w)
			\dotplus w
			\dotplus u'
				\cdot \inv z^{-1}\,
				(-v \circ w)
		\bigr),
	&
		\up h{t_{jk}(x)} &= t_{jk}(x),
	\\
		\up h{t_i(w)}
		&= t_i\bigl(
			\dotminus v' \cdot (-u \circ w)\, \inv z
			\dotplus w \cdot \inv z
			\dotplus u'
				\cdot \inv z^{-1}\,
				(-v \circ w)\,
				\inv z
		\bigr),
	&
		\up h{t_{ij}(x)} &= t_{ij}(\inv z^{-1} x),
	\\
		\up h{t_{-i}(w)} &= t_{-i}\bigl(
			\dotminus v' \cdot (-u \circ w)\, z^{-1}
			\dotplus w \cdot z^{-1}
			\dotplus u'
				\cdot \inv z^{-1}\,
				(-v \circ w)\,
				z^{-1}
		\bigr),
	&
		\up h{t_{-i, j}(x)} &= t_{-i, j}(z x),
	\end{align*}
	for
	\(
		j \neq \pm i
	\).
\end{lemma}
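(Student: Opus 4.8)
The plan is to follow the proof of Lemma~\ref{a-swap-long} as closely as possible, replacing the associative (or matrix-over-alternative) manipulations inside $\sMat{R_{--}}{R_{-+}}{R_{+-}}{R_{++}}$ by the $\widehat\rho$-twisted book-keeping that governs the short root subgroups $t_{\pm i}$. I fix an auxiliary index $j$ with $j\neq\pm i$, and all computations take place inside the groups $G_\Sigma$ attached to the small special closed subsets containing $\mathrm e_i$, $-\mathrm e_i$, $\mathrm e_j$ and the roots occurring in the relevant commutators, which exist by Theorem~\ref{phi-0-ring}. The first observation is that $t_{jk}(x)$ with $|j|,|k|\notin\{|i|\}$ commutes with both $t_i$ and $t_{-i}$, so any diagonal $h$ with $t_i(u)\,t_{-i}(v)=t_{-i}(v')\,t_i(u')\,h$ automatically fixes such $t_{jk}(x)$.

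First I would establish the ``only if'' direction together with the explicit formulas. Conjugating $t_{ij}(x)$, $t_{-i,j}(x)$ and $t_j(w)$ by both sides of $t_i(u)\,t_{-i}(v)=t_{-i}(v')\,t_i(u')\,h$, expanding the left-hand sides via the commutator relations $[t_{-i}(v),t_{ij}(x)]=1$, $[t_i(u),t_{-i,j}(x)]=1$, $[t_i(u),t_{ij}(x)]=t_{-i,j}(u\triangleleft x)\,t_j(\dotminus u\cdot(-x))$, $[t_{-i}(v),t_{-i,j}(x)]=t_{ij}(v\triangleleft x)\,t_j(\dotminus v\cdot(-x))$, $[t_i(u),t_j(w)]=t_{-i,j}(-u\circ w)$ and $[t_{-i}(v),t_j(w)]=t_{ij}(-v\circ w)$, and then reading off the components along the increasing roots of $\Sigma$ by Lemma~\ref{comm-rel}, I obtain the conjugation action of $h$ on $t_{ij}$, $t_{-i,j}$, $t_j$ together with the required values of $v'$, $u'$, $\widehat\rho(v')$, $\widehat\rho(u')$. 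In this comparison $\up h{t_{-i,j}(x)}$ comes out as $t_{-i,j}(zx)$ with $z=\widehat\rho(u)\,\widehat\rho(v)-u\circ v+1_{-i}$; since $h$ is an automorphism, left multiplication by $z$ is a bijection of $R_{-i,j}$, and by the ring structure from Lemma~\ref{ba-ring-3} this forces $z$ to be invertible in $R_{-i,-i}$ (equivalently $\inv z$ invertible in $R_{i,i}$). The formulas for $v'$, $u'$ are the unique solution of the resulting system ($v'$ determined by $u,v$, then $u'$ by $v'$ together with $u,v$); checking consistency with $\widehat\rho(u\dotplus w)=\widehat\rho(u)-u\circ w+\widehat\rho(w)$, with the mixed $\widehat\rho$-identities and with the alternative-ring identities is a direct computation. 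The conjugation action of $h$ on the remaining long root subgroups follows by the involution relations $t_{pq}(a)=t_{-q,-p}(-\inv a)$ and by repeating the argument with $j$ replaced by $-j$ (note $z$ is independent of the auxiliary index), and $\up h{t_{\pm i}(w)}$ is then deduced by writing $t_{\pm i}(w)$ through the commutator $[t_{\pm j}(\dotminus w),t_{\pm j,\pm i}(-1_{\pm\sign(i)})]=t_{\mp i,\pm j}(\widehat\rho(w))\,t_{\pm i}(w)$ of Lemma~\ref{weyl-a-short}, after choosing $j$ with $\sign(j)=\sign(i)$.

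Conversely, assuming $z$ invertible, I would set $u'$, $v'$ by the displayed formulas, $h=t_i(u')^{-1}\,t_{-i}(v')^{-1}\,t_i(u)\,t_{-i}(v)$, and prove $h$ diagonal by computing $\up h{t_\alpha(\zeta)}$ directly for $\alpha\in\{\mathrm e_j-\mathrm e_i,\ \mathrm e_i+\mathrm e_j,\ \mathrm e_j\}$ in the relevant $G_\Sigma$; the genuine ingredient here is the rewriting of the $(1_{-i}+\,\cdot\,)$-type expressions that appear, which is exactly the content of Lemma~\ref{inv-alter} (together with associativity when $\ell\geq4$). The cases $\alpha=\pm\mathrm e_i$, the remaining long roots, and $t_{jk}$ with $|j|,|k|\notin\{|i|\}$ are handled as above, so $h$ normalizes every root subgroup. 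The main obstacle is the ``only if'' half: unlike in Lemma~\ref{a-swap-long}, the $\Delta^0$-components carry the non-abelian operation $\dotplus$ twisted by $\circ$, $\widehat\rho$ and $({-})\cdot({=})$, so the implication ``the factorization exists $\Rightarrow$ $z$ is invertible'' cannot be run on $\Delta^0_i$ itself and must be routed through the action of $z$ on the long root module $R_{-i,j}$, while extracting the coupled closed forms for $u'$ and $v'$ from the commutator comparison demands care with the order of the twisted operations.
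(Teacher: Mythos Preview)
Your proposal is correct and follows essentially the same route as the paper: conjugate $t_{-i,j}(x)$, $t_j(w)$, $t_{ij}(x)$ by both sides of the putative factorization, read off the resulting system of identities (the paper lists nine of them), extract invertibility of $z$ from the bijectivity of $x\mapsto zx$ on $R_{-i,j}$, solve the system for $u'$, $v'$ and the action of $h$, and finally recover $\up h{t_{\pm i}(w)}$ by rewriting $t_{\pm i}(w)$ through commutators of root elements whose conjugates are already known. One small correction: your appeal to Lemma~\ref{inv-alter} for the converse is misplaced. That lemma handles $(1+xy)^{-1}$ in an alternative ring, whereas here $z=\widehat\rho(u)\,\widehat\rho(v)-u\circ v+1_{-i}$ is built from values of $\widehat\rho$ and $\circ$, which already associate with everything by the $(\mathsf B_\ell,\mathsf A_{\ell-1})$-ring axioms (for instance $(\widehat\rho(u)\,x)\,y=\widehat\rho(u)\,(xy)$ and $(u\circ v)\,x=u\circ(v\cdot x)$); no alternative-law manipulation is needed, and indeed the paper's proof does not invoke Lemma~\ref{inv-alter} at this point.
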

\begin{proof}
	Conjugating
	\(
		t_{-i, j}(x)
	\),
	\(
		t_j(w)
	\), and
	\(
		t_{ij}(x)
	\) by both sides of
	\(
		t_i(u)\, t_{-i}(v) = t_{-i}(v')\, t_i(u')\, h
	\) we obtain the identities
	\begin{align*}
		F_{-i, j}(x) &= z x,
	\\
		\rho(v')\, F_{-i, j}(x) &= \rho(v)\, x,
	\\
		v' \cdot F_{-i, j}(x)
		&= v \cdot x \dotminus u \cdot (-\rho(v)\, x),
	\\
		u' \circ F_j(w)
		&= \rho(u)\, (v \circ w) + u \circ w,
	\\
		F_j(w)
		\dotminus v' \cdot \bigl(u' \circ F_j(w)\bigr)
		&= u \cdot (-v \circ w) \dotplus w,
	\\
		\rho(v')\, \bigl(u' \circ F_j(w)\bigr)
		+ v' \circ F_j(w)
		&= v \circ w,
	\\
		\rho(u')\, F_{ij}(x) &= \rho(u)\, x,
	\\
		u' \cdot F_{ij}(x)
		\dotminus v'
			\cdot \bigl(-\rho(u')\, F_{ij}(x)\bigr)
		&= u \cdot x,
	\\
		\bigl(
			\rho(v')\, \rho(u')
			- v' \circ u'
			+ 1
		\bigr)\, F_{ij}(x)
		&= x,
	\end{align*}
	where
	\(
		\up h{t_\alpha(\zeta)}
		= t_\alpha(F_\alpha(\zeta))
	\) for currently unknown function
	\(
		F_\alpha
	\). The first one implies that \(z\) is invertible. Conversely, if \(z\) is invertible, then these identities hold precisely when \(u'\), \(v'\),
	\(
		F_{-i, j}
	\),
	\(
		F_j
	\),
	\(
		F_{ij}
	\) are defined using the formulae from the statement. The identities for
	\(
		\up h{t_i(w)}
	\) and
	\(
		\up h{t_{-i}(w)}
	\) follow by expanding
	\(
		t_{\pm i}(w)
	\) in terms of root elements with known conjugates.
\end{proof}

We also need a partial converse to these two lemmas for abstract
\(
	(\mathsf B_\ell, \mathsf B_\ell)
\)-rings.

\begin{lemma} \label{b-diag}
	Let
	\(
		(R, \Delta)
	\) be an
	\(
		(\mathsf B_\ell, \mathsf B_\ell)
	\)-ring for
	\(
		\ell \geq 3
	\),
	\(
		(R_{pq}, \Delta^0_p)_{pq}
	\) be the corresponding
	\(
		(\mathsf B_\ell, \mathsf A_{\ell - 1})
	\)-ring, and
	\(
		\alpha \in \mathsf B_\ell
	\) be a root. Suppose that
	\(
		\zeta \in P_\alpha
	\) and
	\(
		\eta \in P_{-\alpha}
	\) are such that the criterion from lemma \ref{a-swap-long} or \ref{a-swap-short} holds, i.e.
	\(
		1_i + \zeta \eta \in R_{ii}^*
	\) for long
	\(
		\alpha = \mathrm e_j - \mathrm e_i
	\) (with respect to the parametrizations
	\(
		t_{ij}
	\) and
	\(
		t_{ji}
	\)) and
	\(
		\widehat \rho(\zeta)\, \widehat \rho(\eta)
		- u \circ v
		+ 1_{-i}
		\in R_{-i}^*
	\) for short
	\(
		\alpha = \mathrm e_i
	\). Then there exists an automorphism
	\(
		h = (F_{pq}, H_p)_{pq}
	\) of the corresponding partial graded odd form ring given by the explicit formulae from these lemmas such that the products of conjugations by
	\(
		t_\alpha(\zeta)
	\),
	\(
		t_{-\alpha}(\eta)
	\) and
	\(
		t_{-\alpha}(\eta')
	\),
	\(
		t_\alpha(\zeta')
	\), \(h\) on
	\(
		G_{
			(\mathbb R \alpha + \mathbb R_{> 0} \beta)
			\cap \Phi
		}
	\) coincide for every
	\(
		\beta \nparallel \alpha
	\).
\end{lemma}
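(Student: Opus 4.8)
The plan is to reduce the assertion, for each $\beta$, to a finite list of identities in the $(\mathsf B_\ell,\mathsf B_\ell)$-ring $(R,\Delta)$, and to verify these identities from the axioms. First I would fix $\beta\nparallel\alpha$, put $\Psi=\Phi\cap(\mathbb R\alpha+\mathbb R\beta)$, and set $\Sigma=\Phi\cap(\mathbb R\alpha+\mathbb R_{>0}\beta)$. The set $\Sigma$ is the intersection of $\Phi$ with an open half-space, hence special closed, and so are $\Sigma\cup\{\alpha\}$ and $\Sigma\cup\{-\alpha\}$ (adjoining $\pm\alpha$ produces no opposite pair, since no positive combination of roots of $\Sigma$ is proportional to $\alpha$). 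By Theorem \ref{phi-0-ring}, applied to the $(\mathsf B_\ell,\varnothing)$-ring underlying the partial graded odd form ring of $(R,\Delta)$, the groups $G_\Sigma\leq G_{\Sigma\cup\{\alpha\}}$ and $G_\Sigma\leq G_{\Sigma\cup\{-\alpha\}}$ are available. For every $\gamma\in\Sigma$ both $\interval\alpha\gamma$ and $\interval{-\alpha}\gamma$ are contained in $\Sigma$ (each of their roots has the same, positive, $\beta$-coordinate sign as $\gamma$), so conjugation by $t_\alpha(\zeta)$ inside $G_{\Sigma\cup\{\alpha\}}$ and by $t_{-\alpha}(\eta)$ inside $G_{\Sigma\cup\{-\alpha\}}$ carries $G_\Sigma$ into itself. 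Consequently the two ``products of conjugations'' in the statement are well-defined endomorphisms of $G_\Sigma$, and since $G_\Sigma$ is generated by the root subgroups $t_\gamma(P_\gamma)$, $\gamma\in\Sigma$, it is enough to show that they agree on each $t_\gamma(\theta)$.

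Next I would fix the data $h=(F_{pq},H_p)_{pq}$, $\zeta'$, $\eta'$ by the explicit formulae of Lemma \ref{a-swap-long} when $\alpha$ is long and of Lemma \ref{a-swap-short} when $\alpha$ is short; these depend only on $\alpha$, $\zeta$, $\eta$, and the hypothesis is precisely that the element inverted there --- $1_i+\zeta\eta$, respectively $z=\widehat\rho(\zeta)\,\widehat\rho(\eta)-\zeta\circ\eta+1_{-i}$ --- is invertible. That these formulae do define an automorphism of the partial graded odd form ring is a direct check from the $(\mathsf B_\ell,\mathsf B_\ell)$-ring axioms together with Lemma \ref{inv-alter} and the monomial identities of \S 6; indeed the formulae were read off in Lemmas \ref{a-swap-long} and \ref{a-swap-short} exactly in order to describe such an automorphism, and their derivation there used only the ring axioms, not the presence of Weyl elements in the ambient group. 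Being diagonal, $h$ induces a diagonal automorphism of $G_\Sigma$.

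Then I would carry out the comparison on generators. If $\alpha\perp\beta$ both endomorphisms are the identity on $G_\Sigma$ and there is nothing to prove, so $\Psi$ may be assumed of type $\mathsf A_2$ or $\mathsf B_2$. For each $\gamma\in\Sigma$ I would expand $\up{t_\alpha(\zeta)}{\up{t_{-\alpha}(\eta)}{t_\gamma(\theta)}}$ and $\up{t_{-\alpha}(\eta')}{\up{t_\alpha(\zeta')}{\up h{t_\gamma(\theta)}}}$ by repeated use of the commutator formulae of the partial graded odd form ring, writing each as a product $\prod_{\delta\in\Sigma}t_\delta(\cdot)$ with respect to a fixed right extreme order on $\Sigma$. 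By injectivity of the product map (Lemma \ref{comm-rel}) the two endomorphisms agree on $t_\gamma(\theta)$ if and only if the arguments of $t_\delta$ in the two products coincide for every $\delta$, and each such equality is an identity in $(R,\Delta)$. These are exactly the identities underlying the conjugation formulae displayed in Lemmas \ref{a-swap-long} and \ref{a-swap-short}, hence they hold. The number of cases can be kept down by treating the roots $\gamma$ of the same length as $\alpha$ separately from those of the other length, and by using the symmetry of $\Sigma$ under $s_\alpha$.

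The main obstacle is the case $\ell=3$, where $R$ is only alternative: after the commutators are expanded one faces identities containing products of three or more elements of $R$ and the inverse of $1+\zeta\eta$ (or of $z$), which must be rearranged using Artin's theorem, the Moufang-type identities of \S 6, and Lemma \ref{inv-alter}, and one must keep careful track of the powers of $\lambda$ entering through $t_{ij}(x)=t_{-j,-i}(-\lambda^{(\eps_j-1)/2}x^*\lambda^{(1-\eps_i)/2})$. For $\ell\geq 4$ all of this is associative and mechanical, and in every case the relevant manipulations have essentially already been performed in the proofs of Lemmas \ref{a-swap-long}, \ref{a-swap-short}, \ref{weyl-a-long} and \ref{weyl-a-short}.
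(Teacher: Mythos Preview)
Your overall strategy---reduce to ring identities via the commutator formulae and appeal to the computations in Lemmas \ref{a-swap-long} and \ref{a-swap-short}---is the same as the paper's, and it works for the short-root case essentially as you describe. But for long $\alpha=\mathrm e_j-\mathrm e_i$ there is a genuine gap in your plan, in two places.

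First, your claim that ``their derivation there used only the ring axioms, not the presence of Weyl elements'' is not quite right for the formulae $\up h{t_{\pm i}(u)}$, $\up h{t_{\pm j}(u)}$, $\up h{t_{\pm i,\mp j}(z)}$ in Lemma \ref{a-swap-long}. Those were obtained there by the phrase ``decomposing $t_\alpha(\zeta)$ into products of commutators of root elements with known conjugates'', i.e.\ by using that $h$ was already an honest diagonal element of an ambient group. In the abstract setting you have no such ambient group, so for these roots the argument is circular. The paper resolves this by choosing an auxiliary index $k$ with $|k|\notin\{|i|,|j|\}$ and writing, for instance,
\[
t_{\pm i}(u)=[t_k(\dotminus u),\,t_{k,\pm i}(-1_k)]\;t_{-k,\pm i}(-\inv{\widehat\rho(u)}),\qquad
t_{\pm i,\mp j}(z)=[t_{\pm i,k}(z),\,t_{k,\mp j}(1_k)],
\]
and then applying the already-verified conjugation formulae to each factor.

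Second, you assert that $h=(F_{pq},H_p)$ is an automorphism of the partial graded odd form ring as ``a direct check'', but this is where most of the work lies. In Lemmas \ref{a-swap-long} and \ref{a-swap-short} the element $h$ was diagonal in an actual group, so compatibility with all $C_{\beta\gamma}^\delta$ was automatic; here it is not. For short $\alpha$ the paper observes that the needed identities arise from the known ones by inserting $z^{\pm1}$ and $\inv z^{\pm1}$. For long $\alpha$, however, one must verify in particular that $F_{ij}$ and $F_{ji}$ are compatible with the multiplications $R_{ik}\times R_{kj}\to R_{ij}$, with $\widehat\rho$, with $\phi$, and with the action $\Delta^0_i\times R_{ij}\to\Delta^0_j$, and the paper does this through a list of a dozen explicit identities, each proved by passing through the auxiliary index $k$ (e.g.\ $F_{ik}(z)\,F_{kj}(z')=F_{ij}(zz')$ is reduced to identities not involving $i,j$ simultaneously on one side). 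This is exactly the place where, for $\ell=3$, the alternative-ring subtleties you mention actually bite, and your proposal gives no indication of how to handle them.
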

\begin{proof}
	Note that
	\(
		F_{pq}
	\) and
	\(
		H_p
	\) are homomorphisms. For
	\(
		H_p
	\) in the case of short \(\alpha\) this follows from an easy calculation using axioms of
	\(
		(\mathsf B_\ell, \mathsf A_{\ell - 1})
	\)-rings. The same calculations as in the proofs of lemmas \ref{a-swap-long} and \ref{a-swap-short} show the last claim for all cases except
	\(
		F_{\pm i, \mp j}
	\),
	\(
		H_{\pm i}
	\), and
	\(
		H_{\pm j}
	\) for long
	\(
		\alpha = \mathrm e_j - \mathrm e_i
	\). In the remaining cases we just apply the conjugations to
	\begin{align*}
		t_{\pm i}(u)
		&= [t_k(\dotminus u)\, t_{k, \pm i}(-1_k)]\,
		t_{-k, \pm i}(-\inv{\widehat \rho(u)}),
	\\
		t_{\pm j}(u)
		&= [t_k(\dotminus u)\, t_{k, \pm j}(-1_k)]\,
		t_{-k, \pm j}(-\inv{\widehat \rho(u)}),
	\\
		t_{\pm i, \mp j}(z)
		&= [t_{\pm i, k}(z), t_{k, \mp j}(1_k)]
	\end{align*}
	for
	\(
		k \notin \{-i, i, -j, j\}
	\) with corresponding sign. It also follows that all
	\(
		F_{pq}
	\) and
	\(
		H_p
	\) are bijective.

	It remains to check that \(h\) commutes with all operations
	\(
		C_{\beta \gamma}^\delta
	\). This follows from the last claim if
	\(
		\pm \alpha
	\) do not lie in
	\(
		\{\beta, \gamma\}
		\cup \interval \beta \gamma
	\). In the case of short \(\alpha\) the necessarily identities follow from the known ones by inserting the factors
	\(
		z^{\pm 1}
	\) and
	\(
		\inv z^{\pm 1}
	\). Finally, suppose that
	\(
		\alpha = \mathrm e_j - \mathrm e_i
	\) is long. We do calculations in the language of
	\(
		(\mathsf B_\ell, \mathsf A_{\ell - 1})
	\)-rings for simplicity. By definition,
	\(
		F_{ik}(1_{ik})\, F_{kj}(z)
		= F_{ij}(1_{ik} z)
	\) and
	\(
		F_{jk}(1_{jk})\, F_{ki}(z)
		= F_{ji}(1_{jk} z)
	\) for
	\(
		k \notin \{-i, i, -j, -j\}
	\). It follows that
	\begin{align*}
			H_{-i}(u) \circ H_j(v)
		&=
			F_{ik}(1_{ik})
			\bigl(
				H_{-k}(u\, \inv{1_{ki}})
				\circ H_j(v)
			\bigr)
		=
			F_{ij}(u \circ v);
	\\
			\widehat \rho\bigl(H_{-i}(u)\bigr)\,
			F_{-i, j}(z)
		&=
			F_{i, -k}(1_{i, -k})\,
			\widehat \rho\bigl(
				H_k(u \cdot \inv{1_{-k, i}})
			\bigr)\,
			F_{k, -i}(\inv{1_{i, -k}})\,
			F_{-i, j}(z)
		=
			F_{ij}(\widehat \rho(u)\, z);
	\\
			F_{i, -j}(z)\,
			\widehat \rho\bigl(H_j(u)\bigr)
		&=
			F_{ik}(1_{ik})\,
			F_{k, -j}(1_{ki} z)\,
			\widehat \rho\bigl(
				H_j(u)
			\bigr)
		=
			F_{ij}(z\, \widehat \rho(u));
	\\
			H_{-i}(u) \cdot F_{-i, j}(z)
		&=
			H_{-i}(u)
			\cdot F_{-i, k}(1_{-i, k})\,
			F_{kj}(1_{k, -i} z)
		=
			H_j(u \cdot z);
	\\
			H_j(u) \cdot F_{j, -i}(z)
		&=
			H_j(u)
			\cdot F_{jk}(1_{jk})\,
			F_{k, -i}(1_{kj} z)
		=
			H_{-i}(u \cdot z);
	\\
			F_{ik}(z)\, F_{kj}(z')
		&=
			\phi\bigl(
				\widehat \rho\bigr(
					F_{ik}(z)\,
					F_{k, -i}(\inv{1_{i, -k}})
				\bigr)
			\bigr)\,
			F_{-i, j}(\inv{1_{-k, i}} z') \\
			&+ F_{i, -k}(1_{i, -k})\,
			\bigl(
				F_{-k, -i}(\inv z)\,
				F_{-i, j}(\inv{1_{-k, i}} z')
			\bigr)
		=
			F_{ij}(z z');
	\\
			\widehat \rho\bigl(H_i(u)\bigr)\,
			F_{ij}(z)
		&=
			\widehat \rho\bigl(H_i(u)\bigr)\,
			F_{ik}(1_{ik})\,
			F_{kj}(1_{ki} z)
		=
			F_{-i, j}(\widehat \rho(u)\, z);
	\\
			F_{ij}(z)\,
			\widehat \rho\bigl(H_{-j}(u)\bigr)
		&=
			F_{ik}(1_{ik})\,
			F_{kj}(1_{ki} z)\,
			\widehat \rho\bigl(H_{-j}(u)\bigr)
		=
			F_{i, -j}(z\, \widehat \rho(u));
	\\
			H_i(u) \cdot F_{ij}(z)
		&=
			H_i(u)
			\cdot F_{ik}(1_{ik})\,
			F_{kj}(1_{ki} z)
		=
			H_j(u \cdot z);
	\\
			H_{-j}(u) \cdot F_{-j, -i}(z)
		&=
			H_{-j}(u)
			\cdot F_{-j, -k}(z \inv{1_{ki}})\,
			F_{-k, -i}(\inv{1_{ik}})
		=
			H_{-i}(u \cdot z);
	\\
			\phi\bigl(
				F_{ij}(z)\,
				F_{j, -i}(z')
			\bigr)
		&=
			\phi\bigl(
				F_{ik}(1_{ik})\,
				F_{kj}(1_{ki} z)\,
				F_{j, -i}(z')
			\bigr)
		=
			H_{-i}(\phi(z z'));
	\\
			\phi\bigl(
				F_{-j, i}(z)\,
				F_{ij}(z')
			\bigr)
		&=
			\phi\bigl(
				F_{-j, i}(z)\,
				F_{ik}(1_{ik})\,
				F_{kj}(1_{ki} z')
			\bigr)
		=
			H_j(\phi(z z'));
	\\
			F_{ij}(z)\, F_{jk}(z')
		&=
			\inv{
				\phi\bigl(
					\widehat \rho\bigl(
						F_{-k, -j}(\inv{z'})\,
						F_{-j, k}(\inv z \inv{1_{-k, i}})
					\bigr)
				\bigr)\,
				F_{k, -i}(\inv{1_{i, -k}})
			} \\
			&+ \inv{
				F_{-k, j}(1_{-k, i} z)\,
				\bigl(\,
					F_{jk}(z')
					F_{k, -i}(\inv{1_{i, -k}})
				\bigr)
			}
		=
			F_{ik}(z z')
	\\
			F_{ki}(z)\, F_{ij}(z')
		&=
			\phi\bigr(
				\widehat \rho\bigl(
					F_{ki}(z)\,
					F_{i, -k}(1_{i, -k})
				\bigr)
			\bigr)\,
			F_{-k, j}(1_{-k, i} z) \\
			&+ F_{k, -i}(\inv{1_{i, -k}})\,
			\bigl(
				F_{-i, k}(\inv z)\,
				F_{-k, j}(1_{-k, i} z)
			\bigr)
		=
			F_{kj}(z z').
	\end{align*}
	The same works for the identities involving
	\(
		F_{ji}
	\).
\end{proof}

Now let \(\Phi\) be a root system of type
\(
	\mathsf B_\ell
\) for
\(
	\ell \geq 3
\) or
\(
	\mathsf F_4
\) and \((R, \Delta)\) be a
\(
	(\Phi, \Phi)
\)-ring. We define formal power series over
\(
	(R, \Delta)
\) as the pair
\(
	(R[[\eps]], \Delta[[\eps]])
\), where
\[
	R[[\eps]] = \bigoplus_{m \in \mathbb N_0} R \eps^m,
\quad
	\Delta[[\eps]]
	= \bigoplus_{m \in \mathbb N_0}^\cdot
		(\Delta \cdot \eps^m)
	\dotoplus \bigoplus_{m \in \mathbb N_0}^\cdot
		\phi(R \eps^{2 m + 1}).
\]
In other words, \(\Delta[[\eps]]\) is the set of formal sums
\(
	\sum_m^\cdot u_m \cdot \eps^m
	\dotplus \sum_m^\cdot \phi(x_m \eps^{2m + 1})
\), where almost all values
\(
	u_m
\) and
\(
	x_m
\) are trivial. The operations are given by
\begin{align*}
	(x \eps^m)\, (y \eps^{m'}) &= x y \eps^{m + m'},
&
	u \cdot \eps^m \dotplus v \cdot \eps^m
	&= (u \dotplus v) \cdot \eps^m,
\\
	(x \eps^m)^* &= x^* \eps^m,
&
	\bigl\langle
		u \cdot \eps^m,
		v \cdot \eps^{m'}
	\bigr\rangle
	&= \langle u, v \rangle\, \eps^{m + m'},
\\
	\phi(x \eps^{2m}) &= \phi(x)\, \eps^m,
&
	\rho(u \cdot \eps^m) &= \rho(u)\, \eps^{2m},
\\&&
	(u \cdot \eps^m) \cdot (x \cdot \eps^{m'})
	&= (u \cdot x) \cdot \eps^{m + m'}.
\end{align*}
For
\(
	\Phi = \mathsf F_4
\) the additional operations are
\[
	(u \cdot \eps^m)\, (v \cdot \eps^{m'})
	= u v \cdot \eps^{m + m'},
\quad
	(u \cdot \eps^m)^* = u^* \cdot \eps^m,
\quad
	\phi(u \cdot \eps^m) = \phi(u)\, \eps^m,
\quad
	\rho(x \eps^m) = \rho(x) \cdot \eps^m.
\]
Note that this definition is not symmetric under the outer automorphism of
\(
	\Phi = \mathsf F_4
\). It is easy to check that
\(
	(R[[\eps]], \Delta[[\eps]])
\) indeed satisfies all axioms. Actually, we need its truncation
\(
	(\widehat R, \widehat \Delta)
\), where
\[
		\widehat R
	=
		R[[\eps]] / (\eps^3)
	=
		R
		\oplus R \eps
		\oplus R \eps^2,
\quad
		\widehat \Delta
	=
		\Delta[[\eps]] / (\eps^3)
	=
		\Delta
		\dotoplus \phi(R \eps)
		\dotoplus (\Delta \cdot \eps)
		\dotoplus (\Delta / \phi(R) \cdot \eps^2).
\]
Let
\(
	(P_\alpha)_{\alpha \in \Phi}
\) and
\(
	(\widehat P_\alpha)_{\alpha \in \Phi}
\) be the
\(
	(\Phi, \varnothing)
\)-rings associated with
\(
	(R, \Delta)
\) and
\(
	(\widehat R, \widehat \Delta)
\), see lemma \ref{f4-empty} for the case
\(
	\Phi = \mathsf F_4
\). Let also
\[
	\widehat T
	= \bigl\{
		(
			F_\alpha
			\in \Aut(\widehat P_\alpha)
		)_{\alpha \in \Phi}
		\mid
		C_{\alpha \beta}^\gamma\bigl(
			F_\alpha(\zeta),
			F_\beta(\eta)
		\bigr)
		= F_\gamma\bigl(
			C_{\alpha \beta}^\gamma(\zeta, \eta)
		\bigr)
	\bigr\}
\]
be the group of formal diagonal elements (the ``adjoint torus''). Using the commutator formulae, for any special closed
\(
	\Sigma \subseteq \Phi
\) there is a group
\(
	\widehat G_\Sigma
	= \prod_{\alpha \in \Sigma}
		\widehat P_\alpha
\) (with respect to a right extreme order on \(\Sigma\)) and
\(
	\widehat T
\) acts on
\(
	\widehat G_\Sigma
\) by automorphisms. The following lemma actually holds for any
\(
	(\Phi, \Phi)
\)-ring instead of
\(
	(\widehat R, \widehat \Delta)
\).

\begin{lemma} \label{eq-check}
	Let
	\(
		\alpha, \beta \in \Phi
	\) be non-orthogonal linearly independent roots and
	\(
		\Sigma
		= (
			\mathbb R \alpha + \mathbb R_{> 0} \beta
		)
		\cap \Phi
	\). If two triples
	\[
		(\zeta_1, \eta_1, h_1),
		(\zeta_2, \eta_2, h_2)
		\in \widehat P_\alpha
		\times \widehat P_{-\alpha}
		\times \widehat T
	\]
	act on
	\(
		\widehat G_\Sigma
	\) in the same way (as the compositions of conjugations by
	\(
		t_\alpha(\zeta)
	\),
	\(
		t_{-\alpha}(\eta)
	\), \(h\) and by
	\(
		t_\alpha(\zeta')
	\),
	\(
		t_{-\alpha}(\eta')
	\), \(h\) respectively), then
	\(
		\zeta = \zeta'
	\),
	\(
		\eta = \eta'
	\), and the \(\Sigma\)-components of \(h\) and \(h'\) coincide.

	Moreover, let
	\(
		\Psi \subseteq \Phi
	\) be a root subsystem of type
	\(
		\mathsf A_2
	\) or
	\(
		\mathsf B_2
	\),
	\(
		\beta \in \Phi \setminus (\Psi \cup \Psi^\perp)
	\), and
	\(
		\Sigma
		= (
			\mathbb R \Psi + \mathbb R_{> 0} \beta
		)
		\cap \Phi
	\). We order \(\Psi\) by polar angle with respect to some orientation and starting angle in the plane
	\(
		\mathbb R \Psi
	\). If two tuples
	\[
		(\zeta_\alpha, h)_\alpha,
		(\zeta'_\alpha, h')_\alpha
		\in \bigl(
			\prod_{\alpha \in \Psi} \widehat P_\alpha
		\bigr)
		\times \widehat T
	\]
	act in the same way on
	\(
		\widehat G_\Sigma
	\) and
	\(
		\widehat G_{-\Sigma}
	\), then
	\(
		\zeta_\alpha = \zeta'_\alpha
	\) and the \(\Sigma\)-components of \(h\) and \(h'\) coincide.
\end{lemma}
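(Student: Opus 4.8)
\emph{Overview.} I would prove both assertions by the same device: make the composite automorphism of $\widehat G_\Sigma$ (and, in the second statement, of $\widehat G_{-\Sigma}$) completely explicit on the generators $t_\delta(\xi)$ with $\delta\in\Sigma$, read the unipotent data off from the ``leading'' components of the images, and then recover the diagonal data by cancellation. The combinatorial input is: $\Sigma$ is special closed; $\beta$ is an extreme root of $\Sigma$, because the only roots of the rank-two subsystem $\langle\alpha,\beta\rangle$ lying in the interior of the conical hull of $\Sigma$ are orthogonal to $\alpha$ and hence cannot equal $\beta$; and $\Sigma\cup\{\alpha\}$, $\Sigma\cup\{-\alpha\}$, resp.\ $\Sigma\cup\{\alpha\}$ for each $\alpha\in\Psi$ together with the $-\Sigma$ analogues and the positive systems $\Psi^{\pm}$, are special closed. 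By the commutator formula of theorem \ref{phi-0-ring} (and theorem \ref{f4-consts} when $\Phi=\mathsf F_4$), conjugating $t_\delta(\xi)$ with $\delta\in\Sigma$ by $t_{\pm\alpha}(\zeta)$ or by $t_\alpha(\zeta_\alpha)$ with $\alpha\in\pm\Psi$ produces a product over $\interval{\pm\alpha}{\delta}\subseteq\Sigma$; hence $t_\alpha(\widehat P_\alpha)$, $t_{-\alpha}(\widehat P_{-\alpha})$ and $\widehat T$ all normalise $\widehat G_\Sigma$ and $\widehat G_{-\Sigma}$, and each triple (resp.\ tuple) genuinely induces an automorphism of these groups. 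Since $\widehat G_\Sigma$ has a unique normal form by lemma \ref{comm-rel}, two such automorphisms coincide iff they coincide componentwise on every $\widehat P_\delta$, $\delta\in\Sigma$; this is the only property of $\widehat G_\Sigma$ used, so the argument indeed applies to an arbitrary $(\Phi,\Phi)$-ring.

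\emph{Recovering the unipotent data.} The point is to choose, among the roots $\delta\in\Sigma$ and $\gamma\in\interval{\alpha}{\delta}$, a pair with $\gamma$ as far as possible in the $\alpha$-direction, so that $\gamma\notin\interval{-\alpha}{\delta'}\cup\interval{-\alpha}{\gamma'}$ for the relevant $\delta',\gamma'$; then the $\gamma$-component of the image of $t_\delta(\xi)$ under the composite is exactly $F_\gamma\bigl(C_{\alpha\delta}^\gamma(\zeta,\xi)\bigr)$, with no contribution from the $t_{-\alpha}(\eta)$-conjugation or from cross terms. Probing an extreme root $\gamma$ itself first gives $F_\gamma=F'_\gamma$; probing the opposite extreme root of $\Sigma$ then yields $C_{\alpha\delta}^\gamma(\zeta,\xi)=C_{\alpha\delta}^\gamma(\zeta',\xi)$ for all $\xi$. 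After the standard identifications of the root groups (lemmas \ref{ba-ring-3}, \ref{bb-ring-1}, \ref{f4-ring-1}), $C_{\alpha\delta}^\gamma(\zeta,-)$ is, up to a fixed invertible element and sign, a one-sided ring multiplication by $\zeta$, or one of the operations $\zeta\triangleleft(-)$, $\zeta\cdot(-)$, $\zeta\circ(-)$, $\phi$, $\rho$ evaluated with $\zeta$; evaluating at $\xi$ equal to the relevant unit $1$ or $\iota$ and invoking the unit axioms ($1\cdot x=x$, $u\cdot 1=u$, $\rho(\iota)=1$, invertibility of the $1_{pq}$) forces $\zeta=\zeta'$. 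When one component degenerates (e.g. $\zeta\circ(-)$ or $\rho$ may vanish for short roots in $\mathsf B_2$ or in $\mathsf F_4$) one reads $\zeta$ off a different component — there is always one carrying the $\cdot$-operation with its unit, which cannot degenerate — or combines the equations of several components with the full equality of the automorphisms. By symmetry (conjugation by $t_{-\alpha}(\eta)$, probing the extreme root with nonempty $\interval{-\alpha}{\,\cdot\,}$) one gets $\eta=\eta'$. In the second statement one runs this inductively along the polar order: writing the composite as $u^{+}u^{-}h$ with $u^{\pm}$ in the unipotent groups $\widehat G_{\Psi^{\pm}}$ attached to the two halves of $\Psi$, one peels off the $\zeta_\alpha$ one at a time, using $\widehat G_\Sigma$ for the roots of $\Psi$ on the $\beta$-side of the relevant wall and $\widehat G_{-\Sigma}$ for the rest.

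\emph{Recovering the diagonal data.} Once $\zeta=\zeta'$, $\eta=\eta'$ (resp.\ $\zeta_\alpha=\zeta'_\alpha$ for all $\alpha\in\Psi$), the conjugation parts of the two composites coincide as one automorphism $\chi$ of $\widehat G_\Sigma$, so from $h\circ\chi=h'\circ\chi$ we get $h=h'$ on $\widehat G_\Sigma$; since $h$ acts on $t_\delta(\widehat P_\delta)\le\widehat G_\Sigma$ by its component $F_\delta$, this says precisely that $F_\delta=F'_\delta$ for every $\delta\in\Sigma$, which is the remaining claim.

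\emph{Main obstacle.} The hard part is the case analysis over the possible rank-two subsystems — type $\mathsf A_2$, and type $\mathsf B_2$ with the various length configurations of $\alpha,\beta$ (resp.\ of $\Psi$), with extra sub-cases when $\Phi=\mathsf F_4$. In the $\mathsf B_2$ and mixed-length situations $\widehat G_\Sigma$ is only two-step nilpotent, several components of the images interact, and one must verify that a clean leading component $\gamma$ really exists and that evaluating the corresponding commutator term at a unit (or a small combination of such evaluations) pins down $\zeta$ even when $\circ$ or $\rho$ degenerates; for the copies of $\mathsf B_3$ inside $\Phi$ the coordinate ring is merely alternative, so associativity is unavailable and the identities of \S 6 and lemma \ref{inv-alter} are needed. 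The remaining work — the polar order and the $\pm\Sigma$ split in the second statement, and the $\lambda$-twists introduced by the identifications — is routine given theorems \ref{phi-0-ring}, \ref{f4-consts} and lemmas \ref{weyl-a-long}, \ref{weyl-a-short}, \ref{a-swap-long}, \ref{a-swap-short}, \ref{b-diag}.
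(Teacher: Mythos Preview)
Your proposal is correct and follows essentially the same approach as the paper: for the first claim both you and the paper do a direct case analysis over the rank-two subsystem spanned by $\alpha,\beta$ (types $\mathsf A_2$ and $\mathsf B_2$ with the two length options for $\alpha$), probing specific root generators and reading off leading commutator components at units $1$ or $\iota$; for the second claim both arguments peel off the $\zeta_\alpha$ inductively in polar order using suitably chosen test elements in $\widehat G_{\pm\Sigma}$.

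Two minor remarks. First, your $u^{+}u^{-}h$ framing is a slight reorganisation of the paper's straight induction on the length~$m$ of the sequence $(\alpha_1,\ldots,\alpha_m)$; the paper simply shows at each step that $\zeta_1$ is determined by one well-chosen component (e.g.\ the $(\alpha_1+\beta)$-component of the image of $t_\beta(1)$ or $t_\beta(\iota)$), with one small trick in the $\mathsf B_2$ short case: when $m=|\Psi|=8$ and $\alpha_1$ is short, the paper reverses the order so that the first root becomes long and the previous case applies, reducing to $m\le 7$. Second, the references to lemma~\ref{inv-alter} and the alternative identities of \S6 are not actually needed here --- the paper's proof uses only the commutator formulae and the unit axioms, with no inverses or Moufang manipulations entering.
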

\begin{proof}
	The first claim easily follows from the commutator formulae by considering the three possible cases. Namely, either \(\alpha\) and \(\beta\) span
	\(
		\mathsf A_2
	\), or they span
	\(
		\mathsf B_2
	\) and \(\alpha\) is long, or they span
	\(
		\mathsf B_2
	\) and \(\alpha\) is short.

	To prove the second claim we are going to check by induction on \(m\) that for every sorted sequence
	\(
		\alpha_1, \ldots, \alpha_m \in \Psi
	\) (with respect to the chosen polar angle) the composition of conjugations by
	\(
		t_{\alpha_1}(\zeta_1)
	\), \ldots,
	\(
		t_{\alpha_m}(\zeta_m)
	\), and \(h\) on
	\(
		\widehat G_\Sigma
	\) uniquely determines
	\(
		\zeta_i
	\). The case
	\(
		m \leq 1
	\) is clear.
	\begin{itemize}

		\item
		Suppose that \(\Psi\) is of type
		\(
			\mathsf A_2
		\) and
		\(
			\Psi \cup \Sigma \cup (-\Sigma)
		\) is of type
		\(
			\mathsf A_3
		\). Without loss of generality,
		\(
			\alpha_1 = \mathrm e_2 - \mathrm e_1
		\),
		\(
			\alpha_2 = \mathrm e_3 - \mathrm e_1
		\), and
		\(
			\beta = \mathrm e_4 - \mathrm e_2
		\) (by choosing
		\(
			\beta \in \Sigma
		\) or
		\(
			\beta \in -\Sigma
		\)). Then
		\(
			\zeta_1
		\) may be determined by the
		\(
			(\alpha_1 + \beta)
		\)-component of the conjugation of
		\(
			t_{24}(1)
		\).

		\item
		Now suppose that \(\Psi\) is of type
		\(
			\mathsf A_2
		\) and
		\(
			\Psi \cup \Sigma \cup (-\Sigma)
		\) is of type
		\(
			\mathsf B_3
		\). Without loss of generality,
		\(
			\alpha_1 = \mathrm e_2 - \mathrm e_1
		\),
		\(
			\alpha_2 = \mathrm e_2 - \mathrm e_3
		\), and
		\(
			\beta = \mathrm e_1
		\) (possibly,
		\(
			\beta \in -\Sigma
		\)). Then
		\(
			\zeta_1
		\) may be determined by the
		\(
			(\alpha_1 + \beta)
		\)-component of the conjugation of
		\(
			t_1(\iota)
		\). If \(m \geq 4\), then
		\(
			\alpha_4 = \mathrm e_1 - \mathrm e_2
		\) and we also have to use that
		\(
			\zeta_4
		\) is determined by the \(\beta\)-component of the conjugation of
		\(
			t_2(\iota)
		\).

		\item
		If \(\Psi\) is of type
		\(
			\mathsf B_2
		\) and
		\(
			\alpha_1
		\) is long, we may assume that
		\(
			\alpha_1 = \mathrm e_2 - \mathrm e_1
		\),
		\(
			\alpha_2 = \mathrm e_2
		\), and
		\(
			\beta = \mathrm e_1 + \mathrm e_3
		\). Then
		\(
			\zeta_1
		\) may be determined by the
		\(
			(\alpha_1 + \beta)
		\)-component of the conjugation of
		\(
			t_{-1, 3}(1)
		\).

		\item
		Finally, if \(\Psi\) is of type
		\(
			\mathsf B_2
		\) and
		\(
			\alpha_1
		\) is short, we may assume that
		\(
			\alpha_1 = \mathrm e_1
		\),
		\(
			\alpha_2 = \mathrm e_1 - \mathrm e_2
		\), and
		\(
			\beta = \mathrm e_3 - \mathrm e_1
		\). Moreover, since we may invert the order on \(\Psi\) and all
		\(
			\zeta_i
		\), by the previous case we may also assume that
		\(
			m \leq 7
		\). Then
		\(
			\zeta_1
		\),
		 may be determined by the
		\(
			(2 \alpha_1 + \beta)
		\)-component of the conjugation of
		\(
			t_{13}(1)
		\). \qedhere

	\end{itemize}
\end{proof}

For any special closed subset
\(
	\Sigma \subseteq \Phi
\) consider the group
\(
	\widehat G(\Phi, \Sigma)
\) generated by
\(
	\widehat G_\Sigma
\),
\(
	\widehat T
\), and elements
\(
	t_\alpha(\zeta)
\) for
\(
	\zeta \in \Ker(\widehat P_\alpha \to P_\alpha)
\),
\(
	\alpha \in \Psi \setminus \Sigma
\). The relations are the following.
\begin{itemize}

	\item
	The maps from
	\(
		\widehat G_\Sigma
	\),
	\(
		\widehat T
	\), and
	\(
		\Ker(\widehat P_\alpha \to P_\alpha)
	\) to
	\(
		\widehat G(\Phi, \Sigma)
	\) are homomorphisms.

	\item
	The commutator formula holds for all pairs of non-parallel roots.

	\item
	The conjugacy identities
	\(
		\up h{t_\alpha(\zeta)}
		= t_\alpha(H_\alpha(\zeta))
	\) for
	\(
		h = (H_\alpha)_\alpha \in \widehat T
	\).

	\item
	Finally,
	\(
		t_\alpha(\zeta)\, t_{-\alpha}(\eta)
		= t_{-\alpha}(\eta')\, t_\alpha(\zeta')\, h
	\), where
	\(
		\zeta'
	\),
	\(
		\eta'
	\), and \(h\) are given by the formulae from lemmas \ref{a-swap-long} and \ref{a-swap-short} since all elements from
	\(
		1 + \Ker(\widehat R \to R)
	\) are invertible. In the case
	\(
		\Phi = \mathsf F_4
	\) they may be computed using various root subsystems of type
	\(
		\mathsf B_3
	\), see the proof of lemma \ref{lie-alg} below for the correctness. The element \(h\) indeed lies in
	\(
		\widehat T
	\) by lemma \ref{b-diag}.

\end{itemize}

\begin{lemma} \label{lie-alg}
	Let
	\(
		\Sigma \subseteq \Phi
	\) be a special closed subset and \(\leq\) be a linear order on \(\Phi\) such that its restriction to \(\Sigma\) is right extreme. Then the product map
	\(
		\bigl(
			\prod_{\alpha \in \Phi} P_{\Sigma, \alpha}
		\bigr)
		\times \widehat T
		\to \widehat G(\Phi, \Sigma)
	\) is one-to-one, where
	\(
		P_{\Sigma, \alpha} = \widehat P_\alpha
	\) for
	\(
		\alpha \in \Sigma
	\) and
	\(
		P_{\Sigma, \alpha}
		= \Ker(\widehat P_\alpha \to P_\alpha)
	\) otherwise.
\end{lemma}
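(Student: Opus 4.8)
The plan is to construct $\widehat G(\Phi,\Sigma)$ explicitly as the set $X=\bigl(\prod_{\alpha\in\Phi}P_{\Sigma,\alpha}\bigr)\times\widehat T$ of normal forms, equipped with the multiplication coming from a terminating confluent string rewriting system — exactly in the spirit of the proof of theorem \ref{phi-0-ring} — and to read off the bijectivity of the product map from this. The rewrite rules are: collapsing $t_\alpha(\dot 0)$ and merging $t_\alpha(\zeta)\,t_\alpha(\zeta')\to t_\alpha(\zeta\dotplus\zeta')$; the commutator formula applied to an out-of-order non-parallel pair $t_\beta(\eta)\,t_\alpha(\zeta)$; the swap relation applied to an out-of-order opposite pair $t_\alpha(\zeta)\,t_{-\alpha}(\eta)$, producing the torus factor $h$ of lemmas \ref{a-swap-long}, \ref{a-swap-short}; the torus–conjugation rule $h\,t_\alpha(\zeta)\to t_\alpha(H_\alpha(\zeta))\,h$; and merging of torus factors. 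Irreducible strings are precisely the normal forms (sorted root factors followed by a single $\widehat T$-element).

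Surjectivity of the product map is the easy half: since $\widehat G_\Sigma$ is itself a product of root subgroups, every element of $\widehat G(\Phi,\Sigma)$ is a word in the $t_\alpha(\zeta)$'s and torus elements, and the collection process above brings it into normal form. The one point to notice is that the swap rule applies \emph{unconditionally} here: because $\Sigma$ is special closed, at most one of $\pm\alpha$ lies in $\Sigma$, so the factor attached to the root outside $\Sigma$ always has argument in $\Ker(\widehat P_\alpha\to P_\alpha)$, which is nilpotent ($\eps^3=0$); hence $1+\zeta\eta$, resp.\ $\widehat\rho(\zeta)\,\widehat\rho(\eta)-\zeta\circ\eta+1$, is invertible, lemmas \ref{a-swap-long}, \ref{a-swap-short} apply, and the resulting $h$ lies in $\widehat T$ by lemma \ref{b-diag}. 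Termination follows by a well-founded ordering on formal strings extending the one of theorem \ref{phi-0-ring}: the $\Sigma$-part is controlled as there, the $\eps$-degree of the new kernel-valued factors produced by commutators strictly rises and is bounded by the nilpotency, and the number of inversions and of torus factors is used as a tie-breaker.

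Injectivity is the content; it amounts to confluence of the rewriting system, which identifies $X$ with $\widehat G(\Phi,\Sigma)$ as a set. The critical pairs are: overlaps among the two torus rules and the merging rule, which close up because the components of an element of $\widehat T$ are homomorphisms commuting with every $C_{\alpha\beta}^\gamma$ — this is the defining property of $\widehat T$; three-root overlaps $t_\gamma(\theta)\,t_\beta(\eta)\,t_\alpha(\zeta)$ with no two of $\alpha,\beta,\gamma$ opposite, which are literally the three-root axioms of theorem \ref{phi-0-ring}; and the overlaps in which an opposite pair triggers the swap rule. In the last case each of the two reduction sequences outputs a tuple $(\zeta_\bullet,h)\in\bigl(\prod_\alpha\widehat P_\alpha\bigr)\times\widehat T$; to see that they coincide one computes how they act by conjugation on the local groups $\widehat G_{\Sigma'}$ with $\Sigma'=(\mathbb R\alpha+\mathbb R_{>0}\beta)\cap\Phi$ or $\Sigma'=(\mathbb R\Psi+\mathbb R_{>0}\beta)\cap\Phi$ ($\Psi$ of type $\mathsf A_2$ or $\mathsf B_2$), using lemmas \ref{a-swap-long}, \ref{a-swap-short}, \ref{b-diag} and the commutator formula, and then invokes lemma \ref{eq-check}, which forces the two tuples to agree. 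Once the rewriting model is in place it satisfies all the defining relations of $\widehat G(\Phi,\Sigma)$, so there is a homomorphism $\widehat G(\Phi,\Sigma)\to X$ (onto $X$ with its rewriting-system group structure); composing the product map with it gives the identity on $X$, since a normal form rewrites to itself, and together with surjectivity this yields that the product map is a bijection.

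The main obstacle is exactly this last family of critical pairs: one must verify that \emph{swapping then commuting} equals \emph{commuting then swapping}, carrying along the torus elements generated at each swap step and checking that the torus element produced by a swap has the action prescribed by lemma \ref{b-diag}. All of lemmas \ref{a-swap-long}, \ref{a-swap-short}, \ref{b-diag}, \ref{eq-check} are tailored to precisely this computation. In the $\mathsf F_4$ case there is the additional point, already flagged in the definition of $\widehat G(\Phi,\Sigma)$, that the swap data must be shown independent of which $\mathsf B_3$-subsystem it is computed in; this follows because the corresponding identities hold in $G_{\mathrm{std}}$, together with lemma \ref{f4-empty}.
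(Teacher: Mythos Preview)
Your approach is essentially the paper's own: the same rewriting system, the same reduction of confluence to critical pairs handled via theorem \ref{phi-0-ring}, lemma \ref{b-diag}, and lemma \ref{eq-check}. One point needs correction. In the $\mathsf F_4$ case you justify the independence of the swap data $(\zeta',\eta',h)$ from the choice of $\mathsf B_3$-subsystem by appealing to $G_{\mathrm{std}}$ and lemma \ref{f4-empty}; this does not work. Identities holding in the single model $G_{\mathrm{std}}$ do not establish universal identities in the variety of $(\mathsf F_4,\mathsf F_4)$-rings --- the $G_{\mathrm{std}}$ device in theorem \ref{f4-consts} and lemma \ref{f4-empty} succeeds only because there it is distinguishing among a fixed finite list of terms, whereas the swap formulae of lemmas \ref{a-swap-long} and \ref{a-swap-short} are genuinely complicated expressions involving inverses. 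The paper instead reuses lemma \ref{eq-check}: for two $\mathsf B_3$-subsystems $\Psi_1,\Psi_2\ni\alpha$, each computation of $(\zeta',\eta',h)$ acts on $\widehat G_{\Sigma'}$, for $\Sigma'=(\mathbb R\alpha+\mathbb R_{>0}\beta)\cap\Phi$ with $\beta$ in the common $\mathsf B_2$, exactly as conjugation by $t_\alpha(\zeta)\,t_{-\alpha}(\eta)$ does (this is lemma \ref{b-diag} applied in each $\Psi_i$), so the first clause of lemma \ref{eq-check} forces $\zeta'$, $\eta'$ and the $\Sigma'$-components of $h$ to agree; varying $\beta$ then gives all components and $h\in\widehat T$.
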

\begin{proof}
	Consider the following string rewrite system. Its objects are formal strings of root generators and elements of
	\(
		\widehat T
	\) enclosed in square brackets. The rules are the following.
	\begin{align*}
		t_\alpha(\dot 0) &\to \eps;
	\quad
		[1] \to 1;
	\quad
		t_\alpha(\zeta)\, t_\alpha(\eta)
		\to t_\alpha(\zeta \dotplus \eta);
	\quad
		[h]\, [h'] \to [h h'];
	\\
		[h]\, t_\alpha(\zeta)
		&\to t_\alpha(H_\alpha(\zeta))\, [h]
		\text{ for } h = (H_\alpha)_\alpha;
	\\
		t_\alpha(\zeta)\, t_\beta(\eta)
		&\to \bigl(
			\prod_{\gamma \in \interval \alpha \beta}
				t_\gamma\bigl(
					C_{\alpha \beta}
						^\gamma(\zeta, \eta)
				\bigr)
		\bigr)\,
		t_\beta(\eta)\, t_\alpha(\zeta)
		\text{ for } \alpha \nparallel \beta
		\text{ and } \alpha > \beta;
	\\
		t_\alpha(\zeta)\, t_{-\alpha}(\eta)
		&\to t_{-\alpha}(\eta')\, t_\alpha(\zeta')\, h
		\text{ for } \alpha > -\alpha,
		\text{ where \(\eta'\), \(\zeta'\), \(h\) are taken from lemmas \ref{a-swap-long} and \ref{a-swap-short}}.
	\end{align*}
	This rewrite system is terminating. We have to check that it is confluent and the last rule also holds as an identity for
	\(
		\alpha < -\alpha
	\) (also, that the rule is well-defined for
	\(
		\Phi = \mathsf F_4
	\)). Consider the case
	\(
		\Phi = \mathsf B_\ell
	\). Using theorem \ref{phi-0-ring} and lemma \ref{b-diag}, it suffices to check the confluence for the following start strings.
	\begin{align*}
		&t_\alpha(\zeta)\,
		t_\alpha(\zeta')\,
		t_{-\alpha}(\eta);
	\quad
		t_\alpha(\zeta)\,
		t_{-\alpha}(\eta)\,
		t_{-\alpha}(\eta);
	\quad
		t_\alpha(\dot 0)\, t_{-\alpha}(\eta);
	\quad
		t_\alpha(\zeta)\, t_{-\alpha}(\dot 0);
	\\
		&t_\alpha(\zeta)\,
		t_\beta(\eta)\,
		t_\gamma(\theta)
		\text{ for coplanar } \alpha > \beta > \gamma.
	\end{align*}
	For the first four strings we may apply the first claim of lemma \ref{eq-check}. For the last one we have to check the equalities
	\(
		\psi_\delta = \psi'_\delta
	\) and
	\(
		h = h'
	\) for some
	\(
		\psi_\delta, \psi'_\delta \in \widehat P_\delta
	\) and
	\(
		h, h' \in \widehat T
	\) depending on \(\zeta\), \(\eta\), \(\theta\), where \(\delta\) runs over some root subsystem
	\(
		\Psi \subseteq \Phi
	\) of type
	\(
		\mathsf A_2
	\) or
	\(
		\mathsf B_2
	\) (the case
	\(
		\mathsf A_1 \times \mathsf A_1
	\) is trivial). We may further apply rewrite rules backwards (i.e. for the ``wrong'' orderings of the involved roots) to change the order in
	\(
		\bigl(
			\prod_\delta t_\delta(\psi_\delta)
		\bigr)\, [h]
	\) and
	\(
		\bigl(
			\prod_\delta t_\delta(\psi'_\delta)
		\bigr)\, [h']
	\) from \(\leq\) to some order by a polar angle and then apply the second claim of lemma \ref{eq-check}. Since the applied ``wrong'' rules are the same for both strings, the equality of the resulting strings implies the claim. Also, by lemmas \ref{b-diag} and \ref{eq-check} the formal application of the last rewrite rule to its right hand side
	\(
	\) (for the ``wrong'' root
	\(
		-\alpha < -\alpha
	\)) gives the left hand side after cancellation of the two factors of
	\(
		\widehat T
	\).

	Finally, let
	\(
		\Phi = \mathsf F_4
	\). The elements \(\zeta'\) and \(\eta'\) from the last rule are independent on the choice of a root subsystem
	\(
		\alpha \in \Psi \subseteq \Phi
	\) of type
	\(
		\mathsf B_3
	\) by lemma \ref{eq-check}. By the same argument the components of \(h\) are independent on such a choice and
	\(
		h \in \widehat T
	\). Finally, the rewrite system is confluent and the last rule holds for
	\(
		\alpha < -\alpha
	\) as an identity since all necessary relations may be checked inside various root subsystems of type
	\(
		\mathsf B_3
	\) using the already known case of the lemma.
\end{proof}

\begin{theorem} \label{exist}
	Let \(\Phi\) be a root system of type
	\(
		\mathsf B_\ell
	\) for
	\(
		\ell \geq 3
	\) or
	\(
		\mathsf F_4
	\) and
	\(
		(R, \Delta)
	\) be an
	\(
		(\Phi, \Phi)
	\)-ring. Then the Steinberg group
	\(
		\stlin(\Phi, R, \Delta)
	\) is \(\Phi\)-graded. Moreover, for any system of positive roots
	\(
		\Pi \subseteq \Sigma
	\) the subgroups
	\(
		\stlin(\Phi, R, \Delta)_{-\Pi}
	\) and
	\(
		\stlin(\Phi, R, \Delta)_\Pi
	\) have trivial intersection.
\end{theorem}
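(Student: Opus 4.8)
The plan is to realise $\stlin(\Phi, R, \Delta)$ faithfully inside $\Aut(G_0)$ for the ``adjoint'' group $G_0 := \widehat G(\Phi, \varnothing)$ and to read off both assertions from this representation. Fix a linear order on $\Phi$. By lemma \ref{lie-alg} (with $\Sigma = \varnothing$) the underlying set of $G_0$ is $\bigl(\prod_{\alpha \in \Phi} \Ker(\widehat P_\alpha \to P_\alpha)\bigr) \times \widehat T$, and for every special closed $\Sigma$ (in particular every singleton $\{\alpha\}$) the canonical map $G_0 \to \widehat G(\Phi, \Sigma)$ is injective, again by lemma \ref{lie-alg}. Throughout I use the canonical sections $P_\alpha \hookrightarrow \widehat P_\alpha$ onto the $\eps^0$-part (the constant-series sections of $\widehat P_\alpha = P_\alpha[[\eps]]/(\eps^3)$, hence morphisms of the $(\Phi, \varnothing)$-ring structure), so that for $\zeta \in P_\alpha$ the element $t_\alpha(\zeta)$ makes sense in $\widehat G(\Phi, \{\alpha\})$.

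First I construct the adjoint action. For $\alpha \in \Phi$, $\zeta \in P_\alpha$ put $g_\alpha(\zeta) := \mathrm{conj}_{t_\alpha(\zeta)}\big|_{G_0} \in \Aut(G_0)$, the conjugation taken inside $\widehat G(\Phi, \{\alpha\})$. This conjugation preserves $G_0$: on a generator $h \in \widehat T$ or $t_\beta(\kappa)$ with $\kappa \in \Ker(\widehat P_\beta \to P_\beta)$ and $\beta \ne -\alpha$ this follows from the $\widehat T$-conjugacy relation and the commutator formula, because $\widehat P_\bullet \to P_\bullet$ is a morphism of $(\Phi, \varnothing)$-rings and so every structure constant produced is $\Ker$-valued; for $\beta = -\alpha$ it follows from the swap relations of lemmas \ref{a-swap-long} and \ref{a-swap-short}, which apply since the relevant element ($1 + \widehat\zeta\kappa$, respectively the $z$ of lemma \ref{a-swap-short}) lies in $1 + \Ker(\widehat R \to R)$ and is therefore invertible, and the diagonal factor it produces lies in $\widehat T$ by lemma \ref{b-diag}. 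As the canonical section is a group homomorphism, $\zeta \mapsto g_\alpha(\zeta)$ is a homomorphism $P_\alpha \to \Aut(G_0)$; and for non-opposite $\alpha, \beta$, evaluating the commutator formula for $[t_\alpha(\zeta), t_\beta(\eta)]$ inside $\widehat G(\Phi, \Sigma_{\alpha\beta})$ (with $\Sigma_{\alpha\beta}$ the special closed cone on $\alpha, \beta$) yields $[g_\alpha(\zeta), g_\beta(\eta)] = \prod_{\gamma \in \interval\alpha\beta} g_\gamma\bigl(C_{\alpha\beta}^\gamma(\zeta, \eta)\bigr)$. Hence the $g_\alpha$ satisfy all defining relations of the Steinberg group and assemble into $\Psi \colon \stlin(\Phi, R, \Delta) \to \Aut(G_0)$.

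The key technical claim is faithfulness: for every special closed $\Sigma$ with a right extreme order the map $\prod_{\alpha \in \Sigma}^{\Set} P_\alpha \to \Aut(G_0)$, $(\zeta_\alpha)_\alpha \mapsto \prod_\alpha g_\alpha(\zeta_\alpha)$, is injective, and moreover $\Psi$ is injective on each of $\stlin(\Phi, R, \Delta)_{\Pi}$ and $\stlin(\Phi, R, \Delta)_{-\Pi}$ with images meeting only in $\id$. Both are proved by evaluating the automorphism in question on suitable ``test coordinates'' of $G_0$ — e.g. on $t_\beta(\kappa)$ for $\kappa$ a fixed infinitesimal generator of $\Ker(\widehat P_\beta \to P_\beta)$ and a well-chosen root $\beta$ — and reading off the parameters one root at a time, exactly the uniqueness bookkeeping already carried out in lemmas \ref{lie-alg} and \ref{eq-check}; the last statement follows because a product over $\Pi$ acts trivially on the negative test coordinates and conversely. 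In particular each $t_\alpha \colon P_\alpha \to \stlin(\Phi, R, \Delta)$ is injective. Since $\stlin(\Phi, R, \Delta)$ satisfies the first axiom of $\Phi$-commutator relations by its defining relations, the second paragraph of the proof of lemma \ref{comm-rel} shows that each $\langle t_\beta(P_\beta) : \beta \in \Sigma\rangle$ is the image of the product map, and the injectivity just proved (which factors through $\Psi$) then forces the second axiom; so $\stlin(\Phi, R, \Delta)$ has $\Phi$-commutator relations, and the ``moreover'' clause $\stlin(\Phi, R, \Delta)_{-\Pi} \cap \stlin(\Phi, R, \Delta)_\Pi = 1$ is immediate from injectivity of $\Psi$ on these subgroups together with triviality of the intersection of their images.

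Finally, Weyl elements: for each $\alpha$ choose invertible data ($a \in R^*$ for long $\alpha$, the analogue from lemma \ref{weyl-a-short} or theorem \ref{f4-consts} for short $\alpha$) and set $n_\alpha := t_\alpha(a)\, t_{-\alpha}(-a^{-1})\, t_\alpha(a)$. For $\beta \nparallel \alpha$ one computes $\up{\Psi(n_\alpha)}{g_\beta(\zeta)}$ by conjugating successively by $g_\alpha(a)$, $g_{-\alpha}(-a^{-1})$, $g_\alpha(a)$; each single step is a conjugation inside some $\widehat G(\Phi, \Sigma_{\gamma\delta})$ for a non-opposite pair, so the total effect is governed by lemmas \ref{weyl-a-long} and \ref{weyl-a-short} (for $\mathsf B_\ell$) or by theorem \ref{f4-consts} (for $\mathsf F_4$) and equals $g_{s_\alpha(\beta)}(P_{s_\alpha(\beta)})$; by injectivity of the $t_\gamma$ the same identities hold in $\stlin(\Phi, R, \Delta)$, so $n_\alpha$ is $\alpha$-Weyl by lemma \ref{weyl-crit}, and $\stlin(\Phi, R, \Delta)$ is $\Phi$-graded. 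The main obstacle is steps two and three jointly: one must verify that the truncation $(\widehat R, \widehat\Delta)$ is large enough that all infinitesimal swaps are simultaneously valid and every parameter $\zeta_\alpha$ is detected by the adjoint action — which is exactly why first-order infinitesimals fail and an $\eps^2$-term is needed, matching the shape of $\widehat\Delta$ — and that conjugation by $t_\alpha(\zeta)$ genuinely normalises $G_0$ inside $\widehat G(\Phi, \{\alpha\})$. For $\Phi = \mathsf F_4$ there is the additional point, already visible in theorem \ref{f4-consts}, that one cannot reduce to a single $\mathsf B_3$-subsystem; but consistency of the swap data and conjugation formulae across the various $\mathsf B_3 \subseteq \mathsf F_4$ is supplied by the second part of lemma \ref{eq-check} together with lemma \ref{lie-alg}, so nothing essentially new is required there.
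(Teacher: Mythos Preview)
Your overall architecture matches the paper's: build \(G_0=\widehat G(\Phi,\varnothing)\), define root automorphisms by conjugation inside \(\widehat G(\Phi,\{\alpha\})\), check the Steinberg relations via lemma \ref{lie-alg}, and then establish the second axiom of \(\Phi\)-commutator relations (and the ``moreover'' clause) by detecting each root parameter from the action on infinitesimal test elements. Your discussion of why \(g_\alpha(\zeta)\) normalises \(G_0\) and why the Steinberg relations hold is fine and in fact more explicit than the paper's.

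The gap is in your faithfulness step. You assert that injectivity of \(\prod_{\alpha\in\Sigma} P_\alpha \to \Aut(G_0)\) and of \(\Psi|_{\stlin_\Pi}\) is ``exactly the uniqueness bookkeeping already carried out in lemmas \ref{lie-alg} and \ref{eq-check}'', but neither lemma gives this. Lemma \ref{lie-alg} proves uniqueness of the product decomposition \emph{inside} \(\widehat G(\Phi,\Sigma)\), where the \(\alpha\)-factor ranges over the full \(\widehat P_\alpha\); it says nothing about whether two distinct constant-term elements \(t_\alpha(\zeta),t_\alpha(\zeta')\) with \(\zeta,\zeta'\in P_\alpha\) could act identically on the much smaller group \(G_0\). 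Lemma \ref{eq-check} distinguishes \emph{infinitesimal} tuples (in \(\widehat P_\alpha\)) by their action on a full \(\widehat G_\Sigma\), again not on \(G_0\), and only for rank-\(2\) subsystems. Neither transfers to the situation you need. Your sentence ``a product over \(\Pi\) acts trivially on the negative test coordinates'' is also false as stated: conjugating \(t_{-\alpha}(\kappa)\) by \(t_\alpha(\zeta)\) invokes the swap relation and produces a nontrivial diagonal and \(\alpha\)-component.

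What the paper actually does for this step is an explicit induction on \(|\Sigma|\) with hand-picked test elements. For a long extreme root \(\alpha\in\Sigma\) one chooses \(\beta\) with \(\widehat{\alpha\beta}=2\pi/3\) lying in a larger special closed set \(\Sigma'\supseteq\Sigma\) in which both \(\alpha\) and \(\beta\) are extreme; then the \((\alpha+\beta)\)-component of \(\up{g\,t_\alpha(x)}{t_\beta(\eps)}\) equals \(t_{\alpha+\beta}(\upsilon(x)\eps)\) for a group automorphism \(\upsilon\), while \(\up{U_{-\Sigma}}{t_\beta(\eps)}\) has no \((\alpha+\beta)\)-component at all. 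This simultaneously proves that the \(\alpha\)-parameter is recoverable (hence the product map is injective) and that \(U_\Sigma\cap U_{-\Sigma}\subseteq U_{\Sigma\setminus\{\alpha\}}\cap U_{(-\Sigma)\setminus\{-\alpha\}}\). For \(\mathsf F_4\) this already suffices; for \(\mathsf B_\ell\) one is eventually left with \(\Sigma=\{\mathrm e_1,\ldots,\mathrm e_k\}\cup\{\mathrm e_i+\mathrm e_j\}\), all of whose extreme roots are short, and a different test element \(t_{1,-2}(\eps)\) is used: the \(\mathrm e_{-2}\)-component of its conjugate is \(t_{-2}(v\cdot\eps^2\dotminus u\cdot(-\eps))\), and recovering \(u\) from this is exactly where the summand \(\Delta\cdot\eps\subseteq\widehat\Delta\) (i.e.\ the second-order truncation) is used. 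You allude to this last point but do not supply it; it is the genuine content your proposal is missing.
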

\begin{proof}
	The cases
	\(
		\Phi
		\in \{
			\mathsf A_\ell,
			\mathsf D_\ell,
			\mathsf E_\ell
		\}
	\) are well-known. It suffices to construct some \(\Phi\)-graded group with the last property and the
	\(
		(\Phi, \Phi)
	\)-ring isomorphic to
	\(
		(R, \Delta)
	\). Let
	\[
		G_0 = \widehat G(\Phi, \varnothing)
		= \bigl(
			\prod_{\alpha \in \Phi}
				t_\alpha(
					\Ker(
						\widehat P_\alpha \to P_\alpha
					)
				)
		\bigr)
		\times \widehat T.
	\]
	For any
	\(
		\alpha \in \Phi
	\) and
	\(
		\zeta \in P_\alpha
	\) let
	\(
		t_\alpha(\zeta) \in \Aut(G_0)
	\) be the conjugation inside
	\(
		\widehat G(\Phi, \{\alpha\})
	\). Such automorphisms satisfy all relations of Steinberg groups by lemma \ref{lie-alg}. It remains to check that
	\(
		U_\Sigma = \bigl\langle
			t_\alpha(P_\beta)
			\mid
			\alpha \in \Sigma
		\bigr\rangle
	\) trivially intersects
	\(
		U_{-\Sigma}
	\) for any special closed subset
	\(
		\Sigma \subseteq \Phi
	\).

	Let us proceed by induction on \(|\Sigma|\). Suppose that \(\Sigma\) contains a long extreme root \(\alpha\) (or just an extreme root \(\alpha\) if
	\(
		\Phi = \mathsf F_4
	\)). Choose
	\(
		\beta \in \Phi
	\) such that
	\(
		\widehat{\alpha \beta} = 2 \pi / 3
	\) and \(\beta\) together with \(\Sigma\) are contained in a common special closed subset \(\Sigma'\) such that both \(\alpha\) and \(\beta\) are extreme roots of \(\Sigma'\). Namely, if \(\Sigma\) consists of positive roots with respect to a base and \(\alpha\) is basic, then we may take \(\Sigma'\) to be the set of all positive roots and \(\beta\) to be a basic root near \(\alpha\) in the Dynkin diagram. Then for any
	\(
		g \in U_{\Sigma \setminus \{\alpha\}}
	\) we have
	\[
		\up{g\, t_\alpha(x)}{t_\beta(\eps)}
		= t_{\alpha + \beta}(\upsilon(x)\, \eps)\,
		t_\beta(\eps)\, h
	\]
	for some
	\(
		h
		\in U_{
			\Sigma'
			\setminus \{\alpha, \beta, \alpha + \beta\}
		}
	\) and group automorphism \(\upsilon\). On the other hand
	\(
		\up{U_{-\Sigma}}{t_\beta(\eps)}
	\) does not contain factors with the root
	\(
		\alpha + \beta
	\). From this and the dual argument it follows that
	\(
		U_\Sigma \cap U_{-\Sigma}
		= U_{\Sigma \setminus \{\alpha\}}
		\cap U_{(-\Sigma) \setminus \{-\alpha\}}
	\) and we may apply the induction hypothesis.

	Now we may assume that
	\(
		\Phi = \mathsf B_\ell
	\) and all extreme roots of \(\Sigma\) are short, i.e.
	\[
		\Sigma
		= \{
			\mathrm e_i
			\mid
			1 \leq i \leq k
		\}
		\cup \{
			\mathrm e_i + \mathrm e_j
			\mid
			1 \leq i < j \leq k
		\}
	\]
	for some \(k\) up to the action of the Weyl group. For any
	\(
		u \in \Delta
	\) and
	\(
		g \in U_{\Sigma \setminus \{\mathrm e_1\}}
	\) the component of
	\(
		\up{g\, t_1(u)}{t_{1, -2}(\eps)}
	\) with the root
	\(
		\mathrm e_{-2}
	\) is
	\(
		t_{-2}(
			v \cdot \eps^2
			\dotminus u \cdot (-\eps)
		)
	\) for some
	\(
		v \in \Delta / \phi(R)
	\) by the formulae from lemma \ref{a-swap-short}. Here we use that
	\(
		\Delta \cdot \eps \subseteq \widetilde \Delta
	\), for another (larger) choice of
	\(
		(\widetilde R, \widetilde \Delta)
	\) as a factor-\((\Phi, \Phi)\)-ring of
	\(
		(R[[\eps]], \Delta[[\eps]])
	\) additional high-order summands may appear. On the other hand,
	\(
		\up{U_{-\Sigma}}{t_{1, -2}(\eps)}
	\) does not contain components with the root
	\(
		\mathrm e_1 - \mathrm e_2
	\). From this and the dual argument we have
	\(
		U_\Sigma \cap U_{-\Sigma}
		= U_{\Sigma \setminus \{\mathrm e_1\}}
		\cap U_{(-\Sigma) \setminus \{-\mathrm e_1\}}
	\).
\end{proof}

\section*{Declarations}

Research is supported by ``Native towns'', a social investment program of PJSC ``Gazprom Neft''.

\bibliographystyle{plain}
\bibliography{references}

\end{document}